\tikzset{tail reversed/.code={\pgfsetarrowsstart{tikzcd to}}}
\newcommand\overarc[1]{\ThisStyle{%
  \setbox0=\hbox{$\SavedStyle#1$}%
  \stackon[.5pt]{\SavedStyle#1}{%
  \rotatebox{90}{$\SavedStyle\scaleto{)}{.95\wd0}$}}}}
\newcommand{\dsum}[2]{\sum_{\begin{smallmatrix}#1\\#2\end{smallmatrix}}}
\definecolor{trueblue}{rgb}{0.0, 0.45, 0.81}
\newcommand{\EEE}{\color{black}}
\newcommand{\dx}{\, {\rm d}x}
\newcommand{\dy}{\, {\rm d}y}
\newcommand{\ds}{\, {\rm d}\sigma}
\newcommand{\e}{\varepsilon}
\newcommand{\eg}{{\it e.g.}, }
\newcommand{\ie}{{\it i.e.}, }
\theoremstyle{plain}
\newtheorem{theorem}{Theorem}[section]
\newtheorem{lemma}[theorem]{Lemma}
\newtheorem{proposition}[theorem]{Proposition}
\newenvironment{step}[2]{\medskip\underline{\textit{Step #1:}} \textit{#2.}\smallskip}{}
\theoremstyle{definition}
\newtheorem{definition}[theorem]{Definition}
\newtheorem{example}[theorem]{Example}
\theoremstyle{remark}
\newtheorem{remark}[theorem]{Remark}
\renewcommand{\tilde}{\widetilde}
\newcommand{\sm}{\setminus}
\newcommand{\cf}{{\it cf.}}
\DeclareMathOperator{\argmin}{argmin}
\renewcommand{\d}{ \mathrm{d}}
\DeclareMathOperator{\curl}{curl}
\DeclareMathOperator{\conv}{conv}
\DeclareMathOperator{\dist}{dist}
\DeclareMathOperator{\supp}{supp}
\numberwithin{equation}{section}
\newcommand{\N}{\mathbb{N}}
\newcommand{\Z}{\mathbb{Z}}
\newcommand{\R}{\mathbb{R}}
\newcommand{\C}{\mathbb{C}}
\newcommand{\NN}{\mathcal{N}}
\renewcommand{\S}{\mathbb{S}}
\renewcommand{\L}{\mathcal{L}}
\newcommand{\LL}{\mathbb{L}}
\renewcommand{\H}{\mathcal{H}}
\newcommand{\W}{\mathbb{W}}
\newcommand{\WW}{\mathcal{W}}
\newcommand{\D}{\mathcal{D}}
\newcommand{\dH}{\, {\rm d}\H^1}
\newcommand{\de}{\partial}
\newcommand{\T}{\mathcal{T}}
\newcommand{\F}{\mathcal{F}}
\newcommand{\m}{\mathbf{m}}
\newcommand{\SF}{\mathcal{SF}}
\newcommand{\x}{{\times}}
\newcommand{\defas}{:=}
\newcommand{\wto}{\rightharpoonup}
\newcommand{\wsto}{\overset{*}{\wto}}
\newcommand{\eqZ}{\overset{\Z}{\equiv}}
\newcommand{\wcont}{\subset\subset}
\newcommand{\mres}{\mathbin{\vrule height 1.6ex depth 0pt width 0.13ex\vrule height 0.13ex depth 0pt width 1.3ex}}
\newcommand{\Q}{\mathcal{Q}}
\newcommand{\fto}{\overset{\mathrm{flat}}{\to}}
\newcommand{\dS}{\mathrm{d}_{\S^1}}
\newcommand{\AD}{\mathcal{AD}}
\newcommand{\loc}{\mathrm{loc}}
\newcommand{\screw}{\mathrm{screw}}
\newcommand{\edge}{\mathrm{edge}}
\newcommand{\p}{{\mathrm{p}\text{-}\mathrm{edge}}}
\newcommand{\h}{{\mathrm{p}\text{-}\mathrm{edge,h}}}
\renewcommand{\v}{{\mathrm{p}\text{-}\mathrm{edge,v}}}
\newcommand{\hor}{\mathrm{hor}}
\newcommand{\even}{\mathrm{even}}
\newcommand{\odd}{\mathrm{odd}}
\newcommand{\thetas}{\theta_\mathrm{s}}
\newcommand{\A}[2]{A_{#1,#2}}
\newcommand{\Asigma}{\A{\frac{\sigma}{2}}{\sigma}}
\newcommand{\Ten}{\mathscr{S}}
\newcommand{\AZ}[1]{\Z_\e^{#1}(A)}
\newcommand{\AZS}[2]{\Z_{2\e,{#1}}^{#2}(A)}
\title[Stacking faults in the limit of a discrete model for partial edge dislocations]{Stacking faults in the limit of a discrete model for partial edge dislocations}
\author[A. Bach]{Annika Bach}
\address[A. Bach]{Technische Universiteit Eindhoven. Den Dolech 2, 5600 MB, Eindhoven, Netherlands.}
\email{a.bach@tue.nl}
\author[M. Cicalese]{Marco Cicalese}
\address[M. Cicalese]{Technische Universit\"{a}t M\"{u}nchen. Boltzmannstr. 3, 85748 Garching bei M\"{u}nchen, Germany.}
\email{cicalese@ma.tum.de}
\author[A. Garroni]{Adriana Garroni}
\address[A. Garroni]{Sapienza Universit\`{a} di Roma. P.le A. Moro 5 00185 Roma, Italy.}
\email{garroni@mat.uniroma1.it}
\author[G. Orlando]{Gianluca Orlando}
\address[G. Orlando]{Politecnico di Bari. Via E. Orabona 4, 70125 Bari BA, Italy.}
\email{gianluca.orlando@poliba.it}
\keywords{Partial dislocations; Stacking faults; Edge dislocations; Discrete-to-continuum limit; $\Gamma$-convergence.}
\subjclass[2020]{49J45, 70G75, 74C05, 58K45}
\begin{document}

\maketitle

\begin{abstract}
  In the limit of vanishing lattice spacing we provide a rigorous variational coarse-graining result for a next-to-nearest neighbor lattice model of a simple crystal. We show that the $\Gamma$-limit of suitable scaled versions of the model leads to an energy describing a continuum mechanical model depending on partial dislocations and stacking faults. Our result highlights the necessary multiscale character of the energies setting the groundwork for more comprehensive models that can better explain and predict the mechanical behavior of materials with complex defect structures.
\end{abstract}

\tableofcontents

\section{Introduction}

Defects in the crystal structure of materials such as dislocations, stacking faults, impurities, or grain boundaries play a fundamental role in determining the mechanical properties of the material under consideration. In this paper we are concerned with the first two examples mentioned above, namely with the presence and interplay of dislocations and stacking faults. 
Broadly speaking, dislocations are irregularities in the atomic arrangement of crystals that allow layers of atoms to slip past one another under stress, a phenomenon crucial for plastic deformation. In the case of so-called edge dislocations these irregularities can be thought of -- in three dimensions -- as line defects corresponding to the boundary of extra half-planes of atoms perturbing locally the crystalline structure, or alternatively as the line separating regions on the slip planes that have undergone different slips. Figure~\ref{fig:edge dislocation} illustrates this situation in a simplified two-dimensional setting by depicting a portion of a cross section of a cubic crystal. 

\begin{figure}[ht]
  \centering

      




  \includegraphics{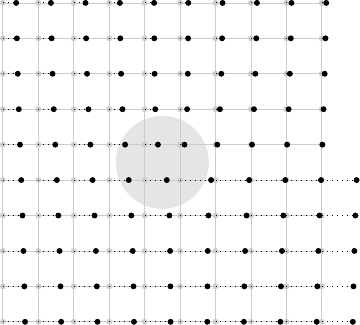}
  \caption{Example of an edge dislocation with Burgers vector $e_1$. The grey dots represent the reference configuration, while the black dots represent the displaced lattice points. We shall see that the elastic energy concentrating outside the singularity, more precisely outside the area highlighted in grey, is detected by the $\Gamma$-limit of the energies $F_\e^\edge$ defined in~\eqref{def:F-edge}.}
  \label{fig:edge dislocation}
\end{figure}

A specific type of dislocations we are interested here are partial dislocations. They typically appear in close-packed crystals such as face-centred cubic (FCC) crystals and are due to the phenomenon that in those crystals for certain slip directions it is necessary to insert two extra half-planes of atoms to restore the crystalline structure far from the defect. It is then energetically more favourable to split up those two half-planes, which in turn leads to the presence of a pair of partial dislocations   
separated by a region of disrupted atomic order known as a stacking fault (see, \eg~\cite[Chapters 5.1--5.3]{HB} or~\cite[Chapters 10.1--10.3]{HL}).
It is worth noting that, unlike dislocations, stacking faults are planar defects in three dimensions. In a nutshell their presence is related to a disruption in the normal stacking sequence of atomic layers in a crystal. For example, in an FCC structure, a defect-free stacking sequence is three-periodic, usually denoted as ``...ABCABC...''. The presence of a stacking fault alters this sequence, producing a stacking like  ``...ABCABABC...'', which deviates from the normal periodic stacking sequence. When such a defect appears in a confined bulk region of the material, partial dislocations must be present near the boundary of this defect region to restore the normal periodic stacking. 

\smallskip
\textbf{Brief literature overview and goal of this paper.} In view of their prominent role in plasticity, dislocations have drawn the attention of both the engineering and the mathematical community. From a mathematical point of view, in the last decades a huge effort has been spent to derive phenomenological/macroscopic plasticity models such as line-tension or strain-gradient models from more fundamental dislocation models using variational methods such as $\Gamma$-convergence. In particular, in a reduced two-dimensional setting both the aforementioned phenomenological models have by now been successfully characterised as $\Gamma$-limits of semi-discrete models within the core-radius approach~\cite{GLP, SZ, MSZ, DGP, Ginster1, Ginster2} as well as of suitable fully discrete models~\cite{P07, ACP11, ADGP14, ADLPP23, ADLPP23a}. It is worth mentioning that in the 3-d setting both semi-discrete and discrete models have been recently analysed under additional diluteness condition between defects leading to recent progresses in the derivation of line-tension models~\cite{CGO15, GMS, CGM23, CGO24}. In this paper we will work with a fully discrete two-dimensional model, but in contrast to the results mentioned above, here we aim at characterising the interplay between partial dislocations and stacking faults in a variational coarse-graining procedure.
In fact, while from a mechanical point of view it is well-understood how stacking faults are linked to partial dislocations, a rigorous variational derivation of a continuum energy model from a discrete molecular mechanical model, which would make this link evident, is lacking. The main goal of this paper is to move the first steps in this direction. 
With the idea of singling out the main mathematical difficulties of such a derivation, we introduce and analyze a simple toy model in the context of edge dislocations which can be seen as a minimal two-dimensional discrete model for which one expects in the limit as the lattice spacing vanishes, an effective continuum energy that detects the (possible) presence of partial dislocations and stacking faults. 

\smallskip
\textbf{Derivation and setup of the model.}
With the future aim of explaining the interplay between partial dislocations and stacking faults in a three-dimensional FCC/HCP setting, we propose a two-dimensional model that contains some of its key features. Indeed a two-dimensional model can be obtained constraining deformations to be uniform along specific lattice directions and identifying kinematic variables on a suitable two-dimensional cross-section of the reference lattice. We simplify the latter setting and consider a model energy defined on a portion of $\e\Z^2$ with lattice spacing $\e>0$. As in \cite{ZSWT} we further simplify the model assuming the kinematic constraint that only horizontal deformations (say in direction $e_1$) are allowed. As a consequence, our order parameters will be scalar displacements $u\colon\e\Z^2\to\R$ and our energy will allow only for slips in horizontal direction. In this setting edge dislocations can be represented as point defects due to horizontal slips (\cf~Figure \ref{fig:edge dislocation}), while stacking faults correspond to line defects. A typical phenomenological assumption is that the energetic contribution to those stacking faults does not stem from nearest-neighbour interactions, but from interactions between neighbours at a larger distance (see~\cite[Section 1.3]{HB} or~\cite[Chapter 10.3]{HL}). In the following we wish to introduce a minimal energy model capturing this behaviour. Specifically, we will consider energies accounting for interactions only between nearest and next-to-nearest neighboring atoms, where nearest neighbors promote the existence of multiple ground state configurations, such as the two configurations illustrated in Figure~\ref{fig:ground states}, while next-to-nearest neighbour interactions contribute to the stacking fault energy illustrated in Figure~\ref{fig:partial dislocation}. 

\begin{figure}[ht]
  \begin{center}

      

      

  \includegraphics{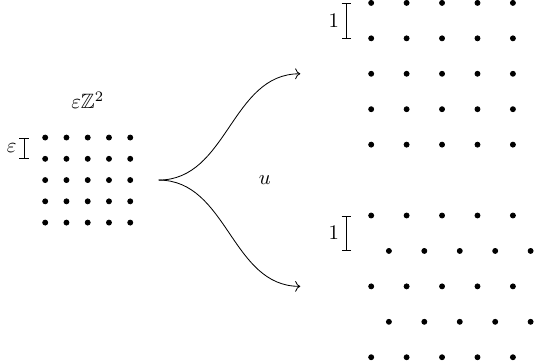}
  \end{center}
  \caption{Local aspect of ground states of the energy $\F_\e^\p$ studied in this paper introduced in~\eqref{def:F-part}. On the left: the reference lattice $\varepsilon \mathbb{Z}^2$. On the top-right: Image of an admissible displacement $u \in \mathcal{AD}_\varepsilon$ where the relative horizontal slips on vertical next-to-nearest neighbours are integers. On the bottom-right: Image of an admissible displacement $u \in \mathcal{AD}_\varepsilon$ where the relative horizontal slips on vertical next-to-nearest neighbours are half-integers.}
  \label{fig:ground states}
\end{figure}

\begin{figure}[ht]
  \centering
  \includegraphics{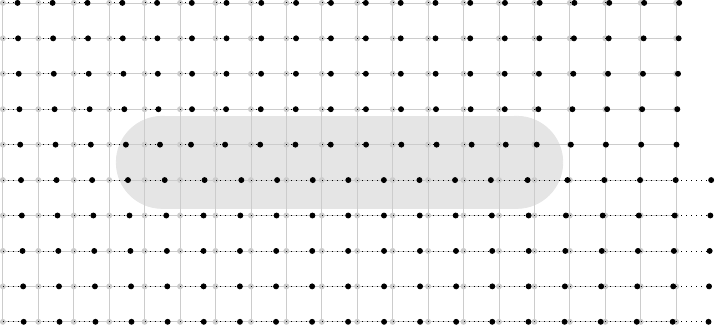}
  \caption{Example of two partial edge dislocations, both with Burgers vector $-\frac{1}{2}e_1$. A stacking fault connecting the two singularities is highlighted in the grey area. There, the displaced lattice is close to a ground state different from the one outside the grey area. The energy of these partial dislocations and stacking faults is detected by the $\Gamma$-limit of the energies $F_\e^\p$ introduced in~\eqref{def:F-part}.}
  \label{fig:partial dislocation}
\end{figure}

Before introducing more precisely the energies considered in this paper, it is convenient to have a closer look at Figures~\ref{fig:ground states} and~\ref{fig:partial dislocation} to motivate the definition of our nearest neighbors and next-to-nearest neighbors interaction energies. A convenient way to describe the structures in Figure~\ref{fig:ground states} is by thinking that they are obtained by the stacking of horizontal layers of atoms on top of each other in two possible ways, say A and B. In configuration A, atoms are displaced by an integer multiple of the horizontal lattice vector with respect to the previous layer, while this displacement becomes half integer in configuration B. 
Vertical next-to-nearest neighbors interactions instead favour integer displacement between next-to-nearest neighboring layers and are assumed to be weaker than nearest-neighbor interactions, leading to an energy penalizing non-homogeneous stacking and being proportional to the length of the defect region. As a result, the total energy penalizes structures like ``...AABAA...'' as the one shown in Figure \ref{fig:partial dislocation}. 


Motivated by the above considerations we introduce the following lattice model. 
Given $\Omega\subset \R^2$ open and bounded, we work on the portion $\e\Z^2\cap\Omega$ of the square lattice $\e\Z^2$ that we consider as a reference configuration of a single crystal that can be deformed via horizontal (scaled) displacements $u \colon \e \Z^2 \to \R$. The energy stored by one such displacement is 
\begin{equation}\label{eqintro:F-part}
  \begin{split}
    F_\e^\p(u, \Omega) & = 2 \pi^2 \underbrace{ \sum_{i} \big| u(i+\e e_1) - u(i) \big|^2}_{\text{horizontal n.n.}} + 2 \pi^2 \underbrace{  \sum_{i} \dist^2 \Big(u(i+\e e_2) - u(i); \frac{1}{2} \Z \Big) }_{\text{vertical n.n.}} \\
    & \quad + 2 \pi^2 \underbrace{ \sum_{i} \frac{\alpha}{\pi^2}  \, \e     \dist^2 \big(u(i+ 2\e e_2) - u(i); \Z \big) }_{\text{vertical n.n.n.}}\,,
  \end{split}
	\end{equation}
where $\alpha > 0$ is a fixed constant. 

In~\eqref{eqintro:F-part} the horizontal interactions are purely elastic, hence favouring small deformations. This corresponds to the constraint that only horizontal slips are allowed. The vertical nearest-neighbors interactions and next-to-nearest neighbors interactions are defined via $\frac{1}{2}$- and $1$-periodic potentials, respectively. As a consequence, nearest neighbors interactions are invariant under variations of the deformation by half-integer shifts, while next-to-nearest neighbors interactions are invariant under integer shifts. In particular, the nearest-neighbors interactions allow for multiple ground states as the ones depicted in Figure~\ref{fig:ground states}, while the second neighbors will penalize non homogeneous stacking between next-to-nearest neighboring layers. The weight $\e>0$ multiplying the latter interactions makes this energy contribution proportional to the length of the stacking fault and thus in particular of order one. It is well known that the energy needed to produce an edge dislocation on the lattice $\e\Z^2$ diverges as $|\log\e|$ in the continuum limit $\e\to 0$. The energy contribution for a partial edge dislocation is expected to display the same logarithmic divergence, thus dominating the energy necessary to produce a stacking fault. To detect the stacking fault energy it is then convenient to apply a renormalization procedure which consists in first removing the logarithmic contribution and eventually passing to the continuum limit as $\e\to 0$. Such a procedure, which has been first introduced in the context of Ginzburg-Landau energies in \cite{BBH, SS}, has already been successfully exploited in the analysis of discrete to continuum energies in \cite{BC22, BCDP18, COR22ARMA, COR22CPAM}.

\smallskip
\textbf{Main result of this paper.} Our main result characterises the $\Gamma$-limit as $\e \to 0$ of~\eqref{eqintro:F-part} after removing the above mentioned logarithmic contribution of a fixed number of (partial) dislocations. To state the result, it is convenient to redefine the energies in terms of dislocation measures. To this end, to any deformation $u$ we associate a measure $\mu_u$. 
The latter is a sum of dirac deltas supported on the centers of lattice cells, and for every such cell it measures the circulation of the discrete gradient of $u$ over its boundary (see~\eqref{def:circulation} and~\eqref{def:vorticity-u}). The family of all measures consisting of finitely many dirac masses with integer weights and supported in $\Omega$ is denoted by $X(\Omega)$ (see~\eqref{def:X} for a precise definition). We then define $\F_\e^\p$ on $X(\Omega)$ by setting  
\begin{equation} \label{eqintro:Fpmu}
  \F_\e^\p(\mu, \Omega) = \min \{ F_\e^\p(u,\Omega) : \ u \colon \e \Z^2 \to \R \, , \ \mu_{2 u} \mres \Omega = \mu \}\;\text{ for any }\mu\in X(\Omega) \, , 
\end{equation}
that is, to any $\mu\in X(\Omega)$ we associate an energy by optimizing the $F_\e^\p$ over all admissible displacements $u$ such that the doubled displacement $2u$ has dislocation measure $\mu$ in $\Omega$. 
The use of the double-displacement $2u$ (instead that on $u$) in the constraint in the optimization is justified by the following rationale. Stacking faults (detected by the variable $u$) are canceled the double-displacement $2u$, which restores the integer periodic stacking. As a consequence, the double-displacement $2u$ can be exploited to identify defects in the lattice different from the stacking faults — the dislocations. With this artifice, it is possible to compare the energies in~\eqref{eqintro:Fpmu} to other discrete energies which instead describe phenomena related to the formation and interaction of integer-degree topological singularities like screw dislocations~\cite{P07, ADGP14} or spin vortices~\cite{AC09, ACP11}. 
In the spirit of the above mentioned renormalisation procedure we analyse the asymptotic behaviour of the energies after removing the logarithmic contribution of a fixed number of $M$ singularities. Specifically, we show that the energies $(\F_\e^\p(\mu,\Omega)-\frac{M\pi}{4}|\log\e|)$ converge to an energy of the form
	\begin{equation}\label{eqintro:gamma-limit}
	\F^\p(\mu,\Omega)= \frac{M}{4}\gamma + \frac{1}{4}\W(\mu,\Omega)+ \alpha \LL(\mu,\Omega)\,,
	\end{equation}
defined on $X_M(\Omega)=\big\{\mu=\sum_{h=1}^{M}d_h\delta_{x_h}\in X(U)\; \text{ with }\; d_h\in\{-1,1\}\big\}$ in the sense specified below. 
We will comment on the different terms in the definition of $\F^\p$ after giving the precise statement of the result.
\begin{theorem}\label{thm:main2}
  Let $\F_\e^\p$ be as in~\eqref{eqintro:Fpmu} and $\F^\p$ be as in~\eqref{eqintro:gamma-limit}. Then the following results hold true.
  \begin{enumerate}[label=(\roman*)]
  \item (Compactness) Let $(\mu_\e)_\e \subset X(\Omega)$ be a sequence satisfying 
  \begin{align*}
  \sup_\e \big(\F_\e^\p(\mu_\e,\Omega)-\frac{M}{4}\pi|\log\e|\big)\leq C\,. 
  \end{align*}
Then, up to subsequence $\mu_\e\fto\mu$ for some $\mu=\sum_{h=1}^Nd_h\delta_{x_h}\in X(\Omega)$ with $|\mu|(\Omega)\leq M$.
  Moreover, if $|\mu|(\Omega)=M$, then $N=M$ and $ \mu \in X_M(\Omega)$.
  \item (Lower bound) Let $(\mu_\e)_\e \subset X(\Omega)$ be such that $\mu_\e\fto\mu$ for some $\mu\in X_M(\Omega)$. Then
    \begin{equation}\label{lb:energy-mu} 
    \liminf_{\e\to 0}\Big( \F_\e^\p(\mu_\e,\Omega)-\frac{M\pi}{4} |\log\e|\Big)\geq \F^\p(\mu,\Omega)\,. 
    \end{equation}
  \item (Upper bound) For any $\mu\in X_M(\Omega)$ there exists $(\mu_\e)_\e \subset X(\Omega)$ with $\mu_\e\fto\mu$ and
    \begin{equation}\label{ub:energy-mu}
    \limsup_{\e\to 0}\Big( \F_\e^\p(\mu_\e,\Omega)-  \frac{M\pi}{4} |\log\e|\Big) \leq \F^\p(\mu,\Omega)\,. 
    \end{equation}
  \end{enumerate}
  \end{theorem}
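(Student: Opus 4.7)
The guiding idea is the substitution $v = 2u$: by construction $\mu_v = \mu$, and the first two terms of $F_\e^\p$ rewrite as a standard discrete screw-dislocation / XY-type energy, because the $\tfrac{1}{2}\Z$-periodicity of the vertical n.n.\ interaction becomes $\Z$-periodicity for $v$. The asymptotics of this reduced energy are in the spirit of the known analysis for integer-degree topological singularities~\cite{P07,ADGP14,AC09,ACP11} and contribute the logarithmic prefactor $\tfrac{M\pi}{4}|\log\e|$, the core energy $\tfrac{M\gamma}{4}$, and the renormalized energy $\tfrac{1}{4}\W(\mu,\Omega)$ in the limit. The third (next-to-nearest neighbor) term, which is invisible to this reduction as a leading order quantity, produces the genuinely new stacking-fault contribution $\alpha\LL(\mu,\Omega)$, since it penalizes bonds that straddle the two families of ground states $A$ and $B$ of Figure~\ref{fig:ground states}.

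\textbf{Compactness and lower bound.} For compactness I would drop the non-negative n.n.n.\ term and apply a standard ball-construction / logarithmic-energy localization to the reduced energy in $v$; this yields $\mu_\e\fto\mu$ (up to subsequence) with $|\mu|(\Omega)\leq M$, and in the saturated case $|\mu|(\Omega)=M$ an energy-budget comparison with the $\tfrac{\pi}{4}|\log\e|$ cost per unit charge forces each limit mass to have charge $\pm 1$, \ie\ $\mu\in X_M(\Omega)$. For the lower bound~\eqref{lb:energy-mu}, I would localize near each $x_h$ on a ball $B_{\rho}(x_h)$, extract $\tfrac{\pi}{4}|\log\e|+\tfrac{\gamma}{4}$ from the core via a sharp lower bound on the reduced energy of $v$, and recover $\tfrac{1}{4}\W(\mu,\Omega)$ on the complement through a harmonic-conjugate / elastic-field argument à la Bourgain-Brezis-Mironescu. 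The stacking-fault lower bound $\alpha\LL(\mu,\Omega)$ should then follow by slicing along vertical lines of the lattice: on any such slice, each consecutive pair of rows at which the local $u$-configuration transitions between $A$ and $B$ forces $\dist^2(u(i+2\e e_2)-u(i);\Z)\gtrsim \tfrac{1}{4}$, and a BV-type lower semicontinuity then transforms the weighted count of such "defective" bonds into a rectifiable-length lower bound compatible with $\mu$.

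\textbf{Upper bound.} For a target $\mu=\sum_{h=1}^M d_h\delta_{x_h}\in X_M(\Omega)$, I would first choose an (almost) optimal 1-dimensional stacking-fault configuration realizing $\LL(\mu,\Omega)$, namely a rectifiable set $\Sigma$ whose endpoints and branching structure match the signed measure $\mu$. The displacement $u$ is then assembled in three pieces: (i) away from $\Sigma$ and from the cores, set $v=2u$ proportional to the harmonic conjugate of the field generated by the singularities at the $x_h$'s with appropriate branch cuts, producing the optimal elastic and renormalized contributions; (ii) within a small ball around each $x_h$, glue in the optimal local profile realizing the core energy $\gamma$; (iii) across $\Sigma$, interpolate $u$ between the two ground-state types in a strip of width $O(\e)$. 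A direct counting argument shows that the n.n.n.\ term is then $\alpha\LL(\mu,\Omega)+o(1)$, while the construction is engineered so that the transition across $\Sigma$ has negligible cost in the n.n.\ terms, which remain within $o(1)$ of their target.

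\textbf{Main obstacle.} The genuinely new step is the simultaneous handling of elastic singularities and the stacking-fault energy $\LL$. Unlike the elastic contributions, which reduce via $v=2u$ to a well-understood vortex-type analysis, the line-tension term requires identifying a rectifiable 1-dimensional limit structure from an energy that, at the discrete level, is distributed over individual bonds. The principal difficulty in the $\Gamma$-$\liminf$ is to show that the n.n.n.\ mismatch concentrates on a set \emph{compatible} with $\mu$ in the geodesic-like sense encoded by $\LL$, rather than being dispersed; while in the upper bound the challenge is to insert the sharp $A$-to-$B$ transition along $\Sigma$ without perturbing the subtle cancellations underlying the core and renormalized contributions of the nearest-neighbor terms.
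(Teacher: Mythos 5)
Your high-level strategy — double the displacement to reveal an integer-degree dislocation energy plus an order-$\e$ stacking-fault penalty — is the correct starting point and matches the paper's opening move. But the sketch glosses over the two technical hinges on which the whole argument turns, and as written both would fail.

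\smallskip
\textbf{Core energy.} You propose to "glue in the optimal local profile realizing the core energy $\gamma$" around each singularity. This hides the hardest step. The doubling $v = 2u$ turns the first two terms of $F_\e^\p$ into $\tfrac{1}{4}F_\e^\edge(v,\cdot)$ — with the \emph{elastic} horizontal potential $f_0$, not the periodic $f_1$ — so even after doubling you are not in the screw/XY setting; the equivalence between $F_\e^\edge$ and $F_\e^\screw$ already requires a nontrivial representative modification (Lemma~\ref{lem:SD-equals-F}). More seriously, once you insert a core profile optimal for $F_\e^\screw$ or $F_\e^\edge$ and set $u = v/2$, the next-to-nearest-neighbour term sees half-integer slips created by dipoles in that profile as spurious stacking faults; nothing in your sketch controls this. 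One needs the fact that the minimiser of $\gamma_\e^\screw$ can be taken dipole-free (Theorem~\ref{thm:core-screw-dipoles}, from~\cite{GPS}), combined with a shifting-and-smoothing procedure to match the half-line discontinuity of the boundary datum, to establish the equivalent characterisation of $\gamma$ in terms of $F_\e^\p$ (Proposition~\ref{prop:core-partials}). The paper explicitly flags this as the main obstruction for the upper bound; your proposal does not even register it as a difficulty.

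\smallskip
\textbf{Stacking-fault lower bound.} The slicing observation — a vertical bond straddling an $A$/$B$ transition pays $\dist^2(\cdot;\Z)\gtrsim \tfrac14$ — is correct pointwise, but "a BV-type lower semicontinuity transforms the weighted count into a rectifiable-length lower bound" is not a proof. A bond-count bound does not by itself yield (a) a compact limit object in $SBV$, (b) horizontality of the limit jump set, or (c) the geometric compatibility with $\mu$ encoded in the notion of \emph{resolving dislocations tension} (Definition~\ref{def:resolve}), and $\LL(\mu,\Omega)$ is defined precisely as a minimum over horizontal segments satisfying that constraint — not over arbitrary rectifiable sets "whose endpoints and branching structure match $\mu$". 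The paper obtains all of this by bounding $F_\e^\p$ from below (after a convexity lemma coupling the n.n.\ and n.n.n.\ vertical terms) by anisotropic weak-membrane energies on the four double-spaced sublattices (Lemma~\ref{lem:Fpart-XYgen-WM}); the arbitrarily large horizontal threshold $R$ forces $|\nu_w\cdot e_1|=0$, the Chambolle-type compactness (Theorem~\ref{thm:WM compactness}) furnishes the $SBV$ limit, and the structure analysis of $\D_{M,\hor}^{1/2}(\Omega)$ in Section~\ref{sec:limit-configurations} (Propositions~\ref{prop:countable segments} and~\ref{prop:w given S}) ties the limit jump set to $\LL(\mu,\Omega)$. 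These are not optional refinements; without them neither the lower bound nor the identification $\Lambda(\mu,\Omega)=\LL(\mu,\Omega)$ used to pass from the field-level statement (Theorem~\ref{thm:main}) to the measure-level statement goes through.
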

The limit energy $\F^\p(\mu,\Omega)$ features three terms referred to as the \emph{core energy}, the \emph{renormalized energy} and the \emph{stacking fault energy}, respectively. The core energy is proportional to a fixed quantity $\gamma$ introduced in~\eqref{def:gamma} and is concentrated around each point in the support of $\mu$. Those points in turn represent the location of the partial dislocations. The renormalised energy $\W(\mu,\Omega)$ corresponds to the Coulombian interaction between points in the support of $\mu$, see~\eqref{def:ren-energy-Phi-mu} for the definition. Consequently, it can be interpreted as the effective interaction energy between the limiting partial dislocations. The last term $\LL(\mu,\Omega)$ accounts for the line-tension energy due to stacking faults which in our model are finitely many horizontal segments connecting partial dislocations between each other or to be boundary of $\Omega$. For our model $\LL(\mu,\Omega)$ is proportional to the total length of those segments and it is interpreted as the energy required to resolve partial dislocation tension in the sense of Definitions~\ref{def:connect singularities} and~\ref{def:resolve}.  

\smallskip
\textbf{Proof strategy and main contributions.} Theorem~\ref{thm:main2} will be a consequence of a more general result which indeed characterises the limit behaviour of $\big(F_\e^\p(u_\e,\Omega)-\frac{M\pi}{4}|\log\e|\big)$, that is, of the energies still depending on the displacement variable. The precise formulation of this result stated in Theorem \ref{thm:main} requires some preparation and is postponed to Section~\ref{sec:proof-main-field}. Here we just mention that it is based on a compactness result for spin fields $w_\e=\exp(2\pi\iota u_\e)$ with $F_\e^\p(u_\e,\Omega)-\frac{M\pi}{4}|\log\e|$ uniformly bounded (see Theorem~\ref{thm:main} (i)). Specifically, we show that such sequences $(w_\e)$ converge up to a subsequence to a function $w\in SBV(\Omega;\S^1)$ with $\H^1(S_w)<+\infty$ whose jump set consists (up to $\H^1$-negligible sets) of finitely many horizontal segments (see Theorem~\ref{thm:main} and Proposition~\ref{prop:countable segments}). Moreover, the jacobian of the complex square $w^2$ coincides with the flat limit of the measures $\pi\mu_{2u_\e}$. This compactness result is obtained by carefully estimating from below our energies with suitable discrete energies whose equi-coerciveness properties are by now well understood. 

Firstly, we observe that $F_\e^\p(u_\e,\Omega)$ is an upper bound for the screw-dislocation energy evaluated in the double displacement $2u_\e$ and consequently for the $XY$-model energy evaluated in the complex square $w_\e^2$. This estimate allows us in particular to deduce (up to subsequences) the flat convergence of the dislocation measures $\mu_{2u_\e}$ to a limiting measure $\mu$ and the weak convergence of $w_\e^2$ to some $v\in W^{1,2}_\loc(\Omega\sm\supp\mu;\S^1)$, satisfying $J(v)=\pi\mu$. The additional compactness in $SBV$ for the original variable $w_\e$ requires a more careful argument based on the characterization of the optimal interplay between nearest and next-to-nearest neighbour interactions in the spirit of \cite{BGS,BC07}. This in turn allows us to estimate our energies from below by suitable anisotropic weak-membrane type energies as those considered in~\cite{Ch99} on a double-spaced lattice and conclude by relying on known compactness properties for those energies. The additional geometric properties of the jump set $S_w$, namely the constraint of being horizontal, are then a consequence of the elastic nearest neighbour interactions. The estimates explained above also lead to the liminf-inequality for the energies $F_\e^\p(u_\e,\Omega)$. In particular, the lower bound we obtain with the above sketched procedure features the same core energy $\gamma$ as in the screw-dislocation model.
Thus, a crucial step to prove the optimality of this lower bound consists in showing that the core energy $\gamma$ can be equivalently characterised in terms of the energy $F_\e^\p$. 
This equivalence is non trivial, and the main obstruction in obtaining such and equivalent characterisation are the elastic horizontal interactions. The procedure we finally apply to obtain the desired characterisation relies on a recent result about minima of discrete energies with degree constraints on the boundary proved in~\cite{GPS} which allows us to choose suitable competitors for the minimisation problem characterising $\gamma$ (see~\eqref{def:core-screw}--\eqref{def:gamma}). Finally, it is worth mentioning that in proving our main result, in Section \ref{Section:edge} we also prove a similar result for the $\Gamma$-limit of a simple edge-dislocation energy corresponding to only the first two terms in~\eqref{eqintro:F-part}.

Some final comments are in order. To characterise the asymptotic behaviour of our energies in~\eqref{eqintro:F-part} we compare them with discrete systems driven solely by periodic type potentials, for which a rigorous variational coarse-graining has been proved. These discrete systems fall into the general class of those lattice systems for which nonlinear elasticity can be formulated as a Landau theory with an infinite number of equivalent energy wells related to the crystal symmetries (see e.g. \cite{BABCTZS}). For screw dislocation models such a theory becomes scalar and its variational coarse-graining has been the object of \cite{ADGP14, P07} where the emergence and the interactions of the topological singularities has been considered. Thanks to the variational equivalence between screw dislocations and classical XY spin systems established in \cite{ACP11}, one can translate the results proved in \cite{BCDP18} on the generalized XY model to the context of crystal plasticity (see also~\cite{B24}). This equivalence identifies spin field singularities (vortices) with screw dislocations. Consequently the results obtained in \cite{BCDP18} can be interpreted as a first attempt to model partial screw dislocations by an energy depending only on nearest-neighboring interactions. Therefore a comparison between the generalized XY model and the present model deserves some comments. In the generalized XY model the limiting spin field can form point singularities known as half-vortices. These half-vortices (corresponding to partial dislocations in the context of crystal plasticity) are connected either to each other or to the boundary by one-dimensional singularities, known as 'string defects' (analogous to the stacking faults in crystal plasticity), which constitute the discontinuity set of an $SBV$ function. However, the authors of~\cite{BCDP18} cannot precisely characterize the geometry of the discontinuity set (it consists of merely rectifiable curves) which makes it challenging to accurately relate fractional vortices to string defects. In contrast, in our model, the line singularities -- specifically, the stacking faults arising from geometric incompatibility between nearest and next-to-nearest interactions -- result in horizontal segment discontinuities. This rigidity allows us to fully characterize the interactions between these singularities and the partial dislocations. This characterization helps us properly understand stacking faults as necessary configurations to resolve the tension in partial dislocations. 
We finally mention that examples of $\Gamma$-convergence of functionals concentrating energy on singular sets of different dimensions already appeared in the continuum setting in~\cite{GMM20} and in~\cite{BC22}.

\section{Set up for the discrete model}\label{sec:setup-discrete}
In this section we introduce the relevant discrete energies for our problem and we state and prove some preliminary results about the relations between those energies.
\subsection{Basic notation} We start fixing some notation employed throughout.  We let $\dS(a,b)\defas 2\arcsin(\tfrac{1}{2}|a-b|)$ denote the geodesic distance between two unit vectors $a,b\in\S^1$; it satisfies
	\begin{equation}\label{est:geo-euclidian}
	|a-b|\leq\dS(a,b)\leq\frac{\pi}{2}|a-b|\,,
	\end{equation}
where $|\cdot|$ denotes the euclidian distance in $\R^2$. For any $R>r>0$ we set $B_r\defas\{x\in\R^2\colon |x|<r\}$ and $\A{r}{R}\defas B_R\sm \overline{B}_r$. Moreover, we let 
	\begin{equation}\label{def:plane}
	\Pi\defas\{x=(x_1,x_2)\in\R^2\colon x_2=0\}\quad\text{and}\quad \Pi^\pm\defas\{x\in\Pi\colon \pm x_1\geq 0\}
	\end{equation}
denote the plane orthogonal to $e_2$ and passing through the origin and its intersection with the positive and the negative real axis, respectively. The upper half space with boundary $\Pi$ is denoted by
	\begin{equation}\label{def:halfspace}
	H\defas\{x=(x_1,x_2)\in\R^2\colon x_2\geq 0\}\,.
	\end{equation}
Eventually, for any $x\in\R^2$ we set $B_r(x)\defas B_r+x$, $\A{r}{R}(x)\defas \A{r}{R}+x$, $\Pi(x)\defas \Pi+x$, $\Pi^\pm(x)\defas\Pi^\pm+x$, and $H(x)\defas H+x$.
The closed segment joining two points $x,y\in\R^2$ is denoted by $[x,y]$.
\subsection{The discrete lattice}
We set the notation employed throughout for the underlying discrete lattice. For any $\e>0$, $k\in\{1,2\}$, and any Borel subset $A\subset\R^2$ we set
	\begin{equation}\label{def:lattice-A}
	\AZ{e_k}\defas\big\{i\in\e\Z^2\cap A\colon[i,i+\e e_k]\subset A\big\}\quad\text{and}\quad \AZ{2e_k}\defas\big\{i\in\e\Z^2\cap A\colon[i,i+2\e e_k]\subset A\big\}\,.
	\end{equation}
Moreover, we define the collection of closed cubes subordinated to the lattice $\e\Z^2$ via
	\begin{equation}\label{def:collection-cubes}
	\Q_\e\defas\big\{Q_\e=Q_\e(i)=i+[0,\e]^2\colon i\in\e\Z^2\big\}\,,
	\end{equation}
while for any Borel set $A\subset\R^2$ we let $\Q_\e(A)\defas\{Q_\e\in\Q_\e\colon Q_\e\subset A\}$ and $\partial_\e  A\defas\e\Z^2\cap\partial\big(\bigcup_{Q_\e\in\Q_\e(A)}Q_\e\big)$ denote the subclass of lattice cubes contained in $A$ and the discrete boundary of $A$, respectively. Eventually, for any cube $Q_\e=i+[0,\e]^2\in\Q_\e$ we write $b(Q_\e)\defas i+\frac{\e}{2}(e_1+e_2)$ for its barycentre.
It is also convenient to fix a triangulation $\T_\e$ of $\R^2$ subordinated to $\e\Z^2$ by setting
	\begin{equation}\label{triangulation}
	\T_\e\defas\Big\{T_\e^+=\conv(i,i+\e e_2,i+\e(e_1+e_2))\,,\ T_\e^-=\conv(i,i+\e e_1,i+\e (e_1+e_2))\colon i\in\e\Z^2\Big\}\,,
	\end{equation}
where $\conv$ denotes the closed and convex hull, see Figure~\ref{fig:lattices}. 

\smallskip
Moreover, we consider the shifted and double-spaced lattices 
	\begin{equation}\label{def:double-lattice-shifted}
	2\e\Z^2+\e s_j\subset\e\Z^2\;\text{ with }\; s_0\defas 0\,,\ s_1\defas e_1\,,\ s_2\defas e_2\,,\ s_3\defas e_1+e_2\,,
	\end{equation}
see Figure~\ref{fig:lattices}. Using the above notation we associate to any Borel subset $A\subset\R^2$  the discrete sets
	\begin{equation}\label{def:double-lattice-shifted-local}
	\AZS{s_j}{e_k}\defas\big\{i\in 2\e\Z^2+\e s_j\colon [i,i+2\e e_k]\subset A\big\}\,.
	\end{equation} 
In addition, for $j\in\{0\,,\ldots ,3\}$ we set $\Q_{2\e,s_j}\defas\Q_{2\e}+\e s_j$, $\T_{2\e,s_j}\defas\T_{2\e}+\e s_j$ and we finally define
	\begin{equation}\label{def:Z-even-Z-odd}
	2\e\Z^2_\even\defas(2\e\Z^2+\e s_0)\cup (2\e\Z^2+\e s_1)\,,\qquad 2\e\Z^2_\odd\defas (2\e\Z^2+\e s_2)\cup (2\e\Z^2+\e s_3)\,,
	\end{equation}
  see Figure~\ref{fig:lattices}. 

  \begin{figure}[ht]
    \begin{center}
    \scalebox{0.8}{
    \includegraphics{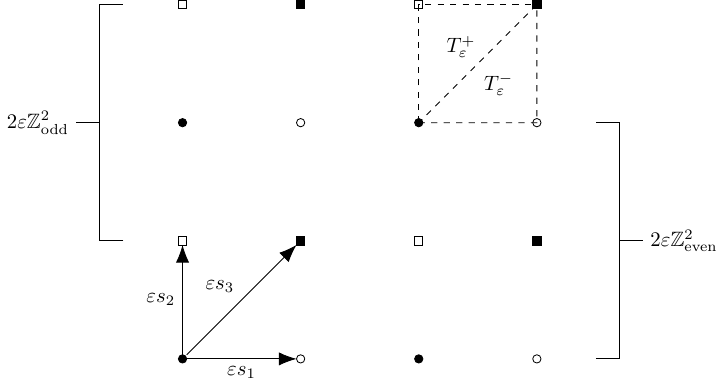}
    }
    \end{center}
    \caption{Local aspect of the discrete sets formed via the four double-spaced lattices: $\mathbb{Z}^{e_k}_{2 \e, s_0}(A)$ (black circles), $\mathbb{Z}^{e_k}_{2 \e, s_1}(A)$ (white circles), $\mathbb{Z}^{e_k}_{2 \e, s_2}(A)$ (white squares), 
    $\mathbb{Z}^{e_k}_{2 \e, s_3}(A)$ (black squares). The whole lattice $\e \mathbb{Z}^2$ is divided into the two sublattices $2\e \mathbb{Z}^2_{\mathrm{even}}$ (black and white circles) and $2\e \mathbb{Z}^2_{\mathrm{odd}}$ (black and white squares). The picture further shows the subdivision of a cell of the lattice $\e \mathbb{Z}^2$ into the two triangles $T^\pm_\e$.}
    \label{fig:lattices}
  \end{figure}

Next we introduce the sets of discrete variables taking values in the real numbers and the unit sphere, denoted respectively by
	\begin{equation*}
	\AD_\e\defas\{u:\e\Z^2\to\R\}\quad\text{and}\quad\SF_\e\defas\{v:\e\Z^2\to\S^1\}\,.
	\end{equation*}
Elements in $\AD_\e$ should be interpreted as $\e$-scaled admissible displacements of the lattice  $\e\Z^2$. As for elements in $\SF_\e$, we will refer to them as spin fields, due to the analogy to magnetic discrete models.

For $u,\tilde{u}\in\AD_\e$ we write 
	\begin{equation*}
	u\eqZ\tilde{u} \quad \text{ if } \quad u(i) - \tilde{u}(i) \in \Z \text{ for every } i\in\e\Z^2\,.
	\end{equation*} 
Note that to any $u\in\AD_\e$ we can associate a corresponding $v\in\SF_\e$ by interpreting the values of $v$ as complex numbers and by considering the complex exponential 
	\begin{equation}\label{eq:angular-lift}
	v=\exp(2\pi\iota u)\,, 
	\end{equation}
where $\iota$ is the imaginary unit. Vice versa, any $v\in\SF_\e$ can be written in the form~\eqref{eq:angular-lift} for some $u\in\AD_\e$ (not unique). We shall often interpret points on $\S^1$ as complex numbers and implicitly use complex products and complex powers.

\subsection{Discrete gradients and discrete topological singularities} 
For $i,j\in\e\Z^2$ with $|i-j|=\e$ and $u\in\AD_\e$, $v\in\SF_\e$ we consider the directional discrete derivatives
	\begin{equation}\label{def:der-discrete}
	\d u(i,j)\defas u(j)-u(i)\qquad\d v(i,j)\defas v(j)-v(i)
	\end{equation}
Moreover, to any $u\in\AD_\e$ we associate a discrete vorticity measure as follows. For any $t\in\R$ let
	\begin{equation}\label{def:proj-Z}
	P_\Z(t)\defas \argmin\{|t-z|\colon z\in\Z\} = \Big\lceil t - \frac{1}{2} \Big\rceil 
	\end{equation}
denote its projection onto $\Z$ (with the convention of taking the minimal argmin in~\eqref{def:proj-Z} if it is not unique). Note that $P_\Z(t+z) = P_Z(t) + z$ for every $t\in\R$ and every $z\in\Z$. For $i,j\in\e\Z^2$ with $|i-j|=\e$ the elastic part of $\d u(i,j)$ is defined by
	\begin{equation}\label{def:d-elastic}
	\d^e u(i,j)\defas 
	\begin{cases}
	\d u(i,j)-P_\Z\big(\d u(i,j)\big) &\text{if}\ i\leq j\,,\\
	\d u(i,j)+P_\Z\big(\d u(j,i)\big) &\text{if}\ j\leq i\,,
	\end{cases}
	\end{equation}
where $i=(i_1,i_2)\leq j=(j_1,j_2)$ means that $i_1\leq j_1$ and $i_2\leq j_2$.
Note that $\d^e u(i,j)=-\d^e u(j,i)$. Moreover
	\begin{equation}\label{eq:d-elastic-distance}
	|\d^e u(i,j)|=\dist\big(\d u(i,j);\Z\big)\,.
	\end{equation}
For any square $Q_\e\in\Q_\e$ with vertices $\{i,j,k,\ell\}$ ordered counter clockwise we define a discrete circulation of $\d u$ around $Q_\e$ by setting
	\begin{equation}\label{def:circulation}
	\mu_u(Q_\e(i))\defas\d^e u(i,j)+\d^e u(j,k)+\d^e u(k,\ell)+\d^e u(\ell,i)\,.
	\end{equation}
By construction, $\mu_u(Q_\e)\in\{-1,0,1\}$. Eventually, we define the measure
	\begin{equation}\label{def:vorticity-u}
	\mu_u\defas\sum_{Q_\e\in\Q_\e}\mu_u(Q_\e)\delta_{b(Q_\e)}\,.
	\end{equation}
For any $j\in\{0,\ldots,3\}$ and any  $Q_{2\e}=i+[0,2\e]\in \Q_{2\e,s_j}  $ with $i\in\e\Z^2+s_j$ we define $\mu_u^{s_j}(Q)$ according to~\eqref{def:circulation} and we let $\mu_u^{s_j}$ be as in~\eqref{def:vorticity-u} with $\Q_\e$ replaced by $\Q_{2\e,s_j} $ and $\mu_u(Q_\e)$ replaced by $\mu_u^{s_j}(Q_{2\e})$.
If $v\in\SF_\e$ we write $v=\exp(2\pi\iota u)$ for some $u\in\AD_\e$ and set 
\begin{equation}\label{def:vorticity-v}
	\mu_v(Q_\e)\defas\mu_u(Q_\e)\,,\quad\mu_v\defas\mu_u\,,\quad \mu_v^{s_j}\defas\mu_u^{s_j}\,. 
\end{equation}
\begin{remark}\label{rem:lifting-v}
For any $v\in\SF_\e$ the measure $\mu_v$ is well-defined, since it does not depend on the choice of the angular lifting $u$. To see this, let $u,\tilde{u}\in\AD_\e$ with $u\eqZ\tilde{u}$. Since by construction $P_\Z(t+z)=P_\Z(t)+z$ for every $t\in\R$ and every $z\in\Z$, one can check that $\d^e u(i,j)=\d^e\tilde{u}(i,j)$ for every $i,j\in\e\Z^2$, $|i-j|=\e$. Thus, $\mu_u=\mu_{\tilde{u}}$.
\end{remark}
\subsection{The discrete energies}\label{subsec:energies}
This subsection is a reference for all the discrete energies that we will use throughout the paper, listed in Table~\ref{table:energies} for convenience and defined precisely in the paragraphs below.
\begin{table}[ht] 
  \begin{tabularx}{\textwidth}{| c | c | >{\hsize=.3\hsize}X | c | >{\hsize=.7\hsize}X |} 
  \hline 
  {\bfseries Symbol} & {\bfseries Domain} & {\bfseries Name} & {\bfseries Formula}  & {\bfseries Comments} \\
  \hline
  $F_\e^\edge$ & $\AD_\e$ & Edge dislocations energy & \eqref{def:F-edge} & Displacements are in-plane and horizontal. No partial dislocations. Here, it will be often used on the doubled displacement. \\
  \hline  
  \rowcolor{black!5}
  $F_\e^\p$ & $\AD_\e$ & Partial edge dislocations energy & \eqref{def:F-part} & {\bfseries The energy studied in this paper}. Displacements are in-plane and horizontal. Accounts for partial dislocations. \\
  \hline
  \begin{tabular}[c]{@{}c@{}}$F_\e^\h$\\$F_\e^\v$\end{tabular} & $\AD_\e$ & Horizontal and vertical part of partial edge dislocations energy & \eqref{def:F-hor-ver} & Used to decompose $F_\e^\p$ conveniently. \\
  \hline
  $F_\e^\screw$ & $\AD_\e$ & Screw dislocations energy & \eqref{def:SD} & Displacements are out-of-plane. No partial dislocations. Here, it will be often used on the doubled displacement. \\
  \hline
  $XY_\e$ & $\SF_\e$ & $XY$ model & \eqref{def:XY} & Accounts for vortices in a spin field. Here, it will be often used on the complex square of spin fields. \\
  \hline
  $W\!M_\e^{\tau_1,\tau_2}$ & $\e \Z^2 \to \R^2$ & Weak-membrane energy & \eqref{def:WM} & Similar to the energy studied in~\cite{Ch99}, it is a discrete counterpart of the Mumford-Shah energy. Characterised by a threshold separating elastic and brittle behaviors. The constants $\tau_1$ and $\tau_2$ refer to the anisotropy for the threshold. \\
  \hline
  \end{tabularx}
  \caption{List of the discrete energies used in the paper. In the ``Domain'' column we stress whether the energy is a functional of displacements or spin fields.}
  \label{table:energies}
\end{table} 
 \EEE  We start introducing the pairwise interaction-energy densities used to define the energies. We let $f_0,f_1,f_{\frac{1}{2}}:\R\to[0,+\infty)$ defined by\footnote{We are using $\dist(t;\Z)$ instead of $\dist(t;\e \Z)$ since we are working with $\e$-scaled admissible displacements.}
	\begin{equation}\label{def:f1-f2}
	f_0(t)\defas 2\pi^2 t^2\,,\qquad f_1(t)\defas 2\pi^2\dist^2\big(t;\Z\big)\,,\qquad f_{\frac{1}{2}}(t)\defas 2\pi^2\dist^2\Big(t;\frac{1}{2}\Z\Big)=\frac{1}{4}f_1(2t)\,\,,
	\end{equation}
see Figure~\ref{fig:interaction-energy densities}. The precise value of the factor $2 \pi^2$ is not relevant for the behavior of the model studied in the paper and is introduced to have a more convenient expression of the energy expressed in terms of spin fields, see~\eqref{lb:edge-SD} below. The three pairwise interaction-energies \EEE satisfy $f_0\geq f_1\geq f_{\frac{1}{2}}$ and
\begin{equation}\label{equality:f0-f1-f2}
f_0(t)=f_1(t)\;\Longleftrightarrow\; P_\Z(t)=0\,,\quad\quad f_0(t)=f_{\frac{1}{2}}(t)\;\Longleftrightarrow\; P_\Z(2t)=0\,.
\end{equation}

\EEE



\begin{figure}[ht]
  \begin{center}
  \scalebox{0.8}{ 
  \includegraphics{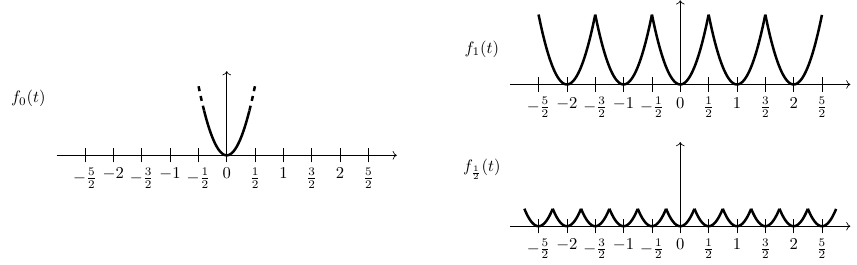}
  }
  \end{center}
  \caption{Comparison between the interaction-energy densities $f_0(t)$, $f_1(t)$, and $f_{\frac{1}{2}}(t)$. } 
  \label{fig:interaction-energy densities}
\end{figure}

\subsubsection{Edge dislocations energy $F_\e^\edge$}  
For any $u\in\AD_\e$ and any Borel set $A\subset\R^2$ we set
	\begin{equation}\label{def:F-edge}
	F_\e^\edge(u,A)\defas \sum_{i\in\AZ{e_1}}f_0\big(\d u(i,i+\e e_1)\big)+\sum_{i\in\AZ{e_2}}f_1\big(\d u(i,i+\e e_2)\big)
	\end{equation}
In~\eqref{def:F-edge}, $u$ should be interpreted as a horizontal (scaled) displacement of the lattice. The energy only depends on horizontal and vertical nearest-neighbors interactions. Horizontal interactions are defined in terms of the purely elastic potential $f_0$, while vertical interactions are defined via the 1-periodic potential $f_1$ typical of dislocations models. In this model, dislocations occur as a consequence of horizontal slips. \EEE

\subsubsection{Partial edge dislocations energy $F_\e^\p$} 
We define here the energy of the model studied in this paper. For any $u\in\AD_\e$ and any Borel set $A\subset\R^2$ we set
	\begin{equation}\label{def:F-part}
	F_\e^\p(u,A)\defas \!\!\! \sum_{i\in\AZ{e_1}} \!\!\! f_0\big(\d u(i,i+\e e_1)\big)+\!\!\! \sum_{i\in\AZ{e_2}}\!\!\! f_{\frac{1}{2}}\big(\d u(i,i+\e e_2)\big)+  \frac{\alpha}{\pi^2}  \, \e   \!\!\! \sum_{i\in\AZ{2e_2}}\!\!\! f_1\big(\d u(i,i+2\e e_2 )\big)\,.
	\end{equation}
It will sometimes be convenient to split $F_\e^\p$ into an horizontal and a vertical part by writing
	\begin{equation}\label{eq:splitting-hor-ver} 
	F_\e^\p(u,A)= F_\e^\h(u,A)+F_\e^\v(u,A)
	\end{equation}
with $F_\e^\h$, $F_\e^\v$ defined by
	\begin{equation}\label{def:F-hor-ver}
	\begin{split}
	F_\e^\h(u,A)&\defas \sum_{i\in\AZ{e_1}}f_0\big(\d u(i,i+\e e_1)\big)\,,\\
	F_\e^\v(u,A) &\defas \sum_{i\in\AZ{e_2}}f_{\frac{1}{2}}\big(\d u(i,i+\e e_2)\big)+  \frac{\alpha}{\pi^2}  \, \e  \!\!\!  \sum_{i\in\AZ{2e_2}} \!\!\! f_1\big(\d u(i,i+ 2\e e_2 )\big)\,.  
	\end{split}
	\end{equation}

\subsubsection{Screw dislocations energy $F_\e^\screw$} 
We recall the definition of the screw dislocations energy studied, \eg in~\cite{ACP11,ADGP14}.  For every $u\in\AD_\e$ and any Borel set $A\subset\R^2$ we let \EEE 
\begin{equation}\label{def:SD}  
	F_\e^\screw(u,A)\defas \sum_{k=1}^2\sum_{i\in\AZ{e_k}} f_1\big(\d u(i,i+\e e_k)\big)
	\end{equation} 
  In~\eqref{def:SD}, $u$ should be interpreted as a out-of-plane (scaled) displacement of the lattice. The energy depends on horizontal and vertical nearest-neighbors interactions. Both are defined via the 1-periodic potential $f_1$.

\subsubsection{The $XY$ model} We recall here the energy of the $XY$ model studied in~\cite{AC09}. For any $v\in\SF_\e$ and any Borel set $A\subset\R^2$ we let
	\begin{equation}\label{def:XY}
	 XY_\e(v,A)\defas\frac{1}{2}\sum_{k=1}^2\sum_{i\in\AZ{e_k}}|\d v(i,i+\e e_k)|^2\,.
	\end{equation}
	\subsubsection{The weak membrane energy $W\!M_\e$}
Eventually, we will make use of the so-called weak membrane energies analogous to the one studied in~\cite{Ch99}. In our setting, it is convenient to consider them in the following form: For any $w \colon \e\Z^2\to\R^2$, any $\tau_1, \tau_2 >0$, and $A\subset\R^2$ Borel we define  
	\begin{equation}\label{def:WM}
	W\!M_\e^{\tau_1,\tau_2}(w,A)\defas\sum_{i\in\AZ{e_1}}\min\Big\{ \frac{1}{2}  |\d w(i,i+\e e_1)|^2,\tau_1 \e\Big\}+\sum_{i\in\AZ{e_2}}\min\Big\{ \frac{1}{2}  |\d w(i,i+\e e_2)|^2,\tau_2 \e\Big\}\,.
	\end{equation}

We recall the following compactness and lower bound results for $W\!M_\e$ proven~\cite{Ch99, Ruf19} and stated here in the form applied in this paper.

\begin{theorem}[Compactness for $W\!M_\e$]\label{thm:WM compactness}
  Let $\Omega\subset\R^2$ be open, bounded, and with Lipschitz boundary. Let $\tau_1, \tau_2 > 0$. Let $w_\e:\e\Z^2\to\R^2$ be such that $\sup_{\e > 0} W\!M_\e^{\tau_1,\tau_2}(w_\e,\Omega) < +\infty$. Assume that  $w_\e$ are equiintegrable. Then there exists $w \in L^1(\Omega;\R^2) \cap GSBV^2(\Omega;\R^2)$ and a subsequence (not relabeled) such that $w_\e \to w$ strongly in $L^1(\Omega;\R^2)$. In particular, if $\sup_{\e > 0} \|w_\e\|_{L^\infty} < +\infty$, then $w \in SBV^2(\Omega;\R^2)$. 
\end{theorem}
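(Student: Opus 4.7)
The plan is to reduce the statement to standard compactness in $GSBV^2$ via a piecewise affine interpolation in the spirit of Chambolle~\cite{Ch99}, as further refined in~\cite{Ruf19}. First, I would extend each discrete map $w_\e$ to a piecewise affine function $\tilde w_\e$ on the triangulation $\T_\e$ introduced in~\eqref{triangulation}, so that on each $T \in \T_\e$ the gradient $\nabla \tilde w_\e$ is constant and encodes (up to a factor $1/\e$) the nearest-neighbour finite differences along the edges of $T$. Accordingly, I would classify each pair $(i,i+\e e_k) \in \AZ{e_k}$ as \emph{elastic} if $\tfrac{1}{2}|\d w_\e(i,i+\e e_k)|^2 < \tau_k \e$ and \emph{broken} otherwise, and call a triangle \emph{good} if all three of its edges are elastic, and \emph{bad} otherwise. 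The key observation is that on every good triangle $T$,
\begin{equation*}
\int_T |\nabla \tilde w_\e|^2 \dx = \tfrac{1}{2}\bigl(|\d w_\e(i,i+\e e_1)|^2 + |\d w_\e(i,i+\e e_2)|^2\bigr),
\end{equation*}
so that summing over good triangles gives $\int_{\Omega^g_\e}|\nabla \tilde w_\e|^2\dx \le C\,W\!M_\e^{\tau_1,\tau_2}(w_\e,\Omega) \le C$, where $\Omega^g_\e$ denotes the union of good triangles. On the other hand, each broken edge contributes at least $\min\{\tau_1,\tau_2\}\,\e$ to the energy, so the number of broken edges, and hence of bad triangles, is $O(1/\e)$.

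The second step is to define a competitor $\hat w_\e$ coinciding with $\tilde w_\e$ on every good triangle and suitably averaged from the nodal values on bad triangles, so that $\hat w_\e \in SBV^2(\Omega;\R^2)$ with $S_{\hat w_\e}$ contained in the union of perimeters of the bad triangles. Since each such perimeter has length $O(\e)$ while there are $O(1/\e)$ bad triangles, one obtains $\H^1(S_{\hat w_\e}) \le C$, uniformly in $\e$. Together with the gradient bound from the previous paragraph and the equiintegrability assumption on $w_\e$ (which transfers to $\hat w_\e$ since these functions differ only on a set of vanishing area), Ambrosio's compactness theorem in $GSBV^2$ yields a not-relabeled subsequence and a limit $w \in GSBV^2(\Omega;\R^2) \cap L^1(\Omega;\R^2)$ with $\hat w_\e \to w$ strongly in $L^1(\Omega;\R^2)$.

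Finally, a standard Poincar\'e-type bound on good triangles, $\|\tilde w_\e - w_\e\|_{L^2(\Omega^g_\e)} \le C\,\e\,\|\nabla \tilde w_\e\|_{L^2}$, combined with the fact that bad triangles cover total area only $O(\e)$, implies $\|\hat w_\e - w_\e\|_{L^1(\Omega;\R^2)} \to 0$ once $w_\e$ is identified with its piecewise constant extension on the Voronoi tiling of $\e\Z^2$; this upgrades the convergence to $w_\e \to w$ in $L^1(\Omega;\R^2)$. The $L^\infty$ improvement follows because the averaging defining $\hat w_\e$ can be chosen to preserve the $L^\infty$ bound, so that the limit $w$ lies in $L^\infty \cap GSBV^2 = SBV^2$. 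The main obstacle is the careful construction of $\hat w_\e$ on bad triangles: it must simultaneously control $\H^1(S_{\hat w_\e})$ via the energy, preserve $L^\infty$ bounds, and keep $\hat w_\e$ close to $w_\e$ in $L^1$. This is precisely the content of Chambolle's discrete-to-continuum interpolation lemma available from~\cite{Ch99, Ruf19}, and once that lemma is in place the argument above amounts to a packaging of known results.
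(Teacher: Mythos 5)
The paper itself does not prove Theorem~\ref{thm:WM compactness}; the result is stated as a recollection of known facts from~\cite{Ch99, Ruf19}, so there is no internal proof to compare against. Your sketch correctly reconstructs the Chambolle--Ruf strategy used in those references: piecewise-affine interpolation on the triangulation $\T_\e$, the elastic/broken edge and good/bad triangle dichotomy, the $O(1/\e)$ count of broken edges (hence of bad triangles) forced by the threshold $\min\{\tau_1,\tau_2\}\e$ per broken edge, the resulting uniform $\H^1$-bound on the jump set of a modified interpolant, Ambrosio's $GSBV$ compactness theorem (via truncation) combined with the equiintegrability assumption, and finally the Poincar\'e-type comparison to transfer the $L^1$-convergence back to the piecewise-constant $w_\e$, with the $L^\infty$-preservation giving the $SBV^2$ upgrade. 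One small notational slip worth flagging: the two legs of the triangle $T_\e^- = \conv(i, i+\e e_1, i+\e(e_1+e_2))$ are $(i,i+\e e_1)$ and $(i+\e e_1, i+\e(e_1+e_2))$, one horizontal and one vertical but not both emanating from $i$; this is harmless once you sum over the two triangles of each square, since the four boundary edges of $Q_\e(i)$ are exactly recovered.
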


  \begin{theorem}[Lower bound for $W\!M_\e$]\label{thm:WM}
  Let $\Omega\subset\R^2$ be open, bounded, and with Lipschitz boundary. Let $w\in SBV^2(\Omega;\R^2)$ and suppose that $w_\e \colon \e\Z^2\to\R^2$ are such that $w_\e$ converge strongly in $L^1(\Omega;\R^2)$ to $w$. Then the lower bound
    \begin{equation}\label{liminf-WM}
    \liminf_{\e\to 0}W\!M_\e^{\tau_1,\tau_2}(w_\e,\Omega)\geq \frac{1}{2} \int_\Omega|\nabla w|^2\dx+\int_{S_w}\big(\tau_1 |\nu_w\cdot e_1|+\tau_2|\nu_w\cdot e_2|\big)\dH
    \end{equation}
  holds true for every $\tau_1, \tau_2 >0$.
  \end{theorem}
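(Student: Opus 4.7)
The plan is to reduce the two-dimensional statement to two one-dimensional lower bounds via slicing, one for each coordinate direction, and then invoke the known discrete-to-continuum result for scalar weak-membrane-type energies. This is possible because the horizontal and vertical nearest-neighbour contributions in~\eqref{def:WM} are entirely uncoupled, so each may be bounded separately and the two inequalities summed.

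First, I would rewrite the horizontal part of $W\!M_\e^{\tau_1,\tau_2}(w_\e, \Omega)$ as a Riemann sum in $y_2 \in \e\Z$,
\begin{equation*}
\e \sum_{y_2 \in \e\Z} E_\e^{\tau_1}\bigl(w_\e(\cdot, y_2),\, \Omega^{y_2}\bigr),
\qquad \Omega^{y_2} \defas \{t \in \R : (t, y_2) \in \Omega\},
\end{equation*}
where $E_\e^\tau$ denotes the one-dimensional weak-membrane energy with threshold $\tau$ applied to the restriction of $w_\e$ to the horizontal slice $\R \times \{y_2\}$. Since $w_\e \to w$ in $L^1(\Omega;\R^2)$, Fubini and a diagonalisation argument guarantee that, along a subsequence, $w_\e(\cdot, y_2) \to w(\cdot, y_2)$ in $L^1(\Omega^{y_2};\R^2)$ for a.e.~$y_2$.

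Second, I would apply the one-dimensional lower bound proved in~\cite{Ch99, Ruf19}: if $g_\e \colon \e\Z \to \R^2$ converges in $L^1(I;\R^2)$ to some $g$ and its 1D weak-membrane energy is equibounded, then $g \in SBV^2(I;\R^2)$ and
\begin{equation*}
\liminf_{\e \to 0} E_\e^\tau(g_\e, I) \geq \tfrac{1}{2}\int_I |g'|^2 \, dt + \tau \, \#\bigl(S_g \cap I\bigr).
\end{equation*}
This is established by splitting bonds into an elastic set, where $\tfrac{1}{2}|\d g_\e|^2 < \tau\e$, and a brittle set, where the minimum equals $\tau\e$. On the elastic set the piecewise affine interpolant has equibounded $H^1$ norm and contributes $\tfrac{1}{2}\int_I |g'|^2$ by weak lower semicontinuity; each jump of the limit $g$ forces, via a blow-up/truncation argument, at least one brittle bond in the approximating sequence, accounting for the $\tau \, \#(S_g \cap I)$ term.

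Third, I would reassemble via Fatou's lemma applied to the Riemann sum in $y_2$, combined with the slicing characterisation of $SBV^2$ functions: on a.e.~horizontal slice, $\int |g'|^2\,dt$ and $\#(S_g)$ reconstruct $\int_\Omega |\partial_1 w|^2 \dx$ and $\int_{S_w} |\nu_w \cdot e_1| \dH$, respectively. This yields the horizontal contribution to~\eqref{liminf-WM}. Repeating the argument along $e_2$ with threshold $\tau_2$ gives the vertical contribution and completes the proof. The main technical step is the one-dimensional lower bound itself, which is however classical for weak-membrane-type energies; once it is granted, the remaining slicing step is routine, the only care being to correctly track the anisotropic thresholds $\tau_1, \tau_2$ across the two coordinate directions.
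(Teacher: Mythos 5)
The paper does not actually prove Theorem~\ref{thm:WM}: it states the result and refers to \cite{Ch99, Ruf19}, where the lower bound is established. Your slicing argument is precisely the strategy underlying the proof in \cite{Ch99}, so you are not only correct but essentially reconstructing the cited argument. The three ingredients you isolate are the right ones: (a) the horizontal and vertical bonds in \eqref{def:WM} decouple, so one can bound each sum separately and use $\liminf(a_\e+b_\e)\geq\liminf a_\e+\liminf b_\e$; (b) the one-dimensional weak-membrane lower bound on each slice, obtained by separating elastic from brittle bonds; (c) Fatou's lemma on the Riemann sum in the transversal variable combined with the integral-geometric (slicing) characterisation of $SBV$ (cf.\ \cite[Theorem~3.108]{AFP}), which identifies $\int\#(S_{w(\cdot,y_2)})\,\d y_2$ with $\int_{S_w}|\nu_w\cdot e_1|\dH$ and $\int|\partial_1 w(\cdot,y_2)'|^2\,\d t\,\d y_2$ with $\int_\Omega|\partial_1 w|^2\dx$, and analogously for the $e_2$-direction. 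Summing the two slice bounds recovers exactly the anisotropic right-hand side of \eqref{liminf-WM}.

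Two remarks worth making explicit in a complete write-up, though neither is a gap. First, when passing from $L^1(\Omega;\R^2)$ convergence of $w_\e$ to slicewise $L^1$ convergence, one should first extract a subsequence of $\e$'s realising the global $\liminf$, then a further sub-subsequence along which $w_\e(\cdot,y_2)\to w(\cdot,y_2)$ in $L^1(\Omega^{y_2};\R^2)$ for a.e.\ $y_2$; Fatou is then applied along that fixed sub-subsequence. Second, $\Omega^{y_2}$ is in general a countable union of intervals rather than a single interval; the one-dimensional lower bound must therefore be stated (as it is in the references) on an arbitrary open subset of $\R$, which is harmless since both sides localise. With these standard refinements your argument is complete and sound.
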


\subsection{Comparison between the discrete energies} In this subsection we compare the discrete energies introduced in Subsection~\ref{subsec:energies}. The results presented here will be useful for the proofs in this paper. We sum up the relations between the discrete energies in the schematic Diagram~\eqref{diag:comparison}, referring to the subsections where they are explained in detail.  
\begin{equation} \label{diag:comparison}
  \begin{tikzcd}
      {F_\e^\p} &&&&&&&&&& {W\!M_\e} \\
      \\
      {F_\e^\edge} &&&&& {F_\e^\screw} &&&&& {XY_\e}
      \arrow["{{\gtrsim \text{ (Subsec.~\ref{subsec:Fpedge Fedge}})}}"{description}, tail, from=1-1, to=3-1]
      \arrow["{{\simeq  \text{ (Subsec.~\ref{subsec:Fedge Fscrew}})}}"{description}, tail reversed, from=3-1, to=3-6]
      \arrow["{{\simeq  \text{ (Subsec.~\ref{subsec:Fscrew XY}})}}"{description}, tail reversed, from=3-6, to=3-11]
      \arrow["{\gtrsim \text{ (Subsec.~\ref{subsec:Fpedge Fpscrew})}}"{description}, tail, from=1-1, to=1-11]
\end{tikzcd}
\end{equation}
  
  \subsubsection{Comparison between $F_\e^\p$ and $F_\e^\edge$} \label{subsec:Fpedge Fedge}
  We start  observing that the energies $F_\e^\p$ and $F_\e^\edge$ can be related by a transformation that doubles the displacement. More precisely, by~\eqref{def:F-edge} and~\eqref{def:F-part}, we have that 
	\begin{equation}\label{eq:F-edge-F-part}
	F_\e^\p(u,A)=\frac{1}{4}F_\e^\edge(2u,A)+ \frac{\alpha}{\pi^2} \, \e  \sum_{i\in\AZ{2e_2}}f_1\big(\d u(i,i+ 2\e e_2 )\big)\,.
	\end{equation}
  In particular, \eqref{eq:F-edge-F-part} yields the following inequality that will be often used throughout the paper:
  \begin{equation} \label{eq:4F-part-larger-F-edge}
   4 F_\e^\p(u,A) \geq F_\e^\edge(2u,A) \, .
  \end{equation}


  \subsubsection{Comparison between $F_\e^\edge$ and $F_\e^\screw$} \label{subsec:Fedge Fscrew} Here we show that the energies $F_\e^\edge$ are equivalent to the energies $F_\e^\screw$. This observation will be the key for the proof of a result on the asymptotic behavior of $F_\e^\edge$.

  \smallskip
  \begin{remark}\label{rem:lb-edge-screw}
  For any $u\in\AD_\e$ such that $2 \pi u$ is an angular lifting of $v$, \ie $v=\exp(2\pi\iota u)$, and any $i,j\in\e\Z^2$ with $|i-j|=\e$ we have thanks to~\eqref{est:geo-euclidian}
  \begin{equation}\label{eq:geo-dist}
    f_1(\d u(i,j))=2\pi^2\dist^2\big(\d u(i,j);\Z\big)=\frac{1}{2}\d_{\S^1}^2\big(v(i),v(j)\big)\geq \frac{1}{2}|\d v(i,j)|^2\,.
  \end{equation}
  This motivates the factor $2\pi^2$ in~\eqref{def:f1-f2}. Indeed, in this way the chain of inequalities
    \begin{equation}\label{lb:edge-SD}
    F_\e^\edge(u,A)\geq F_\e^\screw(u,A)\geq XY_\e(v,A) 
    \end{equation}
  holds for any Borel subset $A\subset\R^2$. Moreover, \eqref{equality:f0-f1-f2} implies that
    \begin{equation}\label{cond:SD-equals-F-edge}
    F_\e^\edge(u,A)=F_\e^\screw(u,A)\quad\Longleftrightarrow\quad \d^e u(i,i+\e e_1)=\d u(i,i+\e e_1)\;\text{ for all}\ i\in\Z_\e^{e_1}(A)\,.
    \end{equation}
  \end{remark}
Remark~\ref{rem:lb-edge-screw} above in particular gives a lower bound for $F_\e^\edge$ in terms of $F_\e^\screw$. The following lemma shows that equality can be reached for a suitable representative, that is, we can always construct a representative satisfying~\eqref{cond:SD-equals-F-edge}.  
  \begin{lemma}\label{lem:SD-equals-F}
  Let $U\subset\R^2$ be open and bounded, $\e>0$ and $u\in\AD_\e$. There exists $\tilde{u}\in\AD_\e$ with $\tilde{u}\eqZ u$ such that 
    \begin{equation}\label{eq:de-equals-d}
    \d^e\tilde{u}(i,i+\e e_1)=\d^e u(i,i+\e e_1)=\d\tilde{u}(i,i+\e e_1)
    \end{equation}
  for every $i\in\Z_\e^{e_1}(U)$ and, consequently,
    \begin{equation}\label{eq:SD-equals-F}
    F_\e^\screw(u,U)=F_\e^\screw(\tilde{u},U)=F_\e^\edge(\tilde{u},U)\,.
    \end{equation}
  \end{lemma}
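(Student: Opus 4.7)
The key idea is that the three equalities in~\eqref{eq:de-equals-d} on horizontal bonds inside $U$ boil down to ensuring $P_\Z\big(\d\tilde u(i, i+\e e_1)\big) = 0$, that is, that the horizontal increments of $\tilde u$ lie in $(-1/2, 1/2]$. Since by Remark~\ref{rem:lifting-v} the elastic parts $\d^e u$ are invariant under the equivalence $\eqZ$, the natural strategy is to construct $\tilde u \defas u + z$ for a suitable integer-valued function $z \colon \e \Z^2 \to \Z$. The condition to enforce then becomes
\[
z(i+\e e_1) - z(i) = -P_\Z\big(\d u(i,i+\e e_1)\big) \quad \text{for every } i \in \Z_\e^{e_1}(U),
\]
because adding such an integer shift to $\d u(i, i+\e e_1)$ produces exactly $\d^e u(i, i+\e e_1)$.

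The construction of $z$ will be carried out independently on each horizontal row $R_{i_2} \defas \e \Z \times \{i_2\}$, which is possible since the constraint only involves horizontal bonds and these never couple different rows. On each such row I pick a reference point $i_\star \in R_{i_2}$, set $z(i_\star) \defas 0$, and propagate $z$ to its neighbors along the row via
\[
z(i + \e e_1) - z(i) \defas \begin{cases} -P_\Z\big(\d u(i,i+\e e_1)\big) & \text{if } i \in \Z_\e^{e_1}(U),\\ 0 & \text{otherwise},\end{cases}
\]
with the analogous rule leftwards. Because the recursion lives on a one-dimensional line without loops, there is no cocycle obstruction; and since $U$ is bounded, only finitely many bonds per row belong to $\Z_\e^{e_1}(U)$, so $z$ stabilizes to a constant far from $U$. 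This produces an integer-valued $z$ on all of $\e \Z^2$, hence $\tilde u \eqZ u$. The identity $\d \tilde u(i, i+\e e_1) = \d^e u(i, i+\e e_1)$ for $i \in \Z_\e^{e_1}(U)$ follows directly from the definition of $z$, while $\d^e \tilde u = \d^e u$ on all bonds is a consequence of Remark~\ref{rem:lifting-v}; together these give~\eqref{eq:de-equals-d}.

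Finally, the chain of equalities~\eqref{eq:SD-equals-F} is a short corollary. The identity $F_\e^\screw(u, U) = F_\e^\screw(\tilde u, U)$ uses~\eqref{eq:d-elastic-distance}: the screw energy depends only on $\dist(\d u(i,j); \Z) = |\d^e u(i,j)|$, which is preserved under $\eqZ$. For $F_\e^\screw(\tilde u, U) = F_\e^\edge(\tilde u, U)$, the two energies differ only on horizontal bonds, where $F_\e^\edge$ uses $f_0$ and $F_\e^\screw$ uses $f_1$. By~\eqref{eq:de-equals-d} the horizontal increments of $\tilde u$ inside $U$ lie in $(-1/2, 1/2]$, so $P_\Z\big(\d \tilde u(i, i+\e e_1)\big) = 0$, and~\eqref{equality:f0-f1-f2} yields $f_0 = f_1$ on these increments; this is precisely the criterion~\eqref{cond:SD-equals-F-edge}. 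I do not anticipate any substantive obstacle: the content of the lemma is an algebraic rewriting that exploits the one-dimensional, row-wise structure of the horizontal interactions and the freedom to shift $u$ by an integer at each vertex.
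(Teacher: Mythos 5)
Your proof is correct and follows essentially the same row-wise propagation idea as the paper: you fix a base point on each horizontal slice and telescope an integer shift so that the horizontal increments of $\tilde u$ become the elastic parts $\d^e u$. The only (minor) difference is cosmetic: you phrase the construction as $\tilde u = u + z$ and propagate $z$ with zero increment across bonds outside $\Z_\e^{e_1}(U)$, which sidesteps the explicit splitting into connected components of $\Pi(\e k)\cap U$ that the paper uses for non-convex $U$, but the underlying mechanism is identical.
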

  \begin{proof}
  Given $u\in\AD_\e$ and $U\subset\R^2$ open and bounded we decompose the lattice portion $\e\Z^2\cap U$ into the union of horizontal slices $\e\Z^2\cap\Pi(\e k)\cap U$ with $k\in\Z$ and we will suitably modify $u$ by removing horizontal jumps along those slices. Let us first assume that $U$ is convex, so that $\Pi(\e k)\cap U$ is connected for every $k\in\Z$. Thus, for any $k\in\Z$ with $\Pi(\e k)\cap U\neq\emptyset$ there exists $i_k=i_k(\e)\in\e\Z^2\cap\Pi(\e k)\cap U$ and $N_k=N_k(\e)\in\N$ such that
    \begin{equation*}
    \e\Z^2\cap\Pi(\e k)\cap U=\{i_k+j\e e_1\colon j=0,\ldots,N_k\}\,.
    \end{equation*}
  We now define $\tilde{u}$ on $\e\Z^2\cap\Pi(\e k)\cap U$ recursively by setting $\tilde{u}(i_k)\defas u(i_k)$ and
    \begin{align}\label{def:utilde}
    \tilde{u}(i_k+ \e j e_1)\defas\tilde{u}(i_k+ \e (j-1) e_1)+\d^eu(i_k+\e (j-1)e_1,i_k+\e je_1)\;\text{ for}\ j\in\{1,\ldots,N_k\}\,.
    \end{align}
  Since the family $\{ \e\Z^2\cap\Pi(\e k)\cap U \}_{k\in\Z}$ decomposes $\e\Z^2\cap U$, the resulting configuration $\tilde{u}$ is well defined on $\e\Z^2\cap U$ and we simply extend it by $u$ to $\e\Z^2\sm U$. Moreover, by definition the equality 
    \begin{equation}\label{eq:dtilde-equals-de}
    \d \tilde{u}(i,i+\e e_1)=\d^e u(i,i+\e e_1)
    \end{equation}
  holds true for any $i\in\Z_\e^{e_1}(U)$. Eventually, \eqref{def:utilde} can be rewritten as
    \begin{align*}
    \tilde{u}(i_k+ \e j e_1)= u(i_k) \EEE  -\sum_{\ell=0}^jP_\Z\big(\d u(i_k+\e \ell e_1,i_k+\e(\ell+1) e_1 ) \big)\,,
    \end{align*}	 
   so that $\tilde{u}\eqZ u$, hence $F_\e^\screw(u,U)=F_\e^\screw(\tilde{u},U)$.
  Moreover, $\tilde{u}\eqZ u$ together with~\eqref{eq:dtilde-equals-de} gives~\eqref{eq:de-equals-d} and we conclude in view of~\eqref{cond:SD-equals-F-edge}.
  
  If $U$ is  non-convex we repeat the above procedure on the connected components of $\Pi(\e k)\cap U$.
  \end{proof}
  \begin{remark}\label{rem:mu-equals-mutilde}
  Let $u,\tilde{u}$ be as in Lemma~\ref{lem:SD-equals-F}; since $\tilde{u}\eqZ u$, by Remark~\ref{rem:lifting-v} we clearly have $\mu_{\tilde{u}}=\mu_u$.
  \end{remark}

  \subsubsection{Comparison between $F_\e^\screw$ and $XY_\e$} \label{subsec:Fscrew XY} Here we simply recall that in~\cite{ACP11} it is shown that $XY_\e$ defined in~\eqref{def:XY} is asymptotically equivalent to $F_\e^\screw$ defined in~\eqref{def:SD} from a variational point of view. This equivalence is based on the correspondence between spin fields $v \in \SF_\e$ and  angular liftings $2\pi u$, with $u \in \AD_\e$.  

  \EEE

\subsubsection{Comparison between $F_\e^\p$ and $W\!M_\e$}  \label{subsec:Fpedge Fpscrew}
Now we show that the energies $F_\e^\p$ can be suitably bounded from below by the weak-membrane energies on the collection of double-spaced lattices $2\e\Z+s_j$ defined in~\eqref{def:double-lattice-shifted-local}. To be precise, for $j\in\{0\,,\ldots ,3\}$ and $u \colon (2 \e \Z^2 + s_j) \to \R$ we set  
	\begin{equation}\label{def:XYgen-shifted}
    F_{2\e,s_j}^{\edge}(u,A)\defas \sum_{i\in\AZS{s_j}{e_1}}f_0\big(\d u(i,i+ 2\e e_1)\big)+\sum_{i\in\AZS{s_j}{e_2}}f_1\big(\d u(i,i+ 2\e e_2)\big) \,,
	\end{equation}
and we define $F_{2\e,s_j}^{\screw}$, $XY_{2\e,s_j}$, and $W\!M_{2\e,s_j}^{\tau_1,\tau_2}$ accordingly. For any function $w$ defined on $\e\Z^2$ and any  $j\in\{0\,,\ldots ,3\}$ we define the restrictions $w_{2\e, s_j} := w_{\e | 2\e \Z^2 + s_j}$. 

We have the following result.
\begin{lemma}\label{lem:Fpart-XYgen-WM}
Let $A\subset\R^2$ be a Borel set, let $C_0>0$, and let $(u_\e)_\e$ be a sequence of configurations $u_\e\in\AD_\e$ satisfying $F_\e^\p(u_\e,A)\leq C_0|\log\e|$. Moreover, let $w_\e\defas \exp(2\pi\iota u_\e)$. Then 
for every $R>0$ there exists $\e_R>0$ such that the estimate 
	\begin{equation}\label{est:Fe-WMR}
	F^\p_\e(u_\e,A)\geq\frac{1}{4}\sum_{j=0}^3 W\!M_{2\e,s_j}^{R,\alpha}(w_{2\e,s_j},A)-C_1\sqrt{\e}|\log\e| 
	\end{equation}
holds for every $\e\in(0,\e_R)$. 
\end{lemma}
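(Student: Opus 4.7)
The plan is to prove the inequality edge-by-edge. First, I would observe that the four shifted sublattices $2\e\Z^2+\e s_j$, $j=0,\ldots,3$, partition the collection of next-to-nearest-neighbor edges of $\e\Z^2$: each such edge lies in exactly one sublattice, and each nearest-neighbor edge of $\e\Z^2$ is the ``middle'' of exactly two such next-to-nearest-neighbor edges. Splitting $F_\e^\p=F_\e^\h+F_\e^\v$ via~\eqref{eq:splitting-hor-ver} reduces the target inequality to separate horizontal and vertical bounds. For each horizontal next-to-nearest-neighbor edge $[i,i+2\e e_1]$, writing $a,b$ for the two nearest-neighbor increments in direction $e_1$, the elementary estimate $|\exp(2\pi\iota t)-1|\le 2\pi|t|$ and the triangle inequality give
\begin{equation*}
  \tfrac12|w_\e(i+2\e e_1)-w_\e(i)|^2 \le 2f_0(a)+2f_0(b).
\end{equation*}
Summing over all horizontal next-to-nearest-neighbor edges in $A$ and using the two-to-one count on nearest-neighbor edges yields the horizontal part of the weak-membrane sum bounded by $4F_\e^\h(u_\e,A)$ with no error.

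The vertical part is the core of the argument. Writing $a,b$ for the two vertical nearest-neighbor increments and $c=a+b$, the identity $\tfrac12|w_\e(i+2\e e_2)-w_\e(i)|^2=2\sin^2(\pi c)\le 2\pi^2\dist(c;\Z)^2=f_1(c)$ bounds the per-edge weak-membrane cost by $\min\{f_1(c),2\alpha\e\}$. The target per-edge inequality is
\begin{equation} \label{eq:plan-target}
  \min\{f_1(c),2\alpha\e\} \le 2\bigl(f_{\frac{1}{2}}(a)+f_{\frac{1}{2}}(b)\bigr) + \tfrac{4\alpha}{\pi^2}\e\,f_1(c),
\end{equation}
which, after summation and the two-to-one count, matches exactly $4F_\e^\v(u_\e,A)$ on the right by the definition~\eqref{def:F-hor-ver}. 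The verification of~\eqref{eq:plan-target} proceeds by case analysis on the fractional parts $a'=a-P_\Z(a)$ and $b'=b-P_\Z(b)$ in $[-\tfrac12,\tfrac12]$. When both $|a'|,|b'|\le\tfrac14$ (both ``integer-close'') or both $|a'|,|b'|>\tfrac14$ (both ``half-integer-close''), a direct triangle-inequality computation gives $\dist(c;\Z)\le\dist(a;\tfrac12\Z)+\dist(b;\tfrac12\Z)$, hence $f_1(c)\le 2(f_{\frac{1}{2}}(a)+f_{\frac{1}{2}}(b))$ and~\eqref{eq:plan-target} follows trivially. The main obstacle is the mixed regime, where one of $a,b$ is integer-close and the other half-integer-close: there $c$ is close to a strict half-integer, so $f_{\frac{1}{2}}(a)$ and $f_{\frac{1}{2}}(b)$ are both small while $f_1(c)$ is close to $\pi^2/2$; the tight identity $\tfrac{4\alpha}{\pi^2}\e\cdot\tfrac{\pi^2}{2}=2\alpha\e$ shows that the scaled next-to-nearest-neighbor term in~\eqref{eq:plan-target} absorbs the truncation $2\alpha\e$ exactly at leading order.

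The residual error comes from edges where~\eqref{eq:plan-target} can fail, either because some nearest-neighbor increment is far from $\tfrac12\Z$ or because of higher-order discrepancy in the mixed regime. Both contributions are controlled by the energy hypothesis $F_\e^\p(u_\e,A)\le C_0|\log\e|$. A Chebyshev-type argument with threshold $\sqrt\e$ shows that at most $C_0|\log\e|/\sqrt\e$ vertical nearest-neighbor edges satisfy $f_{\frac{1}{2}}(\d u_\e)>\sqrt\e$; each such edge is adjacent to at most two vertical next-to-nearest-neighbor edges, on which the trivial bound $\min\{\cdot,2\alpha\e\}\le 2\alpha\e$ is used, producing a contribution at most $4\alpha C_0\sqrt\e|\log\e|$. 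A Cauchy--Schwarz estimate for the mixed-regime slack, exploiting $\sum_{\text{NN}} f_{\frac{1}{2}}(\d u_\e)\le F_\e^\p\le C_0|\log\e|$, yields a contribution of the same order. Combining the horizontal and vertical estimates then gives the claim for all $\e\in(0,\e_R)$ with $\e_R$ small enough to ensure $2\alpha\e<\pi^2/8$ (so that the truncation regime of the weak-membrane energy is correctly identified in the mixed case), with $C_1$ depending only on $\alpha$, $C_0$, and $R$.
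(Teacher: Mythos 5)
Your horizontal estimate is correct and arguably cleaner than the paper's: the bound $\tfrac12|\d w_\e(i,i+2\e e_1)|^2\le 2\bigl(f_0(a)+f_0(b)\bigr)$, obtained from $|\exp(2\pi\iota t)-1|\le 2\pi|t|$ and convexity, together with the two-to-one edge count, gives the horizontal weak-membrane sum $\le 4F_\e^\h$ with no case analysis and no restriction on $\e$. (The paper introduces a case split precisely to compare with the threshold $2R\e$, which is where $\e_R$ appears; your route sidesteps this.)

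The gap is in the vertical error control. You correctly identify the mixed regime as the only dangerous case, and your target per-edge inequality~\eqref{eq:plan-target} is indeed the right object, with a deficit of order $\alpha^2\e^2$ per edge when $|a'+b'|\lesssim \alpha\e/\pi^2$. The problem is that your two proposed error mechanisms do not capture these edges. The Chebyshev sieve on $f_{\frac12}$ with threshold $\sqrt\e$ removes only next-to-nearest-neighbor edges adjacent to a nearest-neighbor edge with $f_{\frac12}>\sqrt\e$, but the deficit occurs precisely on mixed-regime edges where both $f_{\frac12}(a)$ and $f_{\frac12}(b)$ can be arbitrarily small (even zero); these edges all survive the sieve. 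After the sieve, potentially $\sim |A|/\e^2$ next-to-nearest-neighbor edges remain, each carrying a deficit $O(\e^2)$, and a Cauchy--Schwarz that exploits only $\sum_{\text{NN}} f_{\frac12}\le C_0|\log\e|$ yields at best $8\alpha\e\cdot\sqrt{|A|/\e^2}\cdot\sqrt{C|\log\e|}\sim\sqrt{|\log\e|}$, which diverges rather than vanishing like $\sqrt\e|\log\e|$.

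The missing ingredient is the next-to-nearest-neighbor part of the energy hypothesis, namely $\tfrac{\alpha}{\pi^2}\e\sum_{\text{NNN}} f_1\le F_\e^\p(u_\e,A)\le C_0|\log\e|$. On any deficit edge, the increment $c=\d u_\e(i,i+2\e e_2)$ lies within $O(\e)$ of a strict half-integer, so $f_1(c)\ge \pi^2/8$ for $\e$ small; hence the number of deficit edges is $\le \frac{8}{\pi^2}\sum_{\text{NNN}} f_1\le \frac{8C_0}{\alpha}\frac{|\log\e|}{\e}$. Multiplying by the per-edge deficit $O(\e^2)$ gives a total error $O(\e|\log\e|)$, which suffices. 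The paper packages this in a single per-edge estimate whose error term is $C\e^{3/2}f_1\bigl(\d u_\e(i,i+2\e e_2)\bigr)$ — proportional to the very quantity controlled by the $\e$-weighted next-to-nearest-neighbor sum — so that summation and the energy hypothesis immediately produce $C_1\sqrt\e|\log\e|$. The crucial step, absent from your plan, is to count (or weight) the mixed-regime edges via the next-to-nearest-neighbor interactions rather than via the nearest-neighbor interactions alone.
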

\begin{proof}
 For $A\subset\R^2$ and $u_\e\in\AD_\e$ as in the statement we use the decomposition~\eqref{eq:splitting-hor-ver} and we estimate separately $F_\e^\v(u_\e,A)$ and $F_\e^\h(u_\e,A)$. Namely, we find $C_1>0$ such that
	\begin{equation}\label{est:Fe-ver}
	F^\v_\e(u_\e,A) \geq\frac{1}{4}\sum_{j=0}^3 \sum_{i\in\AZS{s_j}{e_2}}  \min \Big\{ \frac{1}{2} |\d w_\e(i,i+2\e e_2) |^2 , 2 \alpha \e \Big\} - C_1\sqrt{\e}|\log\e| 
	\end{equation}
for every $\e>0$, and for any $R>0$ we find $\e_R>0$ such that
	\begin{equation}\label{est:Fe-hor}
    F^\h_\e(u_\e,A) \geq\frac{1}{4}\sum_{j=0}^3 \sum_{i\in\AZS{s_j}{e_1}} \min \Big\{ \frac{1}{2} |\d w_\e(i,i+2\e e_1) |^2 , 2 R \e \Big\} \,,
	\end{equation}
for any $\e\in(0,\e_R)$. Then~\eqref{est:Fe-WMR} follows by the very definition of $W\!M_{2\e,s_j}^{R,\alpha}$ in~\eqref{def:WM}. 

\begin{step}{1}{Proof of~\eqref{est:Fe-ver}}
We start observing that
	\begin{equation}\label{est:Fe-ver-1}
  \begin{split}  
    & F_\e^\v(u_\e,A) \\
    & \geq \hspace*{-.5em}\sum_{i\in\AZ{2e_2}}\hspace*{-.75em}  \Big( \frac{1}{2}  f_{\frac{1}{2}}\big(\d u_\e(i,i+\e e_2)\big)+ \frac{1}{2}  f_{\frac{1}{2}}\big(\d u_\e(i+\e e_2,i+2\e e_2)\big) +  \frac{\alpha}{\pi^2} \e  f_1\big(\d u_\e (i,i+2\e e_2)\big) \Big)\,.   
  \end{split}
	\end{equation}
  The right-hand side in~\eqref{est:Fe-ver-1} can be estimated by a convexity argument as follows: For every $t_1,t_2\in\R$ we have
	\begin{equation}\label{est:inf-conv-convexity0}
	f_{\frac{1}{2}}(t_1)+f_{\frac{1}{2}}(t_2)\geq\frac{1}{2}f_{\frac{1}{2}}(t_1+t_2)\,.
	\end{equation}
To see this, it is enough to recall that $f_{\frac{1}{2}}(t)=\frac{1}{4}\dist^2(2t;\Z)$ and to choose $z_1,z_2\in\Z$ satisfying
	\begin{equation*}
	\dist^2\big(2t_1;\Z\big)=\big(2t_1-z_1\big)^2\qquad\text{and}\qquad\dist^2\big(2t_2;\Z\big)=\big(2t_2-z_2\big)^2\,.
	\end{equation*}
By convexity one obtains that
	\begin{equation*}
	\big(2t_1-z_1\big)^2+\big(2t_2-z_2\big)^2\geq\frac{1}{2}\big(2(t_1+t_2)-(z_1+z_2)\big)^2
	\geq\frac{1}{2}\dist^2\big(2(t_1+t_2);\Z\big)\,,
	\end{equation*}
which gives~\eqref{est:inf-conv-convexity0}.
To apply~\eqref{est:inf-conv-convexity0} in~\eqref{est:Fe-ver-1}, for every $i\in\AZ{2e_2}$ we write
	\begin{equation*}
	\d u_\e(i,i+2\e e_2)=\d u_\e(i,i+\e e_2)+\d u_\e(i+\e e_2,i+2\e e_2)\,,
	\end{equation*}
  to obtain from~\eqref{est:Fe-ver-1}  that 
  \[
      F_\e^\v(u_\e,A)\geq \hspace*{-.5em}\sum_{i\in\AZ{2e_2}}\hspace*{-.75em}  \Big( \frac{1}{4} f_{\frac{1}{2}}\big(\d u_\e(i,i+2 \e e_2)\big) + \frac{\alpha}{\pi^2} \e f_1\big(\d u_\e (i,i+2\e e_2)\Big) \, .
  \] 
Summing up the contributions on the four lattices $2\e\Z^2+s_j$,   we deduce that
	\begin{equation}\label{est:inf-conv}
	 F_\e^\v(u_\e,A) \geq  \frac{1}{4}  \sum_{j=0}^3\sum_{i\in\AZS{s_j}{e_2}}  g_\e\big(\d u_\e(i,i+2\e e_2)\big)\,,  
	\end{equation}
where $g_\e:\R\to[0,+\infty)$ is defined by  
	\begin{equation}\label{est:inf-conv-convexity}
	 g_\e(t)\defas f_{\frac{1}{2}}(t)+{ \frac{4 \, \alpha}{\pi^2} \e  }f_1(t)=\frac{1}{4}f_1(2t)+{\frac{4 \, \alpha}{\pi^2} \e }f_1(t)\,. 
	\end{equation}
Let us show that for a constant $C>0$ we have that
	\begin{equation}\label{est:claim-ge-fe}
    g_\e\big(\d u_\e(i,i+2\e e_2)\big) \geq  \min \Big\{ \frac{1}{2} |\d w_\e(i,i+2\e e_2) |^2 , 2 \alpha \e \Big\}  -C\e\sqrt{\e}f_1\big(\d u_\e(i,i+2\e e_2)\big) \,.
	\end{equation}
  Then, from~\eqref{est:inf-conv} we obtain~\eqref{est:Fe-ver} with $C_1=C\,C_0$. To prove~\eqref{est:claim-ge-fe}, we distinguish two cases. 

  {\itshape Case 1}: If $\dist\big(\d u(i,i+2\e e_2);\Z\big) \leq \frac{1}{4}$, then $\d u(i,i+2\e e_2)$ is far enough from $\frac{1}{2}\Z \setminus \Z$ to deduce that $\frac{1}{4}f_1\big(2\d u(i,i+2\e e_2)\big) = f_1\big(\d u(i,i+2\e e_2) \big)$. This, together with~\eqref{eq:geo-dist} gives us that 
  \[
    g_\e\big(\d u_\e(i,i+2\e e_2)\big) \geq  \frac{1}{4}f_1\big(2\d u(i,i+2\e e_2)\big) = f_1\big(\d u(i,i+2\e e_2) \big) \geq  \frac{1}{2} |\d w_\e(i,i+2\e e_2)|^2 \, ,
  \]
  whence~\eqref{est:claim-ge-fe}. 

  {\itshape Case 2}: If, instead, $\dist\big(\d u(i,i+2\e e_2);\Z\big) > \frac{1}{4}$, let $t = \d u(i,i+2\e e_2)$. We assume without loss of generality that $t\in(1/4,3/4)$, since the general case follows by periodicity. For such $t$ we have the trivial estimate
	\begin{equation}\label{est:trivial}
	\e t^2\geq\frac{\e}{16}\,, 
	\end{equation}
which however is not enough. To improve the above estimate we consider the following exhaustive cases: If $|t-1/2|\geq \beta \sqrt{\e}$ (for a suitable $\beta>0$ to be chosen below), we clearly have 
	\begin{equation}\label{est:close-to-half}
	f_{\frac{1}{2}}(t)=2\pi^2(t-1/2)^2\geq 2 \pi^2 \beta^2 \e \,.
	\end{equation}
If instead $|t-1/2|<\beta \sqrt{\e}$, a direct computation yields
	\begin{equation*}
	\frac{4 \, \alpha}{\pi^2} \e f_1(t)= 8 \alpha \e\min\big\{t^2,(t-1)^2\big\}\geq 8 \alpha \e\Big(\frac{1}{2}- \beta \sqrt{\e}\Big)^2\geq 2 \alpha \e - 8 \alpha \beta \e \sqrt{\e}\,.  
	\end{equation*}
 Choosing $\beta = \frac{\sqrt{\alpha}}{\pi}$ so that $2 \pi^2 \beta^2 = 2 \alpha$,  combining this with~\eqref{est:inf-conv-convexity} and the trivial bound~\eqref{est:trivial} finally yields
	\begin{equation}\label{est:ge-jump}
	g_\e(t)\geq 2 \alpha \e -C\e\sqrt{\e}f_1(t) \,,  
	\end{equation}
which yields~\eqref{est:claim-ge-fe} and concludes this step.
\EEE

\end{step}

\begin{step}{2}{Proof of~\eqref{est:Fe-hor}} By convexity we have that
	\begin{equation}\label{est:horizontal-convexity}
	\begin{split}
	F_\e^\h(u_\e,A) &\geq \pi^2\sum_{i\in\AZ{2e_1}}|\d u_\e(i,i+\e e_1)|^2+|\d u_\e(i+\e e_1,i+2\e e_1)|^2\\
	&\geq \frac{1}{4} \sum_{i\in\AZ{2e_1}} 2 \pi^2 |\d u_\e(i,i+2\e e_1)|^2\,.
	\end{split}
	\end{equation}
If $|\d u_\e(i,i+2\e e_1)| \geq \frac{1}{4}$, for any $R>0$ we have that $2 \pi^2 |\d u_\e(i,i+2\e e_1)|^2 \geq 2R\e$, provided $\e\leq\frac{\pi^2}{16R}$. If $|\d u_\e(i,i+2\e e_1)| < \frac{1}{4}$, then~\eqref{eq:geo-dist} yields that $2 \pi^2 |\d u_\e(i,i+2\e e_1)|^2 = f_1\big(\d u_\e(i,i+2\e e_1) \big) \geq \frac{1}{2} |\d w_\e(i,i+2\e e_1)|$. Hence~\eqref{est:Fe-hor} follows from~\eqref{est:horizontal-convexity} by summing up the contributions on the four sublattices $2\e\Z^2+\e s_j$. This concludes the proof.
\end{step}
\end{proof}
\begin{remark}\label{rem:F-edge-double}
Combining~\eqref{est:Fe-ver-1}, \eqref{est:inf-conv-convexity0}, and~\eqref{est:horizontal-convexity} and using that $f_{\frac{1}{2}}(t)=\frac{1}{4}f_1(t)$, we also obtain that
	\begin{equation}\label{est:F-edge-double}
	F_\e^\p(u_\e,A)\geq\frac{1}{4}\sum_{j=0}^3F_{2\e,s_j}^{\edge}(2 u_{2\e,s_j},A) 
	\end{equation}
for $u_\e\in\AD_\e$ and any Borel set $A\subset\R^2$.
\end{remark} 

\section{Set up for the continuum model}
In this section we introduce the relevant function spaces to characterise the $\Gamma$-limit of suitable rescalings of the discrete energies introduced in Section~\ref{sec:setup-discrete}. 
\subsection{Limiting configurations of singularities} \label{subsec:dislocation measures}
We start this section by recalling the definition of the spaces $X$ and $X_M$ of relevant (limiting) singularity configurations. For any $U\subset\R^2$ open and $M\in\N$ we consider the families of measures
	\begin{equation}\label{def:X}
	X(U)\defas\bigg\{\mu=\sum_{h=1}^{N}d_h\delta_{x_h}\; \text{ with }\; N\in\N\,,\ d_h\in\Z\sm\{0\}\,,\ x_h\in U\,,\ x_h\neq x_{h'}\;\text{ for }\; h\neq h'\bigg\}
	\end{equation}
and
	\begin{equation}\label{def:XM}
	X_M(U)\defas\bigg\{\mu=\sum_{h=1}^{M}d_h\delta_{x_h}\in X(U)\; \text{ with }\; d_h\in\{-1,1\}\bigg\}\,.
	\end{equation}
It will be convenient to equip $X(U)$ with the convergence induced by the flat topology. Namely, for any distribution $T\in\D'(U)$ we let
	\begin{equation*}
	\|T\|_{\rm flat}\defas\sup\big\{\langle T,\psi\rangle\colon \psi\in C_c^\infty(U)\,,\ \|\psi\|_{L^\infty(U)}\leq 1\,,\ \|\nabla\psi\|_{L^\infty(U)}\leq 1\big\}
	\end{equation*}	 
be its flat norm.
We say that a sequence $(\mu_n)_n \subset X(U)$ converges flat to some $\mu\in X(U)$ and we write $\mu_n\fto\mu$, if $\|\mu_n-\mu\|_{\rm flat}\to 0$ as $n\to+\infty$.

\smallskip
The space $X_M$ contains the relevant limiting configurations of singularities (\cf~Theorem~\ref{thm:main2}). The relevant limiting fields will belong to some classes of Sobolev and $SBV$-functions and will be related to a measure $\mu\in X_M(U)$ via their Jacobian. To introduce those classes properly we start recalling the notion of Jacobian and degree for maps in the continuum together with some basic properties of $SBV$-functions. 
\subsection{Jacobians and degree}
We recall here some definitions and basic results concerning topological singularities. Let $U \subset \R^2$ be an open set and let $v = (v_1,v_2) \in W^{1,1}(U;\R^2) \cap L^\infty(U;\R^2)$. We define the {\em pre-Jacobian} (also known as {\em current}) of $v$ by 
\begin{equation*}
    j(v) := \frac{1}{2}( v_1 \nabla v_2 - v_2 \nabla v_1 ) \, .
\end{equation*}
The {\em distributional Jacobian} of $v$ is defined by 
\begin{equation*}
    J(v) := \curl (j(v)) \, ,
\end{equation*}
in the sense of distributions, \ie 
\begin{equation*}
    \langle J(v), \psi \rangle = - \int_U j(v) \cdot \nabla^\perp \psi \, \d x \quad \text{for every } \psi \in C^\infty_c(U) \, ,
\end{equation*}
where $\nabla^\perp = (-\de_2, \de_1)$. Note that $J(v)$ is also well-defined when $v \in H^1(U;\R^2)$, and, in that case, it coincides with the $L^1$ function $\det \nabla v$.
 
Given $v =(v_1,v_2) \in H^{\frac{1}{2}}(\de B_\rho(x_0);\S^1)$, its {\em degree} is defined by 
\begin{equation} \label{def:degree}
    \deg(v,\de B_\rho(x_0)) := \frac{1}{2 \pi} \big( \langle \nabla_{\de B_\rho(x_0)} v_2 , v_1 \rangle_{H^{-\frac{1}{2}},H^\frac{1}{2}} - \langle \nabla_{\de B_\rho(x_0)} v_1 , v_2 \rangle_{H^{-\frac{1}{2}},H^\frac{1}{2}} \big) \, ,
\end{equation}
where $\langle \cdot  , \cdot  \rangle_{H^{-\frac{1}{2}},H^\frac{1}{2}}$ denotes the duality between $H^{-\frac{1}{2}}(\de B_\rho(x_0);\S^1)$ and $H^{\frac{1}{2}}(\de B_\rho(x_0);\S^1)$ and we let  $\nabla_{\de B_\rho(x_0)}$ denote the derivative on $\de B_\rho(x_0)$ with respect to the unit speed parametrization of $\de B_\rho(x_0)$. Note that, by definition, the map $v \in H^{\frac{1}{2}}(\de B_\rho(x_0);\S^1) \mapsto \deg(v,\de B_\rho(x_0))$ is continuous. We remark that
\begin{equation*}
    \deg(v,\de B_\rho(x_0)) = \frac{1}{2\pi}\int_{\de B_\rho(x_0)}  \big( v_1 \nabla_{\de B_\rho(x_0)} v_2  - v_2 \nabla_{\de B_\rho(x_0)} v_1\big) \, \d \H^1 \quad \text{if } v \in H^1(\de B_\rho(x_0);\S^1)
\end{equation*} 
(and thus $v$ is continuous) and this notion coincides with the classical notion of degree.
Also when $v \in H^{\frac{1}{2}}(\de B_\rho(x_0);\S^1)$ is discontinuous, the degree defined in~\eqref{def:degree} inherits from the continuous setting some characterizing properties. In particular, a result due to L.\ Boutet de Monvel \& O.\ Gabber~\cite[Theorem~A.3]{BdMBGeoPur} ensures that
$\deg(v,\de B_\rho(x_0)) \in \Z$.

A further fundamental property of the degree is the following. Let $v \in H^1(\A{r}{R}(x_0);\S^1)$. By the trace theory, $v|_{\de B_{\rho}(x_0)} \in H^{\frac{1}{2}}(\de B_{\rho}(x_0);\S^1)$ for every $\rho \in [r, R]$. Then 
\begin{equation} \label{eq:degree independent}
    \deg(v,\de  B_{\rho}(x_0)) = \deg(v,\de  B_{\rho'}(x_0)) \quad \text{for every } \rho, \rho' \in [r,R] \, .
\end{equation}
This follows from the fact that $\deg(v,\de  B_{\rho}(x_0)) \in \Z$, by the continuity of the degree with respect to the $H^\frac{1}{2}$ norm, and by the continuity of the map
 \begin{equation*}  
     \rho \in [r,R] \mapsto v( x_0 + \rho \, \cdot \, )|_{\de B_{1}} \in H^{\frac{1}{2}}(\de B_{1};\S^1) \, , 
 \end{equation*}
 which is a consequence of the trace theory for Sobolev functions.

We conclude this summary about the degree by recalling the following property. Let $v \in H^1(\A{r}{R}(x_0);\S^1)$. By the theory of slicing of Sobolev functions (\cf~\cite[Proposition~3.105]{AFP} with a change of coordinates), for a.e.\ $\rho \in (r,R)$ the restriction $v|_{\de B_\rho(x_0)}$ belongs to $H^1(\de B_\rho(x_0); \S^1)$ and $\nabla_{\de B_\rho(x_0)}(v|_{\de B_\rho(x_0)})(y) = \nabla v(y) \tau_{\de B_\rho(x_0)}(y)$ for $\H^1$-a.e.\ $y \in \de B_\rho(x_0)$, where $\tau_{\de B_\rho(x_0)}(y)$ is the unit tangent vector to $\de B_\rho(x_0)$ at $y$. Therefore
 \begin{equation} \label{eq:degree for H1}
     \deg(v, \de B_\rho(x_0)) = \frac{1}{\pi} \int_{\de B_\rho(x_0)} j(v)|_{\de B_\rho(x_0)} \cdot \tau_{\de B_\rho(x_0)} \, \d \H^1  \quad \text{for a.e.\ } \rho \in (r,R) \, ,
 \end{equation}
 which relates the degree to the pre-jacobian and, by Stokes' Theorem, to the distributional Jacobian.

 Finally, we recall the following lemma proven in~\cite[Lemma 3.1]{ACP11}, useful when dealing with the flat convergence of Jacobian of maps.
 \begin{lemma}\label{lem:conv-diff-jac}
 Let $U\subset\R^2$ be open and let $(v_\e),(\tilde{v}_\e)\subset H^1(U;\R^2)$ be two sequences satisfying
   \begin{equation}\label{cond:lemma-ACP}
   \|v_\e-\tilde{v}_\e\|_{L^2(U;\R^2)}\big(\|\nabla v_\e\|_{L^2(U;\R^{2\times 2})}+\|\nabla\tilde{v}_\e\|_{L^2(U;\R^{2\times 2})}\big)\to 0\,.
   \end{equation}
 Then $ J(v_\e)-J(\tilde{v}_\e) \fto 0$.
 \end{lemma}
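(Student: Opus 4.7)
The strategy is to estimate $\langle J(v_\e) - J(\tilde v_\e), \psi \rangle$ directly from the distributional definition, for an arbitrary test function $\psi \in C_c^\infty(U)$ with $\|\psi\|_{L^\infty} \leq 1$ and $\|\nabla \psi\|_{L^\infty} \leq 1$, and to establish a bound of the form
\[
|\langle J(v_\e) - J(\tilde v_\e), \psi \rangle| \leq C \, \|v_\e - \tilde v_\e\|_{L^2(U;\R^2)} \big( \|\nabla v_\e\|_{L^2(U;\R^{2\x 2})} + \|\nabla \tilde v_\e\|_{L^2(U;\R^{2\x 2})} \big),
\]
with $C$ absolute. By hypothesis~\eqref{cond:lemma-ACP} the right-hand side tends to zero uniformly in $\psi$, which yields $\|J(v_\e) - J(\tilde v_\e)\|_{\rm flat} \to 0$.

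The first step is a telescoping decomposition of the pre-Jacobian difference. Writing $v_\e = ((v_\e)_1,(v_\e)_2)$ and using
\[
(v_\e)_1 \nabla (v_\e)_2 - (\tilde v_\e)_1 \nabla (\tilde v_\e)_2 = \big((v_\e)_1 - (\tilde v_\e)_1\big) \nabla (v_\e)_2 + (\tilde v_\e)_1 \nabla\big((v_\e)_2 - (\tilde v_\e)_2\big),
\]
together with its analogue obtained by swapping indices, $2\big(j(v_\e) - j(\tilde v_\e)\big)$ becomes a sum of four bilinear terms. The two terms that carry $v_\e - \tilde v_\e$ without derivatives are paired with $\nabla^\perp \psi$ and estimated directly by Cauchy--Schwarz, producing contributions bounded by $\|v_\e - \tilde v_\e\|_{L^2} \|\nabla v_\e\|_{L^2} \|\nabla \psi\|_{L^\infty}$.

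For the remaining two terms the gradient falls on $v_\e - \tilde v_\e$, and I would move it by integration by parts. The crucial observation is that $\mathrm{div}(\nabla^\perp \psi) = -\de_1 \de_2 \psi + \de_2 \de_1 \psi = 0$, so for any $f,g \in H^1(U)$ the product rule gives
\[
\int_U f \, \nabla g \cdot \nabla^\perp \psi \dx = -\int_U g \, \nabla f \cdot \nabla^\perp \psi \dx,
\]
with no boundary contribution since $\supp \psi \Subset U$. This identity is legitimate because the product $fg$ lies in $W^{1,1}_{\rm loc}(U)$ whenever $f,g \in H^1$, its distributional gradient being in $L^1$ by Cauchy--Schwarz. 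Applying it with $f = (\tilde v_\e)_1$, $g = (v_\e)_2 - (\tilde v_\e)_2$ (and the symmetric choice for the other integrand) transfers the derivative onto $\tilde v_\e$, and Cauchy--Schwarz again bounds the resulting integrals by $\|v_\e - \tilde v_\e\|_{L^2} \|\nabla \tilde v_\e\|_{L^2} \|\nabla \psi\|_{L^\infty}$.

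Summing the four contributions and using $\|\nabla \psi\|_{L^\infty} \leq 1$ yields the claimed inequality and concludes the proof. I do not anticipate any substantial obstacle: the only delicate point is justifying the integration by parts for $H^1$ factors, which follows from the standard $W^{1,1}_{\rm loc}$ product rule together with the divergence-free character of $\nabla^\perp \psi$, and an approximation by smooth functions if one wishes to avoid invoking the product rule directly.
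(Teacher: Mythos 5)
The paper does not prove this lemma but cites it from~\cite[Lemma 3.1]{ACP11}, so there is no in-paper argument to compare against. Your proof is correct and is the standard one: the telescoping of $j(v_\e)-j(\tilde v_\e)$ into four bilinear terms, Cauchy--Schwarz on the two where $v_\e-\tilde v_\e$ appears undifferentiated, and an integration by parts — legitimate because $\nabla^\perp\psi$ is divergence-free and $fg\in W^{1,1}_{\loc}(U)$ with $\nabla(fg)=f\nabla g+g\nabla f$ whenever $f,g\in H^1(U)$ — to move the derivative off the difference in the remaining two, producing a bound on $|\langle J(v_\e)-J(\tilde v_\e),\psi\rangle|$ of the form $C\,\|v_\e-\tilde v_\e\|_{L^2(U)}\big(\|\nabla v_\e\|_{L^2(U)}+\|\nabla\tilde v_\e\|_{L^2(U)}\big)\|\nabla\psi\|_{L^\infty(U)}$ that is uniform over the test functions entering the flat norm.
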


\subsection{$SBV$ functions}
Given an open set $U\subset\R^2$ the space of {\em special functions of bounded variation} $w:U\to\R^m$ with $m\geq 1$ is denoted by $SBV(U;\R^m)$. If $m=1$ we simply write $SBV(U)$. We refer to~\cite{AFP} for the definition and main properties of this function space. Here we just recall that for every $w\in SBV(U;\R^m)$ its distributional derivative $Dw$ is a bounded radon measure that can be represented as
	\begin{equation}\label{eq:representation-Dw}
	Dw=\nabla w\L^2+[w]\otimes\nu_w\H^1\mres S_w\,.
	\end{equation}
In~\eqref{eq:representation-Dw} $\nabla w$ is the approximate gradient of $w$, $S_w$ is the set of discontinuity points of $w$ and $\nu_w$ the measure theoretic normal to $S_w$. Finally, $[w]=w^+-w^-$ where $w^+$ and $w^-$ are the one-sided approximate limits of $w$ on $S_w$, which exist up to an $\H^1$-negligible set. We also recall that $W^{1,1}(U;\R^m)\subset SBV(U;\R^m)$ and for any $w\in SBV(U;\R^m)$ we have $w\in W^{1,1}(U;\R^m)$ if and only if $\H^1(S_w)=0$ (\cf~\cite[Formula (4.2)]{AFP}).

Moreover, for any $p>1$ we consider the subspace
	\begin{equation*}
	SBV^p(U;\R^m)\defas\big\{w\in SBV(U;\R^m)\colon \nabla w\in L^p(U;\R^{m\times 2})\,,\ \H^1(S_w)<+\infty\big\}\,.
	\end{equation*}
Eventually, we set $SBV(U;\S^1)\defas\{w\in SBV(U;\R^2)\colon |w(x)|=1\ \text{for a.e.}\ x\in U\}$ and we define $SBV^p(U;\S^1)$ accordingly.

We say that a sequence $(w_k)\subset SBV^p(U;\R^m)$ converges weakly$^*$ to $w\in SBV^p(U;\R^m)$ and write $w_k\wsto w$ in $SBV^p(U;\R^m)$, if $w_k\to w$ in $L^1(U;\R^m)$ and $\sup_k\big(\|\nabla u_k\|_{L^p(U)}+\H^1(S_{w_k}\cap U)+\|u_k\|_{L^\infty(U)}\big)<+\infty$. Moreover, $w_k\wsto w$ in $SBV^p_\loc(U;\R^m)$, if $w_k\wsto w$ in $SBV^p(U';\R^m)$ for every $U'\wcont U$.
\subsection{The spaces of limiting fields, renormalised energy, and core energy}\label{sec:limit-fields}
We are now able to introduce the spaces of limiting fields and characterise the renormalised energy $\W(\mu,\Omega)$.
From now on, if not specified otherwise, $\Omega\subset\R^2$ is an open, bounded, and simply connected subset of $\R^2$ with Lipschitz boundary and  $M\in\N$ is a fixed positive integer. For such $\Omega$ we set
	\begin{equation*}\label{def:DM}
	\D_M(\Omega)\defas\Big\{v\in W^{1,1}(\Omega;\S^1)\colon J(v)=\pi\mu\;\text{ for some }\; \mu\in X_M(\Omega)\; \text{ and }\; v\in H^1_\loc\big(\Omega\sm\supp \mu;\S^1\big)\Big\}\,.
	\end{equation*}
For any $\mu=\sum_{h=1}^M d_h\delta_{x_h}\in X_M(\Omega)$ and $\sigma>0$ sufficiently small such that 
	\begin{equation}\label{cond:sigma}
	B_\sigma (x_h)\subset \Omega\quad\text{and}\quad B_\sigma(x_h)\cap B_\sigma(x_h')=\emptyset\; \text{ for}\ h,h'\in\{1,\ldots,M\}\,,\ h\neq h' 
	\end{equation}
	we set 
	\begin{equation}\label{def:Omega-sigma}
	\Omega^\sigma(\mu)\defas \Omega\sm\bigcup_{h=1}^M B_\sigma(x_h)\,,
	\end{equation} 
and to any $v\in\D_M(\Omega)$ with $J(v)=\pi\mu$ we associate the quantity
	\begin{equation}\label{def:ren-energy-v}
	\WW(v,\Omega)\defas\lim_{\sigma\to 0}\bigg(\frac{1}{2}\int_{\Omega^\sigma(\mu)}\hspace*{-1em}|\nabla v|^2\dx-M\pi|\log\sigma|\bigg)\in\R\cup\{+\infty\}\,,
	\end{equation}
which is well defined thanks to Remark~\ref{rem:DM-W} below.
\begin{remark}\label{rem:DM-W}
Let $v\in\D_M(\Omega)$ with $J(v)=\pi\mu$ and let $\sigma_2>\sigma_1>0$ satisfy~\eqref{cond:sigma}. 
Then~\eqref{eq:degree for H1} together with Stokes' Theorem yields for a.e.\ $\sigma\in(\sigma_1,\sigma_2)$
	\begin{equation}\label{eq:stokes}
	\pi d_h=J(v)\big(B_\sigma(x_h)\big)=\int_{B_\sigma(x_h)}\hspace*{-1.5em}\curl j(v)\dx=\int_{\partial B_\sigma(x_h)}\hspace*{-1.5em}j(v)\cdot\tau_{\partial B_\sigma(x_h)}\dH=\pi\deg\big(v,\partial B_\sigma(x_h)\big)\,.
	\end{equation}
Note that $2|j(v)|=|\nabla v|$, so that from~\eqref{eq:stokes} together with Jensen's inequality we deduce that
	\begin{equation}\label{est:annulus-jensen}
	\begin{split}
	\frac{1}{2}\int_{A_{\sigma_1,\sigma_2}(x_h)}\hspace*{-3em}|\nabla v|^2\dx &=2\int_{\sigma_1}^{\sigma_2}\int_{\partial B_\sigma(x_h)}\hspace*{-1.5em}|j(v)|^2\dH\ds\\
	&\geq\frac{1}{\pi}\int_{\sigma_1}^{\sigma_2}\frac{1}{\sigma}\bigg(\int_{\partial B_\sigma(x_h)}\hspace*{-1.5em}j(v)\cdot\tau_{\partial B_\sigma(x_h)}\dH\bigg)^2\ds\geq\pi\log\frac{\sigma_2}{\sigma_1}\,.
	\end{split}
	\end{equation}		
This in turn implies that
	\begin{equation}\label{est:monotonicity-W}
	\begin{split}
	\frac{1}{2}\int_{\Omega^{\sigma_1}(\mu)}\hspace*{-1.5em}|\nabla v|^2\dx-M\pi|\log\sigma_1| &=\frac{1}{2}\int_{\Omega^{\sigma_2}(\mu)}\hspace*{-1.5em}|\nabla v|^2\dx-M\pi|\log\sigma_2|
	+\sum_{h=1}^M\bigg(\frac{1}{2}\int_{A_{\sigma_1,\sigma_2}(x_h)}\hspace*{-2.5em}|\nabla v|^2\dx-\pi\log\frac{\sigma_2}{\sigma_1}\bigg)\\
	&\geq\frac{1}{2}\int_{\Omega^{\sigma_2}(\mu)}\hspace*{-1.5em}|\nabla v|^2\dx-M\pi|\log\sigma_2|\,.
	\end{split}
	\end{equation}
In particular, the map $\sigma\mapsto \frac{1}{2}\int_{\Omega^\sigma(\mu)}|\nabla v|^2\dx-M\pi|\log\sigma|$ is decreasing and thus $\WW(v,\Omega)$ is well-defined (see also~\cite[Section 4.4]{ADGP14}). Suppose now that $\WW(v,\Omega)<+\infty$; applying~\eqref{est:monotonicity-W} with $\sigma_2=\sigma$, $\sigma_1=\frac{\sigma}{2}$ for $\sigma>0$ sufficiently small yields 
	\begin{align*}
	\WW(v,\Omega) &\geq\frac{1}{2}\int_{\Omega^\sigma(\mu)}|\nabla v|^2\dx-\pi|\log\sigma|+\sum_{h=1}^M\bigg(\frac{1}{2}\int_{A_{\frac{\sigma}{2},\sigma}(x_h)}\hspace*{-2em}|\nabla v|^2\dx-\pi\log 2\bigg)
	\geq\WW(v,\Omega)-r(\sigma)
	\end{align*}
with $r(\sigma)\to 0$ as $\sigma\to 0$. This in turn implies that
	\begin{equation}\label{cond:limit-dyadic-annulus}
	\lim_{\sigma\to 0}\frac{1}{2}\int_{A_{\frac{\sigma}{2},\sigma}(x_h)} |\nabla v|^2\dx=\pi\log 2\;\text{ for every}\ h\in\{1,\ldots,M\}\,.
	\end{equation}
Since the functional $\frac{1}{2}\int_{A_{\frac{\sigma}{2},\sigma}(x_h)}|\nabla v|^2\dx$ attains its minimum value $\pi\log2$, among all functions $v\in H^1_\loc(B_\sigma(x_h)\sm\{x_h\})$ with $J(v)\mres B_\sigma(x_h)=d_h\delta_{x_h}$, precisely on rotations of the function $\big(\frac{x-x_h}{|x-x_h|}\big)^{d_h}$, we deduce from~\eqref{cond:limit-dyadic-annulus} together with the continuity of the lifting operator in $H^1$ that
	\begin{equation}\label{limit-dyadic-lifting}
	\lim_{\sigma\to 0}\int_{A_{\frac{\sigma}{2},\sigma}(x_h)} |\nabla\varphi(x)-d_h\nabla\theta(x-x_h)|^2\dx=0
	\end{equation}
for every local lifting $\varphi$ of $v$ and any lifting $\theta$ of $\frac{x}{|x|}$.
\end{remark}
We finally recall the definition of the {\em renormalised energy} $\W(\mu,\Omega)$ introduced in~\cite{BBH}. To this end, let $\Phi_\mu$ be a solution to the boundary value problem
	\begin{equation}\label{def:Phi-mu}
	\begin{cases}
	\Delta\Phi_\mu=2\pi\mu &\text{in}\ \Omega\,,\\
	\Phi_\mu=0 &\text{on}\ \partial\Omega
	\end{cases}
	\end{equation}
and set $R_\mu(x)\defas\Phi_\mu(x)-\sum_{h=1}^Md_h\log(|x-x_h|)$. Then the renormalised energy associated to $\mu$ and $\Omega$ is given by
	\begin{equation}\label{def:ren-energy-Phi-mu}
	\W(\mu,\Omega)\defas-\pi\sum_{h\neq h'}d_hd_{h'}\log(|x_h-x_{h'}|)-\pi\sum_{h=1}^MR_\mu(x_h)\,.
	\end{equation}
We observe that 
	\begin{equation}\label{eq:ren-energy-ball}
	\W(d \delta_0, B_\sigma(0)) = - d |\log \sigma| \, .
	\end{equation}
As shown in~\cite[Theorem 1.7]{BBH}, $\W(\mu,\Omega)$ can be characterised as the limit
	\begin{equation}\label{eq:ren-energy-limit-min-problem}
	\W(\mu,\Omega)=\lim_{\sigma\to 0}\big(\m_\sigma(\mu,\Omega)-M\pi|\log\sigma|\big)\,,
	\end{equation}
where
	\begin{equation}\label{def:min-degree}
	\m_\sigma(\mu,\Omega)\defas\min\bigg\{\frac{1}{2}\int_{\Omega^\sigma(\mu)}\hspace*{-1em}|\nabla w|^2\dx\colon w\in H^1\big(\Omega^\sigma(\mu);\S^1\big)\,,\ \deg(v,\partial B_\sigma(x_h))=d_h\bigg\}\,.
	\end{equation}
Note that thanks to~\eqref{eq:stokes} we have $\m_\sigma(\mu,\Omega)\leq\frac{1}{2}\int_{\Omega^\sigma(\mu)}|\nabla v|^2\dx$ for every $v\in\D_M(\Omega)$ with $J(v)=\pi\mu$, so that in particular
	\begin{equation}\label{est:W-W-ren}
	\W(\mu,\Omega)\leq\WW(v,\Omega)\; \text{ for every $v\in\D_M(\Omega)$ with $J(v)=\pi\mu$.}
	\end{equation}
Moreover, to any $\mu\in X_M(\Omega)$ we can associate the canonical harmonic map $v_\mu$ via the relation $j(v_\mu)=\frac{1}{2}\nabla^\perp\Phi_\mu$, where $\Phi_\mu$ is given by~\eqref{def:Phi-mu}.
Then $v_\mu\in \D_{M}(\Omega)$ and moreover (\cf~\cite[Section 4.1]{ADGP14}) there holds
	\begin{equation*}
	\W(\mu,\Omega)=\lim_{\sigma\to 0}\bigg(\frac{1}{2}\int_{\Omega^\sigma(\mu)}\hspace*{-1em}|\nabla \Phi_\mu|^2\dx-M\pi|\log\sigma|\bigg)=\lim_{\sigma\to 0}\bigg(\frac{1}{2}\int_{\Omega^\sigma(\mu)}\hspace*{-1em}|\nabla v_\mu|^2\dx-M\pi|\log\sigma|\bigg)\,.
	\end{equation*}
Together with~\eqref{est:W-W-ren} this gives
	\begin{equation}\label{eq:W-can-harm}
	\W(\mu,\Omega)=\min\big\{\WW(v,\Omega)\colon v\in \D_M(\Omega)\,,\ J(v)=\pi\mu\big\}=\WW(v_\mu,\Omega)\,.
	\end{equation}
Eventually, for any $x_0\in\R^2$, $\e>0$, and $\sigma>4\e$ we set
	\begin{equation}\label{def:core-screw}
	\gamma_\e^\screw\big(B_\sigma(x_0)\big)\defas\min\Big\{F_\e^\screw\big(u,B_\sigma(x_0)\big)\colon   u(i)= \frac{1}{2\pi}\theta(i-x_0)\;\text{ for }\;i\in\partial_\e B_\sigma(x_0)\Big\}\,,
	\end{equation}
where $\theta$ is an angular lifting of $\frac{x}{|x|}$, \ie such that $\frac{x}{|x|}=\exp(\iota\theta(x))$ for every $x\in\R^2\sm\{0\}$. 
\begin{remark}\label{rem:core-screw}
Clearly the value $\gamma_\e^\screw\big(B_\sigma(x_0)\big)$ does not depend on the choice of the angular lifting of $\frac{x}{|x|}$ and we could have equivalently required in~\eqref{def:core-screw} that $\exp(2\pi u(i))=\frac{i-x_0}{|i-x_0|}$ on $\partial_\e B_\sigma(x_0)$. For later purpose it will however be more convenient to prescribe the boundary condition in the angular variable as in~\eqref{def:core-screw}.
We recall that the limit
	\begin{equation}\label{def:gamma}
	\gamma\defas\lim_{\e\to 0}\Big(\gamma_\e^\screw\big(B_\sigma(x_0)\big)-\pi\log\frac{\sigma}{\e}\Big)
	\end{equation}
exists and is independent of $x_0$ (see~\cite[Theorem 4.1]{ADGP14} and~\cite[Lemma 7.2]{COR22ARMA}).
\end{remark}
\subsection{Interpolation of discrete functions}\label{sec:interpolation}
We conclude this section by introducing useful interpolations of discrete functions.

\subsubsection*{Piecewise constant interpolations}
Throughout the paper, discrete functions $u_\e \colon \e \Z^2 \to \R^m$ will be tacitly identified with their piecewise constant interpolations taking values $u_\e(i)$ on every cube $Q_\e(i) \in \Q_\e$.   

\subsubsection*{Piecewise affine interpolations}
 For $\e>0$ let $\T_\e$ be the triangulation defined in~\eqref{triangulation}; for any $v_\e\in\SF_\e$ we let $\hat v_\e$ denote the function satisfying $\hat v_\e(i)=v_\e(i)$ for every $i\in\e\Z^2$ and being affine on every triangle $T_\e^+,T_\e^-\in\T_\e$. 
In this way, we have on every cube $Q_\e\in\Q_\e$ the identity
	\begin{equation}\label{eq:gradient-affine}
	XY_\e(v_\e,Q_\e)=\int_{Q_\e}|\nabla \hat v_\e|^2\dx\,.
	\end{equation}
Note that $\hat{v}_\e$ belongs to $H^1_\loc(\R^2;\R^2)$, but it may take values in $\R^2\sm\S^1$.

\subsubsection*{$\S^1$-valued interpolations}
Following the approach in~\cite[Remark 3.2]{BCKO20a} (see also~\cite[Remark 3.3]{ACP11}), to any $v_\e\in\SF_\e$ we associate an $\S^1$-valued interpolation satisfying the properties of Lemma~\ref{lem:S1-interpolation} below. Since the proof of Lemma~\ref{lem:S1-interpolation} is completely analogous to~\cite[Remark 3.2]{BCKO20a}, we do not repeat it here, but just state the result. 
%
\begin{lemma}[$\S^1$-valued interpolation]\label{lem:S1-interpolation}
Let $\e>0$, let $v_\e\in\SF_\e$, and let $2\pi u_\e$ with $u_\e\in\AD_\e$ be an arbitrary angular lifting of $v_\e$. Then there exists $\overarc{v}_\e\in W^{1,1}_\loc(\R^2;\S^1)\cap W^{1,\infty}_\loc(\R^2\sm\supp\mu_{u_\e};\S^1)$ satisfying the following properties:  
\begin{enumerate}[label=(\arabic*)]
\item \label{prop:interpolation} $\overarc{v}_\e(i)=v_\e(i)=\exp\big(2\pi\iota u_\e(i)\big)$ for any $i\in\e\Z^2$;
\item \label{prop:jacobian} $J(\overarc{v}_\e)=\pi\mu_{u_\e}$; 
\item \label{prop:energ} $\int_{Q_\e}|\nabla\overarc{v}_\e|^2\dx=F_\e^\screw(u_\e,Q_\e)$ whenever $\mu_{u_\e}(Q_\e)=0$.
\end{enumerate}
\end{lemma}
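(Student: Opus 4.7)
The construction proceeds cube by cube. Fix $Q_\e \in \Q_\e$ with vertices $a, b, c, d$ labelled counter-clockwise, and set $d_{Q_\e} \defas \mu_{u_\e}(Q_\e) \in \{-1,0,1\}$. First, on every lattice edge $[p, p+\e e_k] \subset \partial Q_\e$ I fix the minimal-geodesic interpolation
\[
 \overarc{v}_\e(p + s \e e_k) \defas \exp\bigl(2\pi\iota(u_\e(p) + s \, \d^e u_\e(p, p+\e e_k))\bigr), \qquad s \in [0,1].
\]
Since $\d^e u_\e$ depends only on the spin field $v_\e$ (\cf\ Remark~\ref{rem:lifting-v}), this is consistent across shared edges of adjacent cubes and automatically yields $\overarc{v}_\e(i) = v_\e(i)$ at every lattice site, \ie\ property~\ref{prop:interpolation}.

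\emph{Extension on regular cubes.} When $d_{Q_\e} = 0$ I pick a lifting $\tilde u_{Q_\e}$ on $\{a,b,c,d\}$ with $\tilde u_{Q_\e}(a) = u_\e(a)$ and with increments along the four sides exactly equal to the corresponding elastic derivatives: the condition $d_{Q_\e} = 0$ is precisely what makes this definition close up at the last vertex. I then let $\hat{\tilde u}_{Q_\e}$ denote the piecewise-affine interpolation on the two triangles $T_\e^\pm \subset Q_\e$ and set $\overarc{v}_\e \defas \exp(2\pi\iota \hat{\tilde u}_{Q_\e})$, which lies in $W^{1,\infty}(Q_\e; \S^1)$ and matches the edge construction on $\partial Q_\e$. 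The pointwise identity $|\nabla \overarc{v}_\e|^2 = 4\pi^2 |\nabla \hat{\tilde u}_{Q_\e}|^2$, a triangle-by-triangle computation, and the definition of $f_1$ together give
\[
 \int_{Q_\e} |\nabla \overarc{v}_\e|^2 \dx = 2\pi^2 \sum_{e\subset\partial Q_\e}|\d^e u_\e(e)|^2 = F_\e^\screw(u_\e, Q_\e),
\]
which is property~\ref{prop:energ}.

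\emph{Extension on singular cubes.} When $d_{Q_\e} = \pm 1$ I extend $\overarc{v}_\e$ to $Q_\e \setminus \{b(Q_\e)\}$ by radial projection from the barycenter: for $x \in Q_\e \setminus \{b(Q_\e)\}$ I set $\overarc{v}_\e(x) \defas \overarc{v}_\e(\pi_{Q_\e}(x))$, where $\pi_{Q_\e}(x) \in \partial Q_\e$ lies on the ray from $b(Q_\e)$ through $x$. This produces $\overarc{v}_\e \in W^{1,\infty}_\loc(Q_\e \setminus \{b(Q_\e)\}; \S^1)$; in polar coordinates centred at $b(Q_\e)$ the radial derivative vanishes while the angular derivative is bounded by $C|x - b(Q_\e)|^{-1}$, which is $L^1$-integrable in two dimensions, so $\overarc{v}_\e \in W^{1,1}(Q_\e; \S^1)$. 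Globally this yields the regularity stated in the lemma.

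\emph{Jacobian identity.} Regular cubes contribute nothing to $J(\overarc{v}_\e)$ because $\overarc{v}_\e \in H^1(Q_\e; \S^1)$ forces $J(\overarc{v}_\e)|_{Q_\e} = \det \nabla \overarc{v}_\e = 0$. On a singular cube the winding of $\overarc{v}_\e|_{\partial Q_\e}$ equals $\sum_{e\subset \partial Q_\e} \d^e u_\e(e) = d_{Q_\e}$ by~\eqref{def:circulation}, and the radial invariance of the construction carries this winding unchanged onto every small circle around $b(Q_\e)$. Applying Stokes' theorem to $j(\overarc{v}_\e)$ on $Q_\e \setminus B_\rho(b(Q_\e))$ and passing to the limit $\rho \downarrow 0$, together with~\eqref{eq:degree for H1}, gives $J(\overarc{v}_\e) \mres Q_\e = \pi d_{Q_\e} \delta_{b(Q_\e)}$, and summing over $Q_\e \in \Q_\e$ yields $J(\overarc{v}_\e) = \pi \mu_{u_\e}$. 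The most delicate point of the argument is confirming that the two interior extensions agree on shared edges with the common geodesic interpolation fixed in the first step; this compatibility is precisely what singles out the elastic derivatives as the right gluing data.
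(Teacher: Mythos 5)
Your construction is correct and coincides with the standard one in~\cite[Remark~3.2]{BCKO20a} and~\cite[Remark~3.3]{ACP11}, which the paper simply cites: geodesic interpolation on lattice edges using the elastic derivatives, exponential of the piecewise-affine lifting on the two triangles of a regular cell, and radial projection from the barycentre on a singular cell. All the claimed identities check out — in particular the energy identity, since $\int_{Q_\e}|\nabla\hat{\tilde u}_{Q_\e}|^2\dx = \tfrac{1}{2}\sum_{e\subset\partial Q_\e}|\d^e u_\e(e)|^2$ on the triangulation $\T_\e$, which after multiplying by $4\pi^2$ matches $F_\e^\screw(u_\e,Q_\e)=2\pi^2\sum_{e}\dist^2(\d u_\e(e);\Z)$.
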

\begin{remark}[Lifting of $\overarc{v}_\e$]\label{rem:lifting-S1-interpolation}
Let $\e>0$, $u_\e\in\AD_\e$, $v_\e=\exp(2\pi\iota u_\e)\in\SF_\e$, and let $\overarc{v}_\e$ be the $\S^1$-valued interpolation of $v_\e$ defined in Lemma~\ref{lem:S1-interpolation}. Suppose that $U\subset\R^2$ is an open, bounded and simply connected set with $\supp \mu_{u_\e}\cap U=\emptyset$. Since $\overarc{v}_\e\in W^{1,\infty}_\loc(U;\S^1)$ and $J(\overarc{v}_\e)=\pi\mu_{u_\e}$, we deduce that $\overarc{v}_\e$ admits a lifting $2\pi\hat{\xi}_\e \in W^{1,\infty}_\loc(U)$ satisfying $\overarc{v}_\e(x)=\exp\big(2\pi\iota \hat{\xi}_\e(x) \big)$ for every $x\in U$ and 
	\begin{equation*}
	2\pi|\nabla\hat{\xi}_\e(x) |=|\nabla\overarc{v}_\e(x)|\;\text{ for a.e.}\ x\in U\,.
	\end{equation*} 
Denoting by $\xi_\e \defas{\hat{\xi}_\e }_{|\e\Z^2}$ the evaluation of $\hat{\xi}_\e$ on $\e\Z^2$, we deduce from Property~\ref{prop:interpolation} in Lemma~\ref{lem:S1-interpolation} that $\xi_\e \eqZ u_\e$. Let moreover $k\in\{1,2\}$ be fixed. As pointed out in~\cite[Remark 3.4]{BCKO20a} the equality $2\pi|\xi_\e (i+\e e_k)- \xi_\e(i)|=\d_{\S^1}\big(\overarc{v}_\e(i),\overarc{v}_\e(i+\e e_k)\big)$ holds for every $i\in\Z_\e ^{e_k}(U)$. Together wit~\eqref{eq:geo-dist} this implies that  
	\begin{equation}\label{eq:difference-phi}
	2\pi|\d \xi_\e (i,i+\e e_k)|=2\pi\dist\big(\d u_\e(i,i+\e e_k);\Z\big)
	\end{equation}
for every $i\in\Z_\e^{e_k}(U)$.
Eventually, as in~\cite[Remark 3.4]{BCKO20a} we deduce that $\hat{\xi}_\e$ is affine on any triangle $T\in\T_\e$ provided $T\subset U$ (note that $\supp \mu_{\xi_\e} \cap T = \emptyset$).  
\end{remark}

We will also consider the corresponding interpolations of $v_\e$ and $w_\e$ on the double-spaced and shifted lattices $2\e\Z^2+s_j$ as in~\eqref{def:double-lattice-shifted}. Namely, for $j\in\{0\,,\ldots ,3\}$ we let   $\overarc{v}_{2\e,s_j}$, $\hat{v}_{2\e,s_j}$, $\hat{w}_{2\e,s_j}$, and $w_{2\e,s_j}$ be as above with $\Q_\e$, $\T_\e$ replaced by $\Q_{2\e,s_j}$, $\T_{2\e,s_j}$, respectively.

\section{$\Gamma$-limit without partial dislocations: Convergence of $F_\e^\edge$}\label{Section:edge}
Before proving the convergence result for the energies $F_\e^\p$ stated in Theorem~\ref{thm:main} we characterise here the zero-order and the first-order $\Gamma$-limit of $F_\e^\edge$. Thanks to the comparison Lemma~\ref{lem:SD-equals-F}, we obtain the following $\Gamma$-convergence results for $F_\e^\edge$ based on the corresponding convergence results for $F_\e^\screw$ established in~\cite{ADGP14}. 
\begin{theorem}\label{thm:zero-order}
Let $\Omega\subset\R^2$ be open, bounded, and with Lipschitz boundary and let $F_\e^\edge$ be defined according to~\eqref{def:F-edge}. Then the following $\Gamma$-convergence result holds true:
\begin{enumerate}[label=(\roman*)]
\item (Compactness) Let $u_\e\in\AD_\e$ be such that 
	\begin{equation}\label{uniformbound:edge-zero}
	\sup_{\e>0}\frac{1}{|\log\e|}F_\e^\edge(u_\e,\Omega)<+\infty
	\end{equation}
and let $\mu_{u_\e}$ be given by~\eqref{def:vorticity-u}. Then, up to subsequences, $\mu_{u_\e}\mres\Omega\fto \mu$ for some $\mu\in X(\Omega)$.
\item (Lower bound) Let $u_\e\in\AD_\e$ and assume that $\mu_{u_\e}\mres\Omega\fto\mu=\sum_{h=1}^Nd_h\delta_{x_h}\in X(\Omega)$. Then there exists a constant $\overline{C}\in\R$ such that for all $\sigma>0$ satisfiyng~\eqref{cond:sigma}	and for all $h\in\{1,\ldots,N\}$ we have
	\begin{equation}\label{lb-local}
	\liminf_{\e\to 0}\Big(F_\e^\edge(u_\e,B_\sigma(x_h))-\pi|d_h|\log\frac{\sigma}{\e}\Big)\geq \overline{C}\,.
	\end{equation}
In particular, 
	\begin{equation*}
	\liminf_{\e\to 0}\frac{1}{|\log\e|}F_\e^\edge(u_\e,\Omega)\geq \pi|\mu|(\Omega)\,.
	\end{equation*}
\item (Upper bound) For every $\mu\in X(\Omega)$ there exist $u_\e\in\AD_\e$ such that $\mu_{u_\e}\mres\Omega\fto\mu$ satisfying
\begin{equation*}
	\limsup_{\e\to 0}\frac{1}{|\log\e|}F_\e^\edge(u_\e,\Omega)\leq \pi|\mu|(\Omega)\,.
\end{equation*}
\end{enumerate}
\end{theorem}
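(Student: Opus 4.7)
The key observation is that, by Remark~\ref{rem:lb-edge-screw}, one has the pointwise inequality $F_\e^\edge(u,A)\ge F_\e^\screw(u,A)$ for any $u\in\AD_\e$ and any Borel $A\subset\R^2$. Since the vorticity measure $\mu_u$ is defined intrinsically from $u$ and is the same whether we view $u$ as a screw-dislocation or as an edge-dislocation variable, all three statements can be reduced to the corresponding known results for $F_\e^\screw$ established in \cite{ADGP14}. The only non-trivial point is the upper bound, which will rely on the rectification construction of Lemma~\ref{lem:SD-equals-F}.

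For part~(i), the uniform bound~\eqref{uniformbound:edge-zero} combined with $F_\e^\edge\ge F_\e^\screw$ immediately yields $\sup_\e |\log\e|^{-1} F_\e^\screw(u_\e,\Omega)<+\infty$. The screw-dislocation compactness result of~\cite[Theorem~3.1]{ADGP14} then gives, up to subsequences, $\mu_{u_\e}\mres\Omega\fto\mu$ for some $\mu\in X(\Omega)$. For part~(ii), the local lower bound~\eqref{lb-local} for $F_\e^\edge$ is an immediate consequence of the analogous local lower bound for $F_\e^\screw$ (see~\cite[Theorem~4.1]{ADGP14}) together with $F_\e^\edge(u_\e,B_\sigma(x_h))\ge F_\e^\screw(u_\e,B_\sigma(x_h))$; the constant $\overline{C}$ can be taken to be the one appearing in the screw result. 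The global estimate $\liminf |\log\e|^{-1} F_\e^\edge(u_\e,\Omega)\ge\pi|\mu|(\Omega)$ then follows by a standard localization argument: choose $\sigma$ so small that~\eqref{cond:sigma} holds, sum the local bounds over $h=1,\dots,N$, divide by $|\log\e|$, and let $\sigma\to 0$ after having taken the liminf, the contribution of the cores being $\pi\sum_h|d_h|\log(\sigma/\e)/|\log\e|\to\pi|\mu|(\Omega)$.

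For part~(iii) one cannot simply use a screw recovery sequence directly, because in general such a sequence $u_\e^\screw$ will satisfy $F_\e^\edge(u_\e^\screw,\Omega)\gg F_\e^\screw(u_\e^\screw,\Omega)$ (the horizontal nearest-neighbour increments are only integer-close, not close to zero). This is where Lemma~\ref{lem:SD-equals-F} enters decisively. Given $\mu\in X(\Omega)$, we take a recovery sequence $u_\e^\screw\in\AD_\e$ for the screw-dislocation functional supplied by~\cite[Theorem~3.1]{ADGP14}, so that $\mu_{u_\e^\screw}\mres\Omega\fto\mu$ and
\[
\limsup_{\e\to 0}\frac{1}{|\log\e|}F_\e^\screw(u_\e^\screw,\Omega)\le\pi|\mu|(\Omega).
\]
Applying Lemma~\ref{lem:SD-equals-F} on $U=\Omega$ produces $\tilde u_\e\eqZ u_\e^\screw$ with
\[
F_\e^\edge(\tilde u_\e,\Omega)=F_\e^\screw(\tilde u_\e,\Omega)=F_\e^\screw(u_\e^\screw,\Omega).
\]
By Remark~\ref{rem:mu-equals-mutilde} the vorticity measure is preserved, so $\mu_{\tilde u_\e}\mres\Omega=\mu_{u_\e^\screw}\mres\Omega\fto\mu$. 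Hence $\tilde u_\e$ is the desired recovery sequence.

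The main (only) conceptual step is the upper bound, and the obstacle there is precisely the mismatch between the purely elastic horizontal potential $f_0$ in $F_\e^\edge$ and the periodic potential $f_1$ used in $F_\e^\screw$: one must rectify horizontal increments to their elastic representatives without altering the vorticity. Lemma~\ref{lem:SD-equals-F} does exactly this, slice by slice along horizontal lattice lines, and Remark~\ref{rem:mu-equals-mutilde} ensures that the resulting modification is invisible at the level of the dislocation measure, which is all that is needed to pass to the limit.
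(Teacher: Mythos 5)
Your proof is correct and follows exactly the same route as the paper: compactness and the lower bound are reduced to the screw-dislocation results of~\cite{ADGP14} via the inequality $F_\e^\edge\ge F_\e^\screw$ from Remark~\ref{rem:lb-edge-screw}, while the upper bound is obtained by rectifying a screw recovery sequence through Lemma~\ref{lem:SD-equals-F} and Remark~\ref{rem:mu-equals-mutilde}. The discussion of why a naive screw recovery sequence fails for the upper bound is a helpful elaboration, but the underlying argument is identical to the paper's.
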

\begin{proof}
The compactness result (i) follows immediately from~\eqref{lb:edge-SD} and the corresponding result\footnote{A technical clarification is in order. We note that~\cite[Theorem 3.1]{ADGP14} actually provides compactness with respect to the flat convergence of the measures $\tilde{\mu}_{u_\e}\defas\sum_{Q\in\Q_\e(\Omega)}\mu_{u_\e}(Q)\delta_{b(Q)}$ with $\mu_{u_\e}(Q)$ as in~\eqref{def:circulation}. Since $\Omega$ has Lipschitz boundary, the measures $\mu_{u_\e}$ and $\tilde\mu_{u_\e}$ are equivalent along sequences $(u_\e)$ with $\sup_{\e>0}\frac{1}{|\log\e|}F_\e^\screw(u_\e,\Omega)<+\infty$ (and thus in particular along sequences satisfying~\eqref{uniformbound:edge-zero}) in the sense that for such sequences we have $(\tilde{\mu}_{u_\e}-\mu_{u_\e}\mres\Omega)\fto 0$ as $\e\to 0$ (see~\cite[Lemma 2.3]{COR22ARMA}). \label{foot:conv-mu}} 
for $F_\e^\screw$ in~\cite[Theorem 3.1(i)]{ADGP14}. Similarly, the lower bound (ii) follows from~\cite[Theorem 3.1(ii)]{ADGP14}. To prove the upper bound (iii) let $\mu\in X(\Omega)$ and consider a recovery sequence $(u_\e)$ satisfying $\mu_{u_\e}\mres\Omega\fto\mu$ and $\limsup_\e\frac{1}{|\log\e|}F_\e^\screw(u_\e,\Omega)\leq\pi|\mu|(\Omega)$. Such a sequence exists thanks to~\cite[Theorem 3.1 (iii)]{ADGP14} and again Footnote~\ref{foot:conv-mu}. Applying Lemma~\ref{lem:SD-equals-F} to the sequence $(u_\e)$ and $U=\Omega$ we obtain a sequence $(\tilde{u}_\e)$ satisfying
	\begin{equation*}
	\limsup_{\e\to 0}\frac{1}{|\log\e|}F_\e^\edge(\tilde{u}_\e,\Omega)=\limsup_{\e\to 0}\frac{1}{|\log\e|}F_\e^\screw(u_\e,\Omega)\leq\pi|\mu|(\Omega)\,.
	\end{equation*}
Moreover, in view of Remark~\ref{rem:mu-equals-mutilde} we have $\mu_{\tilde{u}_\e}\mres\Omega\fto \mu$, which concludes the proof of (iii).
\end{proof}

In a similar way we obtain the following result on the asymptotic behaviour of $F_\e^\edge$ after removing the logarithmic contribution of $M$ dislocations. We refer to Subsection~\ref{sec:limit-fields} for the definition and the properties of $\W$ and $\gamma$.
\begin{theorem}\label{thm:first-order-Fe}
Let $\Omega\subset\R^2$ be open, bounded, and with Lipschitz boundary, let $F_\e^\edge$ be as in~\eqref{def:F-edge}, and let $M\in\N$ be fixed. Then the following holds:
\begin{enumerate}[label=(\roman*)]
\item (Compactness) Let $u_\e\in\AD_\e$ be such that 
	\begin{equation}\label{uniformbound-F-edge}
	\sup_{\e>0}\Big(F_\e^\edge(u_\e,\Omega)-M\pi|\log\e|\Big)<+\infty\,.
	\end{equation}
Then, up to subsequences $\mu_{u_\e}\mres\Omega\fto\mu$ for some $\mu=\sum_{h=1}^Nd_h\delta_{x_h}\in X(\Omega)$ with $|\mu|(\Omega)\leq M$. Moreover, if $|\mu|(\Omega)=M$, then $|d_h|=1$ for every $h\in\{1\,,\ldots,N\}$, hence $\mu\in X_M(\Omega)$. 
In this case, if $v_\e\defas\exp(2\pi\iota u_\e)$ then (up to passing to a further subsequence) $\hat{v}_\e\wto v$ in $H^1_\loc\big(\Omega\sm\supp\mu;\R^2\big)$ for some $v\in\D_M(\Omega)$ with $J(v)=\pi\mu$.
\item (Lower bound) Let $u_\e\in\AD_\e$ be such that $\mu_{u_\e}\mres\Omega\fto\mu$ for some $\mu\in X_M(\Omega)$. Then
	\begin{equation*}
	\liminf_{\e\to 0}\big( F_\e^\edge(u_\e,\Omega)-M\pi|\log\e|\big)\geq \W(\mu,\Omega)+M\gamma\,.
	\end{equation*}
\item (Upper bound) For every $\mu\in X_M(\Omega)$ there exists $u_\e\in\AD_\e$ with $\mu_{u_\e}\mres\Omega\fto\mu$ and satisfying
	\begin{equation*}
	\limsup_{\e\to 0}\big( F_\e^\edge(u_\e,\Omega)-M\pi|\log\e|\big)\leq \W(\mu,\Omega)+M\gamma\,.
	\end{equation*}
\end{enumerate} 
\end{theorem}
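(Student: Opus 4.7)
The strategy is to reduce the whole statement to the corresponding first-order $\Gamma$-convergence result for the screw dislocation energy $F_\e^\screw$ established in~\cite{ADGP14}, using for (i)--(ii) the pointwise inequality $F_\e^\edge \geq F_\e^\screw$ (\cf~\eqref{lb:edge-SD}) and for (iii) the comparison Lemma~\ref{lem:SD-equals-F}, which allows to turn a recovery sequence for the screw energy into a recovery sequence for the edge energy \emph{without} changing the associated vorticity measure (\cf~Remark~\ref{rem:mu-equals-mutilde}).

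For (i), the assumption~\eqref{uniformbound-F-edge} together with $F_\e^\edge(u_\e,\Omega)\geq F_\e^\screw(u_\e,\Omega)$ implies that the sequence $(u_\e)$ satisfies the hypothesis of the first-order compactness result for the screw dislocation energy (\cite[Theorem~4.2]{ADGP14} or analogous). Invoking the latter, together with Footnote~\ref{foot:conv-mu} to pass between $\tilde\mu_{u_\e}$ and $\mu_{u_\e}\mres\Omega$, yields the flat convergence $\mu_{u_\e}\mres\Omega\fto\mu$ for some $\mu\in X(\Omega)$ with $|\mu|(\Omega)\leq M$, the rigidity $|d_h|=1$ in the saturated case $|\mu|(\Omega)=M$, and the weak convergence $\hat v_\e\wto v$ in $H^1_\loc(\Omega\sm\supp\mu;\R^2)$ with $v\in\D_M(\Omega)$ and $J(v)=\pi\mu$.

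For (ii), the same inequality $F_\e^\edge\geq F_\e^\screw$ yields
\begin{equation*}
\liminf_{\e\to 0}\big(F_\e^\edge(u_\e,\Omega)-M\pi|\log\e|\big)\geq\liminf_{\e\to 0}\big(F_\e^\screw(u_\e,\Omega)-M\pi|\log\e|\big)\geq \W(\mu,\Omega)+M\gamma\,,
\end{equation*}
where the last inequality is the first-order lower bound for $F_\e^\screw$ proved in~\cite{ADGP14}.

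For (iii), given $\mu\in X_M(\Omega)$, let $(u_\e)\subset\AD_\e$ be a recovery sequence for the screw dislocation energy provided by~\cite{ADGP14}, so that $\mu_{u_\e}\mres\Omega\fto\mu$ and $\limsup_\e\big(F_\e^\screw(u_\e,\Omega)-M\pi|\log\e|\big)\leq \W(\mu,\Omega)+M\gamma$. Applying Lemma~\ref{lem:SD-equals-F} to $(u_\e)$ with $U=\Omega$ produces $\tilde u_\e\in\AD_\e$ with $\tilde u_\e\eqZ u_\e$ satisfying $F_\e^\edge(\tilde u_\e,\Omega)=F_\e^\screw(\tilde u_\e,\Omega)=F_\e^\screw(u_\e,\Omega)$, and by Remark~\ref{rem:mu-equals-mutilde} also $\mu_{\tilde u_\e}=\mu_{u_\e}$, so that $\mu_{\tilde u_\e}\mres\Omega\fto\mu$ and $(\tilde u_\e)$ has the required properties. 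The main (and essentially only) obstacle in this scheme is thus to make sure that the appropriate variant of the first-order $\Gamma$-convergence of $F_\e^\screw$ at scale $M\pi|\log\e|$ is available off the shelf — in particular the rigidity $|d_h|=1$ in the saturated case, the $H^1_\loc$-compactness of the affine interpolations away from the singularities, and the existence of a smooth recovery sequence whose vorticity measure converges flatly to $\mu$; all of these ingredients are contained in the ball-construction and core-regularization analysis of~\cite{ADGP14}, after which the passage to $F_\e^\edge$ is purely a bookkeeping argument based on $F_\e^\edge\geq F_\e^\screw$ and Lemma~\ref{lem:SD-equals-F}.
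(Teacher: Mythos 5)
Your proposal is correct and matches the paper's proof: both establish (i) and (ii) by the pointwise bound $F_\e^\edge\geq F_\e^\screw$ from~\eqref{lb:edge-SD} together with the corresponding first-order result~\cite[Theorem~4.2]{ADGP14} (and Footnote~\ref{foot:conv-mu} for the two flavours of vorticity measure), and establish (iii) by passing a screw-dislocation recovery sequence through Lemma~\ref{lem:SD-equals-F} and Remark~\ref{rem:mu-equals-mutilde}. No gap.
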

\begin{proof}
The proof is analogous to the one of Theorem~\ref{thm:zero-order}, using~\eqref{lb:edge-SD} and~\cite[Theorem 4.2]{ADGP14} to obtain (i) and (iii) and applying Lemma~\ref{lem:SD-equals-F} to a recovery sequence provided by~\cite[Theorem 4.2 (iii)]{ADGP14}.
\end{proof}
\begin{remark}\label{rem:core}
Applying Lemma~\ref{lem:SD-equals-F} with $U=B_\sigma(x_h)$ we see that the quantity $\gamma$ can be equivalently characterised via
	\begin{equation}\label{eq:gamma-edge}
	\gamma=\lim_{\e\to 0}\Big(\gamma_\e^\edge\big(B_\sigma(x_0)\big)-\pi\log\frac{\sigma}{\e}\Big)
	\end{equation}
with 
	\begin{equation}\label{def:core-edge}
	\gamma_\e^\edge\big(B_\sigma(x_0)\big)\defas\min\Big\{F_\e^\edge\big(u,B_\sigma(x_0)\big)\colon 2\pi u(i)=\theta(i-x_0)\;\text{ for }\;i\in\partial_\e B_\sigma(x_0)\Big\}\,,
	\end{equation}
where now the suitable choice of a lifting $\theta$ is explicitly part of the minimisation problem.
\end{remark}

  \section{Domain of the $\Gamma$-limit of $F_\e^\p$}\label{sec:limit-configurations}\label{sec:properties}

  We consider the family of functions
    \begin{equation*}\label{def:DM2} 
      \begin{split}
        \D_M^{1/2}(\Omega)\defas \Big\{ w\in SBV(\Omega;\S^1) \colon & w^2\in\D_M(\Omega) \,,\ J(w^2)=\pi\mu \; \text{ for some }\; \mu\in X_M(\Omega) \,,\ \\
        &   w\in SBV^2_\loc(\Omega\sm\supp\mu;\S^1) \,,\ \H^1(S_w\cap \Omega)<+\infty  \Big\}\,,
      \end{split}
    \end{equation*} 
  where $w^2$ is the complex square. We shall see that the domain of the $\Gamma$-limit of $F_\e^\p$ is the family of functions
  \begin{equation*}\label{def:DM2-hor}
  \D_{M,\hor}^{1/2}(\Omega) \defas\big\{w\in\D_M^{1/2}(\Omega)\colon |\nu_w\cdot e_1|=0\ \H^1\text{-a.e.\ on}\ S_w\cap\Omega\big\}\,.
  \end{equation*}
In this section we collect some structure properties of limiting configurations $v\in\D_M(\Omega)$ and $w\in\D_{M,\hor}^{1/2}(\Omega)$.

  Before delving into the structure of maps in $\D_{M,\hor}^{1/2}(\Omega)$, to each $\mu \in X_M(\Omega)$ we associate a class of horizontal segments. This class will be relevant in the sequel since it will contain discontinuity sets of maps in $\D_{M,\hor}^{1/2}(\Omega)$. 

  \begin{definition} \label{def:connect singularities}
    Let $\mu = \sum_{h=1}^Md_h\delta_{x_h}\in X_M(\Omega)$. We say that an open segment $(y_1,y_2) \subset \Omega$ is an \emph{indecomposable stacking fault} if $(y_1,y_2) \cap \supp \mu = \emptyset$, if it is horizontal, \ie $y_1 \cdot e_2 = y_2 \cdot e_2$, and if one of the following conditions is satisfied:
    \begin{itemize} 
      \item either $y_1, y_2 \in \supp \mu$, \ie $(y_1,y_2)$ connects two singularities;
      \item or $y_1 \in \supp \mu$, $y_2 \in \de \Omega$ or $y_2 \in \supp \mu$, $y_1 \in \de \Omega$, \ie $(y_1,y_2)$ connects a singularity to the boundary; 
      \item or $y_1, y_2 \in \de \Omega$, \ie $(y_1,y_2)$ connects two boundary points.
    \end{itemize}
    We consider the class of \emph{stacking faults} 
    \[
      \begin{split}
    \Ten(\mu,\Omega) := \Big\{ S \subset \Omega \ : \ & S = \bigcup_{j \in \N} (y_j^1, y_j^2) \cup \NN \, , \ \H^1(\NN) = 0 \, , \ \H^1(S) < +\infty \, , \\
      & (y_j^1, y_j^2) \text{ is an \emph{indecomposable stacking fault} for every } j  \Big\} \, .  
      \end{split}
    \]
    Since the structure $S = \bigcup_{j \in \N} (y_j^1, y_j^2) \cup \NN \in \Ten(\mu,\Omega)$ is unique up to $\H^1$-negligible sets, we say that an \emph{indecomposable stacking fault} $(y_j^1, y_j^2)$ belongs to the decomposition of $S$.
    
    See Figures~\ref{fig:infinite tensions}--\ref{fig:OmegaGamma} for examples.
    \end{definition}

    \begin{remark} \label{rem:countable components}
      Given $S = \bigcup_{j \in \N} (y_j^1, y_j^2) \cup \NN \in \Ten(\mu,\Omega)$, we observe that the set of indices $j \in \N$ such that at least one of $y_j^1, y_j^2$ lies on $\supp \mu$ is finite. Hence, in Definition~\ref{def:connect singularities}, the infinitely many \emph{indecomposable stacking faults} in the decomposition of $S$ can only be those that connect two boundary points. In general, the finiteness of $\H^1(S)$ is not enough to conclude that the family of \emph{indecomposable stacking faults} is finite, see Figure~\ref{fig:infinite tensions}.

      However, for any $\Omega'\wcont\Omega$ we have upon relabeling
      \begin{equation}\label{covering:S-local}
      S \cap\Omega'=\bigcup_{j=1}^N (y_j^1, y_j^2) \cap\Omega'\cup\mathcal{N}
      \end{equation}
    for some $N=N(\Omega')\in\N$, \ie locally $S$ is the union of finitely many horizontal segments. Indeed, let $d\defas\dist(\Omega',\partial\Omega)>0$. Let moreover $j$ be such that $y_j^1, y_j^2 \in \de \Omega$ and $(y_j^1, y_j^2) \cap\Omega' \neq \emptyset$. Then $\H^1((y_j^1, y_j^2))\geq d$, and since $\H^1(S)<+\infty$, there can exist only finitely many segments of this form.
    \end{remark}

    \begin{figure}[ht]
      \centering

          
          

      \includegraphics{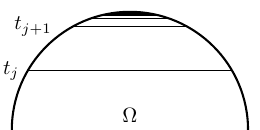}
      \caption{Example of set $S \in \Ten(\Omega;\mu)$ with countably many \emph{indecomposable stacking faults}, the horizontal lines in the picture. To build it, consider  a sequence $(t_j)_{j\in\N}$ such that $\H^1\big(\Pi(t_j e_2)\cap B_1\big)=2^{-j}$. Consider the set $S = \bigcup_{j \in \N} \Pi(t_j e_2)\cap B_1$. Then $\H^1(S) < +\infty$.  }
      \label{fig:infinite tensions}
  \end{figure}
  

    \begin{remark} \label{rem:Sc algebra}
      Note that $\Ten(\mu,\Omega)$ is closed under finite unions, finite intersections, and difference of sets. More precisely, if $S_1, S_2 \in \Ten(\mu,\Omega)$, then $S_1 \cup S_2 \in \Ten(\mu,\Omega)$, $S_1 \cap S_2 \in \Ten(\mu,\Omega)$, and $S_1 \sm S_2 \in \Ten(\mu,\Omega)$.
      
    \end{remark}

    A special subclass of $\Ten(\mu,\Omega)$ is given in the following definition. We will show in Proposition~\ref{prop:countable segments} that discontinuity sets of elements in $\D_{M,\hor}^{1/2}(\Omega)$ satisfy the following property.  

    \begin{definition} \label{def:resolve}
    Let $\mu  = \sum_{h=1}^Md_h\delta_{x_h} \in X_M(\Omega)$ and $S \in \Ten(\mu,\Omega)$. We say that $S$ \emph{resolves dislocations tension} if, additionally, there exists $\sigma > 0$ such that, up to $\H^1$-negligible sets,
    \[
    S \cap B_\sigma(x_h) = \Pi^+(x_h) \cap B_\sigma(x_h)  \quad \text{or} \quad  S \cap B_\sigma(x_h)  = \Pi^-(x_h) \cap B_\sigma(x_h) \, , \quad \text{for } h = 1, \dots, M \, .
    \]
  \end{definition}

  \begin{remark}
    Definition~\ref{def:resolve} is given in this perspective: we will show that for admissible limit  configurations, partial dislocations cannot be isolated in the following sense. Each partial dislocation must be connected either to another partial dislocation, or to the boundary. Two stacking faults cannot stem from one partial dislocation.  \end{remark}

  \EEE

  \smallskip

  \begin{example}[Horizontal cuts] \label{ex:Gamma}
  In the next part of the section we will exploit a specific element $\Gamma \in \Ten(\mu,\Omega)$ consisting of horizontal one-directional cuts through dislocations, see also Figure~\ref{fig:OmegaGamma}. Let $\mu=\sum_{h=1}^M d_h\delta_{x_h}\in X_M(\Omega)$. For every $h = 1, \dots, M$ we let $\Gamma_h$ be the connected component of $\Pi^+(x_h) \cap \Omega$ containing $x_h$. We set $\Gamma := \bigcup_{h=1}^M \Gamma_h$ and we observe that $\Gamma \in \Ten(\mu,\Omega)$. More precisely, the decomposition of $\Gamma$ is a finite family of \emph{indecomposable stacking faults}, given by segments  $(x_h, \bar x)$ either connecting two dislocations (\ie $\bar x \in \supp \mu$) or a dislocation to the boundary (\ie $\bar x \in \de \Omega$).

  \end{example}

  \EEE 

  \smallskip

  \begin{figure}[ht]
    \begin{center}
    \includegraphics{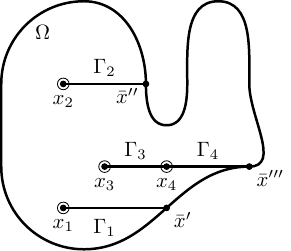}
    \end{center}
    \caption{An example of $\Omega^\sigma(\mu) \sm \Gamma$. Here, $\supp \mu = \{x_1,x_2,x_3,x_4\}$. According to Definition~\ref{def:connect singularities}, the \emph{indecomposable stacking faults} in this example are: $(x_1,\bar x')$, $(x_3, x_4)$, $(x_4,\bar x''')$, $(x_2, \bar x'')$. The \emph{stacking fault} $\Gamma$ does not \emph{resolve dislocations tension} according to Definition~\ref{def:resolve} because of the local aspect of $\Gamma$ around $x_4$.}
    \label{fig:OmegaGamma}
  \end{figure}

  We start by providing a lifting result for fields in $\D_M(\Omega)$. Exploiting the lifting, given $v\in\D_M(\Omega)$, we build a specific spin field $v^{1/2} \in \D_{M,\hor}^{1/2}(\Omega)$ satisfying $( v^{1/2} )^2 = v$ (note that $v^{1/2}$ is just a symbol adopted here). Some steps in the proof of the lifting result are classical, but we provide the details as some of them will turn out useful. 

  \begin{lemma}\label{lem:lifting-v}
  Let $\mu = \sum_{h=1}^M d_h \delta_{x_h} \in X_M(\Omega)$, let $v\in\D_M(\Omega)$ with $J(v)=\pi\mu$, and let $\Gamma$ be as in Example~\ref{ex:Gamma}. Then there exists a lifting $\varphi\in W^{1,1}(\Omega \sm \Gamma)$ satisfying $\exp(\iota\varphi)=v$ a.e.\ in $\Omega$. Moreover, the following conditions are satisfied:
  \begin{enumerate}[label=(\roman*)]
    \item \label{item:H1} $\varphi\in H^1(\Omega^\sigma(\mu) \sm \Gamma)$ for every $\sigma>0$ satisfying~\eqref{cond:sigma};
    \item \label{item:varphi SBV} $\varphi \in SBV(\Omega)$ with $S_\varphi \in \Ten(\mu,\Omega)$;
    \item \label{item:jump varphi constant} $[\varphi]\in 2\pi\Z$ and $[\varphi]$ is constant on the \emph{indecomposable stacking faults} in the decomposition of $S_\varphi$;
    \item \label{item:v root} $v^{1/2} := \exp\big(\iota \frac{\varphi}{2}\big) \in \D_{M,\hor}^{1/2}(\Omega)$ with $S_{v^{1/2}} \in \Ten(\mu,\Omega)$ \emph{resolving dislocations tension} according to Definition~\ref{def:resolve};
    \item \label{item:approximation} for every $\sigma>0$ satisfying~\eqref{cond:sigma}, there exists a sequence $(\varphi_n^\sigma)\subset C^\infty(\Omega^\sigma(\mu) \sm \Gamma)\cap H^1(\Omega^\sigma(\mu) \sm \Gamma)$ satisfying
    \begin{equation}\label{eq:approx-phi}
    \lim_{n\to +\infty}\|\varphi-\varphi_n^\sigma\|_{H^1(\Omega^\sigma(\mu) \sm \Gamma)}=0
    \end{equation}
    and $[\varphi_n^\sigma]=[\varphi]\in 2\pi\Z$ on $\Gamma \cap\Omega^\sigma(\mu)$ for $n$ sufficiently large.
  \end{enumerate}
  \EEE
  \end{lemma}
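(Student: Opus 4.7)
The plan is to construct $\varphi$ as a single-valued primitive of the closed 1-form $2 j(v) = v_1 \nabla v_2 - v_2 \nabla v_1$ on the cut domain $\Omega \setminus \Gamma$. First I would verify that $\Omega \setminus \Gamma$ is open and simply connected: by Example~\ref{ex:Gamma}, $\Gamma$ is a finite union of horizontal segments, each $\Gamma_h$ joining $x_h$ to $\de\Omega$ (possibly passing through other singularities), so removing them from the simply connected $\Omega$ eliminates every nontrivial loop around $\supp\mu$. Since $v \in H^1_\loc(\Omega \setminus \supp\mu;\S^1)$ satisfies $J(v) = 0$ off $\supp\mu$, the form $2 j(v)$ is closed and $L^2_\loc$ on $\Omega \setminus \Gamma$, and the Poincaré lemma on a simply connected domain gives a local primitive $\varphi$ with $\nabla \varphi = 2 j(v)$. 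Using $|v| = 1$ one checks $|\nabla \varphi| = |\nabla v|$ a.e., which combined with $v \in W^{1,1}(\Omega;\S^1)$ yields $\varphi \in W^{1,1}(\Omega \setminus \Gamma)$, while $v \in H^1_\loc(\Omega \setminus \supp\mu;\S^1)$ yields $\varphi \in H^1(\Omega^\sigma(\mu) \setminus \Gamma)$, establishing~\ref{item:H1}. A suitable choice of the additive constant on each connected component ensures $\exp(\iota \varphi) = v$ a.e.\ in $\Omega$.

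To obtain items~\ref{item:varphi SBV}--\ref{item:jump varphi constant} I would analyze the one-sided traces of $\varphi$ on each indecomposable stacking fault $(y_1, y_2)$ of $\Gamma$. On a simply connected neighborhood $U$ of $(y_1, y_2)$ avoiding $\supp\mu \setminus \{y_1, y_2\}$ and meeting $\Gamma$ only along $(y_1, y_2)$, the traces $\varphi^\pm$ exist in $H^{1/2}_\loc((y_1, y_2))$; the identity $\exp(\iota \varphi) = v$ a.e.\ forces $[\varphi] \in 2\pi \Z$ a.e.\ on $(y_1, y_2)$, and since an $H^{1/2}$-function valued in the discrete set $2\pi \Z$ must be locally constant on connected components, $[\varphi]$ equals a single integer multiple of $2\pi$ on $(y_1, y_2)$, giving~\ref{item:jump varphi constant}. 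Because $\H^1(\Gamma) < +\infty$ and the integer jumps are uniformly bounded by the total degree (hence by $M$), extending $\varphi$ to $\Omega$ through one-sided traces produces $\varphi \in SBV(\Omega)$, with $S_\varphi$ equal to the union of the indecomposable stacking faults of $\Gamma$ on which the jump is nonzero. Remark~\ref{rem:Sc algebra} then ensures $S_\varphi \in \Ten(\mu, \Omega)$, completing~\ref{item:varphi SBV}.

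For~\ref{item:v root}, setting $v^{1/2} := \exp(\iota \varphi/2)$ gives $(v^{1/2})^2 = v$. On each indecomposable stacking fault of $\Gamma$ with jump $[\varphi] = 2\pi k$, a direct computation yields $[v^{1/2}] = \exp(\iota \varphi^-/2)((-1)^k - 1)$, so $v^{1/2}$ jumps precisely on those components where $k$ is odd. Hence $S_{v^{1/2}} \subset S_\varphi$, and by Remark~\ref{rem:Sc algebra} also $S_{v^{1/2}} \in \Ten(\mu,\Omega)$; horizontality of $\nu_{v^{1/2}}$ is inherited from $\Gamma$, and $v^{1/2} \in SBV^2_\loc(\Omega \setminus \supp\mu;\S^1)$ follows from $\varphi \in H^1_\loc(\Omega^\sigma(\mu) \setminus \Gamma)$ together with $\H^1(S_{v^{1/2}}) < +\infty$. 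The \emph{resolving dislocations tension} property of Definition~\ref{def:resolve} is checked via~\eqref{eq:stokes}: for $\sigma$ sufficiently small one has $B_\sigma(x_h) \cap \Gamma = \Pi^+(x_h) \cap B_\sigma(x_h)$, and the degree identity $\deg(v, \de B_\rho(x_h)) = d_h = \pm 1$ forces $[\varphi] = 2\pi d_h$ on this half-segment, whose integer coefficient $k = d_h$ is odd.

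For~\ref{item:approximation} I would use that $\Omega^\sigma(\mu) \setminus \Gamma$ is a bounded Lipschitz domain (with $\Gamma$ as part of its boundary), so that smooth functions are dense in $H^1(\Omega^\sigma(\mu) \setminus \Gamma)$. Any approximating sequence $\varphi_n \to \varphi$ in $H^1$ yields convergence of the two-sided traces on $\Gamma$ in $H^{1/2}$, hence $[\varphi_n] \to [\varphi]$ in $H^{1/2}(\Gamma \cap \Omega^\sigma(\mu))$. Since $[\varphi]$ is piecewise constant with values in $2\pi \Z$ on the finitely many components of $\Gamma \cap \Omega^\sigma(\mu)$, one can add to $\varphi_n$ a correction $\chi_n \in H^1(\Omega^\sigma(\mu) \setminus \Gamma)$, supported in a shrinking two-sided tubular neighborhood of $\Gamma$ and smooth there, that enforces $[\varphi_n + \chi_n] = [\varphi] \in 2\pi\Z$ exactly while $\chi_n \to 0$ in $H^1$; taking $\varphi_n^\sigma := \varphi_n + \chi_n$ gives~\eqref{eq:approx-phi}. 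The main technical obstacle in the proof is the SBV analysis of the second paragraph, in particular the $H^{1/2}$-discreteness argument that confines the integer jump to a single value on each indecomposable stacking fault and thereby locates $S_\varphi$ inside the a priori one-dimensional skeleton $\Gamma$; once this structural information is in hand, the remaining regularity, square-root, and smooth-approximation steps are standard, if technical.
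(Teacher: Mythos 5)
Your proposal takes a genuinely different route from the paper's in several places (Poincar\'e lemma on the cut domain rather than the Davila--Ignat SBV lifting; constancy of the jump via ``$H^{1/2}$ plus discrete values $\Rightarrow$ locally constant'' rather than the Stokes'/degree identity~\eqref{eq:stokes}; approximation by mollifying $\varphi$ directly on the slit domain rather than by the Schoen--Uhlenbeck $\S^1$-valued approximation of $v$ followed by lifting). These alternatives are all workable in outline, but there is one substantive gap.

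The verification of the \emph{resolving dislocations tension} property in item~\ref{item:v root} relies on the claim that ``for $\sigma$ sufficiently small one has $B_\sigma(x_h) \cap \Gamma = \Pi^+(x_h) \cap B_\sigma(x_h)$.'' This is false whenever another singularity $x_{h'} \in \supp\mu$ lies on the same horizontal line to the left of $x_h$: then $\Gamma_{h'}$, being the full connected component of $\Pi^+(x_{h'})\cap\Omega$, passes through $x_h$, and one actually has $B_\sigma(x_h)\cap\Gamma = \Pi(x_h)\cap B_\sigma(x_h)$, the full diameter. In that case $[\varphi]$ jumps on \emph{both} half-diameters and one cannot read off the parity from a single degree identity. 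What you need, and what the paper establishes as the dichotomy in its Step~3, is that the two jump values $[\varphi]_{|(x_{h'},x_h)}$ and $[\varphi]_{|(x_h,\bar x)}$ differ exactly by $2\pi d_h$, so that with $d_h\in\{-1,1\}$ precisely one of them lies in $4\pi\Z$; this is what forces $S_{v^{1/2}}\cap B_\sigma(x_h)$ to be exactly one of $\Pi^\pm(x_h)\cap B_\sigma(x_h)$, as Definition~\ref{def:resolve} requires. Without this chain-rule-on-contiguous-faults argument the proposal proves resolution only for the ``isolated'' singularities (those with $\Pi^-(x_h)\cap\Gamma = \{x_h\}$).

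A secondary, smaller point: in item~\ref{item:approximation} you call $\Omega^\sigma(\mu)\setminus\Gamma$ a ``bounded Lipschitz domain'' to invoke density of smooth functions in $H^1$. A slit domain is not Lipschitz in the usual graph sense, so the density of $C^\infty$ requires a short additional remark (e.g., it follows from the segment condition, or by unfolding the slit via a bi-Lipschitz change of variables), or else one should approximate $v$ itself in $H^1(\Omega^\sigma(\mu);\S^1)$ by smooth $\S^1$-valued maps and lift, which is the route the paper takes and which sidesteps the regularity of the slit boundary entirely.
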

  \begin{proof}
  \begin{step}{1}{Construction of $\varphi$ and proof of~\ref{item:H1}}
  Since $v\in W^{1,1}(\Omega;\S^1)\subset SBV(\Omega;\S^1)$, \cite[Theorem 1.1 and Remark 4]{DI03} provides us with a function $\vartheta\in SBV(\Omega)$ such $v=\exp(\iota\vartheta)$ a.e.\ in $\Omega$. The chain rule for $BV$-functions~\cite[Theorem 3.96]{AFP} then implies that
    \begin{equation}\label{eq:chain-rule-v}
    \nabla v\L^2 = \iota v \otimes \nabla\vartheta\L^2\quad\text{and}\quad 0=D^jv= \big(\exp(\iota\vartheta^+)-\exp(\iota\vartheta^-)\big) \otimes \nu_\vartheta\H^1\mres S_\vartheta\,, 
    \end{equation}
  where $\nabla\vartheta$ denotes the approximate gradient of $\vartheta$. From the first equality we deduce that $\nabla\vartheta=2j(v)$ a.e.\ in $\Omega$. 
  This implies, in particular, that $\curl\nabla\vartheta=0$  in $\Omega \sm \Gamma$, since $\curl j(v)=J(v)=\pi\mu$. Thus, by the simply connectedness of $\Omega \sm \Gamma$, there exists $\varphi\in SBV(\Omega) \cap W^{1,1}(\Omega \sm \Gamma)$ with $\nabla\varphi=\nabla\vartheta$ a.e.\ in $\Omega$. In particular, that $S_\varphi \subset \Gamma$. The remainder satisfies $\varphi - \vartheta = \sum_{j\in\N}a_j\mathds{1}_{U_j}$ for some Caccioppoli partition $(U_j)_{j\in\N}$ and $a_j\in\R$ (see, \eg \cite[Theorem 4.23]{AFP}). 
  In Step~3 below, we show that $[\varphi] \in 2 \pi \Z$, $\H^1$-a.e.\ on $S_\varphi$. Since by the second equality in~\eqref{eq:chain-rule-v} we also have that $[\vartheta]\in 2\pi\Z$, $\H^1$-a.e.\ on $S_\vartheta$, we deduce that the jump amplitudes of the Caccioppoli partition belong to $2 \pi \Z$, $\H^1$-a.e.\ and thus, up to removing a constant, we may assume that $a_j \in 2\pi \Z$. With this renormalisation, we obtain $\exp(\iota \varphi) = \exp(\iota \vartheta) = v$ a.e.\ in $\Omega$, hence the desired lifting identity. 

  Eventually, for every $\sigma>0$ satisfying~\eqref{cond:sigma} we have $\|\nabla\varphi\|_{L^2(\Omega^\sigma(\mu) \sm \Gamma)}=\|\nabla v\|_{L^2(\Omega^\sigma(\mu))}<+\infty$, so that $\varphi\in H^1(\Omega^\sigma(\mu) \sm \Gamma)$ (we can always assume that $\|\varphi\|_{L^\infty(\Omega)}\leq 2\pi M$). This proves~\ref{item:H1}. \EEE 
  \end{step}

  \begin{step}{2}{Proof of \ref{item:varphi SBV}--\ref{item:jump varphi constant}} We recall that $S_\varphi \subset \Gamma$, by Step~1. Let us fix an \emph{indecomposable stacking fault} $(x_h, \bar x)$ in the decomposition of $\Gamma$, with $h \in \{1,\dots, M\}$ and $\bar x \in \supp \mu \cup \de \Omega$. Let us show that $[\varphi]$ is constant on $(x_h, \bar x)$. Let us fix $x \in (x_h, \bar x)$ and $\sigma_0 > 0$ such that $B_{\sigma_0}(x) \cap \Gamma = B_{\sigma_0}(x) \cap (x_h, \bar x)$. By~\eqref{eq:stokes}, since $(x_h, \bar x) \cap \supp \mu = \emptyset$, and using the identity $\nabla\varphi=2j(v)$ obtained above we have that for a.e.\ $\sigma \in (0, \sigma_0)$
  \[
    0 =  \deg(v,\partial B_\sigma(x))=\frac{1}{2\pi}\int_{\partial B_\sigma(x)} \nabla\varphi\cdot\tau_{\partial B_\sigma(x)}\dH= - \frac{1}{2\pi}[\varphi](x+\sigma e_1) + \frac{1}{2\pi}[\varphi](x-\sigma e_1)  \, .
  \]
  With this technique, we cover the segment $(x_h, \bar x)$ to deduce that $[\varphi]$ is constant on $(x_h, \bar x)$. It follows that, up to $\H^1$-negligible sets,
  \[
  S_\varphi = \bigcup \{(x_h, \bar x) \text{ \emph{indecomposable stacking fault} in $\Gamma$ such that }  [\varphi]_{|(x_h, \bar x)} \neq 0 \} \in \Ten(\mu,\Omega) \, .
  \]
  We have proven~\ref{item:varphi SBV}--\ref{item:jump varphi constant}. 
  \end{step}

  \begin{step}{3}{$[\varphi] \in 2 \pi \Z$}
    We relate the constant values of $[\varphi]$ to the degree of the singularities by distinguishing two cases as follows. If $\Pi^-(x_h) \cap \Gamma = \{x_h\}$ (\ie $x_h$ is the first singularity on this horizontal line), then for a.e.\ $\sigma>0$ satisfying~\eqref{cond:sigma} we have that 
  \begin{equation} \label{eq:degre and jump-1}
    d_h = \deg(v,\partial B_\sigma(x_h))=\frac{1}{2\pi}\int_{\partial B_\sigma(x_h)} \nabla\varphi\cdot\tau_{\partial B_\sigma(x_h)}\dH=  - \frac{1}{2\pi} [\varphi]_{|(x_h, \bar x)} \,,
  \end{equation}
  hence $[\varphi]_{|(x_h, \bar x)} = - 2 \pi d_h \in\{-2\pi,0,2\pi\}$. Otherwise, if $\Pi^-(x_h) \cap \Gamma \neq \{x_h\}$, then there exists $x_{h'} \in \supp \mu$ with $h' \in \{1, \dots, M\}$ such that $(x_{h'}, x_{h})$ is an \emph{indecomposable stacking fault} in the decomposition of $\Gamma$ contiguous to $(x_h, \bar x)$. Then for a.e.\ $\sigma>0$ satisfying~\eqref{cond:sigma} we have that 
  \begin{equation} \label{eq:degre and jump-2}
    d_h = \deg(v,\partial B_\sigma(x_h)) =  - \frac{1}{2\pi} [\varphi]_{|(x_h, \bar x)} + \frac{1}{2\pi} [\varphi]_{|(x_{h'}, x_h)} \,,
  \end{equation}
  hence $[\varphi]_{|(x_h, \bar x)} = [\varphi]_{|(x_{h'}, x_h)} - 2 \pi d_h$. Using the relations~\eqref{eq:degre and jump-1}--\eqref{eq:degre and jump-2} iteratively on \emph{indecomposable stacking faults} lying on the same horizontal line, we deduce two facts: 
  \begin{enumerate}[label=\alph*)]
    \item $[\varphi]_{|(x_h, \bar x)} \in 2 \pi \Z$ for every \emph{indecomposable stacking fault} $(x_h, \bar x)$ in the decomposition of $\Gamma$;
    \item if $(x_{h'}, x_h), (x_h, \bar x)$ are contiguous \emph{indecomposable stacking faults} in the decomposition of $\Gamma$, then we have the dichotomy: one and only one between $[\varphi]_{|(x_h, \bar x)}$ and $[\varphi]_{|(x_{h'}, x_h)}$ belongs to $4 \pi \Z$. 
  \end{enumerate}
  \end{step}

  \begin{step}{4}{Proof of~\ref{item:v root}} 
    By the chain rule for $BV$ functions~\cite[Theorem 3.96]{AFP} we deduce that $v^{1/2} = \exp(\iota \frac{\varphi}{2}) \in SBV(\Omega;\S^2)$. By Step~1, $(v^{1/2})^2 = \exp(\iota \varphi) = v$ a.e.\ in $\Omega$ and, in particular, $J((v^{1/2})^2) = J(v) = \pi \mu$. By property~\ref{item:H1} and by the chain rule in $H^1$, we deduce that $v^{1/2} \in H^1(\Omega^\sigma(\mu) \sm \Gamma)$ for every $\sigma>0$ satisfying~\eqref{cond:sigma}. It follows that $v^{1/2} \in SBV^2_\loc(\Omega \sm \supp \mu ; \S^1)$ and $S_{v^{1/2}} \subset \Gamma$. From the latter fact it follows that $\H^1(S_{v^{1/2}} \cap \Omega) < +\infty$ and $|\nu_{v^{1/2}} \cdot e_1| = 0$, $\H^1$-a.e.\ on $S_{v^{1/2}}$. With this, we proved all the properties to conclude that $v^{1/2} \in \D_{M,\hor}^{1/2}(\Omega)$. 

    To infer that $S_{v^{1/2}} \in \Ten(\mu,\Omega)$, we rely once more on the chain rule for $BV$ functions to deduce that, up to $\H^1$-negligible sets, 
    \begin{equation} \label{eq:jump set of v root}
    S_{v^{1/2}} = \{ x \in S_{\varphi} \ : \ [\varphi] \in 2 \pi \Z \sm 4 \pi \Z \}   \, .
    \end{equation}
    Using~\ref{item:varphi SBV}--\ref{item:jump varphi constant}, this implies that $S_{v^{1/2}}$ consists of the union of a subfamily of the \emph{indecomposable stacking faults} in the decomposition of $S_\varphi$, hence $S_{v^{1/2}} \in \Ten(\mu,\Omega)$.  

    It remains to prove that $S_{v^{1/2}}$ \emph{resolves dislocations tension} according to Definition~\ref{def:resolve}. Let us consider a singularity $x_h \in \supp \mu$ with $h \in \{1,\dots, M\}$. If $\Pi^-(x_h) \cap \Gamma = \{x_h\}$, then Step~2 yields $[\varphi] = - 2\pi d_h \in 2 \pi \Z \sm 4 \pi \Z$ on $\Pi^+(x_h) \cap B_{\sigma}(x_h)$. By~\eqref{eq:jump set of v root}, it follows that $S_{v^{1/2}} \cap B_{\sigma}(x_h) = \Pi^+(x_h) \cap B_{\sigma}(x_h)$. Otherwise, if $\Pi^-(x_h) \cap \Gamma \neq \{x_h\}$, the dichotomy shown in Step~3b) implies that 
    \[
    S_{v^{1/2}} \cap B_\sigma(x_h) = \Pi^+(x_h) \cap B_{\sigma}(x_h) \quad \text{or}  \quad S_{v^{1/2}} \cap B_\sigma(x_h) = \Pi^-(x_h) \cap B_{\sigma}(x_h) \, ,
    \]
    depending on whether $[\varphi]_{| \Pi^-(x_h) \cap B_{\sigma}(x_h)} \in 4 \pi \Z$ or $[\varphi]_{| \Pi^+(x_h) \cap B_{\sigma}(x_h)} \in 4 \pi \Z$, respectively. This agrees with Definition~\ref{def:resolve}.
  \end{step}

  \begin{step}{5}{Proof of~\ref{item:approximation}}
    We approximate $\varphi$ by piecewise smooth functions. Let $\sigma>0$ satisfying~\eqref{cond:sigma} be fixed. Since $v\in H^1(\Omega^\sigma(\mu);\S^1)$, the approximation theorem for Sobolev maps with values in compact manifolds~\cite[Section 4]{SU83} provides us with a sequence $(v_n)\subset C^\infty(\Omega^\sigma(\mu);\S^1)$ satisfying $v_n\to v$ strongly in $H^1(\Omega^\sigma(\mu);\S^1)$. By the continuity of the degree, for $n$ sufficiently large we know that $\deg(v_n,\partial B_\sigma(x_h))=d_h$ for any $h\in\{1,\ldots,M\}$. Moreover, for any $n\in\N$ there exists a lifting $\varphi_n^\sigma\in C^\infty(\Omega^\sigma(\mu) \sm \Gamma)$ of $v_n$, which thanks to the degree condition and~\eqref{eq:degre and jump-1}--\eqref{eq:degre and jump-2} satisfies for $n$ sufficiently large $[\varphi_n^\sigma] =[\varphi]$, $\H^1$-a.e.\ on $\Gamma$. Eventually, $\nabla\varphi_n^\sigma\to\nabla \varphi$ in $L^2(\Omega^\sigma(\mu) \sm \Gamma;\R^2)$. To see this, it suffices to recall that $\nabla \varphi_n^\sigma=2j(v_n)$ and $\nabla\varphi= 2j(v)$, so that
      \begin{align*}
      \int_{\Omega^\sigma(\mu) \sm \Gamma}|\nabla\varphi_n^\sigma-\nabla\varphi|\dx=2\int_{\Omega^\sigma(\mu) \sm \Gamma}|j(v_n)-j(v)|\dx\ \to 0\;\text{ as}\ n\to+\infty\,
      \end{align*}
    and
      \begin{align*}
      \|\nabla\varphi_n^\sigma\|_{L^2(\Omega^\sigma(\mu) \sm \Gamma)}=\|\nabla v_n\|_{L^2(\Omega^\sigma(\mu) \sm \Gamma)}\to \|\nabla v\|_{L^2(\Omega^\sigma(\mu) \sm \Gamma)}=\|\nabla\varphi\|_{L^2(\Omega^\sigma(\mu) \sm \Gamma)}\;\text{ as}\ n\to+\infty\,.
      \end{align*}
    Hence the claim follows from the Radon-Riesz Theorem.  Up to replacing $\varphi_n^\sigma$ by $\varphi_n^\sigma-\fint_{\Omega^\sigma(\mu) \sm \Gamma} (\varphi_n^\sigma-\varphi)$ we deduce the convergence $\|\varphi_n^\sigma-\varphi\|_{H^1(\Omega^\sigma(\mu) \sm \Gamma)}\to 0$ by applying the Poincar\'{e} inequality.
    \end{step}

  
  \end{proof}
  %

  \begin{figure}[ht]
    \begin{center}
        \includegraphics{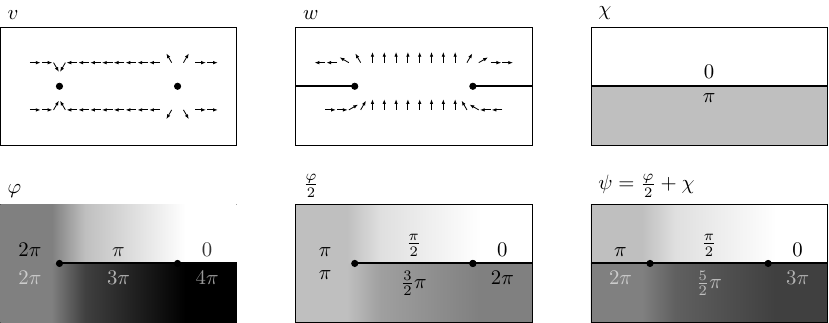}
    \end{center}
    \caption{A schematic example of $w\in\D_{M,\hor}^{1/2}(\Omega)$, $v=w^2 \in \D_M(\Omega)$, and their liftings obtained in Lemma~\ref{lem:lifting-w}. The spin field $v$ has two singularities of degree $1$. The spin field $w$ has two half singularities and jumps on the horizontal segments. The lifting $\varphi$ of $v$ jumps on the horizontal line ($\Gamma$, in the notation introduced above) that contains both singularities. Note that $\frac{\varphi}{2}$ is not a lifting of $w$, since $v^{1/2} = \exp(\iota \frac{\varphi}{2})$ jumps on the segment connecting the two singularities, not drawn in the picture. Adding the partition $\chi$, which jumps on $S_w \cup \Gamma$, gives a lifting $\psi = \frac{\varphi}{2} + \chi$ of $w$.}
    \label{fig:partition}
  \end{figure}
  
  Using similar arguments as in Lemma~\ref{lem:lifting-v} we next construct for any $w\in\D_{M,\hor}^{1/2}(\Omega)$ a suitable lifting $\psi$. 
  We stress that the spin field $v^{1/2}$ built in Lemma~\ref{lem:lifting-v} does not satisfy, in general, $w = v^{1/2}$. Hence $\frac{\varphi}{2}$ is not a lifting of $w$, see also Figure~\ref{fig:partition}. The next lemma shows how to build the correcting term $\chi$ for the lifting.

  \begin{lemma}\label{lem:lifting-w}
  Let $\mu=\sum_{h=1}^Md_h\delta_{x_h} \in X_M(\Omega)$, let $w\in\D_{M,\hor}^{1/2}(\Omega)$ with $v\defas w^2 \in \D_M(\Omega) $ satisfying $J(v)=\pi\mu$, and let \,
  $\Gamma$ be as in Example~\ref{ex:Gamma}. Let moreover $\varphi$ be the lifting of $v$ as in Lemma~\ref{lem:lifting-v}. Then there exists $\chi\in SBV(\Omega;\{0,\pi\})$ such that the angle defined by $\psi\defas\frac{\varphi}{2}+\chi$ satisfies $w=\exp(\iota\psi)$ a.e.\ in $\Omega$ and, up to $\H^1$-negligible sets,
    \begin{equation}\label{inclusion:jumpset-varphi}
    S_w \subset S_\psi \subset 
    S_w\cup \Gamma \,. 
	\end{equation}
Moreover, we have, up to $\H^1$-negligible sets,
\begin{equation} \label{cond:jump w - jump psi}
  S_w=\big\{x\in S_{\psi}\colon [\psi](x)\in \pi\Z\sm 2\pi\Z\big\}\,. 
\end{equation} 
\end{lemma}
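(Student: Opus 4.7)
\textbf{Proof plan for Lemma~\ref{lem:lifting-w}.} The idea is to write $w$ as $\exp(\iota\psi)$ for $\psi=\frac{\varphi}{2}+\chi$, where $\chi\in\{0,\pi\}$ is a two-valued correction accounting for the $\pm$-ambiguity of the complex square root. First I would set $g\defas\exp(\iota\frac{\varphi}{2})$. By the chain rule for $BV$ functions applied to the lifting $\varphi$ from Lemma~\ref{lem:lifting-v}, $g\in SBV(\Omega;\S^1)$ with $g\in H^1_\loc(\Omega^\sigma(\mu)\sm\Gamma;\S^1)$ for every admissible $\sigma$. Since $g^2=\exp(\iota\varphi)=v=w^2$ a.e.\ in $\Omega$, the measurable function $w\cdot\overline{g}$ takes values a.e.\ in $\{-1,+1\}$.

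Next I would show that $w\cdot\overline{g}$ is locally constant on the open set $U\defas\Omega\sm(\Gamma\cup S_w\cup\supp\mu)$. Indeed, $w\in SBV^2_\loc(\Omega\sm\supp\mu;\S^1)$ with jump set $S_w$, so $w\in H^1_\loc(U;\S^1)$; similarly $g\in H^1_\loc(U;\S^1)$ because $S_g\subset S_\varphi\subset\Gamma$ by Lemma~\ref{lem:lifting-v}\ref{item:varphi SBV}. Hence $w\cdot\overline{g}\in H^1_\loc(U;\{-1,+1\})$ is locally constant, and therefore constant on every connected component of $U$. I then define $\chi\colon\Omega\to\{0,\pi\}$ by declaring $\chi\equiv 0$ on components where $w\cdot\overline{g}=1$ and $\chi\equiv\pi$ where $w\cdot\overline{g}=-1$ (the values on the $\L^2$-negligible set $\Gamma\cup S_w\cup\supp\mu$ are irrelevant). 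By Remark~\ref{rem:Sc algebra}, $\Gamma\cup S_w\in\Ten(\mu,\Omega)$ has finite $\H^1$-measure, so $S_\chi\subset\Gamma\cup S_w$ yields $\chi\in SBV(\Omega;\{0,\pi\})$.

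With $\psi\defas\frac{\varphi}{2}+\chi$, the identity $\exp(\iota\psi)=g\cdot\exp(\iota\chi)=g\cdot(w\cdot\overline{g})=w$ holds a.e., which is the desired lifting. For~\eqref{inclusion:jumpset-varphi}, the inclusion $S_\psi\subset S_\varphi\cup S_\chi\subset\Gamma\cup S_w$ follows from additivity of the jump sets together with $S_\varphi\subset\Gamma$. Conversely, if $x\in S_w$ then $w=\exp(\iota\psi)$ forces $\psi$ to jump at $x$, giving $S_w\subset S_\psi$. For~\eqref{cond:jump w - jump psi}, from $2\psi=\varphi+2\chi$ one has $[2\psi]=[\varphi]+2[\chi]\in 2\pi\Z$ thanks to Lemma~\ref{lem:lifting-v}\ref{item:jump varphi constant} and $[\chi]\in\{-\pi,0,\pi\}$, hence $[\psi]\in\pi\Z$; and $w$ jumps at $x\in S_\psi$ exactly when $\exp(\iota[\psi](x))\neq 1$, \ie\ when $[\psi](x)\in\pi\Z\sm 2\pi\Z$.

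The main subtlety is the well-posedness and $SBV$-regularity of $\chi$: on a priori $U$ may have countably many connected components (\cf\ Remark~\ref{rem:countable components}), but the local constancy argument applies on each component separately, and the global $SBV$-regularity is inherited from the finiteness of $\H^1(\Gamma\cup S_w)$ together with the boundedness of $\chi$. Once $\chi$ is in place, the remaining verifications are purely algebraic consequences of $\exp(\iota\varphi)=w^2$ and of the jump structure provided by Lemma~\ref{lem:lifting-v}.
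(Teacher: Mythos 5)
Your proof is correct but follows a genuinely different route from the paper's. The paper re-invokes the $SBV$ lifting theorem of D\'avila--Ignat~\cite{DI03} to obtain an angle $\vartheta\in SBV(\Omega)$ with $w=\exp(\iota\vartheta)$, computes $2\vartheta-\varphi\in SBV(\Omega)$ with vanishing approximate gradient, and then reads $\chi$ off the resulting Caccioppoli partition. You instead never lift $w$: you work directly with the sign function $w\cdot\overline{\exp(\iota\varphi/2)}\in\{-1,+1\}$, show it is locally constant on the open set $U=\Omega\sm(\Gamma\cup S_w\cup\supp\mu)$ by the $H^1_\loc$ regularity of both factors there, and define $\chi$ component-wise. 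Your approach is arguably cleaner since it avoids a second appeal to~\cite{DI03}, but it shifts the technical weight onto the $BV$-regularity of $\chi$: from $\chi$ being locally constant on $U$ you only get that its \emph{approximate} discontinuity set is contained in the rectifiable set $\Gamma\cup S_w\cup\supp\mu$ of finite $\H^1$-measure, and promoting this to $\chi\in BV(\Omega)$ requires Federer's criterion for sets of finite perimeter (\cf~\cite[Theorem~3.61]{AFP}). You flag this subtlety but do not name the tool that closes it; in the paper's route the $BV$-regularity is automatic because $2\vartheta-\varphi$ is a difference of two $SBV$ functions. The remaining algebra (the identity $\exp(\iota\psi)=w$, the two inclusions of~\eqref{inclusion:jumpset-varphi}, and the characterization~\eqref{cond:jump w - jump psi} via $2[\psi]=[\varphi]+2[\chi]\in 2\pi\Z$ and the chain rule for $D^jw$) is the same in substance as the paper's.
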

\begin{proof}
For $w\in\D_{M,\hor}^{1/2}(\Omega)\subset SBV(\Omega;\S^1)$ we use again~\cite[Theorem 1.1 and Remark 4]{DI03} to find a function $\vartheta\in SBV(\Omega)$ satisfying $w=\exp(\iota\vartheta)$ and consequently $v=\exp(2\iota\vartheta)$ and $2\nabla\vartheta=2j(v)=\nabla\varphi$ a.e.\ in $\Omega$. This implies that $2\vartheta-\varphi\in 2\pi\Z$ and $\nabla(2\vartheta-\varphi)=0$ a.e.\ in $\Omega$.
Thus, there exists a Caccioppoli partition $(U_j)_{j\in\N}$ 
such that
	\begin{equation}\label{eq:cacciopoli}
	2\vartheta-\varphi=\sum_{j\in\N}z_j\mathds{1}_{U_j}\;\text{ with}\ z_j\in 2\pi\Z\,,
	\end{equation}
  and such that the discontinuity set of $\sum_{j\in\N}z_j\mathds{1}_{U_j}$ is contained, up to $\H^1$-negligible sets, in $S_\vartheta\cup\Gamma$. We then construct  $\chi\in SBV(\Omega;\{0,\pi\})$ by setting
	\begin{equation*}
	E\defas\bigcup_{\begin{smallmatrix}j\in\N\\ z_j\in 4\pi\Z\end{smallmatrix}} U_j\quad\text{and}\quad\chi\defas \pi \mathds{1}_{\Omega\sm E}
	\end{equation*}	 
Note that \eqref{eq:cacciopoli} implies that $x\in E$ if and only if $w(x)=\exp(\iota\frac{\varphi}{2}(x))$ and $x\in \Omega\sm E$ if and only if $w(x)=\exp(\iota\frac{\varphi}{2}(x)+\iota\pi)$. In particular, $\psi\defas\frac{\varphi}{2}+\chi$ is a lifting of $w$ and it remains to verify~\eqref{inclusion:jumpset-varphi} and~\eqref{cond:jump w - jump psi}. It follows by construction that $[\psi]\in\pi\Z$, $\H^1$-a.e.\ on $S_\psi$. Moreover, again by chain rule, we have $D^jw=\big(\exp(\iota{\psi}^+)-\exp(\iota{\psi}^-)\big) \otimes \nu_{\psi}\H^1\mres S_{\psi}$, whence \eqref{cond:jump w - jump psi}. 
Since for any $x\in S_{\psi}$ with $[\psi](x)=\big[\frac{\varphi}{2}\big](x)+[\chi](x)\in 2\pi\Z$ we necessarily have that $\big[\frac{\varphi}{2}\big]\neq 0$, we deduce~\eqref{inclusion:jumpset-varphi} from~\eqref{cond:jump w - jump psi} together with the definition of $\varphi$. 
\end{proof}
Eventually, the lifting result proven in Lemma~\ref{lem:lifting-w} allows us to deduce a strong structure result for the discontinuity set $S_w$ of a generic $w\in\D_{M,\hor}^{1/2}(\Omega)$ analogous to the one satisfied by $v^{1/2}$ in Lemma~\ref{lem:lifting-v}. 

\begin{proposition} \label{prop:countable segments}
  Let $\mu \in X_M(\Omega)$, let $w\in\D_{M,\hor}^{1/2}(\Omega)$ with $v = w^2 \in \D_M(\Omega)$ satisfying $J(v) = \pi \mu$. Then $S_w \in \Ten(\mu,\Omega)$. Moreover, $S_w$ \emph{resolves dislocations tension} according to Definition~\ref{def:resolve}.
\end{proposition}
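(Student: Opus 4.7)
I would factor $w$ through the spin field $v^{1/2}$ provided by Lemma~\ref{lem:lifting-v} by defining
\[
\eta \defas w \cdot \overline{v^{1/2}} \in SBV(\Omega;\S^1),
\]
where $\overline{\phantom{v}}$ denotes complex conjugation. The identity $\eta^2 = w^2 \cdot \big(v^{1/2}\big)^{-2} = v \cdot v^{-1} = 1$ a.e.\ forces $\eta$ to take values in $\{-1,1\}$; hence $\eta$ corresponds to a Caccioppoli set $F \defas \{\eta = 1\}$. Since at any jump of $v^{1/2}$ or $\eta$ the two one-sided traces must square to the same continuous value (of $v$ or $1$, respectively), both jumps are sign flips, and a four-case analysis on the combinations of jump/no-jump of $v^{1/2}$ and $\eta$ produces the symmetric-difference identity
\[
S_w = S_{v^{1/2}} \, \triangle \, S_\eta \qquad \text{up to $\H^1$-negligible sets.}
\]

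\textbf{Horizontal invariance of $F$.} From the horizontality of $\nu_w$ on $S_w$ and of $\nu_{v^{1/2}}$ on $S_{v^{1/2}}$ (Lemma~\ref{lem:lifting-v}\ref{item:v root}) I would deduce $\nu_\eta \cdot e_1 = 0$ $\H^1$-a.e.\ on $S_\eta$: on $S_\eta \setminus S_{v^{1/2}}$ one has $\nu_\eta = \pm \nu_w$, while on the overlap $S_\eta \cap S_{v^{1/2}}$ the two $\H^1$-rectifiable sets share approximate tangents at $\H^1$-a.e.\ point, so that $\nu_\eta = \pm \nu_{v^{1/2}}$. Since $\eta$ is piecewise constant we have $D \mathds{1}_F = \nu_F\,\H^1 \mres \partial^* F$, so $\partial_{x_1} \mathds{1}_F = 0$ in $\mathcal{D}'(\Omega)$. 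A standard distributional argument then shows that $\mathds{1}_F$ depends only on $x_2$ on every horizontally convex open subset of $\Omega$; in particular, on each open disk $B \subset \Omega$ one has $F \cap B = B \cap (\R \times G_B)$ for some measurable $G_B \subset \R$, and $S_\eta \cap B$ is a union of full horizontal chords of $B$.

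\textbf{Conclusion and main obstacle.} On any ball $B \wcont \Omega \setminus \supp\mu$ the above, together with $\H^1(S_\eta) \leq \H^1(S_w) + \H^1(S_{v^{1/2}}) < +\infty$, makes $S_w \cap B$ a finite union of horizontal chords crossing $B$; covering $\Omega \setminus \supp\mu$ by such balls shows that every maximal connected component of $S_w \setminus \supp\mu$ is an open horizontal segment whose endpoints must belong to $\supp\mu \cup \partial\Omega$ (else the local description would extend $S_w$ past the endpoint), placing $S_w$ in $\Ten(\mu,\Omega)$. For the \emph{resolving dislocations tension} property, fix $x_h \in \supp\mu$ and set $r \defas \dist\big(x_h,\, (\supp\mu \setminus \{x_h\}) \cup \partial\Omega\big) > 0$. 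For $\sigma < r/2$, any chord of $S_\eta \cap B_\sigma(x_h)$ at a height $t \neq x_h \cdot e_2$ lies globally in an indecomposable stacking fault whose endpoints are at distance $\geq r$ from $x_h$, hence has length at least $2\sqrt{r^2 - \sigma^2} > r$; the bound $\H^1(S_\eta) < +\infty$ thus permits only finitely many such heights $t$, and shrinking $\sigma$ further excludes them, so that $S_\eta \cap B_\sigma(x_h) \in \{\emptyset,\, \Pi(x_h) \cap B_\sigma(x_h)\}$. Combined with the dichotomy $S_{v^{1/2}} \cap B_\sigma(x_h) \in \{\Pi^+(x_h) \cap B_\sigma(x_h),\, \Pi^-(x_h) \cap B_\sigma(x_h)\}$ from Lemma~\ref{lem:lifting-v}\ref{item:v root}, the symmetric-difference identity then delivers the same dichotomy for $S_w \cap B_\sigma(x_h)$. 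The principal technical subtlety I anticipate lies in the clean identification of $\nu_\eta$ on the overlap $S_\eta \cap S_{v^{1/2}}$ via the shared-tangent property of rectifiable sets, and the careful passage from $\nu_\eta \cdot e_1 = 0$ to the distributional identity $\partial_{x_1} \mathds{1}_F = 0$ underpinning the horizontal invariance of $F$.
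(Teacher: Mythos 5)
Your proposal is correct and follows essentially the same route as the paper: your spin field $\eta = w\cdot\overline{v^{1/2}}$ coincides with the paper's $R=\exp(\iota\chi)$ from Lemma~\ref{lem:lifting-w}, and the symmetric-difference identity $S_w=(S_{v^{1/2}}\sm S_\eta)\cup(S_\eta\sm S_{v^{1/2}})$, the horizontal lamination of $\{\eta=1\}$ obtained from $D\mathds{1}_F\cdot e_1=0$, and the local dichotomy near each singularity via Lemma~\ref{lem:lifting-v}\ref{item:v root} all match the paper's Steps 1--3 and their aftermath. The only modest shortcut is defining $\eta$ directly as a ratio of spin fields rather than passing through the $SBV$ lifting $\chi$ of Lemma~\ref{lem:lifting-w}.
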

\begin{proof}
  Let $v = w^2 \in \D_M(\Omega)$ and let $\mu,\varphi, \psi, \chi$ be as in Lemma~\ref{lem:lifting-w}. By the formula $\chi = \psi - \frac{\varphi}{2}$ we deduce that $S_\chi \subset S_{\psi} \cup S_{\varphi}$. By Lemma~\ref{lem:lifting-v} we have that $S_\varphi \subset \Gamma$ and by Lemma~\ref{lem:lifting-w} we have that $S_\psi \subset S_w \cup \Gamma$ (both inclusions up to $\H^1$-negligible sets). We deduce that $S_\chi \subset S_w \cup \Gamma$, up to $\H^1$-negligible sets. Since by definition of $\D_{M,\hor}^{1/2}(\Omega)$ we have that the approximate normal to $S_w$ is vertical $\H^1$-a.e.\ on $S_w$, we infer that 
  \begin{equation} \label{eq:chi horizontal jumps}
    \nu_\chi \cdot e_1 = 0 \quad \H^1\text{-a.e.\ on }  S_\chi\, .
  \end{equation}
  However, $\chi \in SBV(\Omega;\{0,\pi\})$ is a partition function, hence the previous condition enforces lamination. More precisely, $S_\chi \in \Ten(\mu,\Omega)$, where, in fact, $S_\chi$ is the countable union of horizontal segments connecting boundary points. Moreover, $[\chi]$ is constant $\H^1$-a.e.\ on these horizontal segments. These facts are classical, but we provide the details for the reader's convenience in Steps~1--3 below.

  \begin{step}{1}{Independence from $x_1$ in a rectangle} We start by working on the rectangle $\Omega = (0,1) \x (-\ell,\ell)$, with $\ell > 0$. By~\eqref{eq:chi horizontal jumps} we have that $D \chi \cdot e_1 = 0$ in the distributional sense. Then $\chi(x) = \chi(x + s e_1)$ for a.e.\ $x$ satisfying $x+s e_1 \in \Omega$, \ie $\chi$ is independent of $x_1$.\footnote{This fact can bee proven via an approximation argument. Fix two rectangles $\Omega'' \subset \subset \Omega' \subset \subset \Omega$ and a mollification kernel $\eta_j$.  Observe that $D(\eta_j * \chi)\cdot e_1 = \eta_j * D \chi \cdot e_1 = 0$. The functions $\eta_j * \chi$ are smooth and $\Omega'$ is convex, thus the functions $\eta_j * \chi$ depend on the $x_2$-variable only. Moreover, we have that $\eta_j * \chi(x) \to \chi(x)$ for a.e.\ $x \in \Omega''$ and $\eta_j * \chi(x) = \eta_j * \chi(x+s e_1) \to \chi(x+s e_1)$ for a.e.\ $x \in \Omega''$. Hence $\chi(x) = \chi(x + s e_1)$ for a.e.\ $x \in \Omega''$. The argument can then be repeated to cover the whole $\Omega$.}
  \end{step}

  \begin{step}{2}{The discontinuity set in a rectangle consists of finitely many horizontal lines} Let $\Omega = (0,1) \x (-\ell,\ell)$, with $\ell > 0$. By the previous step we have that, for $r$ small enough and $c\in\R$, 
    \[
     \fint_{B_r(x + s e_1)} |\chi(y) - c|\d y = \fint_{B_r(x)} |\chi(y + s e_1) - c|\d y = \fint_{B_r(x)} |\chi(y ) - c| \d y \, ,
    \]
   hence $x\in S_\chi \iff x + s e_1 \in S_\chi$. This implies that there exists a family $\{L_i\}_{i \in \mathcal{I}}$ of disjoint lines of the form $L_i = (0,1) \x \{t_i\}$ such that $S_\chi \cap \Omega = \bigcup_{i \in \mathcal{I}} L_i$. The set of indices $\mathcal{I}$ is, in fact, a finite set. Indeed, since $\H^1(S_\chi \cap \Omega) < +\infty$, we get that for every finite subset $\overline{\mathcal{I}} \subset \mathcal{I}$
   \[
   \# \overline{\mathcal{I}} = \sum_{j \in \overline{\mathcal{I}}} \H^1(L_j) \leq \H^1(S_\chi \cap \Omega) < +\infty \, ,
   \]
   hence the cardinality of $\overline{\mathcal{I}}$ is equibounded. 
  \end{step}

  \begin{step}{3}{The discontinuity set in $\Omega$ consists of segments that connect boundary points} Let $\Omega$ be as in the rest of the paper, see Subsection~\ref{sec:limit-fields}. We cover $\Omega$ with countably many open squares and we apply Steps~1--2 to deduce that $S_\chi$ is a countable union of horizontal segments. Let us fix one of these segments, let $x \in S_\chi$ be a point of this segment, and let us consider the horizontal line $\Pi(x)$. The section $\Pi(x) \cap \Omega$ is relatively open on the line $\Pi(x)$ and is thus the countable union of its connected components, given by segments whose extrema lie on $\de \Omega$. Let $(y_1,y_2)$ be the connected component of $\Pi(x) \cap \Omega$ containing $x$, with $y_1,y_2 \in \de \Omega$. We claim that $(y_1,y_2) \subset S_\chi$. Indeed, for every $\delta > 0$ there exists $\ell > 0$ such that the rectangle $(y_1 + \delta e_1, y_2 - \delta e_1) \times \ell (-e_1,e_2)$ is fully contained in~$\Omega$. (For, otherwise, one could find a sequence of points outside $\Omega$ converging to a point of $\de \Omega$ on the closed segment $[y_1  + \delta e_1 , y_2  - \delta e_2]$, contradicting the connectedness of $(y_1, y_2)$.) Applying Step~1--2, we deduce that $(y_1  + \delta e_1 , y_2  - \delta e_2) \subset S_\chi$. Letting $\delta \to 0$, we prove the claim. This concludes the proof of the fact that $S_\chi \in \Ten(\mu,\Omega)$ and $S_\chi$ is the countable union of horizontal segments connecting boundary points.
  \end{step}

  \smallskip 

  We are now in a position to prove that  $S_w \in \Ten(\mu,\Omega)$ using the closure properties of $\Ten(\mu,\Omega)$ illustrated in Remark~\ref{rem:Sc algebra}. We define the spin field $R := \exp(\iota \chi)$, which with the identification $\R^2 \simeq \C$ can be interpreted as a piecewise constant rotation acting on $\C$. By the chain rule for $BV$ functions~\cite[Theorem 3.96]{AFP}, we have that $R \in SBV(\Omega; \{e_1, -e_1\})$ with $S_R=S_\chi$. In particular, $S_R \in \Ten(\mu,\Omega)$ and the \emph{indecomposable stacking faults} in the decomposition of $S_R$ connect boundary points. By Lemma~\ref{lem:lifting-w}, we have that 
  \[
    w = \exp(\iota \psi) = \exp\Big(\iota \frac{\varphi}{2} + \iota \chi\Big) = \exp\Big(\iota \frac{\varphi}{2}\Big) \exp(\iota \chi ) = v^{1/2} R \, ,
  \]
  where $v^{1/2}$ is the spin field defined in Lemma~\ref{lem:lifting-v}. By the product rule, see~\cite[Example~3.97]{AFP}, we have that, up to $\H^1$-negligible sets, $S_w \subset S_{v^{1/2}} \cup S_R$ and 
  \[
  [w] = \begin{cases}
    [v^{1/2}] \tilde  R \, , & \text{$\H^1$-a.e.\ on } S_{v^{1/2}} \sm S_R \, , \\ 
    \tilde{v^{1/2}} [R] \, , & \text{$\H^1$-a.e.\ on } S_R \sm S_{v^{1/2}} \, , \\ 
    [ v^{1/2} R ] \, ,   & \text{$\H^1$-a.e.\ on } S_{v^{1/2}} \cap S_R \, ,
  \end{cases}
  \]
  where $\tilde{v^{1/2}}$ and $\tilde R$ are the precise representatives of $v^{1/2}$ and $R$, respectively. However, let us fix $x \in S_{v^{1/2}} \cap S_R$ such that the jumps are well-defined (this occurs $\H^1$-a.e.). Then, by~\eqref{eq:jump set of v root} we have that $[\frac{\varphi}{2}](x) \in \pi \Z \sm 2 \pi \Z$. On the other hand, since $S_R = S_\chi$, we have that $[\chi](x) = \pm \pi$. It follows that $[\psi](x) = [\frac{\varphi}{2}](x) + [\chi](x) \in 2 \pi \Z$. Hence, by~\eqref{cond:jump w - jump psi}, $\H^1$-a.e.\ of such $x$'s do not belong to $S_w$. We conclude that, up to $\H^1$-negligible sets,
  \begin{equation} \label{eq:Sw}
    S_w = (S_{v^{1/2}} \sm S_R) \cup (S_R \sm S_{v^{1/2}}) \, .
  \end{equation}
  Figure~\ref{fig:partition} shows an instance of this equality.  Since both $S_{v^{1/2}}, S_R \in \Ten(\mu,\Omega)$, by the closure properties in Remark~\ref{rem:Sc algebra} we deduce that $S_w \in \Ten(\mu,\Omega)$.

  \smallskip
  
  We are left to prove that $S_w$ \emph{resolves dislocations tension}. Let us fix $x_h \in \supp \mu$ with $h \in \{1,\dots, M\}$. A ball $B_\sigma(x_h)$ intersects finitely many \emph{indecomposable stacking faults} in the decomposition of $S_w$, see Remark~\ref{rem:countable components}. Thus, considering $\sigma > 0$ small enough, we can assume that  $B_\sigma(x_h)$ intersects only one \emph{indecomposable stacking fault}. By Lemma~\ref{lem:lifting-v}-\ref{item:v root}, $S_{v^{1/2}}$ \emph{resolves dislocations tension}, hence $S_{v^{1/2}} \cap B_\sigma(x_h) = \Pi^{\pm}(x_h) \cap B_\sigma(x_h)$ by Definition~\ref{def:resolve}.  If $S_R \cap B_\sigma(x_h) = \emptyset$, by~\eqref{eq:Sw} this implies that, up to $\H^1$-negligible sets, 
  \[
    S_{w} \cap B_\sigma(x_h) = (S_{v^{1/2}} \sm S_R)  \cap B_\sigma(x_h) = \Pi^{\pm}(x_h) \cap B_\sigma(x_h) \, .
    \]
    Otherwise, if $S_R \cap B_\sigma(x_h) \neq \emptyset$, then $S_R \cap B_\sigma(x_h) = \Pi(x_h) \cap B_\sigma(x_h)$, since the \emph{indecomposable stacking faults} in the decomposition of $S_R$ connect boundary points. By~\eqref{eq:Sw}, it follows that, up to $\H^1$-negligible sets, 
    \[
      S_{w} \cap B_\sigma(x_h) = (S_R \sm S_{v^{1/2}})  \cap B_\sigma(x_h) = (\Pi(x_h) \sm \Pi^{\pm}(x_h) )\cap B_\sigma(x_h) = \Pi^\mp(x_h) \cap B_\sigma(x_h) \, .
      \]
    In both cases, the condition in Definition~\ref{def:resolve} is satisfied, and we obtain the desired result.

\end{proof}

\begin{proposition} \label{prop:w given S}
  Let $\mu \in X_M(\Omega)$ and let $S \in \Ten(\mu,\Omega)$ \emph{resolve dislocations tension}. Let $v \in \D_M(\Omega)$ satisfy $J(v) = \pi \mu$. Then there exists $w \in \D_{M,\hor}^{1/2}(\Omega)$ such that $w^2 = v$ and $S_w = S$ up to an $\H^1$-negligible set. 
\end{proposition}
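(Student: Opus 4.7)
The plan is to reduce to a jump-adjustment problem by fixing an arbitrary preliminary square root and then multiplying it by a suitable locally constant $\{+1,-1\}$-valued function whose reduced boundary is exactly $S \triangle S_{v^{1/2}}$. First, I apply Lemma~\ref{lem:lifting-v}\ref{item:v root} to $v$ to produce $v^{1/2} \in \D_{M,\hor}^{1/2}(\Omega)$ with $(v^{1/2})^2 = v$ and $S_{v^{1/2}} \in \Ten(\mu,\Omega)$ resolving dislocations tension. Then I set $T := (S \sm S_{v^{1/2}}) \cup (S_{v^{1/2}} \sm S)$. Because both $S$ and $S_{v^{1/2}}$ coincide near each singularity $x_h$ with one of the two half-lines $\Pi^\pm(x_h) \cap B_\sigma(x_h)$, their symmetric difference inside $B_\sigma(x_h)$ is, up to $\H^1$-negligible sets, either empty or the full horizontal line $\Pi(x_h) \cap B_\sigma(x_h)$. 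Using the closure properties of $\Ten(\mu,\Omega)$ recorded in Remark~\ref{rem:Sc algebra}, it follows that $T \in \Ten(\mu,\Omega)$ and, in fact, every \emph{indecomposable stacking fault} in the decomposition of $T$ connects two boundary points of $\Omega$.

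The main step is then to construct a partition function $\chi \in SBV(\Omega;\{0,\pi\})$ with $S_\chi = T$ up to $\H^1$-negligible sets. Write $T = \bigcup_{j\in\N}(y_j^1,y_j^2) \cup \NN$ with $y_j^1, y_j^2 \in \de\Omega$, $y_j^1\cdot e_2 = y_j^2\cdot e_2 =: t_j$, and define, for $x=(x_1,x_2)\in\Omega$,
\[
N(x) := \#\big\{j\in\N : t_j > x_2,\ x_1 \in (y_j^1\cdot e_1,\,y_j^2\cdot e_1)\big\}\,, \qquad \chi(x) := \pi\cdot \big(N(x) \bmod 2\big)\,.
\]
Fubini gives $\int_\Omega N\,\d x \le \diam(\Omega)\sum_j \H^1((y_j^1,y_j^2)) \le \diam(\Omega)\,\H^1(T) < +\infty$, so $N$ is finite a.e. Crossing an indecomposable segment of $T$ at height $t_j$ from below to above decreases $N$ by exactly $1$, so $\chi$ jumps by $\pi$; horizontal motion only changes $N$ on an $\H^1$-null set. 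This shows $\chi \in SBV(\Omega;\{0,\pi\})$ with $S_\chi = T$ up to $\H^1$-negligible sets, and, by construction, $|\nu_\chi\cdot e_1| = 0$ $\H^1$-a.e.\ on $S_\chi$.

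Finally, I set $w := v^{1/2}\exp(\iota\chi)$ and verify the claims. Since $\chi$ is a partition function, $\nabla \chi = 0$ a.e.\ in $\Omega$; together with $v^{1/2}\in\D_{M,\hor}^{1/2}(\Omega)$ this yields $w\in SBV(\Omega;\S^1)$, $w\in SBV^2_\loc(\Omega\sm\supp\mu;\S^1)$, $\H^1(S_w \cap \Omega) < +\infty$ (since $S_w \subset S_{v^{1/2}} \cup S_\chi$) and $|\nu_w \cdot e_1|=0$ $\H^1$-a.e.\ on $S_w$. Also $w^2 = (v^{1/2})^2\exp(2\iota\chi) = v$, so $J(w^2)=\pi\mu$. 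To identify $S_w$, I use the product rule for $BV$ functions exactly as in the derivation of~\eqref{eq:Sw} in Proposition~\ref{prop:countable segments}: on $S_{v^{1/2}}\cap S_\chi$ the jumps cancel because $[\chi]=\pm\pi$ makes $R^+=-R^-$ while $[\frac{\varphi}{2}]\in\pi\Z\sm 2\pi\Z$ makes $(v^{1/2})^+=-(v^{1/2})^-$, whereas on the symmetric difference the jump is nontrivial. Hence, up to $\H^1$-negligible sets,
\[
S_w = \big(S_{v^{1/2}}\sm S_\chi\big)\cup\big(S_\chi\sm S_{v^{1/2}}\big) = S_{v^{1/2}}\triangle T = S_{v^{1/2}}\triangle (S\triangle S_{v^{1/2}}) = S\,,
\]
as required. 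The one step that requires real care is the construction of $\chi$: the segments in $T$ may be countable and stack on common horizontal lines, so one must verify both the finiteness of $N$ a.e.\ and that $\chi$ actually jumps (rather than merely might jump) across every segment of $T$; the horizontality of $T$ and the boundary-to-boundary structure of its components are precisely what make this elementary counting argument work.
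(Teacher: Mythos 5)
The overall strategy---constructing $w = v^{1/2}\exp(\iota\chi)$ with $\chi$ a $\{0,\pi\}$-valued partition function satisfying $S_\chi = T := S \triangle S_{v^{1/2}}$, where both $S$ and $S_{v^{1/2}}$ resolve dislocations tension so that $T$ decomposes into boundary-to-boundary segments---is the same as the paper's. The gap is in the crossing-number construction of $\chi$: the claim that ``horizontal motion only changes $N$ on an $\H^1$-null set'' is false when $\Omega$ is simply connected but not convex. Take for instance $\Omega = \big((0,3)\times(0,3)\big)\setminus\big([1,3]\times[1,2]\big)$ and $T = \big((0,3)\times\{1/2\}\big)\cup\big((0,1)\times\{3/2\}\big)$; both pieces are legitimate boundary-to-boundary \emph{indecomposable stacking faults}, since $(1,3/2)\in\de\Omega$. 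On the connected bottom strip $(0,3)\times(0,1/2)$ of $\Omega\setminus T$, your $N$ equals $2$ for $x_1<1$ and $1$ for $x_1>1$, so $\chi$ jumps across the vertical segment $\{1\}\times(0,1/2)$, which has positive $\H^1$-measure and is not contained in $T$. Hence $S_\chi \neq T$ up to $\H^1$-null sets, and $w$ would have a spurious vertical piece in its jump set, incompatible with $\D_{M,\hor}^{1/2}(\Omega)$.

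What goes wrong is a conflation of two null sets: the set of abscissas $x_1$ at which $N(\cdot\,,x_2)$ may jump in the horizontal direction (the countable set of segment-endpoint projections, Lebesgue null in $\R$) versus the corresponding subset of $\Omega$ (a countable union of vertical segments, generically of positive $\H^1$-measure). The paper avoids this by first establishing, via the simple connectedness of $\Omega$ and Jordan's curve theorem, that each boundary-to-boundary horizontal segment disconnects $\Omega$ into \emph{exactly two} connected components, and then building $\chi$ inductively by flipping its value on one of the two components of $\Omega\setminus(y_j^1,y_j^2)$. The vertical-ray crossing number agrees with this topological construction only when $\Omega$ is convex, i.e.\ exactly when every segment of $T$ spans the full horizontal slice of $\Omega$ and $N$ really is slice-wise constant; this is the simplification alluded to in the footnote of the paper's proof. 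You would need to replace the explicit formula for $\chi$ by the topological argument to close the gap.
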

\begin{proof}
We apply Lemma~\ref{lem:lifting-v} to construct $\varphi$ and $v^{1/2}$. By the properties of $\Ten(\mu,\Omega)$, we have that 
\begin{equation} \label{eq:Sv12 S}
\tilde S = (S_{v^{1/2}} \sm S) \cup (S \sm S_{v^{1/2}})   \in \Ten(\mu,\Omega) \, .
\end{equation}
Both $S_{v^{1/2}}$ and $S$ \emph{resolve dislocations tension}. By Definition~\ref{def:resolve} there exists $\sigma > 0$ such that, up to $\H^1$-negligible sets, 
\[
\tilde S \cap B_\sigma(x_h) = \Pi(x_h) \cap B_\sigma(x_h) \quad \text{or} \quad   \tilde S \cap B_\sigma(x_h) = \emptyset \, , \quad \text{for } h = 1,\dots,M \, .
\]
This implies that, up to $\H^1$-negligible sets, $\tilde S = \bigcup_{j \in \N} (y_j^1, y_j^2)$, where $(y_j^1, y_j^2) \subset \Omega$ and $y_j^1, y_j^2 \in \de \Omega$.
 
We prove below that there exists $\chi \in SBV(\Omega;\{0,\pi\})$ such that $S_\chi = \tilde S$. Given such a $\chi$, we set $\psi := \frac{\varphi}{2} + \chi$ and $w := \exp(\iota \psi)$. By definition, $w^2 = v$. As in the proof of~\eqref{eq:Sw}, by~\eqref{eq:Sv12 S} we have that 
\[
S_w = (S_{v^{1/2}} \sm \tilde S) \cup (\tilde S \sm S_{v^{1/2}}) = S \, .
\]
This would conclude the proof. 

The rest of the proof is dedicated to the proof of existence of $\chi$, which can be interpreted as a ``two color theorem''. We stress that this proof require some care, as the topology of $\Omega$ might act as an obstruction: We need to exploit the simply connectedness of $\Omega$, as dropping this assumption allows one to exhibit counterexamples to the existence.\footnote{If $\Omega$ is convex, the proof can be simplified.}

We split the proof in steps. 

\begin{step}{1}{Segments disconnect $\Omega$} We claim that $\Omega \sm (y_j^1,y_j^2)$ is disconnected for every $j \in \N$. Let us argue by contradiction, assuming that $\Omega \sm (y_j^1, y_j^2)$ is connected. Let us fix $x_0 \in (y_j^1,y_j^2)$ and $\sigma > 0$ such that $B_\sigma(x_0) \subset \Omega$. Let $x_1, x_2 \in B_\sigma(x_0)$ be such that $x_1 \cdot e_2 > x_0 \cdot e_2$ and $x_2 \cdot e_2 < x_0 \cdot e_2$. Since we are assuming that $\Omega \sm (y_j^1,y_j^2)$ is connected (thus arc-connected), there exists a simple curve $\gamma \colon [0,1] \to \Omega \sm (y_j^1,y_j^2)$ such that $\gamma(0) = x_1$, $\gamma(1) = x_2$. Since $(\Omega \sm (y_j^1,y_j^2)) \sm [x_1,x_2]$ is connected too,\footnote{This is a consequence of an application of the Mayer-Vietoris sequence.} we can assume that $\gamma$ does not intersect the segment $[x_1,x_2]$. By concatenating $\gamma$ and $[x_1,x_2]$, we construct a simple loop $\tilde \gamma \colon \S^1 \to \Omega$. By Jordan's curve theorem, $\tilde \gamma$ disconnects $\R^2$ in two connected components. By construction, one of the two points $y_j^1$ and $y_j^2$ (say $y_j^1$) belongs to the bounded connected component, hence $\tilde \gamma \colon \S^1 \to \R^2 \sm \{y_j^1\}$ is in the nontrivial homotopy class. This contradicts the fact that $\tilde \gamma$ is homotopic in $\Omega$ to a point.
\end{step}

\begin{step}{2}{Segments disconnect $\Omega$ in exactly two connected components} Let us show that $\Omega \sm (y_j^1,y_j^2)$ consists of two connected components. Let us argue by contradiction, assuming that $\Omega \sm (y_j^1,y_j^2)$ consists of at least three connected components. There exist two points $x_1,x_2 \in \Omega \sm (y_j^1,y_j^2)$ belonging to two distinct connected components such that $x_1 \cdot e_2 > y_j^1 \cdot e_2$ and $x_2 \cdot e_2 < y_j^1 \cdot e_2$. Let $x_3 \in \Omega \sm (y_j^1,y_j^2)$ be a point in a third distinct connected component with $x_3 \cdot e_2 \neq y_j^1 \cdot e_2$. We assume without loss of generality that $x_3 \cdot e_2 > y_j^1 \cdot e_2$. By the path-connectedness of $\Omega$ there is a continuous curve $\gamma \colon [0,1] \to \Omega$ such that $\gamma(0) = x_1$, $\gamma(1) = x_3$. We observe that 
  \[
  \{ t \in [0,1] \text{ such that } \gamma(t) \in (y_j^1,y_j^2)  \}  \neq \emptyset \, ,
  \]
  since $x_1$ and $x_3$ belong to distinct connected components of $\Omega \sm (y_j^1,y_j^2)$. We let 
  \[
  t_1 := \inf   \{ t \in [0,1] \text{ such that } \gamma(t) \in (y_j^1,y_j^2)  \}  \, , \quad  t_2 := \sup   \{ t \in [0,1] \text{ such that } \gamma(t) \in (y_j^1,y_j^2)  \}  \, .
  \]
  Observe that $\gamma(t_1)$, $\gamma(t_2) \in (y_j^1,y_j^2)$. Fix $\eta > 0$ such that $[\gamma(t_1),\gamma(t_2)]+B_\eta \subset \Omega$. There exists $\delta > 0$ such that $\tilde x_1 := \gamma(t_1 - \delta)$, $\tilde x_2 := \gamma(t_2 + \delta) \in [\gamma(t_1),\gamma(t_2)]+B_\eta$ and $\tilde x_1 \cdot e_2 > y_j^1 \cdot e_2$ and $\tilde x_2 \cdot e_2 > y_j^1 \cdot e_2$. The segment $[\tilde x_1, \tilde x_2]$ is contained in $[\gamma(t_1),\gamma(t_2)]+B_\eta$. By concatenating $\gamma_{| [0,t_1-\delta]}$, $[\tilde x_1, \tilde x_2]$, and $\gamma_{| [t_2+\delta,1]}$ we construct a path in $\Omega \sm (y_j^1,y_j^2)$ connecting $x_1$ and $x_3$, contradicting the initial assumption.
  
\end{step}

\begin{step}{3}{Construction of $\chi$} We construct $\chi$ with an induction argument on the index $j$ of the collection of segments. We set $\chi_0 := 0$. Assume that we have defined $\chi_{j-1}$ for $j \geq 1$. By the previous step, we have that $\Omega \sm (y_j^1,y_j^2) = \Omega_-^j  \cup  \Omega_+^j$, where $\Omega_-^j$ ans $\Omega_+^j$ are the two distinct connected components. Then we set 
  \[
  \chi_j = \chi_{j-1} \mathds{1}_{\Omega_+^j} + (\pi - \chi_{j-1}) \mathds{1}_{\Omega_-^j} \, .  
  \]
  We observe that $\chi_j \in SBV(\Omega;\{0,\pi\})$ and $S_{\chi_j} = \bigcup_{i=1}^j (y_i^1,y_i^2)$. This is obtained by observing that for every $x \in \Omega \sm \bigcup_{i=1}^j (y_i^1,y_i^2)$ there exists $\sigma > 0$  such that $B_\sigma(x) \subset \Omega \sm \bigcup_{i=1}^j (y_i^1,y_i^2)$ and $\chi_j$ is constantly equal to either $0$ or $\pi$ in  $B_\sigma(x)$. If instead $x \in (y_i^1,y_i^2)$, there exists $\sigma > 0$  such that $B_\sigma(x)$ intersect only the segment $(y_i^1,y_i^2)$ and $\chi_j$ is equal to $0$ on one side $(y_i^1,y_i^2)$ of and $\pi$ on the other. Eventually, the function $\chi$ is defined as the weak* limit in $SBV$ of the sequence $\chi_j$. One sees that it satisfies $S_\chi = \bigcup_{j \in \N} (y_j^1,y_j^2)$ by noticing that for each $\Omega' \subset \subset \Omega$ and for $j$ large enough, the sequence $\chi_j$ restricted to $\Omega'$ is constant in $j$. 
  
\end{step}

\end{proof}

\EEE
\section{$\Gamma$-limit of $F_\e^\p$}\label{sec:proof-main-field}

\subsection{Statement of the result} We are now in a position to state the following compactness and $\Gamma$-convergence result for the energies $F_\e^\p$ defined in~\eqref{def:F-part} after removing the logarithmic contribution of $M$ limiting singularities. It can be seen as a more general version of Theorem~\ref{thm:main2}. In fact here we consider limiting energies that still depend on the fields, in contrast with Theorem~\ref{thm:main2} where the energy is minimised in the fields for any given configuration of singularities. The result will thus be the key ingredient to prove Theorem~\ref{thm:main2} and is at the same time the second main result of this paper.

\smallskip
We recall that $\Omega\subset\R^2$ is an open, bounded, and simply connected subset of $\R^2$ with Lipschitz boundary and  $M\in\N$ is a fixed positive integer. 
For any two functions $w_1,w_2\in\D_{M,\hor}^{1/2}(\Omega)$ satisfying $w_1^2=w_2^2=:v$ we set
	\begin{equation}\label{def:gamma-limit-w}
	F^\p(w_1,w_2,\Omega)\defas \frac{M}{4}\gamma+\frac{1}{4}\WW(v,\Omega)+ \frac{\alpha}{2} \big(\H^1(S_{w_{1}})+\H^1(S_{w_{2}})\big)\,,  
	\end{equation}
where $\WW(v,\Omega)$ and $\gamma$ are as in~\eqref{def:ren-energy-v} and~\eqref{def:gamma}, respectively.
The meaning of the two configurations $w_1$, $w_2$ will become clear after the statement of the following compactness and $\Gamma$-convergence result.
\begin{theorem}\label{thm:main}
Let $F_\e^\p$ and $F^\p$ be as in~\eqref{def:F-part} and~\eqref{def:gamma-limit-w}, respectively. Then the following $\Gamma$-convergence result holds.
\begin{enumerate}[label=(\roman*)]
\item (Compactness) Suppose that $(u_\e)_\e$ is a sequence of configurations $u_\e\in\AD_\e$ satisfying
	\begin{equation}\label{uniformbound}
	\sup_{\e>0}\Big(F_\e^\p(u_\e,\Omega)-\frac{M}{4} \pi |\log\e|\Big)<+\infty\,.
	\end{equation}
Then, up to a subsequence, $\mu_{2u_\e}\mres\Omega\fto\mu$ for some $\mu=\sum_{h=1}^Nd_h\delta_{x_h}\in X(\Omega)$ with $|\mu|(\Omega)\leq M$. 
Moreover, if $|\mu|(\Omega)=M$, then $N=M$ and $|d_h|=1$ for every $h\in\{1\,,\ldots ,N\}$ (\ie $\mu\in X_M(\Omega)$) and there exist $w_{\even}, w_{\odd}\in\D_{M,\hor}^{1/2}(\Omega)$ with $w_{\even}^2=w_{\odd}^2=:v$ and $J(v)=\pi\mu$ such that (up to further subsequences) $w_{2\e,s_0},w_{2\e,s_1}\to w_\even$ and $w_{2\e,s_2},w_{2\e,s_3}\to w_\odd$ in $L^1(\Omega;\R^2)$. 
\item (Lower bound) Let $w_{\rm even}, w_{\rm odd}\in \D_{M,\hor}^{1/2} (\Omega)$ with $w_{\rm even}^2=w_{\rm odd}^2=:v$ and let $u_\e\in\AD_\e$, $w_\e=\exp(2\pi\iota u_\e)\in \SF_\e$ be such that $\pi\mu_{2 u_\e}\mres\Omega\fto J(v)$, $w_{2\e,s_0},w_{2\e,s_1} \to w_{\rm even}$, and $w_{2\e,s_2},w_{2\e,s_3}\to w_{\rm odd}$ in $L^1(\Omega;\R^2)$. Then
	\begin{equation}\label{liminf:partial}
	\liminf_{\e\to 0}\Big(F_\e^\p(u_\e,\Omega)- \frac{M}{4}\pi|\log\e|\Big)\geq F^\p(w_\even,w_\odd,\Omega)\,. 
	\end{equation}
\item (Upper bound) Let $w_{\rm even}, w_{\rm odd}\in\D_{M,\hor}^{1/2}(\Omega)$ with $w_{\rm even}^2=w_{\rm odd}^2=:v$. Then there exist $u_\e\in \AD_\e$ such that $\pi\mu_{2 u_\e}\mres\Omega\fto J(v)$ and the sequence of spin fields $w_\e\defas \exp (2\pi\iota u_\e)\in\SF_\e$ satisfies $w_{2\e,s_0}, w_{2\e,s_1}\wsto w_{\rm even}$, $w_{2\e,s_2},w_{2\e,s_3}\wsto w_{\rm odd}$ in $L^1\big(\Omega;\R^2\big)$ and
	\begin{equation}\label{limsup:partial}
	\lim_{\e\to 0}\Big(F_\e^\p(u_\e,\Omega)- \frac{M}{4}\pi|\log\e|\Big) \leq F^\p(w_\even,w_\odd,\Omega)\,.
	\end{equation}
\end{enumerate}
\end{theorem}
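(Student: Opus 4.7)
I would begin with the doubling inequality $4 F_\e^\p(u_\e,\Omega) \geq F_\e^\edge(2u_\e,\Omega)$ of~\eqref{eq:4F-part-larger-F-edge} and apply Theorem~\ref{thm:first-order-Fe}(i) to the doubled displacement $2u_\e$. This yields the flat compactness of $\mu_{2u_\e}$ with $|\mu|(\Omega)\leq M$, and, in the equality case, the membership $\mu\in X_M(\Omega)$ together with the weak $H^1_\loc$ convergence $\hat v_\e \wto v\in\D_M(\Omega)$ where $v_\e=w_\e^2$. To promote this to $L^1$ compactness of the sublattice spin fields, note that the local lower bound~\eqref{lb-local} applied on each $B_\sigma(x_h)$ gives $F_\e^\p(u_\e,\Omega^\sigma(\mu))\leq C(\sigma)$ uniformly in $\e$; Lemma~\ref{lem:Fpart-XYgen-WM} then produces a uniform bound on each $W\!M_{2\e,s_j}^{R,\alpha}(w_{2\e,s_j},\Omega^\sigma(\mu))$ for any fixed $R>0$, and Theorem~\ref{thm:WM compactness} extracts $L^1$-convergent subsequences. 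The uniform bound on the horizontal elastic energy $F_\e^\h(u_\e,\Omega^\sigma(\mu))$ forces $\|w_{2\e,s_0}-w_{2\e,s_1}\|_{L^2(\Omega^\sigma(\mu))}\to 0$, so the two even sublattices share a common limit $w_\even$; analogously $w_{2\e,s_2},w_{2\e,s_3}\to w_\odd$. Sending $R\to\infty$ and applying Theorem~\ref{thm:WM} enforces $|\nu_w\cdot e_1|=0$ $\H^1$-a.e.\ on $S_{w_\even}\cup S_{w_\odd}$, so $w_\even,w_\odd\in\D_{M,\hor}^{1/2}(\Omega)$, and the identity $w_\even^2=w_\odd^2=v$ is inherited from $w_\e^2=v_\e\to v$.

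\textbf{(ii) Lower bound.} I would decompose $\Omega=\Omega^\sigma(\mu)\cup\bigcup_h B_\sigma(x_h)$ and treat the two regions separately. On the bulk $\Omega^\sigma(\mu)$, combining Lemma~\ref{lem:Fpart-XYgen-WM} with Theorem~\ref{thm:WM} applied to each of the four sublattice limits yields, after summing the elastic parts (using $|\nabla v|^2=4|\nabla w_\even|^2=4|\nabla w_\odd|^2$) and the brittle ones,
\begin{equation*}
  \liminf_{\e\to 0} F_\e^\p(u_\e,\Omega^\sigma(\mu)) \geq \frac{1}{8}\int_{\Omega^\sigma(\mu)}|\nabla v|^2\dx + \frac{\alpha}{2}\bigl(\H^1(S_{w_\even}\cap\Omega^\sigma(\mu))+\H^1(S_{w_\odd}\cap\Omega^\sigma(\mu))\bigr)\,.
\end{equation*}
On each core ball $B_\sigma(x_h)$, the doubling inequality together with a localization of Theorem~\ref{thm:first-order-Fe}(ii) and the formula $\W(d_h\delta_{x_h},B_\sigma(x_h))=-|\log\sigma|$ of~\eqref{eq:ren-energy-ball} gives $\liminf\frac{1}{4}F_\e^\edge(2u_\e,B_\sigma(x_h))-\frac{\pi}{4}|\log(\sigma/\e)|\geq\frac{\gamma}{4}$. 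Summing over $h$ and sending $\sigma\to 0$, the combination $\frac{1}{8}\int_{\Omega^\sigma}|\nabla v|^2-\frac{M\pi}{4}|\log\sigma|$ converges to $\frac{1}{4}\WW(v,\Omega)$ by definition, the balls contribute $\frac{M\gamma}{4}$, and we recover the target $F^\p(w_\even,w_\odd,\Omega)$.

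\textbf{(iii) Upper bound and main obstacle.} The recovery sequence I would build is piecewise, following the blueprint of~\cite{ADGP14,BCDP18}: near each singularity $x_h$, place a minimiser of the core-energy problem~\eqref{def:core-edge}; on the bulk away from stacking faults, let $u_\e$ be the discretization of the lifting $\frac{\psi}{2\pi}$ produced by Lemma~\ref{lem:lifting-w} (the partition term $\chi$ encoding the sublattice-dependent choice between $w_\even$ and $w_\odd$); and across each indecomposable stacking-fault segment in $S_{w_\even}\cup S_{w_\odd}$, insert a jump of $\frac{1}{2}$ in $u_\e$ on a single $\e$-row, producing the sharp brittle contribution $\frac{\alpha}{2}\H^1$ per segment. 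The principal obstacle, singled out in the introduction, is to prove that the core energy for $F_\e^\p$ coincides with $\gamma$: the purely elastic horizontal interaction in $F_\e^\p$ makes the naive competitors for~\eqref{def:core-edge} produce spurious horizontal-slip energy, so the characterisation of $\gamma$ for $F_\e^\p$ is not automatic from its counterparts for $F_\e^\screw$ or $F_\e^\edge$. The resolution relies on the boundary-degree construction of~\cite{GPS}, which allows building competitors that meet the integer-angular-lifting boundary condition of~\eqref{def:core-edge} while keeping the horizontal slip asymptotically negligible.
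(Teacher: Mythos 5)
Your proposal follows the same route as the paper: the doubling inequality~\eqref{eq:4F-part-larger-F-edge} reduces compactness of the dislocation measures to the edge-dislocation result of Theorem~\ref{thm:first-order-Fe}(i), Lemma~\ref{lem:Fpart-XYgen-WM} together with Theorems~\ref{thm:WM compactness}--\ref{thm:WM} controls the four sublattice spin fields, and the lower bound is proved by splitting $\Omega$ into core balls and the bulk annular region exactly as in the paper. Your write-up of the upper bound correctly identifies the key obstruction (the elastic horizontal interactions in $F_\e^\p$, which break the naive equality with the screw/edge core energy) and the key resolution (the dipole-clearing result of~\cite{GPS}, which becomes Theorem~\ref{thm:core-screw-dipoles} and feeds into Proposition~\ref{prop:core-partials}); this is precisely the blueprint of the paper, though the shifting-and-smoothing construction that matches the optimal screw configuration to the prescribed boundary lifting across the mismatched discontinuity half-lines — the real technical content of Proposition~\ref{prop:core-partials}, occupying Steps~1--4 and the shift-operator estimate of Step~6 there — is left as a black box.
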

Note that the presence of the two limiting configurations $w_\even$, $w_\odd$ is due to the separate compactness on the sublattices $\Z_{2\e}^\even$ and $\Z_{2\e}^\odd$, which in turn stems from the fact that next-to-nearest neighbour interactions decouple between those sublattices. Also observe that $w_\even$ and $w_\odd$ might be different in general and $S_{w_\even}$ and $S_{w_\odd}$ might differ as well (see Examples~\ref{ex:weven-wodd} and~\ref{ex:jumpset-we-wo}). Nevertheless, the presence of $w_\even$ and $w_\odd$ vanishes when the energies are minimised for a given configuration of singularities as in Theorem~\ref{thm:main2}. Indeed, we will see that $\F^\p(\mu,\Omega)=F^\p(w_\mu,w_\mu,\Omega)$ for a suitable function $w_\mu\in\D_{M,\hor}^{1/2}(\Omega)$ satisfying $w_\mu^2=v_\mu$ (see Remark~\ref{rem:energy-w-mu}).

\smallskip
We complete this section by showing that in general the two functions $w_\even$, $w_\odd$ obtained in the compactness statement Theorem~\ref{thm:main}(i) are different.

\begin{example}\label{ex:weven-wodd}
Let $\Omega=B_1$ and $\mu=\delta_0\in X_1(B_1)$. Let moreover $\theta_\hor^+\in C^\infty(\R^2\sm\Pi^+;[0,2\pi])$ be a lifting of $\frac{x}{|x|}$ (\ie $\frac{x}{|x|}=\exp\big(\iota\theta_\hor^+(x)\big)$. We extend $\theta$ to the whole $\R^2$ by setting $\theta_\hor^+(x)\defas\lim_{t\searrow 0}\theta_\hor^+(x+te_2)$ if $x\in\Pi^+\sm\{0\}$, $\theta_\hor^+(0)\defas 0$. Let $u_\e\in\AD_\e$ by given by
	\begin{equation*}
	u_\e(i)\defas
	\begin{cases}
	\frac{1}{4\pi}\theta_\hor^+(i) &\text{if}\ i\in 2\e\Z^2_\even\,,\\\cr
	\frac{1}{4\pi}\big(\theta_\hor^+(i)+2\pi\big) &\text{if}\ i\in 2\e\Z^2_\odd\,.
	\end{cases}
	\end{equation*}
We apply \eqref{eq:F-edge-F-part} and we estimate from above next-to-nearest neighbors interactions as follows: close to $\Pi^+$ with a constant; in an $\e$-neighborhood of 0 with a constant; far from $\Pi^+$ and outside the $\e$-neighborhood of 0 using $|\nabla \theta_\hor^+|^2 \sim \frac{1}{|x|^2}$. We obtain that 
\begin{equation}\label{est:Fe-example-0}
  \begin{split}
    & F_\e^\p(u_\e,B_1)\\
    & \quad \leq\frac{1}{4}F_\e^\edge\big(\tfrac{1}{2\pi}\theta_\hor^+,B_1\big)+\frac{C\e}{2}\#\big\{i\in\e\Z^2\cap   B_1   \colon [i,i+2\e e_2]\cap\Pi^+\neq\emptyset\big\} + C \e + C \e |\log \e| \,.
  \end{split}
	\end{equation}
  Moreover, using the fact that the jump set is horizontal gives
	\begin{equation}\label{est:Fe-example}
	F_\e^\edge\big(\tfrac{1}{2\pi}\theta_\hor^+,B_1\big) \leq F_\e^\screw\big(\tfrac{1}{2\pi}\theta_\hor^+,B_1\big)+C\,,
	\end{equation}
  where the constant $C$ accounts for interactions in an $\e$-neighborhood of 0.
  \EEE

Since $F_\e^\screw\big(\tfrac{1}{2\pi}\theta_\hor^+,B_1)\leq\pi|\log\e|+C$, we deduce from~\eqref{est:Fe-example} that the sequence $(u_\e)$ satisfies~\eqref{uniformbound}. Moreover, $\mu_{2u_\e}\mres B_1=\mu_{\frac{1}{2\pi}\theta_\hor^+}\mres B_1\fto\mu$.
Eventually, setting $w_\e\defas\exp(2\pi\iota u_\e)$ and applying standard interpolation estimates we find that
	\begin{equation*}
	w_{2\e,s_0},w_{2\e,s_1}\to \exp\Big(\iota\tfrac{\theta_\hor^+}{2}\Big)\quad\text{and}\quad w_{2\e,s_2},w_{2\e,s_3}\to \exp\Big(\iota\big(\tfrac{\theta_\hor^+}{2}+\pi\big)\Big)\;\text{ in $L^1(B_1)$ as $\e\to 0$.}
	\end{equation*}
Hence, $w_\even$ and $w_\odd$ are the two complex square roots of $\frac{x}{|x|}$ jumping in both cases across $\Pi^+$. However, also the limiting jump set may differ as the following example shows.
\end{example}
\begin{example}\label{ex:jumpset-we-wo}
Let $\Omega=B_1$, $\mu=\delta_0$ and $\theta_\hor^+$ be as in Example~\ref{ex:weven-wodd}. Let moreover $\theta_\hor^-\in C^\infty(\R^2\sm\Pi^-;[-\pi,\pi])$ be another lifting of $\frac{x}{|x|}$ jumping now across $\Pi^-$ and extended to $\Pi^-$ by setting $\theta_\hor^-(x)\defas\lim_{t\searrow 0}\theta_\hor^-(x+te_2)$ for $x\in\Pi^-\sm\{0\}$ and $\theta_\hor^-(0)\defas 0$. Let then $u_\e\in\AD_\e$ be given by
	\begin{equation*}
	u_\e(i)\defas
	\begin{cases}
	\frac{1}{4\pi}\theta_\hor^+(i) &\text{if}\ i\in 2\e\Z^2_\even\,,\\\cr
	\frac{1}{4\pi}\theta_\hor^-(i) &\text{if}\ i\in 2\e\Z^2_\odd\,.
	\end{cases}	
	\end{equation*}
Then $u_\e$ still satisfies the estimate~\eqref{est:Fe-example} with $\Pi^+$ replaced by $\Pi$ and as a consequence~\eqref{uniformbound} is fulfilled. Moreover, the sequence $w_\e\defas \exp(2\pi\iota u_\e)$ satisfies 
	\begin{equation*}
	w_{2\e,s_0}, w_{2\e,s_1}\to \exp\Big(\iota\tfrac{\theta_\hor^+}{2}\Big)\quad\text{and}\quad w_{2\e,s_2}, w_{2\e,s_3}\to \exp\Big(\iota\tfrac{\theta_\hor^-}{2}\Big)\;\text{ in $L^1(B_1)$ as $\e\to 0$.}
	\end{equation*}
In particular, we have $S_{w_\even}=\Pi^+$ and $S_{w_\odd}=\Pi^-$ up to an $\H^1$-negligible set.
\end{example}

\subsection{Proof of the Compactness and the Lower Bound}\label{sec:lb}
We are now in a position to prove Theorem~\ref{thm:main}(i) and (ii).  

\begin{proof}[Proof of Theorem~\ref{thm:main}(i) and (ii)]
  We divide the proof in several steps. 

  \begin{step}{1}{Compactness of $\mu_{2 u_\e}$} For $M\in\N$ fixed let $u_\e\in\AD_\e$ be a sequence of configurations satisfying~\eqref{uniformbound}. Then~\eqref{uniformbound} and~\eqref{eq:4F-part-larger-F-edge} imply that the sequence $(2u_\e)_\e$ satisfies~\eqref{uniformbound-F-edge}. Thus, from Theorem~\ref{thm:first-order-Fe}(i) we deduce that (up to a subsequence) $\mu_{2u_\e}\mres\Omega\fto\mu$ for some $\mu=\sum_{h=1}^Nd_h\delta_{x_h}\in X(\Omega)$ with $\sum_{h=1}^N|d_h|\leq M$. Let moreover $v_\e(i)\defas\exp(4\pi\iota u_\e(i))$ for $i\in\e\Z^2\cap\Omega$. Since $\Omega$ has Lipschitz boundary, as in~\cite[Remark 2]{AC09} we find an extension of $v_\e$ to $\e\Z^2$ (still denoted by $v_\e$) satisfying
	\begin{equation}\label{est:extension}
	\dsum{Q\in\Q_\e}{Q\cap\Omega\neq\emptyset}XY_\e(v_\e,Q)\leq CXY_\e(v_\e,\Omega)\leq C|\log\e|\,,
	\end{equation}
where the last estimate follows from~\eqref{uniformbound} together with~\eqref{eq:F-edge-F-part} and~\eqref{lb:edge-SD}.
It is not restrictive to assume that we still have $v_\e=\exp(4\pi\iota u_\e)$ on $\e\Z^2\sm\Omega$, since otherwise we could choose an arbitrary angular lifting $\varphi_\e$ of $v_\e$ and replace $u_\e$ by $\frac{\varphi_\e}{2}$ on $\e\Z^2\sm\Omega$. This would not change the energy $F_\e^\p(u_\e,\Omega)$ and thanks to Footnote~\ref{foot:conv-mu} would not affect the convergence properties of $\mu_{2u_\e}$.

\smallskip Let us now suppose that $\sum_{h=1}^N|d_h|=M$. Then Theorem~\ref{thm:first-order-Fe}(i) yields that $N=M$ and $|d_h|=1$ for every $h\in\{1\,,\ldots ,N\}$ and ensures the existence of $v\in\D_M(\Omega)$ with $J(v)=\pi\mu$ such that (up to a further subsequence) $\hat{v}_\e\wto v$ in $H^1_\loc\big(\Omega\sm\supp\mu;\R^2\big)$, where we recall that $\hat{v}_\e$ denotes that piecewise affine interpolation of $v_\e$ on $\T_\e$ (see Section~\ref{sec:interpolation}). Below we establish the existence of $w_{\even}, w_{\odd}\in\D_{M,\hor}^{1/2}(\Omega)$ satisfying $w_\even^2=w_\odd^2=v$.  
\end{step}

\begin{step}{2}{Common limit of dislocations defined on sublattices}
In this step we consider the measures $\mu^j_{2\e} := \mu_{2u_{2\e,s_j}}$ and we show that 
	\begin{equation}\label{conv:m2ue}
    \mu^j_{2\e} \mres\Omega\fto \mu\;\text{ for}\ j\in\{0\,,\ldots,3\}\,.
	\end{equation}
  To show this, from Remark~\ref{rem:F-edge-double}, and~\eqref{uniformbound}, we deduce that 
	\begin{equation}\label{est:F-edge-shift-uniform}
	\sup_{\e>0}\frac{1}{|\log 2\e|}F_{2\e, s_j}^{\edge}(2u_{2\e,s_j},A)<+\infty\;\text{ for every}\ j\in\{0\,,\ldots ,3\}\,.
	\end{equation}  
Theorem~\ref{thm:zero-order} yields that, up to a subsequence, $\mu_{2\e}^{j}\mres\Omega\fto\mu^{j}$ for some $\mu^{j}\in X(\Omega)$. It remains to show that $\mu^{j}=\mu$ for every $j\in\{0\,,\ldots ,3\}$. Thanks to~\cite[Proposition 5.2]{ACP11} the claim follows if we show that 
	\begin{equation}\label{conv-diff-jac}
	\big(J(\hat{v}_\e)-J(\hat{v}_{2\e,s_j})\big)\mres\Omega\fto 0\;\text{ for every}\ j\in\{0\,,\ldots,3\}\,.
	\end{equation}
To prove this, we shall apply Lemma~\ref{lem:conv-diff-jac}. We observe that~\eqref{eq:gradient-affine} together with~\eqref{est:extension} yields
	\begin{equation*}\label{est:grad-v}
	\int_{\Omega}|\nabla\hat{v}_\e|^2\dx\leq \dsum{Q\in\Q_\e}{Q\cap\Omega\neq\emptyset}XY_\e(v_\e,Q)\leq C|\log\e|\,.
	\end{equation*}
Since the analogue of~\eqref{est:extension} holds with $\Q_\e$ replaced by $\Q_{2\e,s_j}$, we obtain in a similar way that $\int_{\Omega}|\nabla\hat{v}_{2\e,s_j}|^2\dx\leq C|\log\e|$.
Moreover, using the explicit expression of $\hat{v}_\e$, $\hat{v}_{2\e,s_j}$ one gets  
	\begin{equation}\label{est:difference-L2-shift}
	\int_{\Omega}|\hat{v}_\e-\hat{v}_{2\e,s_j}|^2\dx\leq C\e^2 \dsum{Q\in\Q_{\e}}{Q\cap\Omega\neq\emptyset}XY_\e(v_\e,Q)\leq C\e^2|\log\e|\,.
	\end{equation}
Combining the above estimates yields 
	\begin{equation*}
	\|\hat{v}_\e-\hat{v}_{2\e,s_j}\|_{L^2(\Omega)}\Big(\|\nabla\hat{v}_\e\|_{L^2(\Omega)}+\|\nabla\hat{v}_{2\e,s_j}\|_{L^2(\Omega)}\Big)\leq C\e|\log\e|\; \to 0\;\text{ as}\ \e\to 0\,,
	\end{equation*}
hence~\eqref{conv-diff-jac} follows from Lemma~\ref{lem:conv-diff-jac}.
\end{step}

\begin{step}{3}{Weak* compactness on sublattices}
  First of all, we observe that $\sup_{\e > 0} \|w_\e\|_{L^\infty} \leq 1$, hence there exists a subsequence (not relabeled) such that for every $j \in \{0,\dots,3\}$ we have the following convergence for the restrictions on the doubled-spaced lattices:
  \[
  w_{2\e, s_j} \wsto w_j \, , \quad \text{weakly* in } L^\infty(\Omega; \R^2) \, .
  \]
  We will improve this convergence in the next steps. 
\end{step}
  
\begin{step}{4}{Lower bound by weak-membrane energy}  
  We fix $R > 1$ and $\sigma > 0$ satisfying~\eqref{cond:sigma}. By Lemma~\ref{lem:Fpart-XYgen-WM} we infer that 
  \begin{equation}\label{eq:4F-part-larger-WM}
    F^\p_\e(u_\e,\Omega^\sigma(\mu))\geq\frac{1}{4}\sum_{j=0}^3 W\!M_{2\e,s_j}^{R,\alpha}(w_{2\e, s_j},\Omega^\sigma(\mu)) + o(1) \, ,
  \end{equation}
  where $o(1) \to 0$ as $\e \to 0$. 
  Using~\eqref{eq:4F-part-larger-F-edge}, we estimate
	\begin{equation}\label{est:liminf-splitting}
	4F_\e^\p(u_\e,\Omega)-M\pi|\log\e| \geq\sum_{h=1}^M \Big(F_\e^\edge\big(2u_\e,B_{\sigma}(x_h)\big)-\pi|\log\e|\Big)+4 F_\e^\p\big(u_\e,\Omega^{\sigma}(\mu)\big)\,.
	\end{equation} 
Note that $\mu_{2u_\e}\mres B_{\sigma}(x_h)\fto d_h\delta_{x_h}$. Thus, applying Theorem~\ref{thm:first-order-Fe}(ii) locally on $B_{\sigma}(x_h)$ yields
	\begin{equation}\label{est:liminf-core}
	\begin{split}
	\liminf_{\e\to 0}\Big(F_\e^\edge\big(2u_\e,B_{\sigma}(x_h)\big)-\pi|\log\e|\Big) &\geq\gamma+\W\big(d_h\delta_{x_h},B_{\sigma}(x_h)\big)\\
	&=\gamma-\pi|\log\sigma|\;\text{ for every}\ h\in\{1\,,\ldots, M\}\,,
	\end{split}
	\end{equation}
where the last equality follows from~\eqref{eq:ren-energy-ball}.
  It follows that 
  \[
    F_\e^\p(u_\e, \Omega^\sigma(\mu)) + \frac{\gamma}{4} - \frac{\pi}{4} |\log \sigma| \leq F_\e^\p(u_\e, \Omega) - \frac{M}{4} \pi |\log \e|  + o(1) \, ,
  \]
  where $o(1) \to 0$ as $\e \to 0$. In particular, by~\eqref{uniformbound}, we deduce that
  \[ 
    \sup_{\e > 0} F_\e^\p(u_\e, \Omega^\sigma(\mu)) < + \infty \, .
  \]
  Then~\eqref{eq:4F-part-larger-WM} yields that
  \begin{equation} \label{eq:bound on WM}
    \sup_{\e > 0} W\!M_{2\e,s_j}^{R,\alpha}(w_{2\e,s_j},\Omega^\sigma(\mu)) < + \infty\,, \quad \text{for } j \in \{0,\dots,3\} \,.
  \end{equation}
\end{step}

\begin{step}{5}{Compactness of $w_{2\e,s_j}$}
  Exploiting~\eqref{eq:bound on WM} and the equiboundedness of the $L^\infty$ norms of $w_{2\e,s_j}$, by Theorem~\ref{thm:WM compactness} we obtain that, for every $\sigma > 0$, $w_{2\e,s_j} \to w_j$ strongly in $L^1(\Omega^\sigma(\mu);\R^2)$ and $w_j \in SBV^2(\Omega^\sigma(\mu);\R^2)$, for $j = 0,\dots,3$, where the $w_j$'s are the weak* limits obtained above. By the arbitrariness of $\sigma$ and the equiboundedness of the $L^\infty$ norms of $w_{2\e,s_j}$, we conclude that $w_{2\e,s_j} \to w_j$ strongly in $L^1(\Omega;\R^2)$ and $w_j \in SBV_\loc^2(\Omega\sm\supp\mu;\R^2)$.
\end{step}

\begin{step}{6}{Identification of $w_\even$ and $w_\odd$}
We show that
\begin{equation} \label{eq:w-even-w-odd}  
\| w_{2\e,s_0}- w_{2\e,s_1}\|_{L^1(\Omega)}\to 0\qquad\text{and}\qquad\|w_{2\e,s_2}-w_{2\e,s_3}\|_{L^1(\Omega)}\to 0\,.
\end{equation}  
From~\eqref{eq:w-even-w-odd} and the convergence proved in Step~5, we deduce that $w_0 = w_1 =: w_{\even}$ and $w_2 = w_3 =: w_{\odd}$. 

We only establish the first convergence in~\eqref{eq:w-even-w-odd}, since the second one follows by a similar argument. Fix $\Omega' \subset \subset \Omega$; given $i_0 \in 2 \e \Z^2$, such that $Q_{2\e}(i_0) \cap \Omega' \neq \emptyset$. We define the rectangles $R^\pm_\e(i_0)\defas Q_{2\e}(i_0)\cap Q_{2\e}(i_0 \pm \e e_1)$ and observe that
\[
\| w_{2\e,s_0}- w_{2\e,s_1} \|_{L^2(R^\pm_\e(i_0))}^2 = 2 \e^2 |w_{\e}(i_0) - w_{\e}(i_0 \pm \e e_1)|^2  \, .
\]
It follows that 
\[
\| w_{2\e,s_0}- w_{2\e,s_1} \|_{L^2(Q_{2\e}(i_0))}^2 = 2 \e^2 |w_{\e}(i_0) - w_{\e}(i_0 + \e e_1)|^2  + 2 \e^2 |w_{\e}(i_0) - w_{\e}(i_0 - \e e_1)|^2  \, .
\]
Summing over all $i_0$ as above and noticing that we have that $Q_{2\e}(i_0), Q_{2\e}(i_0 \pm \e e_1) \subset \Omega$ for $\e$ small enough, we deduce that 
\[
  \| w_{2\e,s_0}- w_{2\e,s_1} \|_{L^2(\Omega')}^2 \leq C \e^2 F^\p_\e(u_\e,\Omega) \leq C \e^2 |\log \e| \to 0 \quad \text{as } \e \to 0 \, .
\]
By the boundedness in $L^\infty$ of $w_{2\e,s_j}$, we deduce that $\| w_{2\e,s_0}- w_{2\e,s_1} \|_{L^2(\Omega)} \to 0$ and, in particular, the first convergence in~\eqref{eq:w-even-w-odd}.
\end{step}

\begin{step}{7}{$w_\even$ and $w_\odd$ belong to $\D_{M,\hor}^{1/2}(\Omega)$}
First of all, we show that $w_\even^2 = w_\odd^2 = v  \in \D_{M}(\Omega)$, where $v$ is the limit found in Step 1. For this, we need to compare the piecewise affine interpolations $\hat v_{2\e,s_j}$ and the piecewise constant functions $v_{2\e, s_j} := w_{2\e,s_j}^2$. Let us fix $\Omega' \subset \subset \Omega$. Given $i_0 \in 2 \e \Z^2$, such that $Q_{2\e}(i_0) \cap \Omega' \neq \emptyset$, we have that 
\[
 \begin{split}
	& \| \hat v_{2 \e, s_j} - v_{2 \e, s_j} \|_{L^2(Q_{2\e}(i_0))}^2  \\
  & \quad \leq  C \e^2  \Big(|v_\e(i_0+2\e e_1)-v_\e(i_0)|^2+|v_\e(i_0+2\e(e_1+e_2))-v_\e(i_0+2\e e_1)|^2\\
	& \hphantom{\quad \leq  C \e^2  \Big(  }+|v_\e(i_0+2\e e_2)-v_\e(i_0)|^2+|v_\e(i_0+2\e(e_1+e_2))-v_\e(i_0+2\e e_2)|^2\Big)\,. 
	\end{split}
\]
Since $Q_{2\e}(i_0) \subset \Omega$ for $\e$ small enough, by~\eqref{est:extension} it follows that 
\[
  \| \hat v_{2 \e, s_j} - v_{2 \e, s_j} \|_{L^2(\Omega')}^2 \leq C \e^2 XY_\e(v_\e, \Omega) \leq C \e^2 |\log \e| \to 0 \quad \text{as } \e \to 0 \, .
\]
By the boundedness in $L^\infty$ of $v_{2\e,s_j}$ and $\hat v_{2\e,s_j}$, we deduce that $\| \hat v_{2\e,s_j}- v_{2\e,s_j} \|_{L^2(\Omega)} \to 0$. Hence, $\| v_{2\e,s_j} - v \|_{L^2(\Omega)} \to 0$. Combining this with the convergences $\|w_{2\e,s_0}^2 - w_\even^2 \|_{L^1(\Omega)} \to 0$ and $\|w_{2\e,s_2}^2 - w_\odd^2 \|_{L^1(\Omega)} \to 0$, we conclude that $w_\even^2 = w_\odd^2 = v$.

It remains to show that $\H^1(S_{w_\even} \cap \Omega) < +\infty$, $\H^1(S_{w_\odd} \cap \Omega) < +\infty$, and
	\begin{equation}\label{cond:jumpset}
	|\nu_{w_\even}\cdot e_1|=0\;\text{ $\H^1$-a.e.\! on $S_{w_\even}$,}\qquad |\nu_{w_\odd}\cdot e_1|=0\;\text{ $\H^1$-a.e.\! on $S_{w_\odd}$.}
	\end{equation} 
In doing so we will essentially establish the liminf inequality. 
Let us fix $\sigma > 0$ such that~\eqref{cond:sigma} is satisfied. (At the end of the step, we shall let $\sigma \to 0$.) 
Since $w_{2\e,s_0},w_{2\e,s_1} \to w_\even$ in $L^1(\Omega^{\sigma}(\mu);\R^2)$, we infer from Lemma~\ref{lem:Fpart-XYgen-WM} together with Theorem~\ref{thm:WM} that
	\begin{equation}\label{est:liminf-wm}
	\begin{split}
	& \liminf_{\e\to 0} F_\e^\p\big(u_\e,\Omega^{\sigma}(\mu)\big) \geq\frac{1}{4}\sum_{j=0}^3 \liminf_{\e\to 0}  W\!M_{2\e,s_j}^{R,\alpha} \big(w_{2\e,s_j},\Omega^{\sigma}(\mu)\big)\\
	&\geq   \frac{1}{4}  \int_{\Omega^{\sigma}(\mu)}|\nabla w_\even|^2+|\nabla w_\odd|^2\dx +\frac{1}{2}\int_{S_{w_\even}\cap\Omega^{\sigma_k}(\mu)}\hspace*{-0.5em}  \big(R|\nu_{w_\even}\cdot e_1| +  \alpha  |\nu_{w_\even}\cdot e_2|\big)\dH  \\
	& \hphantom{ \geq \int_{\Omega^{\sigma}(\mu)}|\nabla w_\even|^2+|\nabla w_\odd|^2\dx } +\frac{1}{2}\int_{S_{w_\odd}\cap\Omega^{\sigma}(\mu)}\hspace*{-0.5em} \big(R|\nu_{w_\odd}\cdot e_1| +   \alpha   |\nu_{w_\odd}\cdot e_2|\big)\dH   \\
  & \geq  \frac{1}{8}  \int_{\Omega^{\sigma}(\mu)}|\nabla v|^2 \dx +\frac{1}{2}\int_{S_{w_\even}\cap\Omega^{\sigma}(\mu)}\hspace*{-0.5em}  \big(R|\nu_{w_\even}\cdot e_1| +  \alpha  |\nu_{w_\even}\cdot e_2|\big)\dH \\
  &  \hphantom{ \geq \frac{1}{2}\int_{\Omega^{\sigma}(\mu)}|\nabla v|^2 \dx } +\frac{1}{2}\int_{S_{w_\odd}\cap\Omega^{\sigma}(\mu)}\hspace*{-0.5em}  \big(R|\nu_{w_\odd}\cdot e_1| +  \alpha  |\nu_{w_\odd}\cdot e_2|\big)\dH  \, , 
 	\end{split}
	\end{equation}
for every $R>0$, where we have used that $|\nabla w_\even|^2 = |\nabla w_\odd|^2 = \frac{1}{4}|\nabla v|^2$ since $w_\even^2 = w_\odd^2 = v$ in the sense of complex squares. 
Combining~\eqref{est:liminf-splitting}, \eqref{est:liminf-core}, and~\eqref{est:liminf-wm} and using the definition of $\WW$ in~\eqref{def:ren-energy-v}  thus gives 
	\begin{equation}\label{est:liminf-almost-final}
	\begin{split}
	& \liminf_{\e\to 0}\Big(4F_\e^\p(u_\e,\Omega)-M\pi|\log\e|\Big) \\
  &\geq M\gamma+\WW(v,\Omega)
	+  2   \int_{S_{w_\even}\cap\Omega^\sigma(\mu)}\hspace*{-0.5em}  \big(R|\nu_{w_\even}\cdot e_1| +   \alpha   |\nu_{w_\even}\cdot e_2|\big)  \dH\\
	& \hphantom{\ \geq M\gamma+\WW(v,\Omega)}+ 2   \int_{S_{w_\odd}\cap\Omega^\sigma(\mu)}\hspace*{-0.5em} \big(R|\nu_{w_\odd}\cdot e_1| +   \alpha |\nu_{w_\odd}\cdot e_2|\big)   \dH+r(\sigma) 
	\end{split}
	\end{equation}
with $r(\sigma)\to 0$ as $\sigma \to 0$. First of all, together with the bound~\eqref{uniformbound} this implies that 
\[
  \begin{split}
    & \sup_{\sigma > 0} \Big( \int_{S_{w_\even}\cap\Omega^\sigma(\mu)}\hspace*{-0.5em}  \big(R|\nu_{w_\even}\cdot e_1| +   \alpha   |\nu_{w_\even}\cdot e_2|\big) \dH \Big) < +\infty  \\
    & \sup_{\sigma > 0} \Big(  \int_{S_{w_\odd}\cap\Omega^\sigma(\mu)}\hspace*{-0.5em} \big(R|\nu_{w_\odd}\cdot e_1| +   \alpha |\nu_{w_\odd}\cdot e_2|\big)   \dH \Big) < +\infty \, ,
  \end{split}
\]
hence $\H^1(S_{w_\even}\cap\Omega)=\H^1(S_{w_\even}\cap\bigcup_{\sigma}\Omega^{\sigma}(\mu))<+\infty$ and similarly for $S_{w_\odd}$.
Passing to the limit as $\sigma \to 0$ in~\eqref{est:liminf-almost-final} and taking the supremum over $R>0$, we then deduce from~\eqref{uniformbound} that
	\begin{equation*}
	\sup_{R>0}R\bigg(\int_{S_{w_\even}\cap\Omega}|\nu_{w_\even}\cdot e_1|\dH+\int_{S_{w_\odd}\cap\Omega}|\nu_{w_\odd}\cdot e_1|\dH\bigg)\leq C\,,
	\end{equation*}
which is only possible if~\eqref{cond:jumpset} holds.

\end{step}

\begin{step}{8}{Liminf inequality}
  To conclude, it suffices to observe that for any $w\in\D_{M,\hor}^{1/2}(\Omega)$ we have $|\nu_w\cdot e_1|=0$ and hence $|\nu_{w}\cdot e_2|=1$ $\H^1$-a.e.\! on $S_w$. Thus~\eqref{liminf:partial} follows from~\eqref{est:liminf-almost-final} by letting $\sigma \to 0$.
  \end{step}
\end{proof}

We have established the proof of Theorem~\ref{thm:main}(i) and (ii). The proof of Theorem~\ref{thm:main}(iii) will heavily rely on a suitable characterisation of the core contribution $\gamma$ in terms of asymptotic minimisation problems involving the whole energy $F_\e^\p$. This is the content of the following subsection.
\subsection{The Core energy}
In this subsection we will show that the core energy $\gamma$ in~\eqref{def:gamma} can be equivalently obtained as a double limit of minimisation problems involving the energy $F_\e^\p$. Namely, we fix an angular lifting $\theta_\hor\in C^\infty(\R^2\sm\Pi)$ of $x/|x|$ and for every $x_0\in\R^2$, $\e>0$, and $\sigma>8\e$ we set
	\begin{equation}\label{def:core-part}
	\gamma_\e^\p\big(B_\sigma(x_0)\big)\defas\min\Big\{F_\e^\p\big(u,B_\sigma(x_0)\big)\colon 2 u(i)=\frac{1}{2\pi}\theta_\hor(i-x_0) \;\text{ for }\;i\in\partial_{2\e} B_\sigma(x_0)\Big\}\,,
	\end{equation}
where with a slight abuse of notation we have set 
	\begin{equation*}
	\partial_{2\e}B_\sigma(x_0)\defas\e\Z^2\cap\bigcup_{j=0}^3\partial \Big(\bigcup_{Q\in \Q_{2\e,s_j}(B_\sigma(x_0))}Q\Big)\,. 
	\end{equation*}
Note that the boundary conditions are prescribed in the larger boundary layer $\partial_{2\e}B_\sigma(x_0)$, since we need to include next-to-nearest neighbour interactions.

\begin{remark} \label{rem:invariance rotation}
  Since the energy is invariant under translations (it depends on $\d^e u$), for any $a \in \R$ we have that 
  \[
    \gamma_\e^\p\big(B_\sigma(x_0)\big) = \min\Big\{F_\e^\p\big(u,B_\sigma(x_0)\big)\colon 2 u(i)=\frac{1}{2\pi}(\theta_\hor(i-x_0) + a) \;\text{ for }\;i\in\partial_{2\e} B_\sigma(x_0)\Big\}\, .
  \]
  We observe that $\theta_\hor(x-x_0) + a$ is a lifting of a rotation of $\frac{x-x_0}{|x-x_0|}$.
\end{remark}

The following holds true.
\begin{proposition}\label{prop:core-partials}
For $x_0\in\R^2$, $\e>0$, and $\sigma>8\e$ let $\gamma_\e^\p$ be as in~\eqref{def:core-part}. Let moreover $\gamma$ be as in~\eqref{def:gamma}; then we have
	\begin{equation}\label{eq:gamma-part}
	\gamma=\lim_{\sigma\to 0}\limsup_{\e\to 0}\Big(4\gamma_\e^\p\big(B_\sigma(x_0)\big)-\pi\log\frac{\sigma}{\e}\Big)=\lim_{\sigma\to 0}\liminf_{\e\to 0}\Big(4\gamma_\e^\p\big(B_\sigma(x_0)\big)-\pi\log\frac{\sigma}{\e}\Big)
	\end{equation}
for every $x_0\in\R^2$.
\end{proposition}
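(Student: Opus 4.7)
The plan is to prove the two inequalities
\begin{equation*}
\limsup_{\sigma\to 0}\limsup_{\e\to 0}\big(4\gamma_\e^\p(B_\sigma(x_0)) - \pi\log(\sigma/\e)\big) \leq \gamma \leq \liminf_{\sigma\to 0}\liminf_{\e\to 0}\big(4\gamma_\e^\p(B_\sigma(x_0)) - \pi\log(\sigma/\e)\big)\,,
\end{equation*}
which, combined with the trivial bound $\liminf \leq \limsup$, yield both equalities in~\eqref{eq:gamma-part}. The starting point is Remark~\ref{rem:core}, which gives the equivalent characterization $\gamma = \lim_{\e\to 0}(\gamma_\e^\edge(B_\sigma(x_0)) - \pi\log(\sigma/\e))$ via the edge-dislocation problem with the horizontal lift $\theta_\hor$. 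The two key identities I shall exploit are $f_0(t/2) = \frac{1}{4}f_0(t)$ and $f_{\frac{1}{2}}(t/2) = \frac{1}{4}f_1(t)$ (the latter via $f_{\frac{1}{2}}(s) = \frac{1}{4}f_1(2s)$), which together imply that the nearest-neighbor part of $F_\e^\p(u/2,\cdot)$ equals exactly $\frac{1}{4}F_\e^\edge(u,\cdot)$.

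For the upper bound I would argue by scaling and gluing. Fix an auxiliary radius $\sigma' = \sigma - \sqrt{\e}$ and take a near-minimizer $\bar u_\e$ of $\gamma_\e^\edge(B_{\sigma'}(x_0))$, so that $2\pi\bar u_\e(i) = \theta_\hor(i-x_0)$ on $\partial_\e B_{\sigma'}(x_0)$ and $F_\e^\edge(\bar u_\e, B_{\sigma'}(x_0)) = \pi\log(\sigma'/\e)+\gamma+o_\e(1)$. Relying on~\cite{GPS}, I would additionally ensure that $\bar u_\e$ stays close to the smooth lift $\theta_\hor(\cdot-x_0)/(2\pi)$ in a definite discrete neighborhood of $\partial B_{\sigma'}(x_0)$; this is the decisive ingredient allowing a matching of the boundary values on the thicker layer $\partial_{2\e}$. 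I then define the competitor $\tilde u_\e$ to equal $\bar u_\e/2$ inside $B_{\sigma'}(x_0)$ and $\theta_\hor(\cdot-x_0)/(4\pi)$ on the lattice points in $B_\sigma(x_0) \setminus B_{\sigma'}(x_0)$, which by construction yields $2\tilde u_\e(i) = \theta_\hor(i-x_0)/(2\pi)$ on $\partial_{2\e} B_\sigma(x_0)$. By the scaling identities above, the nearest-neighbor part of $F_\e^\p(\tilde u_\e, B_{\sigma'}(x_0))$ equals $\frac{1}{4}F_\e^\edge(\bar u_\e, B_{\sigma'}(x_0))$. The new vertical next-to-nearest-neighbor term $\frac{\alpha}{\pi^2}\e\sum_i f_1(\d\bar u_\e(i,i+2\e e_2)/2)$ contributes $O(\sigma)$, coming from the $O(\sigma/\e)$ atoms whose segment $[i,i+2\e e_2]$ crosses the jump half-line $\Pi^+(x_0)$: there $\d\bar u_\e/2 \approx 1/2$ and $f_1(1/2) = \pi^2/2$, giving $\frac{\alpha}{\pi^2}\e\cdot(\sigma/\e)\cdot(\pi^2/2) = O(\sigma)$, while away from $\Pi^+(x_0)$ a Riemann-sum estimate based on $|\nabla\theta_\hor| \sim 1/|x-x_0|$ returns $o_\e(1)$. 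A similar direct estimate shows that the whole annulus contribution is $o_\e(1)$. Combining everything with $\pi\log(\sigma'/\e) = \pi\log(\sigma/\e) + o_\e(1)$, one obtains $4F_\e^\p(\tilde u_\e, B_\sigma(x_0)) \leq \pi\log(\sigma/\e) + \gamma + O(\sigma) + o_\e(1)$; the upper bound follows by sending $\e\to 0$ and then $\sigma\to 0$.

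For the lower bound, I would start from a near-minimizer $u_\e$ of $\gamma_\e^\p(B_\sigma(x_0))$, set $v_\e = 2u_\e$, and use~\eqref{eq:4F-part-larger-F-edge} to get $F_\e^\edge(v_\e, B_\sigma(x_0)) \leq 4\gamma_\e^\p(B_\sigma(x_0))$; by the upper bound just established this is $\pi|\log\e|+O(1)$, so Theorem~\ref{thm:first-order-Fe}(i) with $M=1$ yields along a subsequence $\mu_{v_\e}\mres B_\sigma(x_0) \fto \mu$ with $|\mu|(B_\sigma(x_0)) \leq 1$. The prescribed boundary datum on $\partial_{2\e} B_\sigma(x_0)$ has discrete degree $1$, so $\mu(B_\sigma(x_0)) = 1$ by integrality and hence $|\mu|(B_\sigma(x_0)) = 1$; the second part of Theorem~\ref{thm:first-order-Fe}(i) then gives $\mu = \delta_{x_h}$ for some $x_h \in B_\sigma(x_0)$, and the piecewise-affine interpolation converges in $H^1_\loc(B_\sigma(x_0)\setminus\{x_h\};\R^2)$ to a map $v \in \D_1(B_\sigma(x_0))$ with $J(v) = \pi\delta_{x_h}$. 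Passing the discrete boundary condition to the trace identifies $v(x) = (x-x_0)/|x-x_0|$ on $\partial B_\sigma(x_0)$, and the only such map in $\D_1(B_\sigma(x_0))$ with a single singularity is the radial one with singularity at $x_0$; hence $x_h = x_0$ and $\W(\delta_{x_0}, B_\sigma(x_0)) = -\pi|\log\sigma|$ by~\eqref{eq:ren-energy-ball}. Applying Theorem~\ref{thm:first-order-Fe}(ii) to $v_\e$ and rearranging gives $\liminf_\e(4\gamma_\e^\p - \pi\log(\sigma/\e)) \geq \pi|\log\sigma| + \W(\delta_{x_0}, B_\sigma(x_0)) + \gamma = \gamma$, from which the lower bound follows on sending $\sigma\to 0$.

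The main obstacles will be (a) in the upper bound, the careful control of the gluing near $\partial B_{\sigma'}(x_0)$, where~\cite{GPS} is used to guarantee that the edge-dislocation near-minimizer $\bar u_\e$ is sufficiently close to a smooth lift so that matching the thicker boundary condition on $\partial_{2\e}$ costs only infinitesimal extra energy, and (b) in the lower bound, the identification of the limiting singularity location $x_h = x_0$, which requires pushing the discrete boundary datum for $v_\e$ to the trace of the limiting canonical harmonic map and invoking its uniqueness; without this step the bound $\W(\delta_{x_h}, B_\sigma(x_0)) \geq -\pi|\log\sigma|$ would fail in general.
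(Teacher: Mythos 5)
Your plan of sandwiching $\gamma$ by a $\liminf$ lower bound and a $\limsup$ upper bound is the same as the paper's, but both halves of your argument contain genuine gaps.

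\textbf{Lower bound.} The paper's lower bound~\eqref{est:core-lb} is a one-liner: any competitor $u$ for $\gamma_\e^\p(B_\sigma(x_0))$, doubled, is a competitor for $\gamma_\e^\screw(B_\sigma(x_0))$, so $4\gamma_\e^\p(B_\sigma(x_0))\geq\gamma_\e^\screw(B_\sigma(x_0))$ by~\eqref{eq:4F-part-larger-F-edge}--\eqref{lb:edge-SD}, and~\eqref{def:gamma} finishes. Your detour through flat compactness and Theorem~\ref{thm:first-order-Fe}(ii) requires locating the limit singularity, and your justification---that the boundary trace $v|_{\partial B_\sigma(x_0)}=\tfrac{x-x_0}{|x-x_0|}$ and $J(v)=\pi\delta_{x_h}$ force $x_h=x_0$---is false: for any $x_h\in B_\sigma(x_0)$ there exist $v\in\D_1(B_\sigma(x_0))$ with that boundary trace and singularity at $x_h$. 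As you concede, without $x_h=x_0$ the conclusion fails, since Theorem~\ref{thm:first-order-Fe}(ii) is blind to the boundary data and only delivers $\W(\delta_{x_h},B_\sigma(x_0))+\gamma$, which satisfies $\pi|\log\sigma|+\W(\delta_{x_h},B_\sigma(x_0))\to-\infty$ as $x_h\to\partial B_\sigma(x_0)$.

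\textbf{Upper bound.} Your gluing is in the right spirit, but the NNN estimate relies on an assumption that is precisely what the paper's Steps~1--4 are devoted to establishing. You want $\frac{\alpha}{\pi^2}\e\sum_i f_1\big(\tfrac12\d\bar u_\e(i,i+2\e e_2)\big)$ to contribute $O(\sigma)$ only from bonds crossing $\Pi^+(x_0)$ and $o_\e(1)$ elsewhere. But $F_\e^\edge$ sees vertical increments only modulo $\Z$, so a near-minimizer's ``odd'' integer jumps (bonds where $\d\bar u_\e(i,i+\e e_2)$ is near an odd integer, hence $f_1(\tfrac12\d\bar u_\e)\approx\pi^2/2$) may sit on any curve joining the singularity to the point where $\Pi^+(x_0)$ exits the ball; nothing forces them onto $\Pi^+(x_0)$. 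Theorem~\ref{thm:core-screw-dipoles} (the result from~\cite{GPS}) gives dipole-freeness and the prescribed datum on $\partial_\e B_{\sigma'}(x_0)$, but says nothing about closeness to the smooth lift in a thicker layer or about the location of those jumps. The paper resolves this by shifting the minimizer $\zeta^\sigma_{x_0,\e}$ vertically by $z_\e$ so its singularity is aligned to within $\tfrac{\e}{2}$ with the horizontal through $x_0$ (Step~2), and then manufactures a specific $\Z$-representative $\xi_{\e,\rho}$ by lifting the $\S^1$-interpolation on the cut domain $B_{\rho/2}(x_0)\sm\Pi^+(\bar x_\e-z_\e)$ (Steps~1 and~3); only after this are the identities~\eqref{eq:du-dxi}--\eqref{eq:d-u-e-r} available, which is what lets Step~5 estimate the NNN term as in~\eqref{est:energy-core-jump}--\eqref{est:energy-core-no-jump}. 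The paper also works on a strictly larger ball $B_\rho$, $\rho>6\sigma$, precisely to get a wide annulus for the cut-off interpolation to the boundary datum; your $O(\sqrt\e)$-thick annulus does not give that room and you leave the mismatch across $\partial B_{\sigma'}(x_0)$ (for NNN bonds with one endpoint strictly inside $B_{\sigma'}(x_0)$) unquantified. These omitted steps are the substance of the proof, not technicalities.
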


\begin{remark}
  We stress that in~\eqref{eq:gamma-part} the quantities $\limsup_{\e\to 0}\big(4\gamma_\e^\p\big(B_\sigma(x_0)\big)-\pi\log\frac{\sigma}{\e}\big)$ and $\liminf_{\e\to 0}\big(4\gamma_\e^\p\big(B_\sigma(x_0)\big)-\pi\log\frac{\sigma}{\e}\big)$ may depend on $\sigma$, thus the limit as $\sigma \to 0$ is necessary. This is in contrast to the formula defining $\gamma$ in~\eqref{def:gamma}, where the limit as $\e \to 0$ is independent of~$\sigma$. 
\end{remark}

A crucial ingredient for the proof of Proposition~\ref{prop:core-partials} is the following theorem contained in~\cite[Theorem 2.3 and Remark 2.4]{GPS}, which shows that the minimisation problem defining $\gamma_\e^\screw$ admits a solution without dipoles.
\begin{theorem}\label{thm:core-screw-dipoles}
Let $x_0\in\R^2$, $\e>0$, and $\sigma>4\e>0$; let moreover $\theta$ be a lifting of $x/|x|$ in the cut domain $\R^2\sm\Pi^+$. Then there exists $\zeta_{x_0,\e}^\sigma\in \AD_\e$ satisfying  
\begin{enumerate}[label=($\gamma$\arabic*)]
\item \label{bc-min-core} $\zeta_{x_0,\e}^\sigma =\frac{1}{2\pi}\theta(\cdot-x_0)$ on $\partial_\e B_\sigma(x_0)$
\item \label{vorticity-min-core} $\mu_{\zeta_{x_0,\e}^\sigma} \mres B_\sigma(x_0)= \delta_{\overline{x}_\e} $  with $\overline{x}_\e = b(Q_\e(\overline{\imath}_\e  ))$  for some $ \overline{\imath}_\e = \overline{\imath}_\e (x_0,\e,\sigma)\in\e\Z^2\cap B_\sigma(x_0)$
\end{enumerate}
and such that
	\begin{equation}\label{eq:solution-gamma-screw}
	F_\e^\screw\big(\zeta_{x_0,\e}^\sigma,B_\sigma(x_0)\big)=\gamma_\e^\screw\big(B_\sigma(x_0)\big)\,.
	\end{equation}
In particular, $\zeta_{x_0,\e}^\sigma$ is a solution to~\eqref{def:core-screw}.
\end{theorem}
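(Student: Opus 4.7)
The plan is to prove Theorem~\ref{thm:core-screw-dipoles} in three stages: existence of a minimiser, verification that the total vorticity in $B_\sigma(x_0)$ is $+1$ for every admissible configuration, and reduction to a minimiser supported on a single cube.

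First, existence. Fix $u$ on $\e\Z^2 \setminus \overline{B_\sigma(x_0)}$ to equal $\frac{1}{2\pi}\theta(\cdot - x_0)$, so that only the interior values $\{u(i) : i \in V\}$ with $V \defas (\e\Z^2 \cap B_\sigma(x_0)) \setminus \partial_\e B_\sigma(x_0)$ are free. By the $1$-periodicity of $f_1$ in~\eqref{def:f1-f2}, the functional $F_\e^\screw(\cdot, B_\sigma(x_0))$ is continuous on $\R^V$ and invariant under the action $u \mapsto u + z$ for $z \in \Z^V$. Hence it descends to a continuous function on the compact torus $\R^V / \Z^V$ and attains a minimum there. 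Lifting back produces $u_0 \in \AD_\e$ with $u_0 = \frac{1}{2\pi}\theta(\cdot - x_0)$ on $\partial_\e B_\sigma(x_0)$, establishing property~\ref{bc-min-core}.

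Second, the total vorticity. Summing the cell circulations~\eqref{def:circulation} over all $Q_\e \in \Q_\e(B_\sigma(x_0))$ and using the antisymmetry $\d^e u(i,j) = -\d^e u(j,i)$, the interior edge contributions cancel and we are left with
\[
\mu_u\big(B_\sigma(x_0)\big) \;=\; \sum_{[i,j] \subset \partial_\e B_\sigma(x_0)} \d^e u(i,j)\,,
\]
the sum being over boundary edges oriented counter-clockwise. Evaluating the right-hand side on $u = \frac{1}{2\pi}\theta(\cdot - x_0)$ and using the decomposition $\d^e u = \d u - P_\Z(\d u)$ from~\eqref{def:d-elastic}, the contributions $\d u$ telescope to zero while the contributions $-P_\Z(\d u)$ account for the integer jumps of the lifting $\theta$ around $x_0$, whose sum is the degree of $x/|x|$, namely $+1$. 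Hence every admissible $u$, and in particular the minimiser $u_0$ from Step~1, satisfies $\mu_u(B_\sigma(x_0)) = 1$.

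The main step and principal obstacle is to produce a minimiser with $|\mu|(B_\sigma(x_0)) = 1$, i.e.\ free of dipoles. Suppose $u_0$ is a minimiser with $|\mu_{u_0}|(B_\sigma(x_0)) \geq 3$; by the preceding paragraph there exist cells $Q^\pm \in \Q_\e(B_\sigma(x_0))$ with $\mu_{u_0}(Q^+) = +1$ and $\mu_{u_0}(Q^-) = -1$. The strategy is to exhibit $\tilde u \in \AD_\e$ sharing the same boundary condition as $u_0$, with $|\mu_{\tilde u}|(B_\sigma(x_0)) = |\mu_{u_0}|(B_\sigma(x_0)) - 2$ and $F_\e^\screw(\tilde u, B_\sigma(x_0)) \leq F_\e^\screw(u_0, B_\sigma(x_0))$; iterating finitely many times yields the desired $\zeta_{x_0,\e}^\sigma$. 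Constructing $\tilde u$ is the technical heart of~\cite{GPS}: one chooses a lattice path joining $b(Q^-)$ to $b(Q^+)$, subtracts from $u_0$ a discrete version of the harmonic dipole potential $\frac{1}{2\pi}\bigl(\theta(\cdot - b(Q^+)) - \theta(\cdot - b(Q^-))\bigr)$ in a tubular neighborhood of this path, and then reinterpolates near its boundary to recover $u_0$ outside. The hard part is a sharp energy estimate showing that the ``cost'' of the reinterpolation is controlled by the energy $u_0$ already stored to create the dipole (this is where one exploits that the dipole potential is energy-minimising for its vorticity), ensuring the net effect is non-increasing. A further delicate point is to ensure that the whole surgery is contained in the interior of $B_\sigma(x_0)$ so that the boundary condition~\ref{bc-min-core} is preserved, and that the chosen path/neighborhood produces the correct combinatorial change of vorticity — a purely discrete constraint that distinguishes this setting from its continuum analogues.
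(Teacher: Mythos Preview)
The paper does not prove this theorem: it is quoted verbatim as ``\cite[Theorem~2.3 and Remark~2.4]{GPS}'' and used as a black box. So there is no paper proof to compare against.

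Your Steps~1 and~2 are correct and standard: periodicity of $f_1$ gives compactness on the torus $\R^V/\Z^V$ and hence existence of a minimiser, and the telescoping of interior edges in the sum of cell circulations reduces the total vorticity to a discrete boundary circulation which equals the winding number of the boundary datum, namely $+1$.

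Your Step~3 is not a proof but an outline, and you are honest about this (``the technical heart of~\cite{GPS}'', ``the hard part is a sharp energy estimate''). The strategy you describe---subtracting a discrete dipole potential along a path and reinterpolating---is a natural candidate, but the details you flag as missing are exactly where the substance lies, and without access to~\cite{GPS} (listed as \emph{in preparation}) one cannot verify that this is the route taken there. One point to be careful about: your phrasing ``ensuring the net effect is non-increasing'' is the right target, but the mechanism you sketch (subtract a continuum-type dipole potential, then repair at the boundary of a tube) is delicate in the discrete setting because the screw energy is not convex in $u$---it is only convex in $\d^e u$---so a naive variational comparison does not immediately give monotonicity. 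Discrete dipole-removal arguments typically hinge on a more combinatorial or lifting-based mechanism rather than a direct analogue of the continuum harmonic competitor; whether~\cite{GPS} proceeds as you suggest or via a different route cannot be determined from the present paper.

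In summary: the easy parts are fine, the hard part is correctly identified and deferred to the same external reference the paper uses, and your sketch of that part is plausible but not a substitute for the actual argument.
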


We are now in a position to prove Proposition~\ref{prop:core-partials}. 

\begin{proof}[Proof of Proposition~\ref{prop:core-partials}]
Let $x_0\in\R^2$ be arbitrary and suppose that $u_{\e,\sigma}$ is a competitor for $\gamma_\e^\p\big(B_\sigma(x_0)\big)$ as in~\eqref{def:core-part}. Then $2u_{\e,\sigma}$ is a competitor for $\gamma_\e^\screw\big(B_\sigma(x_0)\big)$ in~\eqref{def:core-screw}. Hence, passing to the infimum, we immediately deduce from~\eqref{eq:4F-part-larger-F-edge} and~\eqref{lb:edge-SD} that $\gamma_\e^\p\big(B_\sigma(x_0)\big)\geq\frac{1}{4}\gamma_\e^\screw\big(B_\sigma(x_0)\big)$, which by~\eqref{def:gamma} in turn implies that
	\begin{equation}\label{est:core-lb}
	\liminf_{\sigma\to 0}\liminf_{\e\to 0}\Big(4\gamma_\e^\p\big(B_\sigma(x_0)\big)-\pi\log\frac{\sigma}{\e}\Big)\geq \gamma\,.
	\end{equation}
Thus, to obtain~\eqref{eq:gamma-part} it suffices to show that
	\begin{equation}\label{est:core-ub}
	\limsup_{\rho\to 0}\limsup_{\e\to 0}\Big(4\gamma_\e^\p\big(B_\rho(x_0)\big)-\pi\log\frac{\rho}{\e}\Big)\leq \gamma\,.
	\end{equation}
We only prove~\eqref{est:core-ub} in the case $\theta_\hor\in C^\infty(\R^2\sm\Pi^+)$, then the general case follows as described in Remark~\ref{rem:core-partials} below.  
To establish~\eqref{est:core-ub}, let $\e>0$, let $\sigma>8\e$, let $\rho> 6\sigma$, and let $ \zeta_{x_0,\e}^\sigma\in\AD_\e$ be as in Theorem~\ref{thm:core-screw-dipoles} satisfying, in particular,
	\begin{equation}\label{eq:energy-core-small}
	F_\e^\screw\big(\zeta_{x_0,\e}^\sigma, B_\sigma(x_0)\big)=\gamma_\e^\screw\big(B_\sigma(x_0)\big)\,.
	\end{equation}
Below we suitably  modify  $\zeta_{x_0,\e}^\sigma$ to obtain a competitor $u_{\e,\rho}$ for $\gamma_\e^\p\big(B_{\rho}(x_0)\big)$ satisfying
	\begin{equation}\label{est:energy-core-final}
	4 F_\e^\p\big(u_{\e,\rho},B_\rho(x_0)\big)\leq \gamma_\e^\screw\big(B_\sigma(x_0)\big)+\pi\log\frac{\rho}{\sigma}+r(\e,\sigma,\rho)
	\end{equation}
with $r(\e,\sigma,\rho)\to 0$ when letting first $\e\to 0$, then $\sigma\to 0$, and eventually $\rho\to 0$. Suppose for the moment that~\eqref{est:energy-core-final} holds true. Since $u_{\e,\rho}$ is a competitor for $\gamma_\e^\p\big(B_\rho(x_0)\big)$ this implies that
	\begin{equation*}
	4\gamma_\e^\p\big(B_\rho(x_0)\big)-\pi\log\frac{\rho}{\e}\leq\gamma_\e^\screw\big(B_\sigma(x_0)\big)+\pi\log\frac{\rho}{\sigma}-\pi\log\frac{\rho}{\e}+r(\e,\sigma,\rho)\,.
	\end{equation*}
Thanks to~\eqref{def:gamma} we find~\eqref{est:core-ub} by letting in order $\e\to 0$, $\sigma\to 0$, and $\rho\to 0$.

By Remark~\ref{rem:invariance rotation} it suffices to construct $u_{\e,\rho}$ such that $2 u(i) = \frac{1}{2\pi} ( \theta_\hor(i-x_0) + a_{\e,\rho})$ and~\eqref{est:energy-core-final} is satisfied, where $a_{\e,\rho}$ is a constant.  

\smallskip
We establish~\eqref{est:energy-core-final} in several steps.

\begin{step}{1}{Construction of the competitor $u_{\e,\rho}$ in $B_{\frac{\rho}{2}}(x_0)$}
Given $\zeta_{x_0,\e}^\sigma $ as above we define $u_{\e,\rho}$ in the ball $B_{\frac{\rho}{2}}(x_0)$ via a shifting and smoothing procedure. 
Recall that $\mu_{\zeta_{x_0,\e}^\sigma} \mres B_{\sigma}(x_0)= \delta_{\overline{x}_\e}$ with $ \overline{x}_\e =\overline{\imath}_\e  +\frac{\e}{2}(e_1+e_2)$ for some $\overline{\imath}_\e=\overline{\imath}_\e(x_0,\e,\sigma)\in \e \Z^2 \cap B_\sigma(x_0)$ (see Theorem~\ref{thm:core-screw-dipoles}, \ref{vorticity-min-core}). Let now
	\begin{equation}\label{def:integer-part}
	x_0^\e\defas\e\Big\lfloor\frac{x_0\cdot e_1}{\e}\Big\rfloor e_1+\e\Big\lfloor\frac{x_0\cdot e_2}{\e}\Big\rfloor e_2\qquad\text{and}\qquad z_\e\defas \big( (\overline{\imath}_\e-x_0^\e)\cdot e_2 \big) e_2 \in \e\Z e_2 
	\end{equation}	 
be the component-wise lower integer part of $x_0$ and the projection of its deviation from $\overline{\imath}_\e$ onto the vertical axis, respectively, see Figure~\ref{fig:shifting-smoothing}. Note that $|x_0-x_0^\e|\leq\sqrt{2}\e$ and hence $|z_\e|\leq \sigma+\sqrt{2}\e$. In particular, the choice of $\rho$ and $\sigma$ ensures that $B_{2\sigma}(x_0-z_\e)\subset B_\frac{\rho}{2}(x_0)$.  

To define the competitor  $u_{\e,\rho}$, we start by defining an auxiliary displacement $\tilde{u}_{\e,\rho}$ on  $\e\Z^2 \cap B_{\frac{\rho}{2}}(x_0)$ via  
	\begin{equation}\label{def:phi-e-rho}
	 \tilde{u}_{\e,\rho}(i)  \defas
	\begin{cases}
	 \frac{1}{2} \zeta_{x_0,\e}^\sigma(i+z_\e)  &\text{if}\ i\in\e\Z^2\cap B_\sigma(x_0-z_\e)\,,\\
	\tfrac{1}{4\pi}  \theta_\hor  (i-x_0+z_\e) &\text{if}\ i\in \e\Z^2 \cap ( B_{\frac{\rho}{2}}(x_0) \sm B_\sigma(x_0-z_\e) ) \,.
	\end{cases}
	\end{equation}
 For~\eqref{def:phi-e-rho} to be well-defined even when $x_0 \cdot e_2  \in \e \Z$, $\theta_\hor$ is extended on the discontinuity half-line $\Pi^+(x_0 - z_\e)$ by approximating from the bottom  $\theta_\hor(x) := \lim_{t \searrow 0} \theta_\hor(x - t e_2)$ for $x \in \Pi^+(x_0 - z_\e)$.

The displacement $\tilde{u}_{\e,\rho}$ requires a modification in order to be extended to the correct boundary conditions. The difficulty in the extension procedure lies in the mismatch between the discontinuity line $\Pi^+(x_0 - z_\e)$ of $\tfrac{1}{2 \pi}  \theta_\hor  (\cdot -x_0+z_\e)$ (value at $\de B_{\frac{\rho}{2}}(x_0)$) and the discontinuity line $\Pi^+(x_0)$ of $\tfrac{1}{2 \pi}  \theta_\hor  (\cdot -x_0)$ (desired value at $\de B_{\rho}(x_0)$). This mismatch does not allow for an immediate interpolation of the values via a cut-off function, but requires some care. Only after the aforementioned modification we will be in a position to define the competitor $u_{\e,\rho}$ in $B_{\frac{\rho}{2}}(x_0)$ and extend this to $B_{\rho}(x_0)$.  

To resolve the mismatch of the discontinuity lines, we construct a suitable representative $\xi_{\e,\rho}$ in the $\Z$-equivalence class of $2 \tilde{u}_{\e,\rho}$ which is basically discontinuous on $\Pi^+(x_0)$. Here and in Step~2, we provide all the details for this  procedure. 
 We consider the spin field $v_\e \in \SF_\e$ defined by 
 \begin{equation} \label{eq:veps}
  v_\e := \begin{cases}
    \exp( 2 \pi \iota 2 \tilde{u}_{\e,\rho})   &\text{if } i \in \e \Z^2 \cap B_{\frac{\rho}{2}}(x_0) \,,\\
    \exp(\iota \theta_\hor  (i-x_0+z_\e)) &\text{otherwise.}
  \end{cases}
\end{equation}
 We let $\overarc v_\e$ be its $\S^1$-interpolation as in Lemma~\ref{lem:S1-interpolation}. We start by lifting $\overarc{v}_\e$ in the set $ B_{\frac{\rho}{2}}(x_0)\sm \Pi^+(\overline{x}_\e -z_\e)$ by applying Remark~\ref{rem:lifting-S1-interpolation}. (In the next step we will show that $\Pi^+(\overline x_\e-z_\e)$ and $\Pi^+(x_0)$ have approximately the same height.) The set $B_{\frac{\rho}{2}}(x_0)\sm \Pi^+(\overline{x}_\e -z_\e)$ is open, bounded, and simply connected. Moreover, $\supp \mu_{2 \tilde{u}_{\e,\rho}}   \cap ( B_{\frac{\rho}{2}}(x_0)\sm \Pi^+(\overline{x}_\e -z_\e) ) =\emptyset$. Indeed, by  construction, we have   $\mu_{2 \tilde{u}_{\e,\rho}}(Q)=0$  for every $Q\in\Q_\e\big(B_{\frac{\rho}{2}}(x_0)\sm B_\sigma(x_0-z_\e)\big)$. Moreover, thanks to~\ref{bc-min-core} the same holds true for $Q\in\Q_\e\big(B_{\frac{\rho}{2}}(x_0)\big)$ with  $Q\cap \partial B_\sigma(x_0-z_\e)\neq\emptyset$.    Together with~\ref{vorticity-min-core} this implies that  $\mu_{2 \tilde{u}_{\e,\rho}}\mres B_{\frac{\rho}{2}}(x_0)=\delta_{\overline{x}_\e-z_\e}$, hence the desired condition on the support.  By Remark~\ref{rem:lifting-S1-interpolation}, there exists a lifting $2\pi \hat{\xi}_{\e,\rho} \in W^{1,\infty}_\loc(B_{\frac{\rho}{2}}(x_0)\sm \Pi^+(\overline{x}_\e -z_\e))$ satisfying 
\begin{equation} \label{eq:overarc-v hat-xi}
  \overarc{v}_\e(x) = \exp(2 \pi \iota \hat{\xi}_{\e,\rho}(x))  \text{ for  }x \in   B_{\frac{\rho}{2}}(x_0)\sm \Pi^+(\overline{x}_\e -z_\e) \, .
\end{equation}  

Finally, we set 
\[
\xi_{\e,\rho}(i) := \hat{\xi}_{\e,\rho}(i) \, , \quad u_{\e,\rho}(i) := \frac{1}{2} \xi_{\e,\rho}(i) \, ,  \quad \text{for } i \in \e \Z^2 \cap B_{\frac{\rho}{2}}(x_0) \, .
\]
Note that 
\begin{equation} \label{eq:2tildeu-2u-xi}
  2 \tilde{u}_{\e,\rho} \eqZ 2 u_{\e,\rho } = \xi_{\e,\rho} \, , \quad \text{on } \e \Z^2 \cap B_{\frac{\rho}{2}}(x_0) \, .
\end{equation}
Moreover, we recall that, by construction of $\overarc{v}_\e$, the function  $\hat{\xi}_{\e,\rho}$ is  affine on lattice edges $[i,j] \subset B_{\frac{\rho}{2}}(x_0)$ with $i,j \in \e \Z^2$ satisfying $|i-j|=\e$ and $[i,j] \cap \Pi^+(\overline{x}_\e - z_\e)=\emptyset$. Moreover, the slope on $[i,j]$ is $\frac{1}{\e}$-proportional to $\d^e (2 \tilde{u}_{\e,\rho})(i,j) = \d^e \xi_{\e,\rho}(i,j)$.


\end{step}

\begin{step}{2}{Comparison between the half-lines $\Pi^+(\overline{x}_\e - z_\e)$ and $\Pi^+(x_0)$} 
  We note that the height of the discontinuity half-line $\Pi^+(\overline{x}_\e -z_\e)$ and the height of the half-line $\Pi^+(x_0)$ are so close that they are indistinguishable by the lattice $\e \Z^2$, see also Figure~\ref{fig:shifting-smoothing}. Indeed, by~\eqref{def:integer-part} we have that  
  \[
    (\overline{x}_\e - z_\e) \cdot e_2 = \Big(\overline{\imath}_\e + \frac{\e}{2}(e_1+e_2) - \big( (\overline{\imath}_\e-x_0^\e)\cdot e_2 \big) e_2  \Big) \cdot e_2 = \Big( x_0^\e + \frac{\e}{2} e_2 \Big) \cdot e_2 \, .
  \]
  If $x_0 \cdot e_2 \in \e \Z$, then $x_0 \cdot e_2 = x_0^\e \cdot e_2$ and  $|(\overline{x}_\e - z_\e) \cdot e_2 - x_0 \cdot e_2| = \frac{\e}{2}$. Otherwise, if $x_0 \cdot e_2 \notin \e \Z$, then $|(\overline{x}_\e - z_\e) \cdot e_2 - x_0 \cdot e_2| < \frac{\e}{2}$.
  
\end{step}

\begin{figure}[ht]
  \begin{center}
  \includegraphics{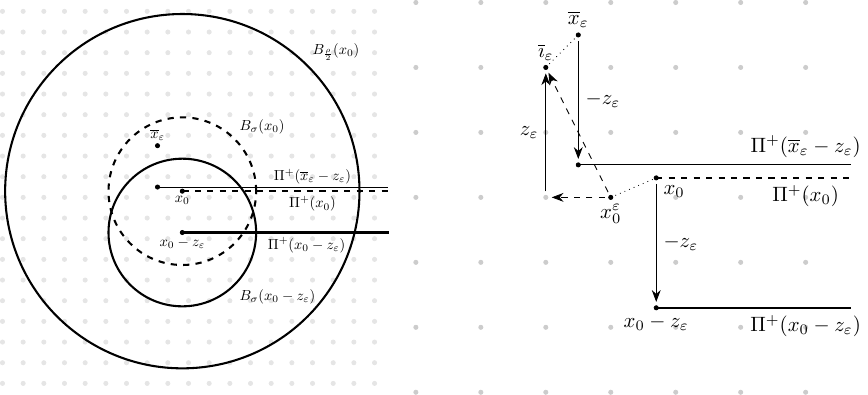}
  \caption{Sets used in the shifting and smoothing procedure in the proof of Proposition~\ref{prop:core-partials}. For the sake of clarity, the picture does not respect the sizes imposed in the proof on the radii  $\sigma > 8\e$  and $\rho > 6\sigma$. On the left: balls used in the construction and half-lines relevant in the shifting and smoothing procedure. On the right: zoomed-in picture of the points involved in the shifting procedure. The picture further shows that the vertical distance between $\Pi^+(x_0)$ and $\Pi^+(\overline{x}_\e - z_\e)$ is less than $\frac{\e}{2}$.}
  \label{fig:shifting-smoothing}
  \end{center}
\end{figure}

\begin{step}{3}{Value at $\de B_{\frac{\rho}{2}}(x_0)$}
  The observation in Step~2 allows us to describe $\xi_{\e,\rho}$ close to $\de B_{\frac{\rho}{2}}(x_0)$ in a more convenient way useful for the extension of $u_{\e,\rho}$ outside $B_{\frac{\rho}{2}}(x_0)$. 
  
  The set  $( \R^2 \sm B_\sigma(x_0-z_\e) ) \sm\Pi^+(x_0)$ is simply connected. Hence, there exists a lifting of $\frac{x-x_0+z_\e}{|x-x_0+z_\e|}$ with a shifted discontinuity, \ie $\thetas(\cdot\,-x_0+z_\e)\in C^\infty\big( ( \R^2 \sm B_\sigma(x_0-z_\e) ) \sm\Pi^+(x_0)\big)$ satisfying  
  \begin{equation} \label{eq:tilde-theta lifting}
    \exp( \iota \thetas(x-x_0+z_\e)) = \frac{x-x_0+z_\e}{|x-x_0+z_\e|} \quad  \text{for } x \in (  \R^2 \sm B_\sigma(x_0-z_\e) ) \sm\Pi^+(x_0) \, .
  \end{equation}
    We extend  $ \thetas(\cdot - x_0 + z_\e)$ to the discontinuity half-line $\Pi^+(x_0)$ by approximating from the bottom $\thetas(x - x_0 + z_\e) := \lim_{t \searrow 0} \thetas(x - x_0 + z_\e - t e_2)$ for $x \in \Pi^+(x_0)$. 
    
    \smallskip
    
    In this step we show that 
  \begin{equation} \label{eq:hatxi-equals-tildetheta}
    2 \pi \xi_{\e,\rho}(i) = \thetas(i-x_0+z_\e) + 2 \pi k \quad  \text{ for } i \in \e \Z^2 \cap B_{\frac{\rho}{2}}(x_0)\sm B_{\sigma}(x_0-z_\e) \, ,
  \end{equation}
  for some $k \in \Z$.  To prove~\eqref{eq:hatxi-equals-tildetheta} we start by observing that~\eqref{def:phi-e-rho}--\eqref{eq:tilde-theta lifting} and the identity $\overarc{v}_{\e | \e \Z^2} = v_\e$ imply that 
  \[
    \begin{split}
     \exp\big(2 \pi \iota \xi_{\e,\rho}(i)\big)  & = \overarc{v}_\e(i) = v_\e(i)  = \exp\big(2\pi \iota 2 u_{\e,\rho}(i) \big) = \exp\big(\iota \theta_\hor(i-x_0+z_\e)\big) \\
      & = \frac{i - x_0 + z_\e}{|i - x_0 + z_\e|}  = \exp\big(\iota \thetas(i - x_0 + z_\e )\big) \quad \text{ for } i \in \e \Z^2 \cap (B_{\frac{\rho}{2}}(x_0)\sm B_\sigma(x_0-z_\e)) \, .
    \end{split}
   \]
  This implies that for every $i \in \e \Z^2 \cap (B_{\frac{\rho}{2}}(x_0)\sm B_\sigma(x_0-z_\e))$ there exists $k(i) \in \Z$ such that 
  \[
    2 \pi  \xi_{\e,\rho}(i) = \thetas(i - x_0 + z_\e ) + 2 \pi k(i)  \, .
  \]
  To prove~\eqref{eq:hatxi-equals-tildetheta} it is enough to show that 
  \begin{equation} \label{eq:k is constant}
    k(i) = k(j) \text{ for every } i,j \in \e \Z^2 \cap (B_{\frac{\rho}{2}}(x_0)\sm B_{\sigma}(x_0-z_\e))\, .
  \end{equation}

  Let us start by proving~\eqref{eq:k is constant} for $i, j \in \e \Z^2 \cap ( B_{\frac{\rho}{2}}(x_0)\sm B_{\sigma}(x_0-z_\e))$ satisfying $|j - i| = \e$ (\ie they are nearest neighbors) and  $[i,j] \cap \Pi^+(\overline{x}_\e -z_\e) = \emptyset$. We write 
  \begin{equation} \label{eq:decomposing k}
    2 \pi (k(j) - k(i)) = 2 \pi (\xi_{\e,\rho}(j) - \xi_{\e,\rho}(i)) + ( \thetas(i - x_0 + z_\e) - \thetas(j - x_0 + z_\e) )\, .
  \end{equation}
  Let us study the term $\thetas(i - x_0 + z_\e) - \thetas(j - x_0 + z_\e)$ in~\eqref{eq:decomposing k}.  
  
  {\itshape Case 1:} If $x_0 \cdot e_2 \notin \Z$, then Step 2 gives that $[i,j] \cap \Pi^+(x_0) = \emptyset$. By~\eqref{eq:tilde-theta lifting} and since $[i,j] \subset \R^2 \sm B_{\frac{\sigma}{2}}(x_0-z_\e)$, we infer that 
  \begin{equation} \label{eq:increment tilde theta}
    \begin{split}
      &  | \thetas(j - x_0 + z_\e) -   \thetas(i - x_0 + z_\e)|   \leq \e \sup_{x\in [i,j]}|\nabla  \thetas(x - x_0 + z_\e)| = \e \sup_{x\in[i,j]} \Big|\nabla \frac{x - x_0 + z_\e}{|x - x_0 + z_\e|}\Big| \\
      & \quad \leq \e \sup_{x\in[i,j]}  \frac{1}{|x - x_0 + z_\e|}  \leq \frac{2}{\sigma} \e < \frac{\pi}{2} \, ,
    \end{split}
  \end{equation}
  where the last inequality is true for $\e$ small enough.

  {\itshape Case 2:} If $x_0 \cdot e_2 \in \Z$ and $[i,j] \cap \Pi^+(x_0) = \emptyset$, we argue as in {\itshape Case 1}. If $x_0 \cdot e_2 \in \Z$ and $[i,j] \cap \Pi^+(x_0) \neq \emptyset$, then Step~2 and the fact that $[i,j] \cap \Pi^+(\overline{x}_\e -z_\e) = \emptyset$ imply that either $j \in \Pi^+(x_0)$ or both $i,j \in \Pi^+(x_0)$. In either case, we obtain~\eqref{eq:increment tilde theta}  by approximating from the bottom $\thetas(x - x_0 + z_\e) := \lim_{t \searrow 0} \thetas(x - x_0 + z_\e - t e_2)$ for $x \in \Pi^+(x_0)$ and arguing as in {\itshape Case 1}.

  \smallskip

  Let us study the term $2 \pi (\xi_{\e,\rho}(j) - \xi_{\e,\rho}(i))$ in~\eqref{eq:decomposing k}. Recall that $[i,j] \cap \Pi^+(\overline{x}_\e -z_\e) = \emptyset$ and that on $[i,j]$ the lifting $\hat{\xi}_{\e,\rho}$ is  affine with slope $\frac{1}{\e}$-proportional to $\d^e (2 \tilde{u}_{\e,\rho})(i,j) = \d^e \theta_\hor(\cdot -x_0+z_\e)(i,j)$. Since $\theta_\hor(\cdot - x_0 + z_\e) \eqZ \thetas(\cdot - x_0 + z_\e)$, by~\eqref{eq:increment tilde theta} we have that 
  \[
    |\d^e \theta_\hor(\cdot - x_0 + z_\e)(i,j)| = |\d^e \thetas(\cdot - x_0 + z_\e)(i,j)| =  | \thetas(j - x_0 + z_\e) -   \thetas(i - x_0 + z_\e)| < \frac{\pi}{2} \, ,
  \]
  for $\e$ small enough.  We deduce that 
  \begin{equation} \label{eq:hatxi small}
    2\pi | \xi_{\e,\rho}(j) - \xi_{\e,\rho}(i) | \leq  \e 2\pi \sup_{[i,j]}| \nabla  \hat{\xi}_{\e,\rho } | < \frac{\pi}{2}\, .
  \end{equation}
  for $\e$ small enough. Putting together~\eqref{eq:decomposing k}, \eqref{eq:increment tilde theta}, and~\eqref{eq:hatxi small} we obtain that $|k(j) - k(i)| < \frac{1}{2}$. Hence $k(j) = k(i)$  and~\eqref{eq:k is constant} is proven for nearest neighbors.

  \smallskip

  We obtain~\eqref{eq:k is constant} by connecting every pair $i,j \in \e \Z^2 \cap (B_{\frac{\rho}{2}}(x_0)\sm B_{\sigma}(x_0-z_\e))$ with a chain of points $i = i_0, i_1, \dots, i_{N-1}, i_N = j \in B_{\frac{\rho}{2}}(x_0)\sm B_{\sigma}(x_0-z_\e)$ satisfying $|i_h - i_{h-1}| = \e$ and $[i_{h-1}, i_h] \cap \Pi^+(\overline{x}_\e -z_\e) = \emptyset$. By the previous argument, we obtain the chain of equalities 
  \[
  k(i) = k(i_0) = k(i_1) = \dots = k(i_{N-1}) = k(i_{N}) = k(j) \, .  
  \]

  In conclusion, we can redefine $\thetas$ by subtracting $2\pi k(i)$ to obtain that 
  \[
  2 u_{\e,\rho} = \xi_{\e,\rho} = \frac{1}{2\pi}\thetas(\cdot - x_0 + z_\e) \quad \text{on } \e \Z^2 \cap B_{\frac{\rho}{2}}(x_0) \sm B_{2 \sigma}(x_0-z_\e) \, ,
  \]
  where $\thetas$ is discontinuous on $\Pi^+(x_0)$. 
  
  Note that for every $i,j\in\e\Z^2\cap B_{\rho}(x_0)$ with $|i-j|=\e$ 
	\begin{equation}\label{eq:du-dxi}
	\begin{split}
	|\d ( 2 u_{\e,\rho})(i,j)| &=\dist\big(\d (2 u_{\e,\rho})(i,j);\Z\big) \quad \text{if }  j = i + \e e_1 \,, \\
  |\d ( 2 u_{\e,\rho})(i,j)| &=\dist\big(\d (2 u_{\e,\rho})(i,j);\Z\big) \quad \text{if } j = i + \e e_2 \text{ and } [i,j]\cap\Pi(x_0)=\emptyset\,,
	\end{split}
	\end{equation}
  where $\Pi(x_0)$ is the full horizontal line.\footnote{The condition $[i,j]\cap\Pi(x_0) = \emptyset$ is a strong condition that guarantees that the points are far from the discontinuity lines $\Pi^+(x_0)$ and $\Pi^+(\overline{x}_\e - z_\e)$. Formula~\eqref{eq:du-dxi} could be improved to a more precise statement, but it is not needed for the purposes of this proof.}

\end{step}

\begin{step}{4}{Extension of $u_{\e,\rho}$ outside $B_{\frac{\rho}{2}}(x_0)$}
We are now in a position to interpolate $2 u_{\e,\rho} = \xi_{\e,\rho}$ to $\tfrac{1}{2 \pi}  \theta_\hor  (\cdot -x_0)$ (plus a constant) in the cut annulus $A_{\frac{\rho}{2},\rho}(x_0) \sm \Pi^+(x_0)= ( B_{\rho}(x_0) \sm B_{\frac{\rho}{2}}(x_0) ) \sm \Pi^+(x_0)$. Specifically, we will interpolate to $\tfrac{1}{2 \pi}  \theta_\hor  (\cdot -x_0) + \frac{1}{2\pi}a_{\e,\rho}$ where the constant $a_{\e,\rho}$ is defined by 
\begin{equation}\label{def:alpha}
a_{\e,\rho} := \fint_{A_{\frac{\rho}{2},\rho}(x_0)} \thetas (x-x_0+z_\e)\dx - \fint_{A_{\frac{\rho}{2},\rho}(x_0)}\theta_\hor(x-x_0)\dx\,.
	\end{equation}
This constant is needed later in order to apply a Poincar\'e inequality.

We introduce a radial cut-off function $\eta\in C^\infty\big([0,+\infty);[0,1])$ be such that $\eta\equiv 0$ on $[0, \frac{5}{8}]$ and $\eta\equiv 1$ on $[\frac{7}{8},+\infty)$. We then define $\vartheta_{\e,\rho}\in C^\infty\big(\R^2 \sm\Pi^+(x_0)\big)$ by setting
	\begin{equation} \label{eq:cutoff interpolation}
	 \vartheta_{\e,\rho}(x)\defas \thetas(x-x_0+z_\e)+\eta\Big(\frac{|x-x_0|}{\rho}\Big)\big(\theta_\hor(x-x_0) + a_{\e,\rho} - \thetas(x-x_0+z_\e)\big)\,,
	\end{equation}
so that $\vartheta_{\e,\rho}(x)= \thetas (x-x_0+z_\e)$ on $B_{\frac{5}{8}\rho}(x_0)$ and $\vartheta_{\e,\rho}(x)=\theta_\hor(x-x_0) + a_{\e,\rho}$ on $B_{\rho}(x_0) \sm B_{\frac{7}{8}\rho}(x_0)$. Eventually, we set
	\begin{equation*}
    \xi_{\e,\rho}(i) \defas \frac{1}{2\pi}\vartheta_{\e,\rho}(i) \, , \quad  u_{\e,\rho}(i)\defas\frac{1}{2}\xi_{\e,\rho}(i) \quad \text{for } i\in\e\Z^2\cap A_{\frac{\rho}{2},\rho}(x_0)\,. 
	\end{equation*}
We have that for every $i,j\in\e\Z^2\cap A_{\frac{\rho}{2},\rho}(x_0)$ with $|i-j|=\e$ 
	\begin{equation}\label{eq:d-u-e-r}
    \begin{split}
      |\d ( 2 u_{\e,\rho})(i,j)| &=\dist\big(\d (2 u_{\e,\rho})(i,j);\Z\big) \quad \text{if }  j = i + \e e_1 \,, \\
      |\d ( 2 u_{\e,\rho})(i,j)| &=\dist\big(\d (2 u_{\e,\rho})(i,j);\Z\big) \quad \text{if } j = i + \e e_2 \text{ and } [i,j]\cap\Pi^+(x_0)=\emptyset\,,
      \end{split}
	\end{equation}
  The computations for~\eqref{eq:d-u-e-r} analogous to those in Step~3.
  
  \smallskip
   
  We conclude this step by observing that $u_{\e,\rho}$ is a competitor for $\gamma_\e^\p\big(B_\rho(x_0)\big)$. It remains to establish the energy estimate~\eqref{est:energy-core-final}.
\end{step}

\begin{step}{5}{Energy estimate I: From $F_\e^\p$ to $F_\e^\screw$}
In this step we show that
	\begin{equation}\label{est:energy-core-part-screw}
	4 F_\e^\p\big(u_{\e,\rho},B_\rho(x_0)\big)\leq(1+C\e)F_\e^\screw\big(2 u_{\e,\rho},B_\rho(x_0)\big)+C\rho
	\end{equation}
for some constant $C>0$. In order to simplify notation we will establish~\eqref{est:energy-core-part-screw} in the case $x_0=0$, since the estimates will not rely on the fact that $0$ is a lattice point.

\smallskip
Using the expression of $F_\e^\p$ in~\eqref{eq:F-edge-F-part} and using~\eqref{eq:du-dxi} and~\eqref{eq:d-u-e-r} we write
	\begin{equation}\label{eq:energy-core-part-edge}
	\begin{split}
	F_\e^\p\big(u_{\e,\rho},B_\rho\big) &=\frac{1}{4}F_\e^\edge\big(2 u_{\e,\rho},B_\rho\big)+ \frac{\alpha}{\pi^2}\e \sum_{i\in\Z_\e^{2e_2}(B_{\rho})}\hspace*{-1em}f_1\big(\d u_{\e,\rho}(i,i+2\e e_2)\big)\\
	&=\frac{1}{4}F_\e^\screw\big(2 u_{\e,\rho} ,B_\rho\big)+ \frac{\alpha}{\pi^2} \e \sum_{i\in\Z_\e^{2e_2}(B_{\rho})}\hspace*{-1em}f_1\big(\d u_{\e,\rho}(i,i+2\e e_2)\big)\,.
	\end{split}
	\end{equation}
It thus remains to estimate the last term on the right-hand side of~\eqref{eq:energy-core-part-edge}.
Suppose first that $i\in\Z_\e^{2e_2}(B_\rho)$ is such that $[i,i+2\e e_2]\cap\Pi\neq\emptyset$, where $\Pi = \Pi(0)$.
In this case we use the trivial estimate $f_1(t) \leq\frac{\pi^2}{2}$ to infer that
	\begin{equation}\label{est:energy-core-jump}
	\begin{split}
	\dsum{i\in\Z_\e^{2 e_2}(B_{\rho})}{ [i,i+2\e e_2] \cap\Pi  \neq\emptyset}\hspace*{-2em}f_1\big(\d u_{\e,\rho}(i,i+2\e e_2)\big)&\leq \frac{\alpha}{2}\#\{i\in\e\Z^2\cap B_{\rho}\colon [i,i+2\e e_2] \cap\Pi \neq\emptyset\}\\
	&\leq\frac{C}{\e}\H^1\big(B_{\rho}\cap \Pi \big)\leq\frac{C\rho}{\e}\,.
	\end{split}
	\end{equation}
If instead $[i,i+2\e e_2]  \cap\Pi=\emptyset$, we use convexity to deduce that
	\begin{equation}\label{est:nnn-convexity}
    \begin{split}
      f_1\big(\d u_{\e,\rho}(i,i+2\e e_2)\big) & \leq 2\pi^2\big|\d u_{\e,\rho}(i,i+2\e e_2)\big|^2 \\
      & \leq 4\pi^2\Big(\big|\d u_{\e,\rho}(i,i+\e e_2)\big|^2 +\big|\d u_{\e,\rho}(i+\e e_2,i+2\e e_2)\big|^2\Big)\,.
    \end{split}
	\end{equation}
Moreover, since $[i,i+2\e e_2] \cap\Pi=\emptyset  $, we deduce from~\eqref{eq:du-dxi} and~\eqref{eq:d-u-e-r} that
	\begin{equation*}
	4\pi^2\big(\d u_{\e,\rho}(i,i+\e e_2)\big)^2=\pi^2\dist^2\big(\d (2 u_{\e,\rho})(i,i+\e e_2);\Z\big)=\frac{1}{2}f_1\big(\d (2 u_{\e,\rho})(i,i+\e e_2)\big)
	\end{equation*}
and the same equality holds true for $\d u_{\e,\rho}(i+\e e_2,i+2\e e_2)$. Together with~\eqref{est:nnn-convexity} this gives
	\begin{equation}\label{est:energy-core-no-jump}
	\dsum{i\in\Z_\e^{2 e_2}(B_{\rho})}{ [i,i+2\e e_2)\cap\Pi=\emptyset}\hspace*{-2em}f_1\big(\d u_{\e,\rho}(i,i+2\e e_2)\big)\leq F_\e^\screw\big(2 u_{\e,\rho},B_\rho\big)\,.
	\end{equation}
Now~\eqref{est:energy-core-part-screw} follows from~\eqref{eq:energy-core-part-edge}, \eqref{est:energy-core-jump}, and~\eqref{est:energy-core-no-jump}.
\end{step}

\begin{step}{6}{Energy estimate II: Bound on $F_\e^\screw$}
In this step we show that
	\begin{equation}\label{est:energy-core-screw-final}
	F_\e^\screw\big(2 u_{\e,\rho},B_\rho(x_0)\big)\leq \gamma_\e^\screw\big(B_\sigma(x_0)\big)+\pi\log\frac{\rho}{\sigma}+r(\e,\sigma,\rho)
	\end{equation}
with $\lim_{\rho\to 0}\lim_{\sigma\to 0}\lim_{\e\to 0}r(\e,\sigma,\rho)=0$. Again we assume without loss of generality that $x_0=0$.

\smallskip
Paying attention to boundary interactions, we split the screw-dislocation energy as follows:
	\begin{equation}\label{est:core-screw-splitting}
	F_\e^\screw\big(2 u_{\e,\rho},B_\rho\big)\leq F_\e^\screw\big(2 u_{\e,\rho}, B_\sigma(-z_\e)\big)+F_\e^\screw\big(\vartheta_{\e,\rho}, B_\rho\sm B_{\sigma-2\e}(-z_\e)\big)\,.
	\end{equation}
Recalling~\eqref{eq:2tildeu-2u-xi} and the definition of $u_{\e,\rho} $ in~\eqref{def:phi-e-rho}, by~\eqref{eq:energy-core-small}  we infer that
	\begin{equation}\label{eq:energy-core-interior}	
	F_\e^\screw\big(2 u_{\e,\rho}, B_\sigma(-z_\e)\big) = F_\e^\screw\big( 2 \tilde{u}_{\e,\rho}  , B_\sigma(-z_\e)\big) = F_\e^\screw\big( \zeta^\sigma_{x_0,\e}  , B_\sigma(-z_\e)\big) =\gamma_\e^\screw(B_\sigma)\,.
	\end{equation}
It remains to evaluate the energy in $A_\e^{\sigma,\rho}\defas B_\rho\sm B_{\sigma-2\e}(-z_\e)$. Using standard interpolation estimates we deduce that  
	\begin{equation}\label{est:energy-core-screw-interpolation}
	\begin{split} 
    F_\e^\screw\big(\vartheta_{\e,\rho}, A_\e^{\sigma,\rho}\big)   \leq \frac{1}{2} \int_{A_\e^{\sigma,\rho}}|\nabla \vartheta_{\e,\rho} (x)|^2\dx + C \e \big\| | \nabla   \vartheta_{\e,\rho} | \cdot | \nabla^2  \vartheta_{\e,\rho} | \big\|_{L^\infty(A_\e^{\sigma,\rho})}  |A_\e^{\sigma,\rho}| \, .
	\end{split}
	\end{equation}
 By construction, see~\eqref{eq:cutoff interpolation}, we have $\nabla \vartheta_{\e,\rho} (x)=\nabla  \thetas  (x+z_\e)$ on $B_{\frac{\rho}{2}}\sm B_{\sigma-2\e}(-z_\e)$, while on $A_{\frac{\rho}{2},\rho}$ we write
	\begin{equation}\label{eq:grad-phi}
	\begin{split}
    \nabla \vartheta_{\e,\rho} (x) &  =\nabla \thetas (x+z_\e) +\eta\bigg(\frac{|x|}{\rho}\bigg)\big(\nabla\theta_\hor(x)-\nabla \thetas (x+z_\e)\big) \\
  & \quad +\frac{1}{\rho}\eta'\bigg(\frac{|x|}{\rho}\bigg)\frac{x}{|x|}\big(\theta_{\hor}(x) + a_{\e,\rho} - \thetas  (x+z_\e)\big)\,.
	\end{split}
	\end{equation}
This implies, in particular, that $|\nabla\vartheta_{\e,\rho} |\leq\frac{C}{\rho-\sigma+2\e}$ on $A_\e^{\sigma,\rho}$. In a similar way we obtain $|\nabla^2\vartheta_{\e,\rho}|\leq\frac{C}{(\rho-\sigma+2\e)^2}$ on $A_\e^{\sigma,\rho}$.
Thus, a combination of~\eqref{est:core-screw-splitting}, \eqref{eq:energy-core-interior}, and~\eqref{est:energy-core-screw-interpolation} yields
	\begin{equation}\label{est:energy-core-screw}
	F_\e^\screw\big(2 u_{\e,\rho},B_\rho\big)\leq\gamma_\e^\screw\big(B_\sigma\big)+ \frac{1}{2} \int_{A_\e^{\sigma,\rho}}|\nabla \vartheta_{\e,\rho}(x)|^2\dx+\frac{C\e}{(\rho-\sigma+2\e)^2}\,.
	\end{equation} 
We are then left to estimate
	\begin{equation}\label{eq:splitting-grad-phi}
	\begin{split}
	\int_{A_\e^{\sigma,\rho}}|\nabla \vartheta_{\e,\rho}(x)|^2\dx
	&=\int_{A_\e^{\sigma,\rho}}|\nabla\thetas (x+z_\e)|^2\dx
	+\int_{A_{\frac{\rho}{2},\rho}}|\nabla \vartheta_{\e,\rho} (x)-\nabla \thetas (x+z_\e)|^2\dx\\
	&\hspace*{1em}+2\int_{A_{\frac{\rho}{2},\rho}}\nabla \thetas  (x+z_\e)\cdot\big(\nabla \vartheta_{\e,\rho} (x)-\nabla \thetas (x+z_\e)\big)\dx\,.
	\end{split}
	\end{equation}
Recalling that $B_\rho\subset B_{\rho+\sigma+\sqrt{2}\e}(-z_\e)$ and using a change of variables $y=x+z_\e$ we obtain 
	\begin{equation}\label{est:core-log-contribution}
	\begin{split}
	\int_{A_\e^{\sigma,\rho}}|\nabla \thetas  (x+z_\e)|^2\dx 
	& \leq\int_{B_{\rho+\sigma+\sqrt{2}\e}\sm B_{\sigma-2\e}} |\nabla \thetas (y)|^2\dy=2\pi\log\frac{\rho+\sigma+\sqrt{2}\e}{\sigma-2\e}\\
  & =2\pi\log\frac{\rho}{\sigma}+r_1(\e,\sigma,\rho)
	\end{split}
	\end{equation}
with $r_1(\e,\sigma,\rho)=2\pi\log\frac{(\rho+\sigma+\sqrt{2}\e)\sigma}{(\sigma-2\e)\rho}\to 0$ as $\e\to 0$, $\sigma\to 0$, and $\rho\to 0$ subsequently.

\smallskip
It remains to show that the last terms on the right-hand side of~\eqref{eq:splitting-grad-phi} vanish. Recalling~\eqref{eq:grad-phi} and using Young's inequality we get
	\begin{equation*}
	|\nabla \vartheta_{\e,\rho}(x)-\nabla \thetas (x+z_\e)|^2 \leq 2|\nabla \thetas (x+z_\e)-\nabla\theta_\hor(x)|^2+\frac{2\|\eta'\|_{L^\infty}^2}{\rho^2}|\thetas (x+z_\e)-\theta_\hor(x) -  a_{\e,\rho}|^2\,.
	\end{equation*}
Note that $\thetas  (\cdot\,+z_\e)-\theta_{\hor}  -  a_{\e,\rho}\in H^1(A_{\frac{\rho}{2},\rho})$. In view of~\eqref{def:alpha}, an application of Poincar\'e inequality yields
	\begin{equation}\label{est:core-poincare}
	\begin{split}
	\frac{1}{\rho^2}\int_{A_{\frac{\rho}{2},\rho}}|\thetas (x+z_\e)-\theta_\hor(x) -  a_{\e,\rho}|^2\dx\leq C\int_{A_{\frac{\rho}{2},\rho}}|\nabla \thetas (x+z_\e)-\nabla\theta_\hor(x) |^2\dx\,.
	\end{split}
	\end{equation}
We show now how to bound the right-hand side of~\eqref{est:core-poincare} using the continuity of the shift-operator in Sobolev spaces (see \eg \cite[Proposition 9.3]{BrezisBook}). 
We start by observing that $\nabla \thetas =\nabla\theta_\hor$ a.e.\ in $A_{\frac{\rho}{2},\rho}(z_\e)$, since both $\thetas$ and $\theta_\hor$ are liftings of $x/|x|$. Moreover, $\nabla\theta_\hor\in H^1(A_{\frac{\rho}{2},\rho}\sm\Pi^+)$ and setting
	\begin{equation*}
	E_{\e}^{\sigma,\rho}\defas A_{\frac{\rho}{2}+\sigma+\sqrt{2}\e,\rho-\sigma-\sqrt{2}\e}\cap\big\{\dist(x,\Pi^+)>\sigma+\sqrt{2}\e\big\}\wcont A_{\frac{\rho}{2},\rho}\sm\Pi^+
	\end{equation*}
we have $|z_\e|<\dist(E_\e^{\sigma,\rho},\partial(A_{\frac{\rho}{2},\rho}\sm\Pi^+))$.
The continuity of the shift operator then implies that
	\begin{equation}\label{est:core-shift-operator}
	\begin{split}
	\int_{E_\e^{\sigma,\rho}}|\nabla \thetas (x+z_\e)-\nabla\theta_\hor(x)|^2\dx &\leq (\sigma+2\sqrt{\e})^2\int_{A_{\frac{\rho}{2},\rho}\sm\Pi^+}|\nabla^2\theta_\hor(x)|^2\dx\leq C\frac{\sigma^2}{\rho^2}\,.
	\end{split}
	\end{equation}
Since moreover $|A_{\frac{\rho}{2},\rho}\sm E_\e^{\sigma,\rho}|\leq C\sigma\rho$ and
	\begin{align*}
	|\nabla\theta_\hor(x)|^2 \leq\frac{C}{\rho^2}\,,\quad|\nabla \thetas (x+z_\e)|^2\leq\frac{C}{(\rho-\sigma-\sqrt{2}\e)^2}\leq\frac{C}{\rho^2}\;\text{ for}\ x\in A_{\frac{\rho}{2},\rho}\,,
	\end{align*}
we finally deduce from~\eqref{est:core-shift-operator} that
	\begin{equation}\label{est:core-shift-final}
	\int_{A_{\frac{\rho}{2},\rho}}|\nabla\thetas(x+z_\e)-\nabla\theta_\hor(x)|^2\dx\leq C\frac{\sigma^2+\sigma\rho}{\rho^2}\,.
	\end{equation}
Eventually, an application of H\"older's inequality together with~\eqref{est:core-log-contribution} and~\eqref{est:core-shift-final} yields
	\begin{equation}\label{est:core-hoelder}
	\begin{split}
	& \int_{A_{\frac{\rho}{2},\rho}}\nabla \thetas (x+z_\e)\cdot\big(   \nabla \vartheta_{\e,\rho}(x)-\nabla\thetas (x+z_\e)\big)\dx \\
	&\quad \leq \|\nabla \thetas(\cdot\, +z_\e)\|_{L^2(A_{\frac{\rho}{2},\rho})}\|\nabla \vartheta_{\e,\rho}  -\nabla \thetas (\cdot\,+z_\e)\|_{L^2(A_{\frac{\rho}{2},\rho})}\\
	&\quad \leq C\Big(\log\frac{\rho}{\sigma}+r_2(\e,\sigma,\rho)\Big)^\frac{1}{2}\Big(\frac{\sigma^2+\sigma \rho}{\rho^2}\Big)^\frac{1}{2}\,.
	\end{split}
	\end{equation}
Since the right-hand side of~\eqref{est:core-hoelder} vanishes as $\e\to 0$ and $\sigma\to 0$, we obtain~\eqref{est:energy-core-screw-final} by combining~\eqref{est:energy-core-screw} with~\eqref{eq:splitting-grad-phi}--\eqref{est:core-poincare} and~\eqref{est:core-shift-final}--\eqref{est:core-hoelder}.
\end{step}

\begin{step}{7}{Conclusion}
Gathering~\eqref{est:energy-core-part-screw} and~\eqref{est:energy-core-screw-final} we deduce that the competitor $u_{\e,\rho}$ satisfies
	\begin{equation*}
	4F_\e^\p\big(u_{\e,\rho},B_\rho(x_0)\big)\leq (1+\e)\Big(\gamma_\e^\screw\big(B_\sigma(x_0)\big)+\pi\log\frac{\rho}{\sigma}+r(\e,\sigma,\rho)\Big)\,.
	\end{equation*}
In view of~\eqref{def:gamma} it is not restrictive to assume that $\e$ was chosen sufficiently small so that $\gamma_\e^\screw(B_\sigma(x_0))\leq \gamma+\pi\log\frac{\sigma}{\e}+1$. Thus, the above estimate yields~\eqref{est:energy-core-final} upon replacing $r(\e,\sigma,\rho)$ by $(1+\e)r(\e,\sigma,\rho)+\e(\gamma + \pi\log\frac{\rho}{\e}+1 )+C\rho$. As indicated above this gives~\eqref{est:core-ub}, which together with~\eqref{est:core-lb} finally yields~\eqref{eq:gamma-part}.
\end{step}
\end{proof}
\begin{remark}\label{rem:core-partials}
In the proof of Proposition~\ref{prop:core-partials} we have established~\eqref{est:core-ub} under the additional assumption that $\theta_\hor\in C^\infty(\R^2\sm\Pi^+)$. Suppose now that $\theta_{\hor}\in C^\infty(\R^2\sm\Pi)$; then there exist $\tilde{\theta}_\hor\in C^\infty(\R^2\sm\Pi^+)$ and $z\in\Z$ such that $\tilde{\theta}_\hor=\theta_\hor$ on $\R^2\cap\{x_2\geq 0\}$ and $\tilde{\theta}_\hor=\theta_\hor+2\pi z$ on $\R^2\cap\{x_2 <0\}$. Let now $x_0\in\R^2$ and for $\e>0$, and $\rho>8\e$; for any competitor $u_{\e,\rho}\in\AD_\e$ satisfying $u_{\e,\rho}=\frac{1}{4\pi}\tilde{\theta}_{\hor}(\cdot-x_0)$ on $\partial_{2\e}B_\rho(x_0)$ we obtain a competitor $\hat{u}_{\e,\rho}\in\AD_\e$ with $\hat{u}_{\e,\rho}=\frac{1}{4\pi}\theta_{\hor}(\cdot-x_0)$ on $\partial_{2\e}B_\rho(x_0)$ by setting $\hat{u}_{\e,\rho}\defas u_{\e,\rho}+\frac{z}{2}\mathds{1}_{\{(x-x_0)\cdot e_2<0\}}$. Moreover, $\hat{u}_{\e,\rho}$ satisfies
	\begin{equation*}
	F_\e^\p\big(\hat{u}_{\e,\rho},B_\rho(x_0)\big)\leq F_\e^\p\big(u_{\e,\rho},B_\rho(x_0)\big)+C\rho\,.
	\end{equation*}
Passing to the infimum and letting $\e\to 0$, $\rho\to 0$ we thus deduce~\eqref{est:core-ub} in the general case.
\end{remark}
\begin{remark}\label{rem:est-Fp-Fscrew}
A similar estimate as in Step 5 of the proof of Proposition~\ref{prop:core-partials} holds in the following more general setting. Let $A\subset\R^2$ be a Borel set and $S$ a countable collection of horizontal segments.
Suppose that $u\in\AD_\e$ satisfies
	\begin{align*}
	\d u (i,i+\e e_1)=\dist(\d u (i,i+\e e_1);\Z)\; \text{ for all }i\in\Z_\e^{e_1}(A)
	\end{align*}	 
and
	\begin{align}\label{cond:rem-Fp-Fscrew}
	\dist (\d u (i,i+\e e_2);\Z)=\dist (\d u (i,i+\e e_2);\tfrac{1}{2}\Z)\; \text{ for all }i\in\Z_\e^{e_2}\text{ with }(i,i+\e e_2]\cap S=\emptyset\,.
	\end{align}
Then we have that
	\begin{align}\label{est:rem-Fp-Fscrew}
	4F_\e^\p(u,A)\leq (1+C\e)F_\e^\screw(2u,A)+2\alpha\e\#\{i\in\Z_\e^{2e_2}(A)\colon (i,i+2\e e_2]\cap S\neq\emptyset\}\,.
	\end{align}
To see this, note that~\eqref{eq:energy-core-part-edge} and the first estimate in~\eqref{est:energy-core-jump} remain unchanged. Finally, if $i\in\Z_\e^{2e_2}(A)$ is such that $(i,i+2\e e_2]\cap S=\emptyset$ then a similar argument as in Step 5 above but now using~\eqref{cond:rem-Fp-Fscrew} yields 
	\begin{align*}
	f_1\big(\d u(i,i+2\e e_2)\big) &\leq 2\Big(f_1\big(\d u(i,i,+\e e_2)\big)+f_1\big(\d u(i+\e e_2,i,+2\e e_2)\big)\Big)\\ 
	&=2\Big(f_{\frac{1}{2}}\big(\d u(i,i,+\e e_2)\big)+f_{\frac{1}{2}}\big(\d u(i+\e e_2,i,+2\e e_2)\big)\Big)\\
	&=\frac{1}{2}\Big(f_1\big(\d (2u)(i,i,+\e e_2)\big)+f_1\big(\d (2u)(i+\e e_2,i,+2\e e_2)\big)\Big)\,.
	\end{align*}
Thus we obtain~\eqref{est:rem-Fp-Fscrew} by summing up over all contributions. 
\end{remark}
\subsection{Proof of the  Upper Bound}\label{sec:ub}
Based on Proposition~\ref{prop:core-partials} we finally prove Theorem~\ref{thm:main}(iii). 
\begin{proof}[Proof of Theorem~\ref{thm:main}(iii)]
Let $w_{\even},w_{\odd}\in\D_{M,\hor}^{1/2}(\Omega)$ with $w_{\even}^2=w_{\odd}^2=:v$ satisfying $J(v)=\pi\mu$, $\mu=\sum_{h=1}^M d_h\delta_{x_h}$ with $d_h\in\{-1,1\}$, $x_h\in\Omega$. It is not restrictive to assume that $\WW(v,\Omega)<+\infty$. Moreover, we first assume that $w_\even=w_\odd=:w$ and we split the proof of~\eqref{limsup:partial} for such $w$ into several steps.  

\begin{step}{1}{Construction of an approximating sequence}
By applying Proposition~\ref{prop:countable segments}, we find that, up to $\H^1$-negligible sets, $S_w = \bigcup_{j\in\N}(y_j^1,y_j^2) \in \Ten(\mu, \Omega)$ \emph{resolves dislocations tension}. 
Moreover, we let $\varphi$, $\chi$, and $\psi$ be as in Lemma~\ref{lem:lifting-v} and Lemma~\ref{lem:lifting-w}. For any $\sigma>0$ such that $2\sigma$ satisfies~\eqref{cond:sigma} we let $\varphi_n^\sigma\subset C^\infty(\Omega^{\frac{\sigma}{4}}(\mu) \sm \Gamma)\cap H^1(\Omega^{\frac{\sigma}{4}}(\mu) \sm \Gamma)$ be an approximation of $\varphi$ provided by Lemma~\ref{lem:lifting-v}  with $\sigma$ replaced by $\frac{\sigma}{4}$.
We also set $\psi_n^\sigma\defas\frac{\varphi_n^\sigma}{2}+\chi$ and $v_n^\sigma\defas\exp(\iota\varphi_n^\sigma)$. For every $h\in\{1,\ldots,M\}$ we have $\varphi_n^\sigma\in C^\infty(A_{\frac{\sigma}{4},2\sigma}(x_h)\sm\Pi(x_h))$ and $\deg(v_n^\sigma,\partial B_\rho(x_h))=d_h$ for every $\rho\in (\frac{\sigma}{4},2\sigma)$, and thus we can find a lifting  $\theta_h\in C^\infty(A_{\frac{\sigma}{4},2\sigma}(x_h)\sm\Pi(x_h))$ of a rotation of $\frac{x-x_h}{|x-x_h|}$ such that 
	\begin{equation}\label{choice:lifting-theta-h}
	\varphi_n^\sigma-d_h\theta_h\in H^1\big(A_{\frac{\sigma}{4},2\sigma}(x_h)\big)\quad\text{and}\quad \fint_{A_{\frac{\sigma}{2},\sigma}(x_h)}\big(\varphi_n^\sigma-d_h\theta_h\big) \, \d x=0  \,.
	\end{equation}
Let now  $\eta\in C^\infty\big([0,+\infty);[0,1]\big)$ with $\eta\equiv 0$ on $[0,\frac{5}{8}]$ and $\eta\equiv 1$ on $[\frac{7}{8},+\infty)$ be a smooth cut-off function and for $h\in\{1,\ldots,M\}$ set
\begin{align*}
	\vartheta_n^{\sigma,h}(x)\defas d_h\theta_h(x) + \eta\bigg(\dfrac{|x-x_h|}{\sigma}\bigg)\big(\varphi_n^\sigma(x) - d_h\theta_h(x)\big)\;\text{ for every}\ x\in A_{\frac{\sigma}{4},2\sigma} (x_h) \,.
	\end{align*}
By the choice of $\theta_h$ we have that $\vartheta_n^{\sigma,h}\in C^\infty(A_{\frac{\sigma}{4},2\sigma} (x_h)\sm \Pi(x_h))$. Moreover, we have
	\begin{equation}\label{bc:phi-n-k}
	\begin{split}
	\vartheta_n^{\sigma,h} \equiv d_h\theta_h\;\text{ on}\  A_{\frac{\sigma}{4},\frac{5\sigma}{8}}(x_h) \quad\text{and}\quad \vartheta_n^{\sigma,h} \equiv\varphi_n^\sigma\;\text{ on}\ A_{\frac{7\sigma}{8},2\sigma}(x_h)\,.
	\end{split}
	\end{equation}
Eventually, for every $h\in\{1,\ldots,M\}$ by Remark~\ref{rem:invariance rotation} we let $u_{\e}^{\sigma,h}\in\AD_\e$ be such that $u_{\e}^{\sigma,h}(i)=\frac{1}{4\pi}\theta_h(i)$ on $\partial_{2\e}B_{\frac{\sigma}{2}}(x_h)$ and $F_\e^\p\big(u_{\e}^{\sigma,h},B_{\frac{\sigma}{2}}(x_h)\big)=\gamma_\e^\p\big(B_{\frac{\sigma}{2}}(x_h)\big)$. We define $u_{\e,n}^\sigma$ on $\e\Z^2\cap\Omega$ by setting 
	\begin{equation}\label{def:recovery} 
	u_{\e,n}^\sigma(i)\defas
	\begin{cases} 
	d_hu_{\e}^{\sigma,h}(i)+\dfrac{1}{2\pi} \chi(i) &\text{if}\ i\in\e\Z^2\cap B_{\frac{\sigma}{2}}(x_h)\ \text{for an}\ h\in\{1,\ldots,M\}\,,\\\cr
	\dfrac{1}{4\pi} \vartheta_n^{\sigma,h}(i) + \dfrac{1}{2\pi} \chi(i) &\text{if}\ i\in\e\Z^2\cap \overline{\Asigma(x_h)}\ \text{for an}\ h\in\{1,\ldots,M\}\,,\\\cr
	\dfrac{1}{4\pi} \varphi_n^\sigma(i) + \dfrac{1}{2\pi}\chi(i) = \dfrac{1}{2\pi}\psi_n^\sigma(i) &\text{if}\ i\in\e\Z^2\cap \Omega^{\sigma}(\mu)\,.
	\end{cases} 
	\end{equation} 
In the previous formula, if $i$ lies on the discontinuity set of $\chi$, $\theta_n^{\sigma,h}$ or $\varphi_n^\sigma$, we use as value the trace from above. To define $u_{\e,n}^\sigma$ on $\e\Z^2\sm\Omega$, we extend $v_{\e,n}^\sigma\defas\exp(2\pi\iota 2 u_{\e,n}^\sigma)$ to $\e\Z^2\sm\Omega$ as in~\eqref{est:extension} and we choose $u_{\e,n}^\sigma$ to be an angular lifting of $\frac{1}{2} v_{\e,n}^\sigma$ on $\e\Z^2\sm\Omega$.
By the definition of $u_{\e,n}^\sigma$ and thanks to~\eqref{bc:phi-n-k} we have that
	\begin{equation}\label{splitting-energy-recovery}
	F_\e^\p\big(u_{\e,n}^\sigma,\Omega\big) \leq \sum_{h=1}^M F_\e^\p\big(u_{\e,n}^\sigma,B_\sigma(x_h)\big)+F_\e^\p\big(\tfrac{1}{2\pi}\psi_n^\sigma,\Omega^{\sigma-2\e}(\mu)\big) \, ,
	\end{equation} 
and below we estimate separately the two contributions on the right-hand side of~\eqref{splitting-energy-recovery}.
\end{step}

\begin{step}{2}{Energy estimate in $B_{\sigma}(x_h)$}
In this step we show that for every $h\in\{1,\ldots,M\}$ we have
	\begin{equation}\label{est:recovery-core}
	\limsup_{\sigma\to0}\limsup_{n\to+\infty}\limsup_{\e\to 0}\Big(4F_\e^\p\big(u_{\e,n}^\sigma,\overline{B_{\sigma}(x_h)}\big)-\pi\log\frac{\sigma}{\e}\Big)\leq \gamma\,.
	\end{equation}
Let $h\in\{1,\ldots,M\}$ be fixed. Since the discontinuity set of $\chi$ is horizontal, for all $i\in\Z_\e^{e_1}(B_{\frac{\sigma}{2}}(x_h))$ we have that $\d u_{\e,n}^\sigma(i,i+\e e_1)=d_h\d u_{\e}^{\sigma,h}(i,i+\e e_1)$. Moreover, by Lemma~\ref{lem:lifting-w}, $S_\chi \cap B_{\sigma}(x_h) \subset \Pi(x_h) \cup S_w$. Possibly fixing a smaller $\sigma$, since $S_w$ \emph{resolves dislocations tension} we have that the conditions in Definition~\ref{def:resolve} are satisfied, so that, in particular, $S_w \cap B_{\sigma}(x_h)  \subset  \Pi(x_h)$.  Hence, if $i\in\Z_\e^{2e_2}(B_{\frac{\sigma}{2}}(x_h))$ with $[i,i+2\e e_2]\cap \Pi(x_h)=\emptyset$, we have that $\d u_{\e,n}^\sigma(i,i+2\e e_2)=d_h\d u_{\e}^{\sigma,h}(i,i+2\e e_2)$. In a similar way we can argue on $\Asigma(x_h)$ comparing $\d u_{\e,n}^\sigma$ with $\d \vartheta_n^{\sigma,h}$. Using in addition that $\chi\in\{0,\pi\}$ and $|d_h|=1$ we thus obtain
	\begin{equation}\label{est:recovery-core1}
	\begin{split}
	F_\e^\p\big(u_{\e,n}^\sigma,\overline{B_{\sigma}(x_h)}\big) &\leq F_\e^\p\big(u_{\e}^{\sigma,h},B_{\frac{\sigma}{2}}(x_h)\big)+F_\e^\p\big(\tfrac{1}{4\pi}\vartheta_n^{\sigma,h},A_{\frac{\sigma}{2}-2\e,\sigma+2\e}(x_h)\big)+C\sigma\\
	&\leq\gamma_\e^\p\big(B_{\frac{\sigma}{2}}(x_h)\big)+F_\e^\p\big(\tfrac{1}{4\pi}\vartheta_n^{\sigma,h},A_{\frac{\sigma}{2}-2\e,\sigma+2\e}(x_h)\big)+C\sigma\,,
	\end{split}
	\end{equation}
where the second estimate follows from the choice of $u_{\e}^{\sigma,h}$.
Moreover, since $\vartheta_n^{\sigma,h}\in C^\infty(A_{\frac{\sigma}{2}-2\e,\sigma}(x_h)\sm \Pi(x_h))$, in a similar way as in  Step~5 of the proof of Proposition~\ref{prop:core-partials}  (see also Remark~\ref{rem:est-Fp-Fscrew}) we deduce that
	\begin{equation}\label{est:core-cutoff-1}
	4 F_\e^\p\big(\tfrac{1}{4\pi}\vartheta_n^{\sigma,h},A_{\frac{\sigma}{2}-2\e,\sigma+2\e}(x_h)\big)\leq(1+\e)F_\e^\screw\big(\tfrac{1}{2\pi}\vartheta_n^{\sigma,h},A_{\frac{\sigma}{2}-2\e,\sigma+2\e}(x_h)\big)+C(\sigma+\e)\,.
	\end{equation}
By interpolation estimates we have that
	\begin{equation}\label{est:core-phi-interpolation}
	\limsup_{\e\to 0}F_\e^\screw\big(\tfrac{1}{2\pi}\vartheta_n^{\sigma,h},A_{\frac{\sigma}{2}-2\e,\sigma+2\e}(x_h)\big)\leq\frac{1}{2}\int_{A_{\frac{\sigma}{2},\sigma}(x_h)} |\nabla\vartheta_n^{\sigma,h}|^2\dx\,.
	\end{equation}
It remains to show that
	\begin{equation}\label{est:recovery-cutoff-final}
	\limsup_{\sigma\to 0}\limsup_{n\to+\infty}\frac{1}{2}\int_{A_{\frac{\sigma}{2},\sigma}(x_h)} |\nabla\vartheta_n^{\sigma,h}|^2\dx\leq\pi\log 2\,,
	\end{equation}
then combining~\eqref{est:recovery-core1}--\eqref{est:recovery-cutoff-final} yields
	\begin{multline*}
	\limsup_{\sigma\to0}\limsup_{n\to+\infty}\limsup_{\e\to 0}\Big(4F_\e^\p\big(u_{\e,n}^\sigma,B_{\sigma}(x_h)\big)-\pi\log\frac{\sigma}{\e}\Big)\\
	\leq\limsup_{\sigma\to 0}\limsup_{\e\to 0}\Big(4\gamma_\e^\p\big(B_{\frac{\sigma}{2}}(x_h)\big)-\pi\log\frac{\sigma}{2\e}\Big)\,,
	\end{multline*}
thus~\eqref{est:recovery-core} follows from Proposition~\ref{prop:core-partials}.
To obtain~\eqref{est:recovery-cutoff-final} we write
	\begin{equation}\label{eq:core-phi-split-grad}
    \begin{split}
      \int_{A_{\frac{\sigma}{2},\sigma}(x_h)} |\nabla\vartheta_n^{\sigma,h}|^2\dx = \int_{A_{\frac{\sigma}{2},\sigma}(x_h)} |\nabla\varphi_n^\sigma|^2\dx & + \int_{A_{\frac{\sigma}{2},\sigma}(x_h)} |\nabla\vartheta_n^{\sigma,h}-\nabla\varphi_n^\sigma|^2\dx \\
      & + 2\int_{A_{\frac{\sigma}{2},\sigma}(x_h)} \nabla\varphi_n^\sigma\cdot(\nabla\vartheta_n^{\sigma,h}-\nabla\varphi_n^\sigma)\dx
    \end{split}
	\end{equation}
and we observe that thanks to~\eqref{cond:limit-dyadic-annulus}, Lemma~\ref{lem:lifting-v}, and the fact that $\WW(v,\Omega)<+\infty$ we have
	\begin{equation}\label{est:recovery-cutoff1}
	\lim_{\sigma\to 0}\lim_{n\to+\infty}\frac{1}{2}\int_{A_{\frac{\sigma}{2},\sigma}(x_h)} |\nabla\varphi_n^\sigma|^2\dx=\lim_{\sigma\to 0}\frac{1}{2}\int_{\Asigma(x_h)} |\nabla\varphi|^2\dx=\pi\log 2\,.
	\end{equation}
It is thus left to show that the remaining contributions in~\eqref{eq:core-phi-split-grad} are negligible. We have
	\begin{multline*}
	\nabla\varphi_n^\sigma(x)-\nabla\vartheta_n^{\sigma,h}(x)=\bigg(1-\eta\Big(\frac{|x-x_h|}{\sigma}\Big)\bigg)\big(\nabla\varphi_n^\sigma(x)-d_h\nabla\theta_h(x)\big)\\
	+\frac{1}{\sigma}\eta'\Big(\frac{|x-x_h|}{\sigma}\Big)\frac{x-x_h}{|x-x_h|}\big(d_h\theta_h(x)-\varphi_n^\sigma(x)\big)
	\end{multline*}
for $x\in A_{\frac{\sigma}{2},\sigma}(x_h)$. Thanks to~\eqref{choice:lifting-theta-h}, an application of Poincar\'e inequality together with~\eqref{limit-dyadic-lifting} thus yields
	\begin{equation*}
	\int_{A_{\frac{\sigma}{2},\sigma}(x_h)} |\nabla\vartheta_n^{\sigma,h}-\nabla\varphi_n^\sigma|^2\dx\leq C\int_{A_{\frac{\sigma}{2},\sigma}(x_h)} |d_h\nabla\theta_h-\nabla\varphi_n^\sigma|^2\dx\ \to 0\;\text{ as $n\to +\infty$ and $\sigma\to 0$.}
	\end{equation*}
Thus~\eqref{est:recovery-cutoff-final} follows from~\eqref{eq:core-phi-split-grad} and~\eqref{est:recovery-cutoff1} together with an application of H\"older's inequality. 
\end{step}

\begin{step}{3}{Energy estimate on $\Omega^\sigma(\mu)$}
In this step we show that
	\begin{equation}\label{est:recovery-far-field}
	\limsup_{n\to+\infty}\limsup_{\e\to 0}4F_\e^\p\big(\tfrac{1}{2\pi}\psi_n^\sigma,\Omega^{\sigma-2\e}(\mu)\big)\leq\frac{1}{2}\int_{\Omega^\sigma(\mu)}|\nabla v|^2\dx+ 4 \alpha \H^1(S_w)\,.
	\end{equation}
%
In view of~\eqref{cond:jump w - jump psi} we find that $\psi_n^\sigma$ satisfies~\eqref{cond:rem-Fp-Fscrew} in Remark~\ref{rem:est-Fp-Fscrew} with $A=\Omega^{\sigma-2\e}(\mu)$ and $S=S_w$ (recall that $[\varphi_n^\sigma]=[\varphi]$ for $n$ sufficiently large and thus~\eqref{cond:jump w - jump psi} holds with $\psi_n^\sigma$ in place of $\psi$). Thus~\eqref{est:rem-Fp-Fscrew} implies that 
	\begin{equation}\label{est:recovery-far-field1}
	\begin{split}
	4F_\e^\p\big(\tfrac{1}{2\pi}\psi_n^\sigma,\Omega^{\sigma-2\e}(\mu)\big) &\leq(1+C\e)F_\e^\screw\big(\tfrac{1}{\pi} \psi_n^\sigma,\Omega^{\sigma-2\e}(\mu)\big) \\
	&+  2 \alpha \e \#\bigg\{i\in\Z_\e^{2e_2}(\Omega^{\sigma-2\e}(\mu))\colon (i,i+2\e e_2]\cap \bigcup_{j\in\N} (y_j^1,y_j^2)\neq \emptyset\bigg\}
	\end{split}
	\end{equation}
Moreover, since $\chi$ takes values in $\{0,\pi\}$, we have that $\frac{1}{\pi}\psi_n^\sigma\eqZ\frac{1}{2\pi}\varphi_n^\sigma$, hence
	\begin{align}\label{eq:recovery-far-modZ}
	F_\e^\screw\big(\tfrac{1}{\pi} \psi_n^\sigma,\Omega^{\sigma-2\e}(\mu)\big)=F_\e^\screw\big(\tfrac{1}{2\pi} \varphi_n^\sigma,\Omega^{\sigma-2\e}(\mu)\big)\,.
	\end{align}\EEE
Again by interpolation estimates we have that
	\begin{equation}\label{est:recovery-far-interpolation}
	\limsup_{n\to +\infty}\limsup_{\e\to 0}F_\e^\screw\big(\tfrac{1}{2\pi} \varphi_n^\sigma,\Omega^{\sigma-2\e}(\mu)\big)\leq\lim_{n\to+\infty}\frac{1}{2}\int_{\Omega^\sigma(\mu)}|\nabla\varphi_n^\sigma|^2\dx=\frac{1}{2}\int_{\Omega^\sigma(\mu)}|\nabla v|^2\dx\,,
	\end{equation}
where the last equality follows from Lemma~\ref{lem:lifting-v} (v) together with the equality $|\nabla\varphi|=|\nabla v|$ in $\Omega^\sigma(\mu)$. Moreover, for any $j\in\N$ we have 
	\begin{equation*}
	\#\big\{i\in\Z_\e^{2e_2}(\Omega^{\sigma-2\e}(\mu))\colon (i,i+2\e e_2]\cap (y_j^1, y_j^2)\neq \emptyset\big\}\leq\frac{2(y_j^2-y_j^1)}{\e}\,,
	\end{equation*}
which implies that
	\begin{equation}\label{est:recovery-surface}
	\begin{split}
	& 2 \alpha \e  \#\bigg\{i\in\Z_\e^{2e_2}(\Omega^{\sigma-2\e}(\mu))\colon (i,i+2\e e_2]\cap \bigcup_{j\in\N} (y_j^1,y_j^2)\neq \emptyset\bigg\}\\
	&\hspace*{1em}\leq 2 \alpha \e \sum_{j\in\N}\#\big\{i\in\Z_\e^{2e_2}(\Omega^{\sigma-2\e}(\mu))\colon (i,i+2\e e_2]\cap  (y_j^1,y_j^2)\neq \emptyset\big\}\\
  &\hspace*{1em}\leq\sum_{j\in\N} 4 \alpha (y_j^2-y_j^1)=  4 \alpha  \H^1(S_w)\,.
	\end{split}
	\end{equation}
Eventually, \eqref{est:recovery-far-field} follows by gathering~\eqref{est:recovery-far-field1}--\eqref{est:recovery-surface}.
\end{step}

\begin{step}{4}{Conclusion}
Combining~\eqref{est:recovery-core} and~\eqref{est:recovery-far-field} we find that
	\begin{equation}\label{est:recovery-complete}
	\begin{split}
	&\limsup_{\sigma\to 0}\limsup_{n\to+\infty}\limsup_{\e\to 0}\big(4 F_\e^\p(u_{\e,n}^\sigma,\Omega)-M\pi|\log\e|\big)\\
	&\hspace*{1em}\leq \limsup_{\sigma\to 0}\bigg(\frac{1}{2}\int_{\Omega^\sigma(\mu)}|\nabla v|^2\dx-M\pi|\log\sigma|\bigg)+M\gamma+ 4 \alpha  \H^1(S_w)\\
	&\hspace*{1em}= \WW(v,\Omega)+M\gamma+ 4 \alpha  \H^1 (S_w)\,.
	\end{split}
	\end{equation}
Moreover, we have $\mu_{2u_{\e,n}^\sigma}\mres\Omega\fto\mu$ as $\e\to 0$, $\sigma\to 0$.
We finally set $w_{\e,n}^\sigma\defas \exp(2\pi\iota u_{\e,n}^\sigma)$ and we claim that
	\begin{equation}\label{convergence:recovery}
	\|w_{\e,n}^\sigma-w\|_{L^1(\Omega;\R^2)}\to 0\;\text{ as}\ \e\to 0\,,\ n\to\infty\,,\ \sigma\to 0\,.
	\end{equation}
Note that from~\eqref{convergence:recovery} we can deduce the corresponding convergence of $w_{2\e,s_j}^{n,\sigma}$ using the same arguments as in the proof of~\eqref{eq:w-even-w-odd}.
Thus, once the claim is established we can conclude via a diagonal argument. To obtain~\eqref{convergence:recovery} it is convenient to set $w_n^\sigma(x)\defas\exp\big(\iota\big(\frac{\varphi_n^\sigma(x)}{2} + \chi(x)\big)\big)$ for every $x\in\Omega^\sigma(\mu)$, so that $w_n^\sigma(i)=w_{\e,n}^\sigma(i)$ for every $i\in\e\Z^2\cap\Omega^\sigma(\mu)$.
We have that
	\begin{equation}\label{conv:recovery-decomposition-cubes}
	\begin{split}
	\|w_{\e,n}^\sigma-w\|_{L^1(\Omega;\R^2)} &=\sum_{Q_\e(i)\in\Q_\e}\int_{\Omega\cap Q_\e(i)}|w_{\e,n}^\sigma(i)-w(x)|\dx\\
	&\leq \sum_{Q_\e(i)\in\Q_\e(\Omega^\sigma(\mu)\sm S_{w_n^\sigma})}\int_{Q_\e(i)}|w_{n}^\sigma(i)-w(x)|\dx+2\hspace*{-1.5em}\dsum{Q_\e\in\Q_\e}{Q_\e\cap\R^2\sm\Omega^\sigma(\mu)\neq\emptyset}\hspace*{-1.5em}|Q_\e\cap\Omega|\\
	&\hspace*{1em}+2\e^2\#\big\{Q_\e\in\Q_\e(\Omega^\sigma(\mu))\colon Q_\e\cap S_{w_n^\sigma}\neq\emptyset\big\}\,,
	\end{split}
	\end{equation}
where we have used that $|w_{\e,n}^\sigma|, |w|\leq 1$.
For every $Q_\e\in\Q_\e$ with $Q_\e\cap\R^2\sm\Omega^\sigma(\mu) \neq \emptyset$, the inclusion $Q_\e\cap\Omega\subset\{\dist(x,\partial\Omega)\leq\sqrt{2}\e\}\cup\bigcup_{h=1}^MB_{\sigma+\sqrt{2}\e}(x_h)$ holds, from which we deduce that
	\begin{equation}
	\dsum{Q_\e\in\Q_\e}{Q_\e\cap\R^2\sm\Omega^\sigma(\mu)\neq\emptyset}\hspace*{-1.5em}|Q_\e\cap\Omega| \leq \big|\{x\in\Omega\colon\dist(x,\partial\Omega)\leq\sqrt{2}\e\}\big|+M\big|B_{\sigma+\sqrt{\e}}\big|\to 0\;\text{ as $\e\to 0$, $\sigma\to 0$,}
	\end{equation}
where we have used that $\partial\Omega$ is Lipschitz and thus admits and $(n-1)$-dimensional Minkowsky content.
Similarly, we deduce that
	\begin{equation}\label{conv:recovery-L1-jump}
	\e^2\#\big\{Q_\e\in\Q_\e(\Omega^\sigma(\mu))\colon Q_\e\cap S_{w_n^\sigma}\neq\emptyset\big\}\leq C\e\H^1(S_{w_n^\sigma}\cap\Omega^{\sigma}(\mu)) \to 0\;\text{ as}\ \e\to 0\,.
	\end{equation}		
	
The remaining term in~\eqref{conv:recovery-decomposition-cubes} can be estimated  by observing the following. For any $Q_\e(i)\in\Q_\e(\Omega^\sigma(\mu)\sm S_{w_n^\sigma})$ and any $x\in Q_\e(i)$ we have that $|w_n^\sigma(i)-w_n^\sigma(x)|\leq \sqrt{2}\e\|\nabla w_n^\sigma\|_{L^\infty(\Omega^\sigma(\mu);\R^{2\times 2})}$. From this we infer that
	\begin{equation}\label{conv:recovery-af}
	\begin{split}
	\sum_{Q_\e(i)\in\Q_\e(\Omega^\sigma(\mu)\sm S_{w_n^\sigma})}\int_{Q_\e(i)}|w_{n}^\sigma(i)-w(x)|\dx\leq \sqrt{2}\e\|\nabla w_n^\sigma\|_{L^\infty(\Omega^\sigma(\mu))}|\Omega^\sigma(\mu)|+\|w_n^\sigma-w\|_{L^1(\Omega^\sigma(\mu))}
	\end{split}
	\end{equation}
Finally, $|w_n^\sigma(x)-w(x)|\leq\big|\exp\big(\iota\frac{\varphi_n^\sigma(x)}{2}\big)-\exp\big(\iota\frac{\varphi(x)}{2}\big)\big|$ for any $x\in\Omega^\sigma(\mu)$, which together with an application of H\"older's inequality and Lemma~\ref{lem:lifting-v} implies that $\|w_n^\sigma-w\|_{L^1(\Omega^\sigma(\mu))}\to 0$ as $n\to \infty$. Together with~\eqref{conv:recovery-decomposition-cubes}--\eqref{conv:recovery-af} this gives~\eqref{convergence:recovery} and we conclude.
\end{step}
\end{proof}
\begin{remark}[The case $w_{\even}\neq w_{\odd}$]
In this case we choose $\varphi$, $\varphi_n^\sigma$ as in Lemma~\ref{lem:lifting-v} with $v=w_\even^2=w_\odd^2$ and we let $\chi_\even$, and $\chi_\odd$ be as in Lemma~\ref{lem:lifting-w} applied with $w_\even$, $w_\odd$, respectively. We then define $u_{\e,n}^{\sigma,\even}$, $u_{\e,n}^{\sigma,\odd}$ according to~\eqref{def:recovery} with $\chi$ replaced by $\chi_\even$, $\chi_\odd$, respectively. We finally set 
	\begin{equation*}
	u_{\e,n}^\sigma(i)\defas
	\begin{cases}
	u_{\e,n}^{\sigma,\even}(i) &\text{if}\ i\in 2\e\Z^2_\even\,,\\
	u_{\e,n}^{\sigma,\odd}(i) &\text{if}\ i\in 2\e\Z^2_\odd
	\end{cases}
	\end{equation*}
with $2\e\Z^2_\even$, $2\e\Z^2_\odd$ as in~\eqref{def:Z-even-Z-odd}.
In this way, $u_{\e,n}^\sigma$ still satisfies~\eqref{est:recovery-core} and it remains to estimate $F_\e^\p(u_{\e,n}^\sigma, \Omega^{\sigma-2\e}(\mu))$. Since $i\in 2\e\Z^2_\even$ implies that $i+\e e_1\in 2\e\Z^2_\even$, we obtain
	\begin{equation}\label{eq:du-e1-even}
	\d u_{\e,n}^\sigma(i,i+\e e_1)=\d u_{\e,n}^{\sigma,\even}(i,i+\e e_1)=\frac{1}{4\pi}\d \varphi_n^\sigma(i,i+\e e_1)
	\end{equation}
for every $i\in 2\e\Z^2_\even$ with $[i,i+\e e_1]\subset\Omega^{\sigma-2\e}(\mu)$. Here the second equality follows thanks to~\eqref{def:recovery} together with the fact that $S_{\chi_\even}$ and $S_{\chi_\odd}$ are horizontal. Similarly, we have
	\begin{equation}\label{eq:du-2e2-even}
	\d u_{\e,n}^\sigma(i,i+2\e e_2)=\d u_{\e,n}^{\sigma,\even}(i,i+2\e e_2)=\frac{1}{2\pi}\d \psi_n^{\sigma,\even}(i,i+2\e e_2)
	\end{equation}
for every $i\in 2\e\Z^2_\even$ with $[i,i+2\e e_2]\subset\Omega^{\sigma-2\e}(\mu)$. Using finally that both $\chi_{\even}$ and $\chi_{\odd}$ take values in $\{0,\pi\}$ we deduce that
	\begin{equation}\label{eq:du-e2-even}
	\begin{split}
	\d u_{\e,n}^\sigma(i,i+\e e_2) &=\frac{1}{4\pi}\d \varphi_n^\sigma(i,i+\e e_2)+\frac{1}{2\pi}\big(\chi_\odd(i+\e e_2)-\chi_\even(i)\big)\\
	&\stackrel{\mod \frac{1}{2}\Z}{=} \frac{1}{4\pi}\d \varphi_n^\sigma(i,i+\e e_2)
	\end{split}
	\end{equation}
for every $i\in 2\e\Z^2_\even$ with $[i,i+\e e_1]\subset\Omega^{\sigma-2\e}(\mu))$. Since the analogues of~\eqref{eq:du-e1-even}--\eqref{eq:du-e2-even} hold with $2\e\Z^2_\even$ replaced by $2\e\Z^2_\odd$, we get
	\begin{align*}
	4 F_\e^\p\big(u_{\e,n}^\sigma,\Omega^{\sigma-2\e}(\mu)\big)=F_\e^{\edge}\big(\tfrac{1}{2\pi}\varphi_n^\sigma,\Omega^{\sigma-2\e}(\mu)\big)&+\frac{\alpha}{\pi^2}  \e \hspace*{-2.5em}\dsum{i\in 2\e\Z^2_\even}{ [i,i+2\e e_2]\subset\Omega^{\sigma-2\e}(\mu)}\hspace*{-2.5em}f_1\big(\tfrac{1}{2\pi}\d\psi_n^{\sigma,\even}(i,i+2\e e_2)\big)\\
	&+\frac{\alpha}{\pi^2}  \e \hspace*{-2.5em}\dsum{i\in 2\e\Z^2_\odd}{ [i,i+2\e e_2]\subset\Omega^{\sigma-2\e}(\mu)}\hspace*{-2.5em}f_1\big(\tfrac{1}{2\pi}\d\psi_n^{\sigma,\odd}(i,i+2\e e_2)\big)\,.
	\end{align*}
Arguing as in~\eqref{est:recovery-far-field1}--\eqref{est:recovery-surface} it is then immediate to see that $u_{\e,n}^\sigma$ satisfies~\eqref{est:recovery-far-field} with $4 \alpha \H^1(S_w)$ replaced by $2 \alpha\H^1(S_{w_\even})+  2 \alpha  \H^1(S_{w_\odd})$.
\end{remark}
\section{Proof of Theorem~\ref{thm:main2}}\label{sec:proof-main-measure}

As a consequence of Theorem~\ref{thm:main} we now obtain Theorem~\ref{thm:main2}. 
Recall that for any $\mu\in X(\Omega)$ the discrete energies $\F_\e^\p(\mu,\Omega)$ are given by
	\begin{equation*}\label{def:energy-mu}
	\F_\e^\p(\mu,\Omega)\defas\inf\big\{F_\e^\p(u,\Omega)\colon u\in\AD_\e\,,\ \mu_{2 u}\mres\Omega=\mu\big\}
	\end{equation*}
with the convention $\inf\emptyset\defas+\infty$. 

For any $\mu = \sum_{i=1}^M d_h \delta_{x_h} \in X_M(\Omega)$, we define the line-tension energy 
\begin{equation} \label{eq:L}
  \LL(\mu,\Omega) \defas \inf \big\{ \H^1(S) : \ S \in \Ten(\mu,\Omega) \, , \ S \text{ \emph{resolves dislocation tension}}\big\} \, , 
\end{equation}
for which we refer to Definitions~\ref{def:connect singularities} and~\ref{def:resolve}. 

\begin{remark}
  The infimum in~\eqref{eq:L} is, in fact, a minimum and is reached on a stacking fault decomposed in a finite union of \emph{indecomposable stacking faults}. Indeed, by Definition~\ref{def:connect singularities}, a stacking fault $S \in \Ten(\mu,\Omega)$ is (up to a $\H^1$-negligible set that does not affect $\H^1(S)$) a countable union $S = \bigcup_{j \in \N} (y_j^1, y_j^2)$ of \emph{indecomposable stacking faults} $(y_j^1, y_j^2)$. By Remark~\ref{rem:countable components}, the set of indices $j \in \N$ such that at least one of $y_j^1, y_j^2$ lies on $\supp \mu$ is finite. The rest of the \emph{indecomposable stacking faults} connect two boundary points. More precisely, there are two sets of indices, $I_1$ finite and $I_2$ at most countable, such that $S = S_1 \cup S_2$ with $S_1 = \bigcup_{j \in I_1} (y_j^1, y_j^2)$ and  $S_2 = \bigcup_{j \in I_2} (y_j^1, y_j^2)$. Moreover, for $j \in I_2$, $(y_j^1, y_j^2)$ connects boundary points. Since all the  \emph{indecomposable stacking faults} are pairwise disjoint, we have that $\H^1(S) \geq \H^1(S_1)$. Finally, $S_1$ still \emph{resolves dislocation tension} in the sense of Definition~\ref{def:resolve}, since only \emph{indecomposable stacking faults} connecting two boundary points were removed from $S$. Hence, the infimum in~\eqref{eq:L} is taken over the finite family of finite unions of \emph{indecomposable stacking faults} connecting two dislocations or a dislocation to the boundary, \emph{resolving dislocation tension}. This proves the claim.
\end{remark} 
 
For any $\mu\in X_M(\Omega)$ the limiting energies are defined~by
	\begin{equation*}\label{def:limit-mu}
	\F^\p(\mu,\Omega)=\frac{M}{4}\gamma+ \frac{1}{4}\W(\mu,\Omega)+\alpha\LL(\mu,\Omega) \, .
	\end{equation*}

As a preliminary step, we characterise the quantity $\LL(\mu,\Omega)$ in terms of functions $w\in\D_{M,\hor}^{1/2}(\Omega)$ as follows. For any $\mu\in X_M(\Omega)$ we set
	\begin{equation}\label{def:L-mu}
	\Lambda(\mu,\Omega)\defas\inf\big\{\H^1(S_w)\colon w\in \D_{M,\hor}^{1/2}(\Omega)\,,\ w^2=v_\mu\ \text{a.e.\ in}\ \Omega\big\}\,,
	\end{equation}
where $v_\mu$ is the canonical harmonic map associated to $\mu$ (see Section~\ref{sec:limit-fields}). Moreover, we introduce 
\begin{equation}\label{def:L-mu-prime}
	\Lambda'(\mu,\Omega)\defas\inf\big\{\H^1(S_w)\colon w\in\D_{M,\hor}^{1/2}(\Omega)\,,\ J(w^2)=\pi\mu\big\} \, .
	\end{equation} 

Then the following result holds true.  
\begin{lemma}\label{lem:min-length}
For any $\mu\in X_M(\Omega)$  the minimisation problem defining $\Lambda(\mu,\Omega)$ in~\eqref{def:L-mu} admits a solution $w_\mu\in \D_{M,\hor}^{1/2}(\Omega)$. Moreover, $\Lambda(\mu,\Omega)=\Lambda'(\mu,\Omega)=\LL(\mu,\Omega)$.
\end{lemma}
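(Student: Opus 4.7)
The proof will be a direct consequence of the two structural Propositions~\ref{prop:countable segments} and~\ref{prop:w given S} and of the fact that the infimum defining $\LL(\mu,\Omega)$ is attained (as observed in the remark preceding the lemma).

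First I would show the chain of inequalities $\LL(\mu,\Omega)\leq\Lambda(\mu,\Omega)$ and $\Lambda(\mu,\Omega)\leq\LL(\mu,\Omega)$. For the former, let $w\in\D_{M,\hor}^{1/2}(\Omega)$ be any competitor for $\Lambda(\mu,\Omega)$; since $w^2=v_\mu$ and $J(v_\mu)=\pi\mu$, Proposition~\ref{prop:countable segments} yields $S_w\in\Ten(\mu,\Omega)$ with $S_w$ \emph{resolving dislocations tension}, so $S_w$ is an admissible competitor for~\eqref{eq:L} and $\LL(\mu,\Omega)\leq\H^1(S_w)$. Passing to the infimum over $w$ gives the inequality. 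For the reverse inequality, let $S\in\Ten(\mu,\Omega)$ be any competitor for $\LL(\mu,\Omega)$; apply Proposition~\ref{prop:w given S} with $v=v_\mu\in\D_M(\Omega)$ (which satisfies $J(v_\mu)=\pi\mu$ by the definition of the canonical harmonic map) to obtain $w\in\D_{M,\hor}^{1/2}(\Omega)$ with $w^2=v_\mu$ and $S_w=S$ up to an $\H^1$-negligible set. Then $w$ is admissible for $\Lambda(\mu,\Omega)$ and $\Lambda(\mu,\Omega)\leq\H^1(S_w)=\H^1(S)$; passing to the infimum gives the claim.

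Next I would prove the identity $\Lambda'(\mu,\Omega)=\Lambda(\mu,\Omega)$. The inequality $\Lambda'(\mu,\Omega)\leq\Lambda(\mu,\Omega)$ is immediate, since any competitor for $\Lambda(\mu,\Omega)$ satisfies $J(w^2)=J(v_\mu)=\pi\mu$ and hence is admissible for $\Lambda'(\mu,\Omega)$. Conversely, given $w\in\D_{M,\hor}^{1/2}(\Omega)$ admissible for $\Lambda'(\mu,\Omega)$, Proposition~\ref{prop:countable segments} gives $S_w\in\Ten(\mu,\Omega)$ \emph{resolving dislocations tension}; invoking Proposition~\ref{prop:w given S} with $v=v_\mu$ and $S=S_w$ produces $\tilde{w}\in\D_{M,\hor}^{1/2}(\Omega)$ with $\tilde w^2=v_\mu$ and $S_{\tilde w}=S_w$ up to an $\H^1$-negligible set, hence $\Lambda(\mu,\Omega)\leq\H^1(S_{\tilde w})=\H^1(S_w)$. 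Taking the infimum over $w$ gives $\Lambda(\mu,\Omega)\leq\Lambda'(\mu,\Omega)$.

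Finally, the existence of a minimiser $w_\mu$ for $\Lambda(\mu,\Omega)$ follows by combining the identity $\Lambda(\mu,\Omega)=\LL(\mu,\Omega)$ with the observation in the remark preceding the lemma that the infimum in~\eqref{eq:L} is attained: there exists $S^*\in\Ten(\mu,\Omega)$ \emph{resolving dislocations tension} with $\H^1(S^*)=\LL(\mu,\Omega)$. A last application of Proposition~\ref{prop:w given S} with $v=v_\mu$ and $S=S^*$ yields $w_\mu\in\D_{M,\hor}^{1/2}(\Omega)$ with $w_\mu^2=v_\mu$ and $\H^1(S_{w_\mu})=\H^1(S^*)=\Lambda(\mu,\Omega)$, as desired. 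No genuine obstacle is expected here, since the delicate structural work (the decomposition of $S_w$ into horizontal segments and the two-colouring construction) is already encapsulated in Propositions~\ref{prop:countable segments} and~\ref{prop:w given S}; the lemma is a bookkeeping consequence of those results.
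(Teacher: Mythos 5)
Your proof is correct, but you prove existence of the minimiser $w_\mu$ by a genuinely different route from the paper. The paper establishes existence first, by the direct method in $SBV$: it takes a minimising sequence $(w_k)$ for $\Lambda(\mu,\Omega)$, observes that $|\nabla w_k|=\frac{1}{2}|\nabla v_\mu|\in L^p$ for $p<2$ is equibounded, applies $SBV$-compactness and lower semicontinuity to extract a limit $w_\mu$, and then needs an extra argument (approximating the length density from below by the anisotropic integrands $R|\nu\cdot e_1|+|\nu\cdot e_2|$ and letting $R\to\infty$) to show the limiting jump set is still horizontal, i.e.\ that $w_\mu\in\D_{M,\hor}^{1/2}(\Omega)$. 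You instead establish $\Lambda=\LL$ first, invoke the remark preceding the lemma (the infimum in~\eqref{eq:L} is attained because, after discarding \emph{indecomposable stacking faults} that only connect boundary points, the competitor set is effectively finite), and then manufacture $w_\mu$ directly from an optimal $S^*$ via Proposition~\ref{prop:w given S}. Your route is leaner: it bypasses compactness and lower semicontinuity entirely and automatically produces a minimiser with $S_{w_\mu}=S^*$, a horizontal set of segments, so the horizontality is for free. For the equality $\Lambda=\Lambda'=\LL$ your argument and the paper's are the same in substance: the paper takes the chain $\Lambda\geq\Lambda'\geq\LL\geq\Lambda$ (the middle step by Proposition~\ref{prop:countable segments}, the last by Proposition~\ref{prop:w given S}), whereas you separately show $\LL\leq\Lambda\leq\LL$ and $\Lambda'\leq\Lambda\leq\Lambda'$, the second inequality in each pair via the two propositions composed — slightly more verbose but equivalent.
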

\begin{proof}
The proof is split into two steps establishing separately the existence of a minimiser and the equality $\Lambda(\mu,\Omega)=\Lambda'(\mu,\Omega)=\LL(\mu,\Omega)$.

\begin{step}{1}{Existence of a minimiser $w_\mu$}
Similar to~\cite[Section 4]{GMM20}, the existence of a minimiser $w_\mu$ follows by the direct method. Indeed, thanks to Lemma~\ref{lem:lifting-v} there exists at least one competitor $w\in\D_{M,\hor}^{1/2}(\Omega)$ satisfying $w^2=v_\mu$, hence $\Lambda(\mu,\Omega)<+\infty$. Moreover, if $(w_k)_k \subset \D_{M,\,\hor}^{1/2}(\Omega)$ is a minimising sequence, then $|\nabla w_k|=\frac{1}{2}|\nabla v_\mu|$ a.e.\ in $\Omega$. Since $v_\mu\in W^{1,p}(\Omega;\S^1)$ for every $p\in[1,2)$, we obtain that
	\begin{equation*}
	\sup_{k\in\N}\bigg(\int_{\Omega}|\nabla w_k|^p\dx+\H^1(S_{w_k})\Big)<+\infty\;\text{ for any}\  p \in [1,2)\,. 
	\end{equation*}
Together with the uniform bound $\|w_k\|_{L^\infty(\Omega;\R^2)}=1$ and the $SBV$-compactness result~\cite[Theorem 4.8]{AFP} we deduce that up to subsequences (not relabeled) $w_k\wsto w_\mu$ for some $w_\mu\in SBV(\Omega;\R^2)$. Up to passing to a further subsequence we can additionally assume that $w_k\to w_\mu$ a.e.\ in $\Omega$, which in particular implies that $|w_\mu|=1$ and $w_\mu^2=v_\mu$ a.e.\ in $\Omega$. Moreover, the lower semicontinuity result~\cite[Theorem 4.7]{AFP} ensures that
	\begin{equation*}
	\H^1(S_{w_\mu}) \leq\liminf_{k\to+\infty}\H^1(S_{w_k})=\Lambda(\mu,\Omega)<+\infty
	\end{equation*}
and
	\begin{equation*}
	\int_{\Omega^\sigma(\mu)}|\nabla w_\mu|^2\dx \leq\liminf_{k\to+\infty}\int_{\Omega^\sigma(\mu)}|\nabla w_k|^2\dx=\frac{1}{4}\int_{\Omega^\sigma(\mu)}|\nabla v_\mu|^2\dx<+\infty\;\text{ for every}\ \sigma>0\,.
	\end{equation*}
To conclude it thus suffices to show that $|\nu_{w_\mu}\cdot e_1|=0$ $\H^1$-a.e.\ on $S_{w_\mu}$. This can be done using a similar argument as in the proof of Theorem~\ref{thm:main}(i). Namely, for every $R>0$ we set $g_R(\nu)\defas R|\nu\cdot e_1|+|\nu\cdot e_2|$ for every $\nu\in\S^1$. Clearly, the $1$-homogeneous extension of $g_R$ to $\R^2$ is convex. Hence~\cite[Theorem 5.22]{AFP} implies that
	\begin{equation*}
	\Lambda(\mu,\Omega)=\liminf_{k\to+\infty}\H^1(S_{w_k})=\liminf_{k\to+\infty}\int_{S_{w_k}}g_R(\nu_{w_k})\dH\geq\int_{S_{w_\mu}}g_R(\nu_{w_\mu})\dH\;\text{ for every}\ R>0\,,
	\end{equation*}
which is only possible if $|\nu_{w_\mu}\cdot e_1|=0$ $\H^1$-a.e.\ on $S_{w_\mu}$.
\end{step}

\begin{step}{2}{We show that $\Lambda(\mu,\Omega)=\Lambda'(\mu,\Omega)=\LL(\mu,\Omega)$} 
We have that $\Lambda(\mu,\Omega)\geq\Lambda'(\mu,\Omega)$, since $J(v_\mu)=\pi\mu$. Moreover, for every $w\in\D_{M,\hor}^{1/2}(\Omega)$ with $J(w^2)=\pi\mu$, by Propostion~\ref{prop:countable segments} we have that $S_w$ is a competitor for the minimisation problem defining $\LL(\mu,\Omega)$, hence $\Lambda'(\mu,\Omega)\geq\LL(\mu,\Omega)$. 

\smallskip

To conclude, it thus suffices to show that $\LL(\mu,\Omega)\geq\Lambda(\mu,\Omega)$.  Let us fix $S \in \Ten(\mu,\Omega)$ \emph{resolving dislocations tension}. By applying Proposition~\ref{prop:w given S} to $S$ and the  canonical harmonic map $v_\mu \in \D_M(\Omega)$, we find $w_\mu \in \D_{M,\hor}^{1/2}(\Omega)$ such that $S_{w_\mu} = S$ up to an $\H^1$-negligible set and $w_\mu^2 = v_\mu$. Thus, $w_\mu$ is a competitor for the minimisation problem defining $\Lambda(\mu,\Omega)$. Since $\H^1(S) = \H^1(S_w)$, we deduce that $\H^1(S)\geq\Lambda(\mu,\Omega)$. We conclude by taking the infimum in $S$. 

\end{step}
\end{proof}
\begin{remark}\label{rem:energy-w-mu}
From Lemma~\ref{lem:min-length} together with~\eqref{eq:W-can-harm} we deduce in particular that
	\begin{equation*} 
	\begin{split}
	\F^\p(\mu,\Omega) &=\frac{M}{4}\gamma+\frac{1}{4}\W(\mu,\Omega)+\alpha\LL(\mu,\Omega)=\frac{M}{4}\gamma+\frac{1}{4}\W(\mu,\Omega)+\alpha\Lambda(\mu,\Omega)\\
	&=\frac{M}{4}\gamma+\frac{1}{4}\WW(v_\mu,\Omega)+\alpha\H^1(S_{w_\mu})=F^\p(w_\mu,w_\mu,\Omega)\,.
	\end{split}
	\end{equation*}
Thus, $\F^\p(\mu,\Omega)$ determines the minimal amount of energy induced by a configuration $\mu$ of limiting singularities and this amount in turn is obtained by minimising separately the necessary core contribution, far-field contribution, and surface contribution.
Moreover
	\begin{equation}\label{eq:F-mu-w}
		F^\p(w_\mu,w_\mu,\Omega)\leq F^\p(w_{\odd},w_{\even},\Omega)
\end{equation}
for all $w_{\odd}$ and $w_\even$ satisfying $J(w_\even^2)=\pi\mu$ and $J(w_\odd^2)=\pi\mu$.
\end{remark}
The proof of Theorem~\ref{thm:main2} is now based on Theorem~\ref{thm:main} and Lemma~\ref{lem:min-length}.
\begin{proof}[Proof of Theorem~\ref{thm:main2}]
The compactness statement Theorem~\ref{thm:main2}(i) is an immediate consequence of the corresponding statement Theorem~\ref{thm:main}(i). Indeed, suppose that $(\mu_\e)\subset X(\Omega)$ is given with $\sup_{\e>0}\big(\F_\e^\p(\mu_\e,\Omega)-\frac{M\pi}{4}|\log\e|\big)<+\infty$. By definition, this implies that there exist displacements $u_\e\in\AD_\e$ with $\mu_{2u_\e}\mres\Omega=\mu_\e$ and 
	\begin{equation}\label{eq:min-sequence-mue}
	\lim_{\e\to 0}\big(F_\e^\p(u_\e,\Omega)-\F_\e^\p(\mu_\e,\Omega)\big)=0\,.
	\end{equation}
Hence $u_\e$ satisfies~\eqref{uniformbound} and we deduce from Theorem~\ref{thm:main}(i) that up to a subsequence (not relabeled) $\mu_\e=\mu_{2u_\e}\mres\Omega\fto\mu$ for some $\mu\in X(\Omega)$ satisfying the required properties.

\smallskip
To establish Theorem~\ref{thm:main2}(ii) suppose in addition that $\mu_\e=\mu_{2u_\e}\mres\Omega\fto\mu$ with $\mu\in X_M(\Omega)$. Up to passing to a further subsequence we can assume that $w_{2\e,s_0},w_{2\e,s_1}\to w_\even$ and $w_{2\e,s_2},w_{2\e,s_3}\to w_\odd$ for some $w_\even,w_\odd\in\D_{M,\hor}^{1/2}(\Omega)$ with $w_\even^2=w_\odd^2=:v$ and $J(v)=\pi\mu$. Suppose without loss of generality that $\H^1(S_{w_\even})\leq\H^1(S_{w_\odd})$; then~\eqref{liminf:partial} together with~\eqref{eq:min-sequence-mue} yields
	\begin{align*}
	\liminf_{\e\to 0}\Big(4\F_\e^\p(\mu_\e,\Omega)-M\pi|\log\e|\big)&=\liminf_{\e\to 0}\Big(4 F_\e^\p(u_\e,\Omega)-M\pi|\log\e|\big)\\
	&\geq \WW(v,\Omega)+M\gamma+\H^1(S_{w_\even})\\
	&\geq\W(\mu,\Omega)+M\gamma+\Lambda'(\mu,\Omega)\,,
	\end{align*}
where the last inequality follows from~\eqref{est:W-W-ren} and by definition of $\Lambda'(\mu,\Omega)$ in~\eqref{def:L-mu-prime}. Thus~\eqref{lb:energy-mu} follows from Lemma~\ref{lem:min-length}.

\smallskip
In order to prove Theorem~\ref{thm:main2}(iii) it suffices to recall the Inequality~\eqref{eq:F-mu-w} in Remark~\ref{rem:energy-w-mu}. Indeed, thanks to~\eqref{eq:F-mu-w} we find for any $\mu\in X_M(\Omega)$ the required sequence $(\mu_\e)\subset X$ satisfying~\eqref{ub:energy-mu} by setting $\mu_\e\defas\mu_{2u_\e}\mres\Omega$, where $(u_\e)$ is the recovery sequence provided by Theorem~\ref{thm:main}(iii) for $w_\even=w_\odd=w_\mu$. Then~\eqref{ub:energy-mu} follows by combining~\eqref{limsup:partial} and~\eqref{eq:F-mu-w}.
\end{proof}

\smallskip

\noindent {\bf Acknowledgments.} \\
A.\ Bach has received funding from the European Union's Horizon research and innovation programme under the Marie Sk\l odowska-Curie grant agreement No 101065771.\\
A.\ Garroni acknowledges the financial support of PRIN 2022J4FYNJ ``Variational methods for stationary and evolution problems with singularities and interfaces'', PNRR Italia Domani, funded by the European Union via the program NextGenerationEU, CUP B53D23009320006. Views and opinions expressed are however those of the authors only and do not necessarily reflect those of the European Union or The European Research Executive Agency. Neither the European Union nor the granting authority can be held responsible for them.\\
G.\ Orlando is member of Gruppo Nazionale per l'Analisi Matematica, la Probabilità e le loro Applicazioni (GNAMPA) of the Istituto Nazionale di Alta Matematica (INdAM). G.\ Orlando has been partially supported by the Research Project of National Relevance ``Evolution problems involving interacting scales'' granted by the Italian Ministry of Education, University and Research (MIUR Prin 2022, project code 2022M9BKBC, Grant No. CUP D53D23005880006). G.\ Orlando acknowledges partial financial support under the National Recovery and Resilience Plan (NRRP) funded by the European Union - NextGenerationEU - Project Title ``Mathematical Modeling of Biodiversity in the Mediterranean sea: from bacteria to predators, from meadows to currents'' - project code P202254HT8 - CUP B53D23027760001 - Grant Assignment Decree No.\ 1379 adopted on 01/09/2023 by the Italian Ministry of University and Research (MUR). G.\ Orlando was partially supported by the Italian Ministry of University and Research under the Programme ``Department of Excellence'' Legge 232/2016 (Grant No.\ CUP - D93C23000100001).

\end{document}